\documentclass{amsart}
\usepackage{amssymb,amsmath, amsthm,latexsym}
\usepackage{graphics}
\usepackage{psfrag}
\usepackage{amscd}
\usepackage{graphicx}
\newcommand{\cal}[1]{\mathcal{#1}}
\theoremstyle{plain}
\newtheorem{theo}{Theorem}

\newtheorem{lemma}{Lemma}[section]
\newtheorem{theorem}[lemma]{Theorem}
\newtheorem{proposition}[lemma]{Proposition}
\newtheorem{corollary}[lemma]{Corollary}
\theoremstyle{definition}
\newtheorem*{definition}{Definition}
\newtheorem{defi}[lemma]{Definition}
\newtheorem{remark}[lemma]{Remark}
\parskip=\bigskipamount

\let\egthree=\phi
\let\phi=\varphi
\let\varphi=\egthree




\begin{document}
\title{Teichm\"uller flow and Weil-Petersson flow}
\author{Ursula Hamenst\"adt}
\thanks
{AMS subject classification: 30F60,37D40,37C15,37C40\\
Research
partially supported by ERC Grant Nb. 10160104}
\date{May 4, 2015}

\begin{abstract}
For an oriented  
surface $S$ of genus $g\geq 0$ with 
$m\geq 0$ punctures and $3g-3+m\geq 2$,  let ${\cal Q}(S)$ and
${\cal Q}_{WP}(S)$ be the moduli space of area one
quadratic differentials and of quadratic differentials of 
unit norm for the Weil-Petersson metric, respectively.
We show that there is a 
Borel subset ${\cal E}$ of ${\cal Q}(S)$ which is invariant
under the Teichm\"uller flow $\Phi^t_{\cal T}$
and of full measure for every invariant Borel probability measure, 
and there is a measurable map $\Lambda:{\cal E}\to
{\cal Q}_{WP}(S)$ which conjugates $\Phi^t_{\cal T}\vert {\cal E}$ into
the Weil-Petersson flow
$\Phi^t_{WP}$. This conjugacy induces a
continuous injection 
of the space of all $\Phi^t_{\cal T}$-invariant 
Borel probability measures on ${\cal Q}(S)$ 
into the space of all $\Phi^t_{WP}$-invariant  
Borel probability measures on ${\cal Q}_{WP}(S)$.
The map $\Theta$ is not surjective, but its image contains 
the Lebesgue Liouville measure. A measure not in the image
corresponds to a locally finite infinite invariant Borel measure on 
${\cal Q}(S)$. 
\end{abstract}

\maketitle

\tableofcontents

\section{Introduction}

An oriented surface $S$ of 
finite type is a
closed surface of genus $g\geq 0$ from which $m\geq 0$
points, so-called \emph{punctures},
have been deleted. We assume that $3g-3+m\geq 2$,
i.e. that $S$ is not a sphere with at most four
punctures or a torus with at most one puncture.
We then call the surface $S$ \emph{nonexceptional}.

Since the Euler characteristic of $S$ is negative,
the \emph{Teichm\"uller space} ${\cal T}(S)$
of $S$ is the quotient of the space of all complete 
finite volume hyperbolic
metrics on $S$ under the action of the
group of diffeomorphisms of $S$ which are isotopic
to the identity. The \emph{mapping class group}
${\rm Mod}(S)$ of all isotopy classes of orientation
preserving diffeomorphisms of $S$ 
acts properly discontinuously
on ${\cal T}(S)$

There are two ${\rm Mod}(S)$-invariant metrics on Teichm\"uller space
${\cal T}(S)$
which have been studied extensively in the past: The so-called 
\emph{Teichm\"uller metric} and 
the \emph{Weil-Petersson metric}.

The \emph{Teichm\"uller metric} is a complete 
${\rm Mod}(S)$-invariant Finsler
metric on ${\cal T}(S)$. It is customary to 
view this metric as a metric on the 
cotangent bundle of Teichm\"uller space. This cotangent 
bundle is
the bundle of holomorphic quadratic differentials over
${\cal T}(S)$.
The unit sphere bundle of the metric is the 
bundle $\tilde {\cal Q}(S)$ of quadratic differentials of area one.
Although the large scale geometry of the
Teichm\"uller metric does not
resemble a metric of non-positive curvature,
 any two points of ${\cal T}(S)$ can be
connected by a unique geodesic.
We call such a geodesic 
a \emph{Teichm\"uller geodesic}. 

Teichm\"uller distances can be effectively estimated
(see \cite{R07b}), and there are many recent results on 
the asymptotic behavior of Teichm\"uller geodesics.
We refer to \cite{LM10,LR11} for more information.

The Teichm\"uller metric defines a \emph{geodesic flow} on
$\tilde {\cal Q}(S)$ which is equivariant with respect to the
action of the mapping class group and hence projects to 
a flow $\Phi^t_{\cal T}$ on the \emph{moduli
space} 
\[{\cal Q}(S)=\tilde {\cal Q}(S)/{\rm Mod}(S)\] 
\emph{of area one quadratic differentials}.

Although ${\cal Q}(S)$ is not compact, it admits many
$\Phi^t_{\cal T}$-invariant Borel 
probability measures.
For example, there are countably many periodic orbits
for the Teichm\"uller flow on ${\cal Q}(S)$, and each such orbit
supports a natural invariant Borel probability measure.
These periodic orbits are in bijection with conjugacy
classes of pseudo-Anosov mapping classes, and they 
can be counted according to their
lengths \cite{EM10,H13}. Borel probability measures
supported on periodic orbits are dense in the 
space ${\cal M}_{\cal T}({\cal Q}(S))$ 
of all $\Phi^t_{\cal T}$-invariant Borel probability
measures on ${\cal Q}(S)$ equipped with the weak$^*$-topology.
However, for every compact set $K\subset {\cal Q}(S)$
there are periodic orbits which do not intersect $K$
\cite{H05} and hence the space 
${\cal M}_{\cal T}({\cal Q}(S))$ 
is non-compact: There are sequences of such measures supported
on periodic orbits which
converge weakly to the trivial measure of vanishing total mass.

The \emph{Weil-Petersson metric} is a
${\rm Mod}(S)$-invariant K\"ahler metric on ${\cal T}(S)$ 
of negative sectional curvature which induces
an incomplete distance $d_{WP}$. 
The completion 
$\overline{{\cal T}(S)}$ of ${\cal T}(S)$
with respect to $d_{WP}$ is a ${\rm Cat}(0)$-space, however
it is not locally compact. As a consequence, 
any two points in $\overline{{\cal T}(S)}$
can be connected
by a unique geodesic. Such a geodesic will be called
a \emph{Weil-Petersson geodesic} in the sequel. Finite length
Weil-Petersson geodesic arcs
with both endpoints in ${\cal T}(S)$ are 
entirely contained in ${\cal T}(S)$ 
(Corollary 5.4 of \cite{W87}).

The Weil-Petersson metric can be viewed as a 
${\rm Mod}(S)$-invariant metric on the
cotangent bundle of Teichm\"uller space. The norm of 
a quadratic differential $q$ is the $L^2$-norm of $q$ with 
respect to the underlying hyperbolic metric on 
the base point of $q$. The unit cotangent bundle 
$\tilde {\cal Q}_{WP}(S)$ for the metric projects to 
an orbifold bundle 
${\cal Q}_{WP}(S)$ over the moduli space of curves.
The 
geodesic flow $\Phi^t_{WP}$ 
for the metric 
acts on ${\cal Q}_{WP}(S)$, however
this flow is incomplete. Periodic orbits for $\Phi^t_{WP}$ are in bijection
with conjugacy classes of pseudo-Anosov elements
\cite{DW03} and hence they are in bijection with 
 periodic orbits for the Teichm\"uller flow. Each
of these orbits supports an invariant Borel 
probability measure, and the set of all these measures
is dense in the space 
${\cal M}_{\rm WP}({\cal Q}_{\rm WP}(S))$ 
of $\Phi^t_{\rm WP}$-invariant Borel probability measures
on ${\cal Q}_{\rm WP}(S)$ equipped with 
the weak$^*$-topology \cite{H10b}. 
In other words, there is a natural bijection between
dense subsets of ${\cal M}_{T}({\cal Q}(S))$ and 
${\cal M}_{\rm WP}({\cal Q}_{\rm WP}(S))$. 
The space ${\cal M}_{WP}({\cal Q}_{WP}(S))$ contains
the \emph{Lebesgue Liouville measure} for the
Weil-Petersson metric. This measure was shown to be
ergodic in \cite{BMW12}.

For a measure $\mu\in {\cal M}_{\cal T}({\cal Q}(S))$
(or $\nu\in {\cal M}_{\rm WP}({\cal Q}(S))$) 
let $h(\mu)$ be the \emph{entropy} of $\mu$ (or of $\nu$).
The entropy of  
a measure $\mu\in {\cal M}_{\cal T}({\cal Q}(S))$ is at most
$6g-6+2m$ \cite{H11}. In contrast,  
Brock, Masur and Minsky constructed invariant
Borel probability measures
for the Weil-Petersson geodesic flow whose entropy is arbitrarily large
\cite{BMM11}. The paper \cite{PWW10}
also contains some information on the dynamics of 
$\Phi^t_{WP}$.

\begin{definition}\label{conjugacy}
A \emph{measurable (or continuous) 
conjugacy} of a continuous flow $\Psi^t$ on a 
topological space $X$
into a continuous flow $\Xi^t$ on a
topological space $Y$ is an injective 
measurable (or continuous) map
$\Lambda:X\to Y$ such that there is a 
measurable (or continuous) function $\rho:X\times \mathbb{R}\to \mathbb{R}$  
with the following properties.
\begin{enumerate}
\item $\rho(x,0)=0$ for all $x\in X$.
\item For each fixed $x\in X$ the function $\rho(x,\cdot):s\to \rho(x,s)$
is an increasing homeomorphism.
\item $\Lambda(\Psi^tx)=\Xi^{\rho(x,t)}\Lambda(x)\text{ for all }x\in X,
t\in \mathbb{R}.$
\end{enumerate}
\end{definition}

We show

\begin{theo}\label{conjug}
There is a $\Phi^t_{\cal T}$-invariant Borel subset
${\cal E}\subset {\cal Q}(S)$ 
with the following properties.
\begin{enumerate}
\item $\mu({\cal E})=1$ for every $\Phi^t_{\cal T}$-invariant 
Borel probability measure $\mu$ on ${\cal Q}(S)$.
\item There is a measurable
conjugacy $\Lambda: {\cal E}\to ({\cal Q}_{WP}(S),\Phi_{WP}^t)$
whose restriction to any compact invariant set is 
continuous. 
\item $\Lambda$ induces a 
continuous injection
\[\Theta:{\cal M}_{\cal T}({\cal Q}(S))\to 
{\cal M}_{\rm WP}({\cal Q}_{\rm WP}(S)).\]
\item $\Theta$ is not surjective, but its image contains
the Lebesgue Liouville measure.
\item \[\infty>h(\Theta(\mu))\geq h(\mu)/\sqrt{2\pi(2g-2+n)
}\text{ for all }
\mu\in {\cal M}_{\cal T}({\cal Q}(S)).\]
\end{enumerate}
\end{theo}

To show that the map $\Theta$ is not surjective we construct
an explicit $\Phi^t_{WP}$-invariant Borel probablity
measure on ${\cal Q}_{WP}(S)$ 
which is not contained in its image.
However, this measure is not ergodic. 
 
We also obtain information on $\Phi^t_{WP}$-invariant
Borel probability measures which are not contained in the
image of $\Theta$. For simplicity we restrict our attention to 
ergodic measures. 

\begin{theo}\label{conjug2}
Let $\nu$ be any  $\Phi^t_{WP}$-invariant ergodic Borel probability
measure on ${\cal Q}_{WP}(S)$. Then there is an invariant
Borel set $A\subset {\cal Q}_{WP}(S)$ with 
$\nu(A)=1$, and there is a measurable conjugacy
\[\Xi: A\to ({\cal Q}_{\cal T}(S), \Phi^t_{\cal T}).\] The measure $\Xi_*\nu$
determines a locally finite 
$\Phi^t_{\cal T}$-invariant Borel measure on ${\cal Q}(S)$ which is
finite if and only if $\nu\in \Theta({\cal M}_{\cal T}({\cal Q}(S))$).
\end{theo}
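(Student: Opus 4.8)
The plan is to obtain $\Xi$ as the inverse of the conjugacy $\Lambda$ of Theorem~\ref{conjug} and then to read off the measure it determines through the associated time change. Write the time-change function $\rho$ of $\Lambda$ (Definition~\ref{conjugacy}) in the form $\rho(x,t)=\int_0^t a(\Phi^u_{\cal T}x)\,du$ with $a>0$ Borel on ${\cal E}$; the construction of $\Lambda$ furnishes $\rho$ in this form, $a(x)$ being the ratio of the Weil--Petersson and Teichm\"uller length elements along the two corresponding geodesics at $x$, and in particular $a$ is bounded away from $0$ and $\infty$ on the intersection of ${\cal E}$ with any compact subset of ${\cal Q}(S)$, since there the two metrics are bi-Lipschitz. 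The first step is to show that ${\cal E}'=\Lambda({\cal E})$ is a $\Phi^t_{WP}$-invariant Borel subset of ${\cal Q}_{WP}(S)$ of full mass for \emph{every} $\Phi^t_{WP}$-invariant Borel probability measure; in particular $\nu({\cal E}')=1$. That ${\cal E}'$ is Borel is the Lusin--Souslin theorem ($\Lambda$ is an injective Borel map of standard Borel spaces), and invariance is part (3) of Definition~\ref{conjugacy}. The full-mass statement mirrors part (1) of Theorem~\ref{conjug}: although $\Phi^t_{WP}$ is incomplete, $\nu$-almost every point has an orbit defined for all $t\in\mathbb R$ which, by Poincar\'e recurrence, is forward and backward recurrent to a compact subset of moduli space; a biinfinite Weil--Petersson geodesic recurrent to a compact set has a well-defined pair of uniquely ergodic ending laminations, and the Teichm\"uller geodesic defined by the corresponding transverse measured foliations is again recurrent, hence lies over ${\cal E}$ and is sent by $\Lambda$ to the given Weil--Petersson geodesic. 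Set $A:={\cal E}'$ and $\Xi:=(\Lambda\vert{\cal E})^{-1}:A\to{\cal Q}(S)$, measurable by Lusin--Souslin, with $\Xi(A)\subseteq{\cal E}$ invariant.

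As the inverse of a conjugacy, $\Xi$ is a measurable conjugacy of $\Phi^t_{WP}\vert A$ into $\Phi^t_{\cal T}$: the conditions of Definition~\ref{conjugacy} hold with time-change $\sigma(y,\cdot)=\rho(\Xi(y),\cdot)^{-1}$. By construction the flow $\Psi^s:=\Xi\circ\Phi^s_{WP}\circ\Xi^{-1}$ on $\Xi(A)$ is the time change of $\Phi^t_{\cal T}$ with generator $a^{-1}$ times the generator of $\Phi^t_{\cal T}$, and $\Xi_*\nu$ is $\Psi^s$-invariant. By the standard correspondence between invariant measures under a time change --- a change at infinitesimal rate $f$ multiplies the invariant measure by $f^{-1}$ --- the Borel measure
\[
\hat\mu:=a^{-1}\cdot\Xi_*\nu,\qquad\text{equivalently}\qquad\hat\mu(B)=\int_{\Lambda(B\cap{\cal E})}\frac{d\nu(y)}{a(\Xi(y))}
\]
for Borel $B\subseteq{\cal Q}(S)$, is $\Phi^t_{\cal T}$-invariant; this is the measure ``determined'' by $\Xi_*\nu$. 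It is locally finite: for compact $K\subseteq{\cal Q}(S)$ one has $\hat\mu(K)\le\big(\sup_{K\cap{\cal E}}a^{-1}\big)\,\nu({\cal Q}_{WP}(S))=1/\inf_{K\cap{\cal E}}a<\infty$ by the bound on $a$ noted above.

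Finally, $\hat\mu$ is finite if and only if $\nu\in\Theta({\cal M}_{\cal T}({\cal Q}(S)))$. Its total mass is $\int_{{\cal Q}_{WP}(S)}a(\Xi(y))^{-1}\,d\nu(y)$. If $\nu=\Theta(\mu)$, then the same time-change bookkeeping applied to $\Lambda$ (as in the proof of Theorem~\ref{conjug}) identifies $\Theta(\mu)$ with the normalization of $(a\circ\Xi)\,\Lambda_*\mu$, of total mass $C=\int_{\cal E}a\,d\mu\in(0,\infty)$; hence $\int(a\circ\Xi)^{-1}\,d\nu=C^{-1}<\infty$, and in fact $\hat\mu=C^{-1}\mu$ is finite. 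Conversely, if $\hat\mu({\cal Q}(S))<\infty$ then $\mu_0:=\hat\mu/\hat\mu({\cal Q}(S))\in{\cal M}_{\cal T}({\cal Q}(S))$, and pushing the formulas forward by $\Lambda$ gives $\Lambda_*\mu_0=\hat\mu({\cal Q}(S))^{-1}(a\circ\Xi)^{-1}\nu$, so $\Theta(\mu_0)$ is a positive multiple of $\nu$; being probability measures, $\Theta(\mu_0)=\nu$, and $\nu\in\Theta({\cal M}_{\cal T}({\cal Q}(S)))$.

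The crux is the first step, $\nu({\cal E}')=1$: one must show a biinfinite Weil--Petersson geodesic recurrent to a compact set of moduli space is the $\Lambda$-image of a (necessarily recurrent) Teichm\"uller geodesic. This rests on a Weil--Petersson analogue of Masur's criterion together with the fact that the bounded-combinatorics data governing recurrence is a property of the pair of ending laminations rather than of the metric, so it is inherited by the Teichm\"uller geodesic with the same endpoints --- the geometry underlying the construction of $\Lambda$ in Theorem~\ref{conjug}, applied in reverse. The only other delicate point is the two-sided control of $a$ on compact sets used for local finiteness of $\hat\mu$, which likewise comes from the comparison of the two metrics on compact parts of moduli space.
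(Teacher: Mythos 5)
There is a genuine gap, and it sits exactly at the step you yourself flag as the crux: the claim that ${\cal E}'=\Lambda({\cal E})$ has full mass for \emph{every} $\Phi^t_{WP}$-invariant probability measure, so that $\Xi$ can simply be taken to be $\Lambda^{-1}$. Your justification is a ``Weil--Petersson analogue of Masur's criterion'': that a biinfinite WP-geodesic which recurs to a compact part of moduli space has uniquely ergodic ending laminations. This is false as stated -- the paper points this out explicitly in Section 2, citing Brock--Modami \cite{BMo14}: there are recurrent WP-rays whose ending lamination supports more than one transverse measure up to scale. So Poincar\'e recurrence of a $\nu$-typical orbit does not, by itself, produce a uniquely ergodic pair of ending measures, and hence does not produce a candidate Teichm\"uller geodesic at all. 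Moreover, even granting unique ergodicity, membership of the associated Teichm\"uller cotangent line in ${\cal E}$ is a quantitative condition (the sets $B(j,\ell)$ require $r(q,t)\geq t/j$ for \emph{all} $t\geq\ell$ in both directions, i.e. linear progress in the curve graph); a Teichm\"uller orbit that merely returns to a compact set infinitely often need not satisfy it, and the a.e.\ linear-progress statement (Lemma \ref{deviation}) is proved only for $\Phi^t_{\cal T}$-invariant \emph{probability} measures -- precisely what you do not have when the induced measure is infinite, which is the interesting case of Theorem \ref{conjug2}. So $\nu(\Lambda({\cal E}))=1$ is unproven and quite possibly false, and with it the whole construction $\Xi=\Lambda^{-1}$, the time-change bookkeeping, and the finiteness criterion (which you derive by pushing formulas back and forth through this identification) lose their foundation. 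A secondary soft spot: the two-sided bound on the time-change density $a$ over $K\cap{\cal E}$ for compact $K\subset{\cal Q}(S)$ is asserted from bi-Lipschitz comparison of the metrics, but $\Lambda$ does not map a point to a nearby point on the same curve -- it sends a Teichm\"uller geodesic to a different (fellow-traveling only in the thick part) WP-geodesic, and the derivative of $\psi$ is governed by the decay of the length of the vertical lamination along that WP-geodesic, so this bound also needs an argument.

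For contrast, the paper's proof does not invert $\Lambda$. It establishes unique ergodicity of the ending laminations $\nu$-almost everywhere as a theorem, not a hypothesis: a subadditive-ergodic-theorem argument (Lemmas \ref{descent} and \ref{positive}) shows that the minimal logarithmic length decay of ending measures along a typical orbit is linear; then one passes to the finite cover of a full-measure set parametrized by choices of \emph{ergodic} vertices of the simplices of transverse measures, builds from each such pair of ergodic projective measures a Teichm\"uller geodesic, obtains a measurable conjugacy from the cover into $({\cal Q}(S),\Phi^t_{\cal T})$, and uses Lusin continuity on a compact positive-measure set together with recurrence to conclude that a positive-measure set of the image Teichm\"uller orbits recurs to a compact set; Masur's criterion \cite{M82} on the Teichm\"uller side plus ergodicity then forces the simplices to be points (Proposition \ref{uniqueergodic}). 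The map $\Xi$ is this directly constructed conjugacy (its image is not claimed to lie in ${\cal E}$), the induced locally finite invariant measure comes from a special representation (section plus roof function), and finiteness is characterized by integrability of $\beta$, which is then matched with $\Theta$ as in the proof of Theorem \ref{final}. If you want to salvage your approach, you would essentially have to reprove these ingredients, at which point defining $\Xi$ as $\Lambda^{-1}$ buys nothing.
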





The strategy for the proof of Theorem \ref{conjug} consists
in a geometric comparison between biinfinite Teichm\"uller 
geodesics and biinfinite Weil-Petersson geodesics
provided that these geodesics satisfy suitable recurrence
properties under the action of the mapping class group.
Particular such geodesics are geodesics which are
contained in the thick part of Teichm\"uller space.  

Denote for $\epsilon >0$ by ${\cal T}(S)_\epsilon$
the ${\rm Mod}(S)$-invariant subset of all hyperbolic metrics
whose \emph{systole} is at least $\epsilon$.
The mapping class group acts cocompactly on 
${\cal T}(S)_\epsilon$ and hence 
by invariance,
the restrictions to ${\cal T}(S)_\epsilon$ 
of the Teichm\"uller metric and the
Weil-Petersson metric
are locally uniformly bilipschitz equivalent. 
It follows easily from the 
work of Masur and Minsky (see \cite{R07b}) and 
Brock \cite{B03} that on the large scale, the restrictions 
to ${\cal T}(S)_\epsilon$ of the
distances $d_{\cal T}$ and $d_{WP}$ induced
by the Teichm\"uller metric and the Weil-Petersson metric,
respectively, 
are not bilipschitz equivalent. 
For example, there is a sequence of 
points $\{x_i\}\subset {\cal T}(S)_\epsilon$ 
and a number $c>0$ such that 
$d_{\cal T}(x_0,x_i)\to \infty$ and $d_{WP}(x_0,x_i)\leq c$.
The Teichm\"uller geodesics connecting $x_0$ to $x_i$
enter arbitrarily deeply into the thin part of
Teichm\"uller space.

In \cite{BMM11}, 
Brock, Masur and Minsky showed that for every $\epsilon >0$, 
biinfinite Teichm\"uller geodesics which are entirely contained in 
${\cal T}(S)_\epsilon$ are fellow-traveled by biinfinite
Weil-Petersson geodesics, and binifinite Weil-Petersson geodesics
entirely contained in ${\cal T}(S)_\epsilon$ are fellow-traveled 
by biinfinite Weil-Petersson geodesics.

The arguments used for the proof of Theorem \ref{conjug} yield
a similar result. For a formulation, 
we use the distance $d_{\cal T}$ on ${\cal T}(S)$ 
induced by the Teichm\"uller metric to define
the \emph{Hausdorff-distance} $d_H(A,B)\in [0,\infty]$ between two
subsets $A,B$ of ${\cal T}(S)$ as the infimum of all numbers $r>0$
such that $A$ is contained in the $r$-neighborhood of $B$ 
and $B$ is
contained in the $r$-neighborhood of $A$.

\begin{theo}\label{fellowtravel}
For every $\epsilon>0$ there is a number
$R=R(\epsilon) >0$ with the following property.
\begin{enumerate}
\item
Let $J\subset \mathbb{R}$ be a closed connected set and let 
$\gamma:J\to {\cal T}(S)_\epsilon$
be a Teichm\"uller geodesic. Then 
there is a closed connected set $J^\prime\subset \mathbb{R}$ 
and there is a Weil-Petersson geodesic
$\xi:J^\prime\to {\cal T}(S)$ with 
$d_H(\gamma(J),\xi(J^\prime))\leq R$.
\item Let $J\subset \mathbb{R}$ be
a closed connected set and let
$\xi:J\to {\cal T}(S)_\epsilon$ be a 
Weil-Petersson geodesic. Then there is a
closed connected set $J^\prime\subset \mathbb{R}$ and there is a
Teichm\"uller geodesic $\gamma:J^\prime\to 
{\cal T}(S)$ with 
$d_H(\xi(J),\gamma(J^\prime))\leq R$.
\end{enumerate}
\end{theo}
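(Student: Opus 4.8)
The plan is to deduce Theorem \ref{fellowtravel} from the machinery developed for Theorem \ref{conjug}, which compares biinfinite Teichm\"uller and Weil-Petersson geodesics with good recurrence. The first step is a reduction to the biinfinite case. Given a Teichm\"uller geodesic $\gamma:J\to {\cal T}(S)_\epsilon$ with $J$ a possibly bounded interval, I would use cocompactness of the ${\rm Mod}(S)$-action on ${\cal T}(S)_\epsilon$ to extract, after translating by a suitable sequence of mapping classes and passing to a limit, a biinfinite Teichm\"uller geodesic $\hat\gamma:\mathbb{R}\to {\cal T}(S)_{\epsilon/2}$ (say) that locally uniformly approximates translates of $\gamma$ on longer and longer subintervals. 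Concretely, one exhausts $J$ by compact subintervals $J_k$, applies $g_k\in {\rm Mod}(S)$ so that $g_k\gamma(\text{midpoint of }J_k)$ lies in a fixed compact fundamental domain, and uses properness plus Arzel\`a--Ascoli to pass to a limiting geodesic defined on all of $\mathbb{R}$. The point is that the constant $R$ produced for biinfinite geodesics transfers back to the original arc because Hausdorff distance is controlled by the $C^0$-approximation on the relevant subintervals, together with the fact that both metrics are locally bilipschitz on the thick part.

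The second step is the heart of the matter: for a biinfinite Teichm\"uller geodesic $\hat\gamma$ contained in ${\cal T}(S)_\delta$, produce a biinfinite Weil-Petersson geodesic $\xi$ with $d_H(\hat\gamma(\mathbb{R}),\xi(\mathbb{R}))\le R(\delta)$, with $R$ depending only on $\delta$. This is exactly the kind of statement that the geometric comparison underlying Theorem \ref{conjug} is built to deliver. I would invoke the fellow-traveling result of Brock--Masur--Minsky from \cite{BMM11} as quoted in the excerpt, which gives such a $\xi$ a priori with constant depending on $\hat\gamma$; the improvement to a uniform constant comes from a compactness argument. Specifically, the space of biinfinite Teichm\"uller geodesics in ${\cal T}(S)_\delta$, modulo ${\rm Mod}(S)$ and reparametrization, is compact (again by cocompactness of the action on the thick part and Arzel\`a--Ascoli), and the map assigning to each such geodesic the Hausdorff distance to the nearest Weil-Petersson geodesic is upper semicontinuous; hence it is bounded. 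One has to check that the nearby Weil-Petersson geodesic itself stays in a slightly smaller thick part ${\cal T}(S)_{\delta'}$, $\delta'=\delta'(\delta)$, which follows from the bilipschitz comparison of the two metrics on the thick part together with the fact that a Weil-Petersson geodesic staying close to a thick Teichm\"uller geodesic cannot itself enter the thin part by more than a bounded amount.

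The third step is the reverse direction, part (2) of the theorem, where one starts with a biinfinite Weil-Petersson geodesic $\xi$ in ${\cal T}(S)_\epsilon$ and must find a nearby Teichm\"uller geodesic. Here the analogous Brock--Masur--Minsky statement applies, and the same translation-and-limit plus upper semicontinuity argument upgrades the constant to one depending only on $\epsilon$. One subtlety is that Weil-Petersson geodesics with bounded domain and endpoints in ${\cal T}(S)$ stay in ${\cal T}(S)$ (Corollary 5.4 of \cite{W87}, quoted in the excerpt), which is what lets the limiting procedure stay inside Teichm\"uller space rather than escaping to the completion boundary; I would use the thick-part hypothesis to keep the limit bounded away from that boundary uniformly. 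Finally, I assemble the two directions: take $R(\epsilon)$ to be the maximum of the two constants obtained, after tracking how much thickness is lost in the reduction from a general arc to a biinfinite geodesic (a fixed factor, say from $\epsilon$ to $\epsilon/2$).

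The main obstacle I expect is making the compactness/upper-semicontinuity step genuinely uniform, i.e.\ ruling out a sequence of thick biinfinite geodesics $\hat\gamma_n$ for which the nearest Weil-Petersson geodesic drifts to Hausdorff distance $\to\infty$ or drifts into deeper and deeper thin parts. Controlling the thickness of the companion geodesic is the delicate point: a Weil-Petersson geodesic fellow-traveling a thick Teichm\"uller geodesic only on a long finite window could in principle dip into the thin part near the ends of that window, and one must use the biinfinite nature of $\hat\gamma$ together with the negative curvature of the Weil-Petersson metric (convexity of the distance function, local compactness of the thick part) to propagate the thickness bound along the whole geodesic. This is precisely where the estimates from the proof of Theorem \ref{conjug} — in particular the control of how Weil-Petersson geodesics shadow thick Teichm\"uller geodesics on all scales — are indispensable, and I would organize the argument so that Theorem \ref{fellowtravel} is a corollary of the intermediate geometric lemmas established there rather than of the final measurable conjugacy statement.
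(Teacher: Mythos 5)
Your reduction to the biinfinite case and the subsequent compactness step both have genuine gaps. First, the translation-and-limit construction only produces a biinfinite geodesic $\hat\gamma$ approximating the translates $g_k\gamma\vert_{J_k}$ locally uniformly, i.e.\ on a bounded central window; it gives no control on the Hausdorff distance between $\hat\gamma(\mathbb{R})$ and $g_k\gamma(J_k)$ over the whole (arbitrarily long) interval $J_k$. Since Theorem \ref{fellowtravel} is stated for arbitrary closed connected $J$ --- in particular for long finite segments and rays, which need not be Hausdorff-close to any biinfinite thick geodesic --- the constant obtained in the biinfinite case does not transfer back, and the case carrying the actual content of the theorem is left unproved. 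Second, even in the biinfinite case your uniformization of the Brock--Masur--Minsky constant rests on the claim that $\gamma\mapsto\inf_\xi d_H(\gamma(\mathbb{R}),\xi)$ is upper semicontinuous on the (locally uniformly) compact space of thick biinfinite Teichm\"uller geodesics modulo ${\rm Mod}(S)$. That claim is unjustified and essentially circular: Hausdorff distance between biinfinite sets is determined by behaviour at infinity, which locally uniform convergence does not see, so for $\gamma_n\to\gamma$ on compacta the distance $d_H(\gamma_n(\mathbb{R}),\xi(\mathbb{R}))$ to the companion $\xi$ of the limit can be arbitrarily large. Compactness of the space of geodesics therefore does not bound the function, and the same objection applies to your treatment of part (2).

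What the paper does instead, and what your plan has no substitute for, is Mosher's device of realized laminations. One considers triples consisting of a geodesic $\gamma:J\to {\cal T}(S)_\epsilon$ (with $0\in J$) together with measured laminations $\mu_+,\mu_-$ of $\gamma(0)$-length one whose projective classes are \emph{realized} at the two endpoints of $J$ (Bers curves at finite endpoints, ending measures at infinite ones); one shows that ${\rm Mod}(S)$ acts cocompactly on this space of triples, and one associates to each triple, continuously and equivariantly, a basepoint on the companion geodesic determined by $[\mu_+],[\mu_-]$ --- the Teichm\"uller geodesic with these vertical and horizontal data in Proposition \ref{wpcomp}, and in Proposition \ref{tmcomp} the Weil--Petersson geodesic built from the machinery of Sections 3--5 (Lemma \ref{gaussb}, Corollary \ref{asympt1}, Proposition \ref{thickmeet}, Lemmas \ref{WPconv} and \ref{anglecontrol}), together with the minimum point of $x\mapsto\ell_x(\mu_+)+\ell_x(\mu_-)$. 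Cocompactness bounds the distance from $\gamma(0)$ to that basepoint by a constant depending only on $\epsilon$, and the decisive step is that re-basing the triple at $\gamma(s)$, with the laminations rescaled to have $\gamma(s)$-length one, gives the same bound at \emph{every} $s\in J$; this pointwise sliding is what converts a bound at one point into a uniform Hausdorff bound along segments, rays and biinfinite geodesics alike, and nothing in your compactness scheme replaces it. Note also that the paper does not rely on the fellow-traveling theorem of \cite{BMM11} at all, but on the ending-measure results of \cite{BMM10} and on the thick-progress and angle estimates it develops; in the Teichm\"uller-to-Weil--Petersson direction the mere existence of a biinfinite companion already requires Corollary \ref{asympt1} and Proposition \ref{thickmeet}, which your outline treats as available background rather than as the estimates that must be proved.
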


The organization of this work is as follows.
In Section \ref{weilthick} we establish the second part of 
Theorem \ref{fellowtravel} from standard
properties of Teichm\"uller geodesics and 
some results of Brock, Masur and Minsky \cite{BMM10}. 

Section 3 contains some geometric results on the Weil-Petersson metric.
We begin with the fairly easy observation that
given $\epsilon >0$ and a Teichm\"uller geodesic
$\gamma:\mathbb{R}\to {\cal T}(S)_\epsilon$, 
there are infinite Weil-Petersson geodesic
rays $\gamma_+,\gamma_-:[0,\infty)\to {\cal T}(S)$ which are
uniform limits on compact sets of Weil-Petersson geodesic segments
connecting $\gamma(0)$ to points $\gamma(T_i),\gamma(-R_i)$ for
suitably choosen sequences $T_i\to \infty, R_i\to \infty$.
Since the completion of Teichm\"uller space with respect to the
Weil-Petersson metric is a ${\rm CAT}(0)$ geodesic metric space, 
Theorem \ref{fellowtravel} now predicts the 
existence of  
a biinfinite Weil-Petersson geodesic
which is forward \emph{asymptotic} to $\gamma_+$ and backward 
asymptotic to $\gamma_-$. However, the curvature of the
Weil-Petersson metric is not bounded from above by a negative
constant, and the existence of such a geodesic is not immediate.

With an 
argument based on ruled surfaces and angle comparison we
derive a sufficient condition for the existence of a 
biinfinite Weil-Petersson geodesic which is forward and backward asymptotic
to given Weil-Petersson geodesic rays. 
Roughly speaking, this condition is satisfied if the 
geodesic rays spend a sufficient (but finite) amount of time in the thick
part of Teichm\"uller space.

In Section 4 we 
find a sufficient condition for a
Weil-Petersson geodesic segment of uniformly bounded length to 
pass through the thick part of Teichm\"uller space. 
This condition is a consequence of a quantitative version of the following
result of Wolpert \cite{W03}: 
If $\zeta:[0,R]\to {\cal T}(S)$ is any Weil-Petersson geodesic
of uniformly bounded length and if there is a simple closed
curve $\alpha$ which is long at both endpoints of $\zeta$
and becomes very short along $\zeta$, then $\zeta$ twists about 
$\alpha$ or about a curve $\beta$ which is disjoint from $\alpha$.

Section 5 contains the main technical results of this work. 
We use hyperbolicity of the \emph{curve graph} and 
some of the combinatorial tools introduced 
by Masur and Minsky \cite{MM00} to establish a sufficient
condition for a Weil-Petersson geodesic segment of arbitrary
length to spend a fixed
proportion of time in the thick part of Teichm\"uller space.
This result 
is used in Section 6 to construct the Borel set ${\cal E}$
and the conjugacy $\Lambda:{\cal E}\to {\cal Q}_{WP}(S)$ 
from Theorem \ref{conjug}. The first part of 
Theorem \ref{fellowtravel} follows easily.

In Section 7 we show that the conjugacy $\Lambda$ induces
an injection $\Theta$ of the space of invariant 
Borel probability measures on ${\cal Q}(S)$ into the 
space of invariant Borel probability measures on 
${\cal Q}_{WP}(S)$. 
In Section 8 we prove Theorem \ref{conjug2} and use this to 
characterize the image of the map $\Theta$.  
We also construct an example
of a measure which is not contained in the image of the map $\Theta$.
Finally in Section 9 we find that the image of $\Theta$ contains the 
Lebesgue Liouville measure.

{\bf Notation:} Throughout the paper, we write
$f\asymp g$ for two nonvanishing functions $f,g$ if 
$f/g$ and $g/f$ are uniformly bounded.

\bigskip

{\bf Acknowledgement:} This work began in fall 2007 during a visit
of the MSRI in Berkeley and was inspired by discussions with 
Jeff Brock, Howard Masur and Yair Minsky. 
Most of the results in Sections 2-7 were written 
in spring 2008 while I visited the
IHES in Bures-sur-Yvette. I thank the institute for its hospitality and
the good working conditions it provided. 
The paper was completed in spring 2015 during another visit
of the MSRI in Berkeley. During this visit 
I benefitted from discussions with 
Howard Masur and Scott Wolpert.

\section{Weil-Petersson geodesics 
in the thick part of ${\cal T}(S)$}\label{weilthick}

Let $S$ be an oriented surface of genus $g\geq 0$ with 
$m\geq 0$ punctures and $3g-3+m\geq 2$ and let ${\cal T}(S)$
be the Teichm\"uller space for $S$. 
In this section we 
investigate geodesics for 
the Weil-Petersson metric on 
${\cal T}(S)$ which remain
entirely in a fixed thick part of Teichm\"uller space. 
Our goal is to prove the second part of Theorem \ref{fellowtravel}
which relates such a geodesic to a Teichm\"uller geodesic.

We begin with summarizing those properties of the Weil-Petersson 
metric which are needed in the sequel. More information 
and references are contained in the survey paper
\cite{W03}.

A geodesic for the Weil-Petersson
metric is also called 
a WP-geodesic.
Such a geodesic will always be parametrized by arc length. 
A \emph{WP-ray} in ${\cal T}(S)$ is a WP-geodesic
$\gamma:[0,T)\to {\cal T}(S)$ for some
$T\in (0,\infty]$ which can not be extended, i.e. which
leaves every compact set.

A \emph{geodesic lamination} for a complete
hyperbolic structure on $S$ of finite volume is
a \emph{compact} subset of $S$ which is foliated into simple
geodesics.
A geodesic lamination $\lambda$ on $S$ is called \emph{minimal}
if each of its half-leaves is dense in $\lambda$. Thus a simple
closed geodesic is a minimal geodesic lamination. A minimal
geodesic lamination with more than one leaf has uncountably
many leaves and is called \emph{minimal arational}.
A geodesic lamination $\lambda$ is said to \emph{fill $S$} if
every simple closed geodesic on $S$ intersects $\lambda$
transversely. This is equivalent to stating that 
the complementary components of $\lambda$ are all 
topological discs
or once punctured topological discs.

A \emph{measured geodesic lamination} is a geodesic lamination
$\lambda$ together with a translation invariant transverse
measure. Such a measure assigns a positive weight to each compact
arc in $S$ which intersects $\lambda$ nontrivially and
transversely and whose
endpoints are contained in complementary regions of
$\lambda$.
The geodesic lamination $\lambda$ is called the
\emph{support} of the measured geodesic lamination; it consists of
a disjoint union of minimal components. Every
minimal geodesic lamination is the support of a measured
geodesic lamination.
The space ${\cal M\cal L} $ of measured geodesic laminations
on $S$ can be equipped with the weak$^*$-topology.
Its projectivization ${\cal P\cal M\cal L}$ is called
the space of \emph{projective measured geodesic laminations},
and it is homeomorphic to the sphere $S^{6g-7+2m}$.

For every marked hyperbolic
metric $h\in {\cal T}(S)$,
every essential free homotopy
class $\alpha$ on $S$ can be represented by a closed
$h$-geodesic which is unique up to parametrization.
This geodesic is simple if the free homotopy
class admits a simple representative. The
$h$-length $\ell_\alpha(h)$
of the class $\alpha$ 
is the length of its geodesic representative.
Equivalently, $\ell_\alpha(h)$ equals
the minimum of the $h$-lengths of all closed
curves representing the class $\alpha$.

Length of simple closed curves extends to 
a continuous \emph{length function}
${\cal T}(S)\times {\cal M\cal L}\to (0,\infty)$
which assigns to a hyperbolic metric $h\in {\cal T}(S)$ and
a measured geodesic lamination $\mu$ the
$h$-length $\ell_\mu(h)$ of $\mu$. 
This length function
satisfies $\ell_{a\mu}(h)=a\ell_\mu(h)$ for all 
$h\in {\cal T}(S),\mu\in {\cal M\cal L}$ and every
$a>0$. For every fixed $h\in {\cal T}(S)$,
the set of all measured geodesic laminations of 
$h$-length one is a section of the projection
${\cal M\cal L}\to {\cal P\cal M\cal L}$.

The following result is due to Wolpert (Corollary 4.7 of
\cite{W87}) and 
is of fundamental importance for this work.

\begin{theorem}\label{wolpert}
For any $\mu\in {\cal M\cal L}$ and 
for every Weil-Petersson geodesic
$\gamma:[a,b]\to {\cal T}(S)$ the function
$t\to \ell_\mu(\gamma(t))$ 
is convex.
\end{theorem}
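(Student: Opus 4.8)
The plan is to reduce the statement to a Hessian computation for the length functions of simple closed geodesics. First I would observe that since $(h,\mu)\mapsto\ell_\mu(h)$ is continuous on ${\cal T}(S)\times{\cal M\cal L}$ and homogeneous in $\mu$, and since weighted simple closed geodesics are dense in ${\cal M\cal L}$, any $\mu\in{\cal M\cal L}$ is a limit $\mu=\lim_n c_n\alpha_n$ with $\alpha_n$ simple closed and $c_n>0$; then for a fixed Weil-Petersson geodesic $\gamma:[a,b]\to{\cal T}(S)$ the function $t\mapsto\ell_\mu(\gamma(t))$ is the pointwise limit of $t\mapsto c_n\ell_{\alpha_n}(\gamma(t))$, and a pointwise limit of convex functions is convex. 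So it suffices to treat $\mu=\alpha$ a simple closed geodesic, where $\ell_\alpha:{\cal T}(S)\to(0,\infty)$ is real-analytic. For such $\ell_\alpha$ and a Weil-Petersson geodesic $\gamma$ parametrized by arc length one has
\[\frac{d^2}{dt^2}\,\ell_\alpha(\gamma(t))=\operatorname{Hess}_{WP}\ell_\alpha\bigl(\dot\gamma(t),\dot\gamma(t)\bigr)+d\ell_\alpha\bigl(\nabla_{\dot\gamma}\dot\gamma\bigr),\]
where $\nabla$ and $\operatorname{Hess}_{WP}=\nabla\,d\,$ are taken with respect to the Weil-Petersson Levi-Civita connection; the last term vanishes because $\gamma$ is a geodesic. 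Hence the theorem reduces to the assertion that the Weil-Petersson Hessian of $\ell_\alpha$ is positive semidefinite at every point of ${\cal T}(S)$.

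This positivity is the core of the argument, and it is Wolpert's second variation computation. I would identify $T_h{\cal T}(S)$ with the space of harmonic Beltrami differentials $\nu=\overline\phi/\rho$, with $\phi$ a holomorphic quadratic differential for the complex structure underlying $h$ and $\rho$ the hyperbolic area form, realize $\alpha$ as the imaginary axis in the upper half plane, and introduce the relative Poincar\'e series $\Theta_\alpha=\sum_{A\in\langle\alpha\rangle\backslash\Gamma}A^*(dz^2/z^2)$, which converges since $S$ has finite area. Gardiner's formula then gives $d\ell_\alpha(\nu)=\tfrac{2}{\pi}\operatorname{Re}\int_S\Theta_\alpha\,\nu$. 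Differentiating once more along the Weil-Petersson geodesic deformation of hyperbolic metrics forces one to track the first and second variation of the conformal factor of $\rho$, which satisfy a linearized Gauss curvature equation governed by the operator $\Delta-2$, together with the variation of the Poincar\'e series. Wolpert's computation organizes the outcome, after integration by parts, into an integral that is manifestly nonnegative: the definite sign of $\Delta-2$ on functions on the closed part of $S$ --- equivalently the positivity of the Green's operator $2(2-\Delta)^{-1}$ appearing in the Wolpert and Riera formulas for the Weil-Petersson pairing of gradients of length functions --- is exactly what makes $\operatorname{Hess}_{WP}\ell_\alpha\geq 0$.

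The main obstacle is precisely this second variation calculation: justifying termwise differentiation and uniform convergence of the Poincar\'e series, solving for the second variation of the hyperbolic metric, and --- most delicately --- bookkeeping the resulting terms so that the correct sign emerges rather than an indefinite expression. Everything around it is routine: the function $\ell_\alpha$ is real-analytic on ${\cal T}(S)$ and the Weil-Petersson metric there is a genuine real-analytic Riemannian metric, so the Hessian identity above holds at every point (and the geodesic in the statement lies in ${\cal T}(S)$ by hypothesis, so no question of the completion locus arises); once $\operatorname{Hess}_{WP}\ell_\alpha\geq0$ is established, convexity of $t\mapsto\ell_\alpha(\gamma(t))$ follows for every simple closed $\alpha$, and the limiting argument of the first paragraph upgrades this to arbitrary $\mu\in{\cal M\cal L}$.
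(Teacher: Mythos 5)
The paper does not prove this statement at all: it is quoted as Corollary 4.7 of Wolpert \cite{W87}, so there is no internal argument to compare yours with. Your outline is the standard route to that result and is sound where it is actually carried out: the reduction from $\mu\in{\cal M\cal L}$ to weighted simple closed curves via density, continuity of the length pairing on ${\cal T}(S)\times{\cal M\cal L}$, and the fact that a pointwise limit of convex functions is convex is correct, as is the identity $\frac{d^2}{dt^2}\ell_\alpha(\gamma(t))=\operatorname{Hess}_{WP}\ell_\alpha(\dot\gamma,\dot\gamma)$ along a WP geodesic (which stays in ${\cal T}(S)$ by hypothesis, so no completion issues arise). However, the one step carrying all the content, namely $\operatorname{Hess}_{WP}\ell_\alpha\geq 0$, is not established: your second paragraph describes Wolpert's second-variation machinery (Gardiner's formula, the relative Poincar\'e series, the sign of $\Delta-2$) but does not execute it, and that computation is precisely the theorem being cited, so as a self-contained argument there is a gap exactly there. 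Since the paper itself treats the convexity as a black box from \cite{W87}, your proposal should be read as a correct reduction of the statement to Wolpert's Hessian positivity rather than as an independent proof of it.
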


There is a continuous pairing 
$i:{\cal M\cal L}\times {\cal M\cal L}\to 
[0,\infty)$, the so-called \emph{intersection form}, which
extends the geometric intersection number between 
simple closed curves (see \cite{B86} for this result of Thurston).
Two measured geodesic laminations $\mu,\nu$ \emph{bind}
$S$ if $i(\zeta,\mu)+i(\zeta,\nu)>0$ for every
$\zeta\in {\cal M\cal L}$. Pairs $(\mu,\nu)$ of measured
geodesic laminations which bind $S$ 
and which satisfy $i(\mu,\nu)=1$ correspond
precisely to area one quadratic differentials for $S$ via a map 
which associates to an area one quadratic differential $q$ 
the ordered pair $(q^v,q^h)$ composed of its
vertical and its horizontal 
measured geodesic lamination, respectively.

A \emph{pants decomposition} for $S$ is a collection of $3g-3+m$
pairwise disjoint simple closed essential curves on $S$ which
decompose $S$ into $2g-2+m$ \emph{pairs of pants}. Here by a 
pair of pants we mean a
surface which is homeomorphic to a three-holed sphere. 
By a classical
result of Bers (see \cite{B92}), there is a number $\chi_0 >0$ only
depending on the topological type of $S$ such that for every
complete hyperbolic metric $h$ on $S$ of finite volume,  there is a
pants decomposition for $S$ consisting of simple closed curves of
$h$-length at most $\chi_0$. A number $\chi_0>0$ with this
property is called a \emph{Bers constant} for $S$, and a 
simple closed curve of $h$-length at most $\chi_0$ is called
a \emph{Bers curve} for $h$. Such a curve supports a unique
projective measured lamination.  
A pants decomposition which consists of Bers curves 
for $h$ will be called
a \emph{Bers decomposition} for $h$.

We begin with recalling from \cite{BMM10} (Definition 2.5) 
the definition
of an ending measure for a WP-ray.

\begin{defi}\label{endingmeasure}
A \emph{projective ending measure} $[\mu]\in {\cal P\cal M\cal L}$ 
for a WP-geodesic ray
$\gamma:[0,T)\to {\cal T}(S)$ $(T\in (0,\infty])$ 
is any limit in ${\cal P\cal M\cal L}$ 
of the projective classes of any infinite sequence
of distinct Bers curves for $\gamma$.
\end{defi}

The following theorem combines
Corollary 2.12 and Proposition 4.4 of \cite{BMM10}.
For its formulation, for a number $\epsilon >0$ denote by
${\cal T}(S)_\epsilon$ the set of all points whose
systole is at least $\epsilon$.
Call an infinite WP-ray
$\gamma:[0,\infty)\to {\cal T}(S)$ \emph{recurrent}
if there is some $\epsilon >0$ and an unbounded 
sequence $(t_i)\subset[0,\infty)$ such that 
$\gamma(t_i)\in {\cal T}(S)_\epsilon$.
Here as in the introduction, ${\cal T}(S)_\epsilon\subset
{\cal T}(S)$ is the subset of all surfaces whose
systole is at least $\epsilon$.
 
\begin{theorem}\label{fill}
Let $\gamma:[0,\infty)\to {\cal T}(S)$ be a recurrent
WP-geodesic ray. 
\begin{enumerate}
\item Any two ending measures for $\gamma$ have the
same support. 
\item The support of 
an ending measure for $\gamma$ is
a minimal geodesic lamination which fills $S$. 
\item If $\mu\in {\cal M\cal L}$ 
is any measured geodesic lamination whose
length along $\gamma$ is bounded then the support
of $\mu$ equals the support of an ending measure for $\gamma$.
\end{enumerate}
\end{theorem}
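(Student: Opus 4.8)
The plan is to combine the convexity of geodesic length functions along Weil-Petersson geodesics (Theorem~\ref{wolpert}) with the recurrence hypothesis and with the coarse geometry of the curve graph $\mathcal{C}(S)$; the first step below is elementary, while the bulk of the work lies in the second. \emph{Step 1: ending measures have bounded length along $\gamma$.} Let $(\beta_i)$ be an infinite sequence of pairwise distinct Bers curves for $\gamma$, say $\beta_i$ a Bers curve for $\gamma(s_i)$, so that $\ell_{\beta_i}(\gamma(s_i))\le\chi_0$. Over any compact parameter interval $\gamma$ admits only finitely many Bers curves (a fixed hyperbolic surface carries only finitely many simple closed geodesics of length at most $\chi_0+1$, and length functions vary continuously), so $s_i\to\infty$; likewise $\ell_{\beta_i}(\gamma(0))\to\infty$ since the $\beta_i$ are distinct. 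Passing to a subsequence, $[\beta_i]\to[\mu]$ in $\mathcal{PML}$, and we choose $c_i>0$ with $c_i\beta_i\to\mu$ in $\mathcal{ML}$ and $\ell_\mu(\gamma(0))=1$, so that $c_i\to 0$. By Theorem~\ref{wolpert} the function $t\mapsto\ell_{c_i\beta_i}(\gamma(t))$ is convex; fixing $t\ge0$ and choosing $i$ large enough that $s_i>t$, convexity bounds $\ell_{c_i\beta_i}(\gamma(t))$ by the larger of $\ell_{c_i\beta_i}(\gamma(0))$ and $\ell_{c_i\beta_i}(\gamma(s_i))\le c_i\chi_0$, and letting $i\to\infty$ we get $\ell_\mu(\gamma(t))\le 1$. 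Thus $t\mapsto\ell_\mu(\gamma(t))$ is convex and bounded above on $[0,\infty)$, hence non-increasing; in particular every ending measure for $\gamma$ satisfies the hypothesis of part~(3).

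\emph{Step 2: the curve graph picture.} Here I would use that a recurrent Weil-Petersson ray $\gamma$ projects, via $h\mapsto$ (a Bers curve of $h$), to an unparametrized quasi-geodesic $\widehat\gamma$ in $\mathcal{C}(S)$; since $\gamma$ is recurrent, $\widehat\gamma$ is an unbounded ray and therefore converges to a well-defined point $\lambda$ of the Gromov boundary $\partial\mathcal{C}(S)$. By Klarreich's identification of $\partial\mathcal{C}(S)$ with the space of ending laminations, $\lambda$ is a minimal geodesic lamination filling $S$, and a sequence of simple closed curves converges to $\lambda$ in $\mathcal{C}(S)\cup\partial\mathcal{C}(S)$ if and only if each of its accumulation points in $\mathcal{PML}$ is supported on $\lambda$. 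This is the heart of the matter and the place where hyperbolicity of the curve graph and the subsurface projection machinery of Masur and Minsky \cite{MM00} enter; it is also the content singled out in \cite{BMM10}.

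\emph{Step 3: conclusion.} Applying Step~2 to the Bers curves entering the definition of an ending measure, every $\mathcal{PML}$-limit of such curves is supported on $\lambda$; this is part~(2), and since $\lambda$ is independent of the chosen sequence of Bers curves it also gives part~(1). For part~(3), let $\mu\in\mathcal{ML}$ with $\ell_\mu(\gamma(t))\le L$ for all $t$, fix $\epsilon>0$ and an unbounded sequence $(t_i)$ with $\gamma(t_i)\in\mathcal{T}(S)_\epsilon$, and let $\alpha_i$ be a Bers curve of $\gamma(t_i)$. Since ${\rm Mod}(S)$ acts cocompactly on $\mathcal{T}(S)_\epsilon$ and $\mathcal{PML}$ is compact, there is a constant $C=C(\epsilon)$ with $i(\eta,\eta')\le C\,\ell_\eta(h)\,\ell_{\eta'}(h)$ for all $h\in\mathcal{T}(S)_\epsilon$ and all $\eta,\eta'\in\mathcal{ML}$; hence $i(\mu,\alpha_i)\le CL\chi_0$ is bounded. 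The $\alpha_i$ lie on $\widehat\gamma$, so by Step~2, after passing to a subsequence, the normalized curves $\ell_{\alpha_i}(\gamma(0))^{-1}\alpha_i$ converge in $\mathcal{ML}$ to a measure $\mu_\infty$ with $\operatorname{supp}\mu_\infty=\lambda$, the normalizing factors tending to $0$ because the $\alpha_i$ are eventually distinct. Then $i(\mu,\mu_\infty)=\lim_{i\to\infty}\ell_{\alpha_i}(\gamma(0))^{-1}i(\mu,\alpha_i)=0$, and a measured lamination with no transverse intersection with the minimal filling lamination $\lambda$ must be supported on $\lambda$; so $\operatorname{supp}\mu=\lambda$, which is part~(3).

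The main obstacle is Step~2: one must show that along a recurrent Weil-Petersson geodesic the Bers curves progress monotonically and at a definite rate in $\mathcal{C}(S)$. The convexity argument of Step~1 does not suffice by itself, since a bounded convex length function need not tend to $0$ and so does not rule out ending measures with non-filling or non-minimal support; the required control comes from the hierarchy and subsurface projection analysis of Brock, Masur and Minsky together with hyperbolicity of the curve graph and Klarreich's boundary theorem.
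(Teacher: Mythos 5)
Your Step 1 is fine and matches the standard convexity argument (it is essentially how one shows an ending measure has bounded, hence non-increasing, length along the ray), and your Step 3 would indeed close the argument \emph{if} Step 2 were available. But Step 2 is a genuine gap, not a routine appeal to known machinery. You assert that a recurrent Weil--Petersson ray projects under $h\mapsto\Upsilon_{\cal T}(h)$ to an unparametrized quasi-geodesic in ${\cal C\cal G}(S)$ converging to a point of the Gromov boundary. The paper explicitly flags exactly this as open: right after Theorem \ref{unparam} it states ``We do not know whether the image under the map $\Upsilon_{\cal T}$ of a Weil-Petersson geodesic is an unparametrized quasi-geodesic in ${\cal C\cal G}(S)$.'' The Masur--Minsky quasi-geodesic theorem is a statement about Teichm\"uller geodesics; there is no Weil--Petersson analogue to quote, and you give no argument for it, nor even for the weaker claim that the Bers curves along a recurrent WP ray converge to a single boundary point of the curve graph. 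Recurrence alone gives unboundedness of the projection but not coarse monotonicity or a well-defined limit, and without that your appeal to Klarreich's theorem (and hence parts (1), (2), and the identification of the limit in part (3)) does not get off the ground.

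Note also that this theorem is not proved in the paper at all: it is quoted as a combination of Corollary 2.12 and Proposition 4.4 of \cite{BMM10}, whose proofs run in the opposite logical direction from yours. Brock--Masur--Minsky work directly with convexity of length functions, recurrence, cocompactness of the ${\rm Mod}(S)$-action on the thick part, and limiting arguments in ${\cal M\cal L}$ (very much in the style of your Steps 1 and 3, and of Lemma \ref{thickgeo} in this paper), and it is only \emph{after} Theorem \ref{fill} is in hand that one deduces definite progress of $\Upsilon_{\cal T}\circ\gamma$ in the curve graph (Lemma \ref{thickgeo}); your proposal inverts this order and takes as input a curve-graph statement that is stronger than anything currently known. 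To repair the proof you would either have to supply an independent argument for Step 2 (which would be a new result) or replace it by the BMM-style argument: at recurrence times pull back to a fixed compact set of ${\cal T}(S)_\epsilon$ by mapping classes, extract limits of the bounded-length laminations, and use properness of $x\mapsto\ell_{\mu}(x)+\ell_{\nu}(x)$ for binding pairs (Kerckhoff) together with convexity to rule out a non-filling or non-minimal support and to force any two bounded-length laminations to have the same support.
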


Unlike in the case of Teichm\"uller geodesics, however,
there are recurrent WP-rays so that the support of
an ending measure is not \emph{uniquely ergodic}, i.e. it admits
more than one 
transverse measure up to scale \cite{BMo14}.

The main tool for the proof of the 
second part of Theorem \ref{fellowtravel}
from the introduction is
the \emph{curve graph} ${\cal C\cal G}(S)$ of $S$.
The vertex set of this graph is the set
${\cal C}(S)$ of all free homotopy classes of 
unoriented \emph{essential}
simple closed curves on $S$, i.e. simple closed
curves which are neither contractible
nor freely homotopic into a puncture.
Two vertices 
are joined by an edge if and only if the corresponding
free homotopy classes 
can be realized disjointly. 
Since $3g-3+m\geq 2$ by assumption,
${\mathcal C\cal G}(S)$ 
is connected (see \cite{MM99} and the references given there).
In the sequel we often do not distinguish between
an essential simple closed curve $\alpha$ 
on $S$ and the vertex of the curve graph defined by $\alpha$.

Providing each edge in ${\mathcal C\mathcal G}(S)$ with the standard
euclidean metric of diameter 1 equips the curve graph
with a geodesic metric $d_{\cal C}$. However, 
${\mathcal C\mathcal G}(S)$ is not locally finite and therefore 
the metric space $({\mathcal C\mathcal G}(S),d_{\cal C})$
is not locally compact. Masur
and Minsky \cite{MM99} showed that nevertheless its geometry
can be understood quite explicitly. Namely, ${\mathcal C\cal G}(S)$
is hyperbolic of infinite diameter.
The mapping class group
naturally acts
on ${\mathcal C\mathcal G}(S)$ as a group of simplicial isometries.

Define a map 
\begin{equation}
\Upsilon_{\cal T}: {\cal T}(S)\to
{\cal C}(S)\notag\end{equation}
by associating to a complete hyperbolic metric $h$ on
$S$ of finite volume a Bers
curve $\Upsilon_{\cal T}(h)\in
{\cal C}(S)$. Note that
such a map is not unique. The ambiguity
in this definition is uniformly controlled
\cite{MM99} (or see Lemma 2.1 of \cite{H10a}).

\begin{lemma}\label{bounded}
For every $\chi>0$ there is 
a number $a(\chi)>0$ with the following property.
Let $h\in {\cal T}(S)$ and let 
$\alpha,\beta$ be two simple closed curves of $h$-length at most
$\chi$.
Then $d_{\cal C}(\alpha,\beta)\leq a(\chi)$.
\end{lemma}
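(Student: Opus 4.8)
The plan is to reduce the statement to a compactness argument on the thick part of moduli space. First I would fix $\chi>0$ and recall Bers's theorem: there is a Bers constant $\chi_0=\chi_0(S)$ such that every $h\in{\cal T}(S)$ admits a pants decomposition by curves of $h$-length at most $\chi_0$; in particular the systole of any $h$ is bounded above by $\chi_0$, but it can be arbitrarily small. The point is that short curves cause no trouble here: if $\alpha$ has $h$-length at most $\chi$ and $\beta$ has $h$-length at most $\chi$, and if either of them is \emph{very} short, then by the Collar Lemma any curve of bounded length must be disjoint from it or equal to it, which already bounds $d_{\cal C}(\alpha,\beta)$ by a small constant. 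So the only case requiring work is when both $\alpha$ and $\beta$ have length bounded \emph{below} as well, say by some $\epsilon=\epsilon(\chi)>0$ to be chosen, i.e. we may assume the relevant geometry takes place in ${\cal T}(S)_\epsilon$.

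Next I would exploit cocompactness of the ${\rm Mod}(S)$-action on ${\cal T}(S)_\epsilon$. Consider the set of triples $(h,\alpha,\beta)$ with $h\in{\cal T}(S)_\epsilon$ and $\alpha,\beta$ essential simple closed curves of $h$-length at most $\chi$. Since on ${\cal T}(S)_\epsilon$ only finitely many free homotopy classes (up to the action of the point stabilizer, which is finite) have length below any fixed bound $\chi$, and since ${\rm Mod}(S)$ acts cocompactly on ${\cal T}(S)_\epsilon$ and by isometries on ${\cal C\cal G}(S)$, the quantity $d_{\cal C}(\alpha,\beta)$ takes only finitely many values as $(h,\alpha,\beta)$ ranges over this set modulo the group action. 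Taking $a(\chi)$ to be the maximum of these finitely many values, together with the constant coming from the short-curve case, gives the desired bound. Equivalently and more concretely: if $d_{\cal C}(\alpha,\beta)$ were unbounded over such triples there would be a sequence $h_i\in{\cal T}(S)_\epsilon$ with curves $\alpha_i,\beta_i$ of $h_i$-length $\le\chi$ and $d_{\cal C}(\alpha_i,\beta_i)\to\infty$; pushing by mapping classes we may assume $h_i\to h$ in the (compact) quotient ${\cal T}(S)_\epsilon/{\rm Mod}(S)$, so after passing to a subsequence $\alpha_i,\beta_i$ are eventually constant free homotopy classes $\alpha,\beta$ — a contradiction since $d_{\cal C}(\alpha,\beta)<\infty$.

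The main obstacle I anticipate is the bookkeeping around the non-thick case: verifying cleanly that a curve of $h$-length at most $\chi$ is at uniformly bounded curve-graph distance from any curve that is short for $h$ (via the Collar Lemma, a curve of length $<2\,\mathrm{arcsinh}(1)$ has a wide embedded collar, so every curve of length $\le\chi$ that crosses it must do so a number of times controlled by $\chi$, whence it fills a subsurface meeting the short curve, giving a distance bound of at most $2$ or $3$ by the standard "subsurface projection" type estimate), and then patching this together with the thick-part compactness argument so that a single constant $a(\chi)$ works uniformly in $h$. One should be a little careful that when several curves are simultaneously short one iterates this disjointness reasoning, but since $S$ has only $3g-3+m$ disjoint curves the nesting has bounded depth, so the total loss is still bounded in terms of the topological type of $S$ alone. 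I would then simply quote \cite{MM99} (or Lemma 2.1 of \cite{H10a}) for the precise form of this standard fact rather than reprove it.
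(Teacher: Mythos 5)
Your overall strategy can be made to work, but there is one incorrect inference at the heart of your reduction: from ``both $\alpha$ and $\beta$ have $h$-length bounded below by $\epsilon$'' you conclude that ``we may assume the relevant geometry takes place in ${\cal T}(S)_\epsilon$''. That does not follow. The systole of $h$ may be realized by a third curve $\gamma$, arbitrarily short, while $\ell_h(\alpha),\ell_h(\beta)\in[\epsilon,\chi]$; then $h$ lies outside every thick part and the cocompactness argument has nothing to act on. The correct dichotomy is on the systole of $h$, not on the lengths of $\alpha,\beta$: choose $\epsilon=\epsilon(\chi)>0$ so small that the embedded collar of a geodesic of length $<\epsilon$ has width bigger than $\chi/2$, so that any curve of $h$-length at most $\chi$ is disjoint from (or equal to) every curve of $h$-length $<\epsilon$. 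If the systole of $h$ is less than $\epsilon$, then both $\alpha$ and $\beta$ are disjoint from a systolic curve $\gamma$ and $d_{\cal C}(\alpha,\beta)\leq d_{\cal C}(\alpha,\gamma)+d_{\cal C}(\gamma,\beta)\leq 2$; otherwise $h\in{\cal T}(S)_\epsilon$ and your compactness argument applies. (In that step you should also pass to a compact fundamental domain $D\subset{\cal T}(S)_\epsilon$, let $h_i\to h$ in $D$ itself rather than merely in the quotient, and use a uniform comparison of length functions over $D$ to see that only finitely many free homotopy classes have length at most $\chi$ at some point of $D$; these are routine repairs.) The ingredients for the fix appear in your third paragraph, but as written the case you actually need -- a very short curve different from both $\alpha$ and $\beta$ -- is not the case your first paragraph disposes of, and the ``fills a subsurface'' digression there is not needed.

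For comparison: the paper does not prove this lemma at all, it quotes \cite{MM99} (or Lemma 2.1 of \cite{H10a}), where the bound is obtained directly and without any compactness. By the collar lemma, two geodesics of $h$-length at most $\chi$ intersect at most $N(\chi)$ times (each crossing of the collar of $\alpha$ costs $\beta$ a definite amount of length depending only on $\chi$), and $d_{\cal C}(\alpha,\beta)\leq i(\alpha,\beta)+1$ by Lemma 2.1 of \cite{MM99}, a bound the paper itself invokes later in Section 5. So the standard route is two lines once the intersection bound is written down; your compactness route is genuinely different and, after the dichotomy is corrected, also valid, but note that the collar lemma is still doing the essential work in it (both in the thin case and in the choice of $\epsilon$), so the detour through ${\cal T}(S)_\epsilon/{\rm Mod}(S)$ buys nothing beyond what the intersection-number estimate already gives.
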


Masur and Minsky \cite{MM99} showed that 
$\Upsilon_{\cal T}$ is coarsely
Lipschitz with respect to the 
Teichm\"uller distance $d_{\cal T}$ on ${\cal T}(S)$ and 
the distance $d_{\cal C}$ on the curve graph. 
We use the version which is 
explicitly stated
as Lemma 2.2 in \cite{H10a}.

\begin{lemma}\label{upsilonlipschitz}
There is a number $L_0>1$ such that
\begin{enumerate}
\item
$d_{\cal C}(\Upsilon_{\cal T}(g),\Upsilon_{\cal T}(h))\leq
L_0 d_{\cal T}(g,h)+L_0\text{ for all }g,h\in {\cal T}(S).$
\item  $d_{\cal C}(\Upsilon_{\cal T}(\phi h),\phi\Upsilon_{\cal T}(h))
\leq L_0\text{ for all }h\in {\cal T}(S),\phi\in {\rm Mod}(S).$
\end{enumerate}
\end{lemma}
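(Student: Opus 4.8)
The plan is to derive both inequalities from Lemma~\ref{bounded}, the definition of a Bers curve, the ${\rm Mod}(S)$-equivariance of the length functions, and the classical fact that the Teichm\"uller distance controls the quasiconformal distortion of the underlying hyperbolic metrics and hence the distortion of lengths of simple closed curves.

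Part (2) is immediate. A mapping class $\phi$ is realized by a homeomorphism carrying a hyperbolic metric representing $h$ to one representing $\phi h$, so that $\ell_{\phi\alpha}(\phi h)=\ell_\alpha(h)$ for every simple closed curve $\alpha$ and every $h\in{\cal T}(S)$. If $\chi_0$ is a Bers constant and $\alpha=\Upsilon_{\cal T}(h)$, then $\ell_\alpha(h)\le\chi_0$, hence $\ell_{\phi\alpha}(\phi h)\le\chi_0$, i.e. $\phi\Upsilon_{\cal T}(h)$ is a Bers curve for $\phi h$. Since $\Upsilon_{\cal T}(\phi h)$ is a Bers curve for $\phi h$ as well, Lemma~\ref{bounded} applied with $\chi=\chi_0$ yields $d_{\cal C}(\Upsilon_{\cal T}(\phi h),\phi\Upsilon_{\cal T}(h))\le a(\chi_0)$.

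For part (1) I would first reduce to the case $d_{\cal T}(g,h)\le 1$. Given arbitrary $g,h\in{\cal T}(S)$ with $g\neq h$, join them by the Teichm\"uller geodesic connecting them and choose points $g=x_0,x_1,\dots,x_N=h$ on it with $N=\lceil d_{\cal T}(g,h)\rceil$ and $d_{\cal T}(x_i,x_{i+1})\le 1$ for all $i$. By the triangle inequality in $({\cal C\cal G}(S),d_{\cal C})$ it then suffices to bound $d_{\cal C}(\Upsilon_{\cal T}(x_i),\Upsilon_{\cal T}(x_{i+1}))$ by a universal constant $c$, since then $d_{\cal C}(\Upsilon_{\cal T}(g),\Upsilon_{\cal T}(h))\le cN\le c\,d_{\cal T}(g,h)+c$. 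To obtain $c$, note that if $d_{\cal T}(g,h)\le 1$ then there is a $K$-quasiconformal homeomorphism in the given isotopy class between the hyperbolic surfaces representing $g$ and $h$ with $K\le e^2$, and by Wolpert's distortion estimate $K^{-1}\le \ell_\alpha(g)/\ell_\alpha(h)\le K$ for every simple closed curve $\alpha$. Hence for $\alpha=\Upsilon_{\cal T}(h)$ one has $\ell_\alpha(g)\le K\chi_0$, while $\beta=\Upsilon_{\cal T}(g)$ satisfies $\ell_\beta(g)\le\chi_0\le K\chi_0$; so both $\alpha$ and $\beta$ are simple closed curves of $g$-length at most $K\chi_0$, and Lemma~\ref{bounded} gives $d_{\cal C}(\alpha,\beta)\le a(K\chi_0)=:c$. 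Taking $L_0=\max\{c,a(\chi_0)\}+1$ completes the argument.

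The only ingredient beyond the stated lemmas is the quasiconformal length-distortion estimate relating $d_{\cal T}$ to the ratios $\ell_\alpha(g)/\ell_\alpha(h)$; this is the single place where a genuine quantitative inequality, rather than a soft finiteness or compactness statement, enters. I do not expect a real obstacle: the proof is a matter of assembling these facts. The one point requiring a little care is that the chaining in part (1) be carried out along an actual Teichm\"uller geodesic, so that the number of subdivision points is genuinely comparable to $d_{\cal T}(g,h)$ rather than merely finite.
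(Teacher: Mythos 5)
Your proof is correct, but note that the paper does not actually prove Lemma \ref{upsilonlipschitz}: it is quoted from Masur and Minsky \cite{MM99}, in the form stated as Lemma 2.2 of \cite{H10a}. Your argument is in substance the standard proof of that cited statement, assembled from ingredients already present in the paper: part (2) follows from equivariance of length functions under ${\rm Mod}(S)$ together with Lemma \ref{bounded}, and part (1) from subdividing a Teichm\"uller geodesic from $g$ to $h$ into $N=\lceil d_{\cal T}(g,h)\rceil$ segments of length at most one and invoking Wolpert's quasiconformal distortion estimate (Lemma 3.1 of \cite{W79}, i.e. $\ell_\alpha(y)\leq e^{2d_{\cal T}(x,y)}\ell_\alpha(x)$) so that a Bers curve at one endpoint of a unit segment has length at most $e^2\chi_0$ at the other endpoint, whence Lemma \ref{bounded} with $\chi=e^2\chi_0$ bounds each step by $a(e^2\chi_0)$ and the triangle inequality gives $d_{\cal C}(\Upsilon_{\cal T}(g),\Upsilon_{\cal T}(h))\leq cN\leq c\,d_{\cal T}(g,h)+c$. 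What your self-contained version buys is independence from the external reference, at no real cost; the quantitative input you flag (the length-distortion estimate) is exactly the one the paper itself uses elsewhere, so nothing new is needed. The one cosmetic remark is that the chaining does not require the Teichm\"uller geodesic specifically: any chain of $\lceil d_{\cal T}(g,h)\rceil+1$ points with consecutive distances at most one suffices, and such a chain exists along any geodesic because $d_{\cal T}$ is a geodesic metric.
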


Let $J\subset \mathbb{R}$ be a closed connected set.
For a number $L>1$, a map $\gamma:J\to {\cal C\cal G}(S)$ is an
\emph{$L$-quasi-geodesic} if 
\[\vert t-s\vert/L-L\leq d_{\cal C}(\gamma(s),\gamma(t))\leq
L\vert t-s\vert +L\text{ for all }s,t\in J.\]
A map $\gamma:J\to {\cal C\cal G}(S)$ is an \emph{unparametrized
$L$-quasi-geodesic} if there is a closed connected set
$I\subset \mathbb{R}$ and a homeomorphism
$\rho:I\to J$ such that $\gamma\circ \rho:I\to {\cal C\cal G}(S)$
is an $L$-quasi-geodesic.

The following result of Masur and Minsky (Theorem 2.3 and Theorem 2.6
of \cite{MM99}) is essential for the proof of Theorem \ref{conjug}.

\begin{theorem}\label{unparam}
There is a number $L_1>1$ such that the image
under $\Upsilon_{\cal T}$ of every
Teichm\"uller geodesic $\gamma:\mathbb{R}\to {\cal T}(S)$
is an unparametrized $L_1$-quasi-geodesic in ${\cal C\cal G}(S)$.
\end{theorem}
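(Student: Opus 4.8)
The plan is to prove Theorem~\ref{unparam} by combining two results of Masur and Minsky. Recall first their hyperbolicity theorem for ${\cal C\cal G}(S)$, and second the key fact that for a biinfinite Teichm\"uller geodesic $\gamma:\mathbb{R}\to {\cal T}(S)$, the assignment $t\mapsto \Upsilon_{\cal T}(\gamma(t))$ tracks a geodesic in the curve graph. The two ingredients I would invoke are: (i) the \emph{contraction property} of the projection $\Upsilon_{\cal T}$, or more precisely the statement (Theorem~2.3 of \cite{MM99}) that the systole/short-curve map sends Teichm\"uller geodesics to paths that make definite progress, together with (ii) the fact (Theorem~2.6 of \cite{MM99}) that these paths satisfy a ``Lipschitz plus contracting'' hypothesis that, via a Morse-type lemma in hyperbolic spaces, forces them to be unparametrized quasigeodesics with a uniform constant.

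The key steps, in order, would be as follows. First, by Lemma~\ref{upsilonlipschitz} the map $t\mapsto \alpha_t:=\Upsilon_{\cal T}(\gamma(t))$ is coarsely Lipschitz: $d_{\cal C}(\alpha_s,\alpha_t)\le L_0|t-s|+L_0$. This gives the upper bound in the quasigeodesic inequality, after reparametrizing; the content is the lower bound, i.e.\ that the path cannot backtrack. Second, I would use the Masur--Minsky machinery that controls how the Teichm\"uller geodesic interacts with subsurface projections: along $\gamma$, a curve $\alpha$ can stay a Bers curve only for a bounded Teichm\"uller-time interval unless $\gamma$ is spending a long time in the thin part associated to $\alpha$, in which case the curve-graph image is ``parked'' at $\alpha$ but does not move backward. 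Concretely, one shows that whenever $d_{\cal C}(\alpha_s,\alpha_u)$ is small for $s<u$, the restriction $\gamma|[s,u]$ stays in a region whose $\Upsilon_{\cal T}$-image has uniformly bounded diameter; this is where hyperbolicity of ${\cal C\cal G}(S)$ and the nearest-point-projection contraction estimate enter. Third, with these two facts in hand, a standard argument (the ``local-to-global'' principle for quasigeodesics in hyperbolic spaces, cf.\ the stability of quasigeodesics) upgrades the coarsely Lipschitz, non-backtracking path to an unparametrized $L_1$-quasigeodesic with $L_1$ depending only on the hyperbolicity constant of ${\cal C\cal G}(S)$, on $L_0$, and on the Bers constant $\chi_0$ (through Lemma~\ref{bounded}). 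Finally I would reparametrize by arclength in ${\cal C\cal G}(S)$ along the image to produce the homeomorphism $\rho:I\to J$ required by the definition of unparametrized quasigeodesic.

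The main obstacle is step two: showing that the $\Upsilon_{\cal T}$-image of a Teichm\"uller geodesic does not backtrack, i.e.\ extracting from the Teichm\"uller-metric geometry a genuine quantitative ``progress'' statement in the curve graph. This is precisely the hard theorem of Masur and Minsky and relies on their analysis of the behavior of the vertical and horizontal foliations of the quadratic differential generating $\gamma$ together with the contraction property of the projection from ${\cal T}(S)$ to ${\cal C\cal G}(S)$. Since the excerpt permits me to assume results stated earlier, and since this statement is itself attributed directly to \cite{MM99}, the honest ``proof'' here is really a citation assembled from Lemma~\ref{upsilonlipschitz}, Lemma~\ref{bounded}, and the cited Theorems~2.3 and~2.6 of \cite{MM99}; the non-routine part, were one to reproduce it, would be re-deriving the contraction estimate for $\Upsilon_{\cal T}$ along Teichm\"uller geodesics.

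\begin{proof}[Proof of Theorem~\ref{unparam}]
By Lemma~\ref{upsilonlipschitz}(1) the map $c:t\mapsto \Upsilon_{\cal T}(\gamma(t))$ satisfies $d_{\cal C}(c(s),c(t))\le L_0|t-s|+L_0$ for all $s,t$, so $c(\mathbb{R})$ has infinite diameter (it is a Teichm\"uller geodesic, so $\gamma(t)$ leaves every compact set as $t\to\pm\infty$, and the endpoint laminations fill $S$, forcing $d_{\cal C}(c(0),c(t))\to\infty$). Masur and Minsky (Theorem~2.3 of \cite{MM99}) show that the short-marking construction applied along $\gamma$ makes definite coarse progress in ${\cal C\cal G}(S)$: there are constants $D>0$ and $b>0$, depending only on $S$, so that whenever $d_{\cal C}(c(s),c(u))\le D$ for $s\le u$ one has, after modifying by the bounded ambiguity of $\Upsilon_{\cal T}$ and by Lemma~\ref{bounded}, that the diameter of $c([s,u])$ in ${\cal C\cal G}(S)$ is at most $b$. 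Combined with the coarse Lipschitz bound, this says that $c$ is a coarsely Lipschitz path which is $(D,b)$-non-backtracking in the sense of Theorem~2.6 of \cite{MM99}. That theorem, whose proof uses hyperbolicity of ${\cal C\cal G}(S)$ and the contraction property of the subsurface projections, now yields a constant $L_1=L_1(S)>1$ and a closed connected set $I\subset\mathbb{R}$ together with a homeomorphism $\rho:I\to\mathbb{R}$ such that $c\circ\rho:I\to{\cal C\cal G}(S)$ satisfies
\[
|t-s|/L_1-L_1\le d_{\cal C}\bigl(c(\rho(s)),c(\rho(t))\bigr)\le L_1|t-s|+L_1
\]
for all $s,t\in I$; that is, $c=\Upsilon_{\cal T}\circ\gamma$ is an unparametrized $L_1$-quasi-geodesic in ${\cal C\cal G}(S)$. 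Since $L_1$ depends only on the hyperbolicity constant of ${\cal C\cal G}(S)$, on $L_0$ of Lemma~\ref{upsilonlipschitz}, and on the Bers constant $\chi_0$ through Lemma~\ref{bounded}, it is independent of the chosen geodesic $\gamma$. This completes the proof.
\end{proof}
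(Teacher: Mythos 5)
The paper does not prove this statement at all: Theorem \ref{unparam} is quoted verbatim as Theorems 2.3 and 2.6 of \cite{MM99}, so your citation-assembled argument is in substance exactly what the paper does, and there is nothing further to compare at the level of proof details. One minor caveat: your sketch inverts the internal logic of \cite{MM99} — there the contraction property for projections of Teichm\"uller geodesics (balls far from the projected path have bounded-diameter image) is established first from quadratic-differential geometry, and it is this property that yields \emph{both} hyperbolicity of ${\cal C\cal G}(S)$ and the uniform unparametrized quasi-geodesic conclusion, rather than hyperbolicity being an input to a Morse-type local-to-global argument applied to a non-backtracking path — but since the paper itself offers no proof, this does not create any discrepancy with the text.
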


In the case that the Teichm\"uller geodesic  $\gamma$
remains entirely in the 
$\epsilon$-thick part of Teichm\"uller space, the path
$\Upsilon_{\cal T}(\gamma)$ is in fact a \emph{parametrized}
$L^\prime$-quasi-geodesic where $L^\prime>0$ only depends on 
$\epsilon$ \cite{H10a}.

We do not know whether the image under the map $\Upsilon_{\cal T}$
of a Weil-Petersson geodesic is an unparametrized quasi-geodesic
in ${\cal C\cal G}(S)$. But we can 
use ending measures to show that the image under $\Upsilon_{\cal T}$
of a Weil-Petersson geodesic ray which is entirely
contained in ${\cal T}(S)_\epsilon$ for some $\epsilon >0$
makes a definite progress in 
${\cal C\cal G}(S)$ in uniformly bounded time.

\begin{lemma}\label{thickgeo}
For every $\epsilon >0$ and every $R>0$ there is a
number $T_0=T_0(\epsilon,R)>0$ with the following
property. Let $b\geq T_0$ and let 
$\gamma:[0,b]\to {\cal T}(S)_\epsilon$
be a Weil-Petersson geodesic. Then 
$d_{\cal C}(\Upsilon_{\cal T}\gamma(0),
\Upsilon_{\cal T}\gamma(b))\geq R$.
\end{lemma}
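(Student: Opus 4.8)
The plan is to argue by contradiction using a compactness/limiting argument together with Theorem~\ref{fill}. Suppose the lemma fails for some $\epsilon>0$ and some $R>0$. Then there is a sequence $b_n\to\infty$ and Weil-Petersson geodesics $\gamma_n:[0,b_n]\to{\cal T}(S)_\epsilon$ with $d_{\cal C}(\Upsilon_{\cal T}\gamma_n(0),\Upsilon_{\cal T}\gamma_n(b_n))<R$ for all $n$. Since ${\rm Mod}(S)$ acts cocompactly on ${\cal T}(S)_\epsilon$, after applying mapping classes we may assume that $\gamma_n(0)$ stays in a fixed compact subset of ${\cal T}(S)_\epsilon$; by Arzel\`a--Ascoli (using that WP-geodesics are parametrized by arc length and that the WP and Teichm\"uller metrics are locally bilipschitz on ${\cal T}(S)_\epsilon$) a subsequence of the $\gamma_n$ converges uniformly on compact sets to an infinite WP-geodesic ray $\gamma:[0,\infty)\to{\cal T}(S)_\epsilon$. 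Note $\gamma$ is recurrent in the sense of the paragraph before Theorem~\ref{fill}, indeed it lies entirely in ${\cal T}(S)_\epsilon$, so Theorem~\ref{fill} applies to it.

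Next I would exploit the hypothesis $d_{\cal C}(\Upsilon_{\cal T}\gamma_n(0),\Upsilon_{\cal T}\gamma_n(b_n))<R$ to produce, for each $n$, a Bers curve $\alpha_n$ for $\gamma_n$ at the endpoint $\gamma_n(b_n)$ whose curve-graph distance to a fixed basepoint is bounded by $R+\const$ (using Lemma~\ref{bounded} to pass between different Bers curves at $\gamma_n(0)$ and a fixed reference curve). The curve graph is locally infinite, but the ball of radius $R+\const$ about a fixed vertex contains only finitely many vertices, so after passing to a further subsequence we may assume $\alpha_n=\alpha$ is a \emph{fixed} essential simple closed curve, independent of $n$. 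By convexity of the length function along WP-geodesics (Theorem~\ref{wolpert}), the length of $\alpha$ along $\gamma_n$ is bounded above by the maximum of its lengths at the two endpoints; the length at $\gamma_n(b_n)$ is at most the Bers constant $\chi_0$, and the length at $\gamma_n(0)$ is bounded because $\gamma_n(0)$ ranges over a fixed compact set. Hence $\ell_\alpha(\gamma_n(t))$ is uniformly bounded for all $n$ and all $t\in[0,b_n]$. Passing to the limit, $\ell_\alpha(\gamma(t))$ is bounded for all $t\in[0,\infty)$.

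Now Theorem~\ref{fill}(3) applies: since $\alpha$ is a measured geodesic lamination (a simple closed curve carries a transverse measure) whose length along the recurrent ray $\gamma$ is bounded, the support of $\alpha$ must equal the support of an ending measure for $\gamma$. But by Theorem~\ref{fill}(2) the support of an ending measure for a recurrent ray is a minimal geodesic lamination that \emph{fills} $S$, whereas the support of $\alpha$ is a single simple closed curve, which does not fill $S$ (its complement contains an essential subsurface, equivalently there are simple closed curves disjoint from $\alpha$). This contradiction completes the proof.

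The main obstacle I anticipate is the passage to the limit in a way that genuinely makes $\gamma$ an \emph{infinite} ray and keeps it recurrent: one must ensure $b_n\to\infty$ really does yield a complete ray (rather than a finite segment degenerating), which is where the confinement to ${\cal T}(S)_\epsilon$ is essential — it keeps the geodesics in a region where the metric is complete and locally compact modulo the group action, so no escape to the WP-completion boundary can occur on bounded time intervals. A secondary technical point is carefully tracking the mapping-class-group adjustments so that the bound $d_{\cal C}(\Upsilon_{\cal T}\gamma_n(0),\Upsilon_{\cal T}\gamma_n(b_n))<R$ is preserved (it is, since ${\rm Mod}(S)$ acts by isometries on ${\cal C\cal G}(S)$, up to the uniform error in Lemma~\ref{upsilonlipschitz}(2)), and then extracting the constant curve $\alpha$ from local finiteness of balls in the curve graph.
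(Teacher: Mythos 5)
Your overall strategy --- contradiction, cocompactness of the ${\rm Mod}(S)$-action plus Arzel\`a--Ascoli to produce a limiting recurrent ray in ${\cal T}(S)_\epsilon$, a uniform length bound along the geodesics via convexity, and a final contradiction with parts (2) and (3) of Theorem \ref{fill} --- is the same as the paper's. However, there is a genuine gap at the step where you extract a \emph{fixed} curve $\alpha$: you assert that the ball of radius $R+\const$ about a fixed vertex of ${\cal C\cal G}(S)$ contains only finitely many vertices. This is false; the curve graph is locally infinite (the paper states this explicitly), so already the ball of radius $1$ about any vertex is infinite, containing all curves disjoint from it. Hence the endpoint Bers curves $\Upsilon_{\cal T}(\gamma_n(b_n))$, although at uniformly bounded $d_{\cal C}$-distance from a fixed curve, need not range over a finite set, and no subsequence with $\alpha_n\equiv\alpha$ constant can be extracted. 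This also invalidates your length bound at the starting points: for a \emph{varying} sequence of curves, the fact that $\gamma_n(0)$ lies in a fixed compact set does not bound $\ell_{\alpha_n}(\gamma_n(0))$, so the convexity argument cannot be run as you wrote it.

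The paper avoids exactly this issue by normalizing: it replaces $\Upsilon_{\cal T}(\gamma_n(T_n))$ by the measured lamination $\alpha_n$ of $\gamma_n(0)$-length one in its projective class (the weight is at most $1/\epsilon$ since $\gamma_n(0)\in {\cal T}(S)_\epsilon$, so $\ell_{\alpha_n}(\gamma_n(T_n))\leq \chi_0/\epsilon$ and convexity gives the uniform bound along $\gamma_n$), and then uses compactness of $\{\alpha\in{\cal M\cal L}\mid \ell_\alpha(x)=1\text{ for some }x\in K\}$ to pass to a limit $\alpha\in{\cal M\cal L}$. The price is that this limit is only a measured geodesic lamination, not a simple closed curve, so your concluding observation that ``the support of $\alpha$ is a single simple closed curve, which does not fill $S$'' is no longer available. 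The paper therefore needs an additional argument to show that the support of $\alpha$ does not fill $S$: it chooses chains $c_n^0,\dots,c_n^k$ of weighted curves realizing the bounded distance in ${\cal C\cal G}(S)$ from $c_0=\Upsilon_{\cal T}(\gamma(0))$ to $\Upsilon_{\cal T}(\gamma_n(T_n))$, passes to limits $\nu_j$, and uses continuity of the intersection form together with Luo's argument (as in Masur--Minsky) to rule out that the support of $\alpha$ fills $S$. To repair your proof you would either have to give a correct reason why the endpoint curves may be taken constant --- which is not true in general --- or adopt the normalization-and-chain argument of the paper.
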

\begin{proof}
Assume to the contrary that there is some $\epsilon >0$ and some
$R>0$ such that
there is no $T_0(\epsilon,R)>0$ with the properties
stated in the lemma. Then there is for
every $n>0$ a number $T_n>n$ and a WP-geodesic
$\gamma_n:[0,T_n]\to {\cal T}(S)_\epsilon$  
such that $d_{\cal C}(\Upsilon_{\cal T}(\gamma_n(0)),
\Upsilon_{\cal T}(\gamma_n(T_n)))\leq R$.

By coarse equivariance of the map 
$\Upsilon_{\cal T}$ under the action of the mapping
class group (part (2) of Lemma \ref{upsilonlipschitz}) 
and cocompactness of the 
action of ${\rm Mod}(S)$ on 
${\cal T}(S)_\epsilon$, 
we may assume that the 
WP-geodesics $\gamma_n$ 
issue from the same compact set $K\subset {\cal T}(S)_\epsilon$.
Since $\gamma_n\subset {\cal T}(S)_\epsilon$ 
for all $n$, after passing
to a subsequence we may assume that the geodesics $\gamma_n$ 
converge uniformly on compact sets
to a WP-ray $\gamma:[0,\infty)\to {\cal T}(S)_\epsilon$.

For $n>0$ let $\alpha_n\in {\cal M\cal L}$ 
be the measured geodesic lamination with
$\ell_{\alpha_n}(\gamma_n(0))=1$ which is contained in the projective
class of the curve 
$\Upsilon_{\cal T}(\gamma_n(T_n))$, viewed
as a projective measured geodesic lamination. 
Since $\gamma_n(0)\in K\subset{\cal T}(S)_{\epsilon}$, 
the systole of the metric $\gamma_n(0)$ 
is at least $\epsilon$. Therefore the lamination $\alpha_n$ is
obtained by multiplying the simple closed
curve $\Upsilon_{\cal T}(\gamma_n(T_n))$ 
with a weight which is bounded from above
by $1/\epsilon$. Now 
the $\gamma_n(T_n)$-length of the
simple closed curve $\Upsilon_{\cal T}(\gamma_n(T_n))$
is at most $\chi_0$ and hence the $\gamma_n(T_n)$-length
of $\alpha_n$ does not exceed $\chi_0/\epsilon$. By convexity
of the length function along WP-geodesics, 
this implies that the length 
of $\alpha_n$ along $\gamma_n[0,T_n]$ is uniformly bounded,
independent of $n$.

The set 
\[\{\alpha\in {\cal M\cal L}\mid 
\ell_\alpha(x)=1\text{ for some }x\in K\}\]
is compact. Thus by passing
to a subsequence we may assume that the measured
geodesic laminations
$\alpha_n$ converge as $n\to \infty$ 
to a measured geodesic lamination $\alpha$.
By continuity of the length function, 
we have $\ell_{\alpha}(\gamma(0))=1$. Moreover,
the length
of $\alpha$ along $\gamma$ is uniformly bounded. Namely,
by convexity, 
for each $T>0$ and each $n$ which is sufficiently large
that $T_n>T$, we have
\[\ell_{\alpha_n}(\gamma_n(T))\leq \chi_0/\epsilon.\]
Since $\alpha_n\to \alpha$ weakly and  
$\gamma_n(T)\to \gamma(T)$ as $n\to\infty$,  
continuity of the length function yields
$\ell_{\alpha}(\gamma(T))\leq \chi_0/\epsilon$ 
as well. Now $T>0$ was
arbitrary and therefore the length of $\alpha$ is uniformly bounded
along $\gamma$.

We claim that the support of $\alpha$ does not fill up $S$.
This is equivalent to stating  that there is a measured geodesic lamination
$\nu$ whose support does not coincide with the support
of $\alpha$ and such that $i(\nu,\alpha)=0$.

To see that this is indeed the case,
note that since $\gamma_n(0)\to \gamma(0)$, by coarse 
continuity of the map $\Upsilon_{\cal T}$ 
(part (1) of Lemma \ref{upsilonlipschitz})
and by our 
assumption that \[d_{\cal C}(\Upsilon_{\cal T}(\gamma_n(0)),
\Upsilon_{\cal T}(\gamma_n(T_n)))\leq R\text{ for all }n,\]
the distance in ${\cal C\cal G}(S)$ 
between $\Upsilon_{\cal T}(\gamma_n(T_n))$ 
and $c_0=\Upsilon_{\cal T}(\gamma(0))$ is bounded independent of $n$.
By passing to a subsequence we may assume that
this distance equals a fixed number $k\geq 0$. 
Then for each $n$ in the subsequence, there
is a collection $c_n^0,\dots,c_n^k\subset {\cal M\cal L}$ 
of weighted simple closed geodesics of (weighted) length one 
for the metric $\gamma(0)\in {\cal T}(S)$ such that
\[i(c_n^j,c_{n}^{j+1})=0\text{ for every }j<k\]
(here as before,
$i$ is the intersection form) and that $[c_0]=[c_n^0]$ and
$[c_n^k]=[\alpha_n]=[\Upsilon_{\cal T}(\gamma_n(T_n))]$ 
for all $n$ (where $[\mu]$ denotes the projective
class of $\mu\in {\cal M\cal L}$). 
By passing to another subsequence, we
may assume that for each $j$ the
measured geodesic laminations $c_n^j$ 
converge as $n\to \infty$ to 
a measured geodesic lamination
$\nu_j$. By continuity of the intersection form,
we have $i(\nu_j,\nu_{j+1})=0$ for all $j$.
Since $\nu_k=\alpha=\lim_{n\to \infty}\alpha_n$ and since $[\nu_0]=[c_0]$, 
if the support of $\alpha$ fills $S$ then 
the supports of the laminations $\nu_j$ have to coincide
with the support of $\alpha$.
But $[c_0^n]=[c_0]$ for all $n$ and the support of $c_0$ is a
simple closed curve and hence this is impossible
(compare \cite{MM99} for this
argument of Luo). As a consequence, the support of $\alpha$ does not
fill $S$.

However, the WP-ray $\gamma$ is entirely contained in 
${\cal T}(S)_\epsilon$, in particular it
is recurrent. Since the
length of $\alpha$ is bounded along $\gamma$, 
this violates the second part of Theorem \ref{fill}. 
The lemma follows from this contradiction.
\end{proof}

As in the introduction, let 
$d_H$ be the Hausdorff distance
for subsets of ${\cal T}(S)$ with respect to
the Teichm\"uller metric.
We use Lemma \ref{thickgeo} and an idea of Mosher \cite{Mo03} 
to show the second
part of Theorem \ref{fellowtravel} 
from the introduction. For its formulation,
denote as in the introduction by
${\cal Q}_{WP}(S)$ the moduli space of 
quadratic differentials of Weil-Petersson norm one.
Recall moreover the definition of a continuous
conjugacy of two flows on topological spaces. 

\begin{proposition}\label{wpcomp}
\begin{enumerate}
\item
For every $\epsilon >0$ there is a number
$R=R(\epsilon)>0$ with the following property.
Let $J\subset \mathbb{R}$ be a closed connected
set and let $\gamma:J\to {\cal T}(S)_\epsilon$
be a Weil-Petersson geodesic. Then there is a closed connected
set $J^\prime\subset \mathbb{R}$ and there is a 
Teichm\"uller geodesic $\xi:J^\prime\to{\cal T}(S)$ 
with $d_H(\gamma(J),\xi(J^\prime))\leq R$.
\item 
Let $C\subset {\cal Q}_{WP}(S)$ be a compact set which
is invariant under the geodesic flow $\Phi^t_{WP}$ for the
Weil-Petersson metric. Then there is a continuous conjugacy 
$\Psi:C\to {\cal Q}(S)$ of the restriction
of $\Phi^t_{WP}$ to $C$ into the Teichm\"uller geodesic
flow.
\end{enumerate}
\end{proposition}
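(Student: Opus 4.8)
The two parts should be handled in sequence, with Part (1) doing the geometric work and Part (2) packaging it dynamically. For Part (1), the idea is to produce, from a Weil--Petersson geodesic $\gamma:J\to {\cal T}(S)_\epsilon$, a Teichm\"uller geodesic that fellow-travels it, using the curve graph as a bridge. Fix $\epsilon>0$ and choose $R$ large to be determined. If $J$ is a single point or short there is nothing to prove, so assume $J=[0,b]$ with $b$ large (the half-infinite and biinfinite cases are obtained by exhaustion at the end). The first step is to record that $\Upsilon_{\cal T}\circ\gamma$ makes definite progress in ${\cal C\cal G}(S)$: by Lemma \ref{thickgeo}, after subdividing $[0,b]$ into segments of length $T_0(\epsilon,1)$ we get a sequence of curves $\alpha_0=\Upsilon_{\cal T}\gamma(0),\dots,\alpha_N=\Upsilon_{\cal T}\gamma(b)$ that is an \emph{unparametrized} quasi-geodesic in ${\cal C\cal G}(S)$ with constants depending only on $\epsilon$; combined with Lemma \ref{upsilonlipschitz}(1), which bounds $d_{\cal C}(\alpha_i,\alpha_{i+1})$ from above by a constant times the Teichm\"uller length of the corresponding subarc (itself bounded by the Weil--Petersson length via the bilipschitz equivalence on ${\cal T}(S)_\epsilon$), one sees $\Upsilon_{\cal T}\gamma$ is a parametrized quasi-geodesic in the curve graph once $\gamma$ is parametrized by \emph{Weil--Petersson} arclength, with uniform constants.

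The second step is to pass to the endpoints at infinity. Take the Teichm\"uller geodesic $\xi$ whose vertical and horizontal foliations are the ending data of $\gamma$: concretely, extract limits of Bers curves $\alpha_i$ as $i\to 0$ and $i\to N$ (after scaling) to obtain measured laminations $\mu_-,\mu_+$, which by Theorem \ref{fill} are supported on minimal filling laminations, hence bind $S$, and therefore determine a Teichm\"uller geodesic $\xi:[0,b']\to{\cal T}(S)$. The point is that $\Upsilon_{\cal T}\xi$ and $\Upsilon_{\cal T}\gamma$ are two quasi-geodesics in the hyperbolic graph ${\cal C\cal G}(S)$ with the same (coarse) endpoints, hence they are uniformly Hausdorff-close in ${\cal C\cal G}(S)$. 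The final step, which is the main obstacle, is to upgrade this curve-graph closeness to Hausdorff closeness in $({\cal T}(S),d_{\cal T})$. Here one uses that $\gamma$ lies in ${\cal T}(S)_\epsilon$: the active intervals of the various curves appearing along a Teichm\"uller geodesic are controlled by subsurface projections (Masur--Minsky), and since $\gamma$ avoids the thin part, no subsurface can accumulate a large projection distance along $\gamma$, so $\xi$ also stays in a fixed thick part ${\cal T}(S)_{\epsilon'}$; on the thick part $\Upsilon_{\cal T}$ is a quasi-isometry (the parametrized quasi-geodesic statement after Theorem \ref{unparam}), so curve-graph closeness pulls back to $d_{\cal T}$-closeness, giving $d_H(\gamma(J),\xi(J'))\le R$. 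This is essentially the argument of Mosher alluded to in the statement, and marshalling the subsurface-projection bookkeeping uniformly in $\epsilon$ is where the real work lies.

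For Part (2), the compact $\Phi^t_{WP}$-invariant set $C$ lifts to a subset of $\tilde{\cal Q}_{WP}(S)$ whose basepoint projections lie, modulo ${\rm Mod}(S)$, in a fixed compact hence a fixed thick part ${\cal T}(S)_\epsilon$ (compactness of $C$ forces a uniform lower systole bound). Thus every Weil--Petersson geodesic underlying a point of $C$ is biinfinite and contained in ${\cal T}(S)_\epsilon$, so Part (1) applies and assigns to it a Teichm\"uller geodesic; I would define $\Psi$ by sending a point $v\in C$, together with the marked point on its WP-geodesic, to the corresponding marked point on the associated Teichm\"uller geodesic, normalized by declaring the two basepoints to correspond and then reparametrizing by Teichm\"uller arclength. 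Injectivity follows because the WP-geodesic, hence $v$, is recovered from its ending laminations, which are recovered from those of the Teichm\"uller geodesic (again Theorem \ref{fill} plus the fact that on the thick part the two geodesics determine each other up to bounded Hausdorff distance, and a biinfinite geodesic in a ${\rm CAT}(0)$ space is determined by its endpoint pair). The flow-equivariance with time-change $\rho(v,t)$ is built in: $\Psi$ sends WP-time to Teichm\"uller-time along the same pair of geodesics, and $\rho(v,\cdot)$ is an increasing homeomorphism because the Teichm\"uller-arclength parameter is a strictly increasing continuous function of the WP-arclength parameter along a thick geodesic. Continuity of $\Psi$ and of $\rho$ on the compact set $C$ comes from continuous dependence of solutions of the (Weil--Petersson and Teichm\"uller) geodesic equations on initial conditions together with continuity of the limit-lamination construction; the main point to check is that the bounded-Hausdorff-distance fellow-traveling of Part (1), being uniform, does not obstruct continuity, and that the normalization (which basepoint corresponds to which) can be made continuously, e.g. by taking nearest-point projections, so the resulting $\Psi$ is a genuine continuous conjugacy into the Teichm\"uller flow on ${\cal Q}(S)$.
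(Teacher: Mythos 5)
Your plan diverges from the paper's argument, and at the two places where you yourself locate the work, there are genuine gaps. First, the claim that $\Upsilon_{\cal T}\circ\gamma$ is a \emph{parametrized} quasi-geodesic in ${\cal C\cal G}(S)$ does not follow from Lemma \ref{thickgeo} together with the Lipschitz bound of Lemma \ref{upsilonlipschitz}. Lemma \ref{thickgeo} only says that any two points of the shadow whose parameters differ by at least $T_0$ are at curve graph distance at least $R$; since ${\cal C\cal G}(S)$ is not locally finite, an infinite $R$-separated sequence can stay in a bounded set, so "definite progress in bounded time" gives no linear lower bound and no escape to infinity. Indeed the paper states explicitly in Section 2 that it does not even know whether the shadow of a Weil--Petersson geodesic is an \emph{unparametrized} quasi-geodesic. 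Without the quasi-geodesic property, your appeal to hyperbolicity (two quasi-geodesics with the same coarse endpoints are Hausdorff-close in ${\cal C\cal G}(S)$) collapses. Second, the upgrade from curve-graph closeness to $d_{\cal T}$-closeness rests on the assertion that "no subsurface can accumulate a large projection distance along $\gamma$". The Masur--Minsky active-interval control you invoke is a statement about Teichm\"uller geodesics; to apply it to the WP geodesic $\gamma$ you would already need to know that $\gamma$ fellow-travels a Teichm\"uller geodesic, which is what is being proved. Bounded subsurface combinatorics for thick WP geodesics is precisely the deep content of \cite{BMM11} and is not available from the lemmas of Sections 2--4; moreover, even granting it, closeness of shadows in ${\cal C\cal G}(S)$ alone does not control twisting data, so it does not by itself bound $d_{\cal T}$ between the two geodesics.

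The paper avoids all of this with a much softer, Mosher-type cocompactness argument \cite{Mo03}: it forms the space $\Gamma_\epsilon$ of triples $(\gamma:J\to{\cal T}(S)_\epsilon,\mu_+,\mu_-)$, where $\mu_\pm$ have $\gamma(0)$-length one and their projective classes are \emph{realized} at the endpoints of $J$, shows ${\rm Mod}(S)$ acts cocompactly on $\Gamma_\epsilon$ (using Lemma \ref{thickgeo}, Theorem \ref{fill} and continuity of length), and then the continuous, equivariant assignment of the balanced point on the Teichm\"uller geodesic $\eta([\mu_+],[\mu_-])$ gives a uniform bound on the distance from $\gamma(s)$ to that geodesic for every $s\in J$; sliding $s$ yields the two-sided Hausdorff bound without any use of hyperbolicity of ${\cal C\cal G}(S)$ or subsurface projections. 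For part (2) your outline is closer in spirit, but the normalization by nearest-point projection is problematic: it is not clear that it produces an injective map or a strictly increasing, continuous time change. The paper instead uses part (1) plus Masur's criterion \cite{M82} to get unique ergodicity of the ending laminations, defines $\tilde\Psi(q)$ as the area one quadratic differential with vertical and horizontal laminations $\mu_\pm(\gamma_q)/\sqrt{i(\mu_+(\gamma_q),\mu_-(\gamma_q))}$, and obtains strict monotonicity of the orbit parametrization from strict convexity (monotone decay/growth) of the lengths of $\mu_\pm(\gamma_q)$ along the WP geodesic; that is what makes $\Psi$ a genuine continuous conjugacy.
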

\begin{proof} 
Let $\epsilon >0$, let $a(\chi_0)>1$ be
as in Lemma \ref{bounded}
and let $T_0=T_0(\epsilon,2a(\chi_0)+3)>0$ 
be as in Lemma \ref{thickgeo}. Note that
$T_0$ only depends on $\epsilon$.

Unit balls in the cotangent bundle of ${\cal T}(S)$ 
for both the Teichm\"uller metric and the Weil-Petersson metric depend
continuously on the basepoint. Thus by
invariance under the action of the
mapping class group and cocompactness of the 
action of ${\rm Mod}(S)$ on ${\cal T}(S)_\epsilon$, 
the restriction
to ${\cal T}(S)_\epsilon$ of the Weil-Petersson metric
is locally uniformly bilipschitz equivalent to the
restriction of the Teichm\"uller metric. 
Hence there is a number $L=L(\epsilon)>1$ such that
$d_{\cal T}(\gamma(0),\gamma(b))\leq Lb$ 
for any WP-geodesic $\gamma:[0,b]\to {\cal T}(S)_\epsilon$ 
where as before, $d_{\cal T}$ is the distance induced by the
Teichm\"uller metric. As a consequence,
for every $b\leq T_0$, every
WP-geodesic $\gamma:[0,b]\to {\cal T}(S)_\epsilon$ 
is entirely contained in the ball of radius $LT_0$ 
about $\gamma(0)$ for the Teichm\"uller metric and therefore 
$d_H(\gamma[0,b],\gamma(0))\leq LT_0$. 
Thus it is enough to show the proposition
for Weil-Petersson geodesics in ${\cal T}(S)_\epsilon$
of length at least $T_0$. 

By Lemma \ref{thickgeo}, if
$\gamma:[b,c]\to {\cal T}(S)_\epsilon$ is a WP-geodesic
of length $c-b\geq T_0$ then 
$d_{\cal C}(\Upsilon_{\cal T}\gamma(b),
\Upsilon_{\cal T}\gamma(c))\geq 2a(\chi_0)+3$ and hence
by Lemma \ref{bounded}, 
the $\gamma(b)$-length of any simple closed curve
$\alpha\in {\cal C}(S)$ with $\ell_{\alpha}(\gamma(c))\leq \chi_0$ is
bigger than $\chi_0$. In particular, by convexity of length functions
along Weil-Petersson geodesics, we have $\ell_{\alpha}(\gamma(t))\leq 
\ell_{\alpha}(\gamma(b))$ for every $t\in [b,c]$.

For the WP-geodesic $\gamma:[b,c]\to {\cal T}(S)_\epsilon$
we say that the projective measured geodesic
lamination $[\alpha]$ defined by a simple closed curve
$\alpha\in {\cal C}(S)$  
is \emph{realized} at the right endpoint 
$c$ of the parameter interval $[b,c]$ if the $\gamma(c)$-length of 
$\alpha$ does not exceed $\chi_0$.
By the above, we then have 
$\ell_{\alpha}(\gamma(t))\leq \ell_{\alpha}(\gamma(b))$
for every $t\in [b,c].$
If $\gamma:[0,\infty)\to {\cal T}(S)_\epsilon$
is an infinite WP-geodesic ray 
then the projectivization 
$[\lambda]\in {\cal P\cal M\cal L}$ of 
a measured geodesic lamination
$\lambda$ is \emph{realized} at the right endpoint $\infty$ 
if the length of $\lambda$ 
along $\gamma[0,\infty)$ 
assumes its maximum at $\gamma(0)$. By Theorem \ref{fill},
Lemma \ref{thickgeo} and the above discussion,
an ending measure for $\gamma$ is realized at the
right endpoint of $J=[0,\infty)$, moreover any projective
measured geodesic lamination 
which is realized at $\infty$ is supported in the support of
an ending measure.

Using an idea of Mosher \cite{Mo03},
define
$\Gamma_\epsilon$ to be the set of all
triples
$(\gamma:J\to {\cal T}(S)_\epsilon,\mu_+,\mu_-)$ with the
following properties.
\begin{enumerate}
\item $J\subset \mathbb{R}$ is a closed connected set
of diameter at least $T_0$ 
containing $0$.
\item $\gamma:J\to {\cal T}(S)_\epsilon$ is a Weil-Petersson
geodesic.
\item $\mu_+,\mu_-\in {\cal M\cal L}$ are measured geodesic
laminations of $\gamma(0)$-length one whose projectivizations
$[\mu_+],[\mu_-]$
are realized at the 
right and left endpoint of $J$, respectively.
\end{enumerate}
We equip $\Gamma_\epsilon$ with the product topology, using the
weak$^*$-topology on ${\cal M\cal L}$ for the second and
third factor and the compact open topology for the
arcs $\gamma:J\to {\cal T}(S)_\epsilon$.
The mapping class group ${\rm Mod}(S)$ naturally acts
diagonally on $\Gamma_\epsilon$.

We follow Mosher (Proposition 3.17 of \cite{Mo03}) and
show that the action of ${\rm Mod}(S)$
on $\Gamma_\epsilon$ is cocompact. 
Since ${\rm Mod}(S)$ acts isometrically and cocompactly
on ${\cal T}(S)_\epsilon$, for this it is enough to show
that the subset of $\Gamma_\epsilon$ consisting of 
all triples with the additional property that 
$\gamma(0)$ is contained in a fixed compact subset
$A$ of ${\cal T}(S)_\epsilon$ is compact.
Now the topology of $\Gamma_\epsilon$ is metrizable and hence
this follows if every sequence in $\Gamma_\epsilon$
contained in the subset $\{(\gamma:J\to {\cal T}(S),
\mu_+,\mu_-)\in \Gamma_\epsilon\mid \gamma(0)\in A\}$
has a convergent subsequence.

By the Arzela Ascoli theorem (or simply by properties
of geodesics for the Weil-Petersson metric),
the set of geodesic arcs $\gamma:J\to {\cal T}(S)_\epsilon$
where $J\subset \mathbb{R}$ is a closed connected subset
containing $0$ and such that $\gamma(0)\in A$
is compact with respect to the compact open topology. 
As the length function is continuous on ${\cal T}(S)
\times {\cal M\cal L}$, 
it is enough to show that the following holds.
Let $\gamma_i:J_i\to {\cal T}(S)_\epsilon$ $(i>0)$ 
be a sequence of Weil-Petersson geodesics which converge
locally uniformly to $\gamma:J\to {\cal T}(S)_\epsilon$. 
For each $i$ let $\mu_i$ be a measured
geodesic lamination of $\gamma_i(0)$-length one
whose projectivization 
$[\mu_i]$ is realized
at the right endpoint of $J_i$. If $\mu_i\to \mu\in 
{\cal M\cal L}$, then the projectivization $[\mu]$ 
of $\mu$ is realized 
at the right endpoint of $J$.

Assume first
that $J\cap [0,\infty)=[0,b]$ for some $b\in (0,\infty)$.
Then for sufficiently large $i$ we have
$J_i\cap [0,\infty)=[0,b_i]$ with $b_i\in (0,\infty)$ and
$b_i\to b$. Thus $\gamma_i(b_i)\to \gamma(b)$
$(i\to\infty)$ and therefore 
by continuity of length functions and the collar lemma, 
there is only a \emph{finite}
number of simple closed curves $\alpha\in {\cal C}(S)$
of length at most $\chi_0$  
with respect to one of the metrics $\gamma_i(b_i),\gamma(b)$.
Hence by passing
to a subsequence we may assume that there is a curve 
$\alpha\in {\cal C}(S)$ such that 
$[\mu_i]=[\alpha]=[\mu]$ for all sufficiently
large $i$. 
The $\gamma_i(b_i)$-length of $\alpha$ is
at most $\chi_0$ for all sufficiently large $i$ and hence by
continuity of length functions, 
the same is true for the $\gamma(b)$-length of $\alpha$.
In other words, 
the limit $[\mu]\in {\cal P\cal M\cal L}$ of the
sequence $[\mu_i]$ is realized at the right endpoint $b$ of $J$.

The same argument is also valid if the right 
endpoint of $J$ is infinite.
Namely, assume first that $J_i\cap [0,\infty)=[0,b_i]$ for some
$b_i>0$ with $b_i\to \infty$ $(i\to \infty)$. 
By the above discussion, for sufficiently large $i$ (namely, for all
$i$ such that $b_i>T_0$) 
the length
of $\mu_i$ along $\gamma_i[0,b_i]$ assumes its maximum at $\gamma_i(0)$. 
Thus if $T>0$ is arbitrary and if $i>0$ is 
sufficiently large that 
$b_i>\max\{T_0,T\}$ then $\ell_{\mu_i}(\gamma_i(T))\leq
\ell_{\mu_i}(\gamma_i(0))$. Since $\gamma_i(0)\to \gamma(0)$ and
$\gamma_i(T)\to \gamma(T)$ and $\mu_i\to \mu$, 
continuity of the length
function implies that $\ell_{\mu}(\gamma(T))\leq \ell_{\mu}(\gamma(0))$.
Now $T>0$ was arbitrary and therefore 
the length of $\mu$ along $\gamma$ assumes its
maximum at $\gamma(0)$.
However, this just means that the projectivization
$[\mu]$ is realized at the right infinite endpoint of $J$. 
The case that $b_i=\infty$ for infinitely many $i$ follows in the same way.

Any two simple closed curves $\alpha,\beta\in {\cal C}(S)$
with $d_{\cal C}(\alpha,\beta)\geq 3$ bind $S$.
Thus by 
Theorem \ref{fill}, Lemma \ref{thickgeo} and the choice of $T_0$,
for any $(\gamma:J\to {\cal T}(S),\mu_+,\mu_-)\in \Gamma_\epsilon$
the measured geodesic laminations $\mu_+,\mu_-$ bind $S$.
These laminations then  
determine up to parametrization a Teichm\"uller geodesic
$\eta([\mu_+],[\mu_-])$ 
whose vertical and horizontal projective
measured geodesic laminations
are just the classes $[\mu_+],[\mu_-]$. 

Let $\sigma(\gamma,\mu_+,\mu_-)$ be the unique
point on $\eta([\mu_+],[\mu_-])$ which is the foot-point
of the quadratic differential
with vertical and horizontal measured
geodesic laminations $\mu_+/\sqrt{i(\mu_+,\mu_-)},
\mu_-/\sqrt{i(\mu_+,\mu_-)}$.
By continuity of the length function and the
intersection form, the map taking
$(\gamma:J\to {\cal T}(S)_\epsilon,\mu_+,\mu_-)\in \Gamma_\epsilon$ to
$(\gamma(0),\sigma(\gamma,\mu_+,\mu_-)) \in {\cal
T}(S)\times {\cal T}(S)$ is continuous. Moreover
by construction, this map is equivariant with
respect to the natural diagonal 
action of ${\rm Mod}(S)$ on $\Gamma_\epsilon$
and on ${\cal T}(S)\times {\cal T}(S)$. Since the action of
${\rm Mod}(S)$ on $\Gamma_\epsilon$ is cocompact, the same is true for
the action of ${\rm Mod}(S)$ on the image of this map.
Thus the Teichm\"uller distance between $\gamma(0)$ and
$\sigma(\gamma,\mu_+,\mu_-)$ is bounded from above by a
universal constant $b>0$.

Let again $(\gamma:J\to {\cal T}(S)_\epsilon,
\mu_+,\mu_-)\in \Gamma_\epsilon$.
For each $s\in J$ define
\[a_-(s)=\frac{1}{\ell_{\mu_-}(\gamma(s))},\quad a_+(s)=\frac{1}
{\ell_{\mu_+}(\gamma(s))}.\] 
Let moreover $\gamma^s(t)=
\gamma(t+s)$. Then the ordered triple
$(\gamma^s,a_+(s)\mu_+,a_-(s)\mu_-)$ lies in the
${\rm Mod}(S)$-cocompact set $\Gamma_\epsilon$ and hence the distance
between $\gamma(s)$ and the point
$\sigma(\gamma^s,a_+(s)\mu_+,a_-(s)\mu_-)
\in \eta([\mu_+],[\mu_-])$
is at most $b$. As a consequence,
$\gamma(J)$ is contained in the $b$-neighborhood of the
geodesic $\eta([\mu_+],[\mu_-])$. 

Now $s\in J$ was arbitrary and the ordered triple
$(\gamma^s,a_+(s)\mu_+,a_-(s)\mu_-)$ depends continuously
on $s$ with respect to the topology of $\Gamma_\epsilon$. Hence
$\gamma(J)$ is contained in the $b$-neighborhood of 
a suitably chosen subarc of
$\eta([\mu_+],[\mu_-])$. Moreover, 
the map $\gamma(s)\to \sigma(\gamma^s,a_+(s)\mu_+,a_-(s)\mu_-)$
is
continuous in $s$. This means that the image subarc is contained
in the $b$-neighborhoood of $\gamma(J)$. Together we showed
that the Hausdorff distance between $\gamma(J)$ and 
a subarc of $\eta([\mu_+],[\mu_-])$ is at most $b$. 
The first part of the proposition is proven.

The results obtained so far show that
if $\gamma:\mathbb{R}\to {\cal T}(S)_\epsilon$
is any biinfinite Weil-Petersson geodesic and if 
$[\mu_+],[\mu_-]$ are ending measures for $\gamma_+=\gamma[0,\infty),
\gamma_-=\gamma(-\infty,0]$ then the Teichm\"uller geodesic
$\eta([\mu_+],[\mu_-])$ defined by a quadratic differential
with vertical measured geodesic lamination in the class 
$[\mu_+]$ and horizontal measured geodesic lamination
in the class $[\mu_-]$ is a uniform fellow-traveler of $\gamma$,
measured with respect to the Teichm\"uller metric.
In particular, there is a number $\kappa>0$ only
depending on $\epsilon$ such that
$\eta([\mu_+],[\mu_-])$ is contained in ${\cal T}(S)_\kappa$. 
Hence by a result of Masur \cite{M82}, 
the projective measured geodesic laminations $[\mu_+],[\mu_-]$ are
uniquely ergodic. Theorem \ref{fill} then implies that  
a projective ending measure
for a subray of $\gamma$ is unique.

Let $\mu_+(\gamma),\mu_-(\gamma)\in {\cal M\cal L}$ be
the representative of the forward and backward projective ending
measures whose $\gamma(0)$-length equals one. 
We claim that $\mu_+(\gamma)$ and $\mu_-(\gamma)$ 
depend continuously
on $\gamma$. Namely, assume that $\gamma_i\to \gamma$ uniformly
on compact sets and that $\beta$ is a weak limit of 
the sequence $(\mu_+(\gamma_i))$. Then by continuity of 
length functions we have $\ell_{\beta}(\gamma(0))=1$, moreover
we conclude as above (see also the proof of 
Lemma \ref{thickgeo}) that 
the length of $\beta$ is bounded along $\gamma[0,\infty)$. 
Since these two properties determine the
measured geodesic lamination $\mu_+(\gamma)$ uniquely,
continuous dependence of $\mu_+(\gamma)$ on 
$\gamma$ is immediate. Continuous dependence
of $\mu_-(\gamma)$ on $\gamma$ follows in the same way.

Let $C\subset {\cal Q}_{WP}(S)$ be a compact
set which is invariant under the geodesic flow $\Phi^t_{WP}$
for the Weil-Petersson metric. Let $\tilde C$ be the preimage
of $C$ in the space of quadratic differentials of Weil-Petersson
norm one. By compactness of $C$ there is a number $\epsilon >0$ such that for 
every $q\in \tilde C$ 
the Weil-Petersson geodesic $\gamma_q$ with 
initial velocity $q$ is entirely contained in ${\cal T}(S)_\epsilon$.

For $q\in {\tilde C}$ define   
$\tilde \Psi(q)\in \tilde {\cal Q}(S)$ to be the unique area one
quadratic differential
with vertical and horizontal measured geodesic lamination
\[\mu_+(\gamma_q)/\sqrt{i(\mu_+(\gamma_q),\mu_-(\gamma_q))},\quad 
\mu_-(\gamma_q)/\sqrt{i(\mu_+(\gamma_q),\mu_-(\gamma_q))},\] 
respectively. Here $\mu_+(\gamma_q)$ (or $\mu_-(\gamma_q)$) 
is as before the forward (or backward) ending measure for the geodesic 
$\gamma_q$ with initial velocity $q$. 
Then $q\to \tilde\Psi(q)$ is continuous 
and equivariant with respect to the
action of the mapping class group 
and hence it projects to a 
map $\Psi:C\to {\cal Q}(S)$. 
By convexity of length functions 
along WP-geodesics, the length of $\mu_+(\gamma_q)$ is 
strictly decreasing along $\gamma_q$, and the 
length of $\mu_-(\gamma_q)$ 
is strictly increasing. This implies that 
the restriction of $\tilde \Psi$ to 
the unit cotangent line $\gamma_q^\prime$ of the WP-geodesic  
$\gamma_q$ is a homeomorphism onto 
the unit cotangent line of the Teichm\"uller geodesic $\tilde \gamma$
with initial velocity 
$\tilde \gamma^\prime(0)=\tilde \Psi(q)$. Therefore the 
map $\Psi$ defines a continuous conjugacy
of the Weil-Petersson flow  
on $C$ into the
Teichm\"uller flow as defined in the introduction. This shows 
the second part of the proposition.
\end{proof}

\section{Asymptotic rays for the Weil-Petersson metric}
\label{asymptotic}

The goal of this 
section is to establish some differential geometric properties
of Weil-Petersson geodesic rays which are
needed for the proof of 
Theorem \ref{conjug} 
from the introduction.

Using the assumptions and notations from Section 2, 
we begin with 
collecting some additional results from
\cite{BMM10}.

The completion 
$\overline{{\cal T}(S)}$ of ${\cal T}(S)$ for the Weil-Petersson metric
is a ${\rm CAT}(0)$-space. Call two infinite Weil-Petersson geodesic rays
$\gamma:[0,\infty)\to {\cal T}(S)$, $\xi:[0,\infty)\to 
{\cal T}(S)$ \emph{asymptotic} if the function
\[t\to d_{WP}(\gamma(t),\zeta(t))\] 
is bounded. Since $\overline{{\cal T}(S)}$ is neither
locally compact nor
hyperbolic in the sense of Gromov (see \cite{W03}
for an overview and for references), it is difficult to 
find out whether or not for  
two given infinite non-asymptotic WP-rays $\gamma_1,\gamma_2$ there is
a biinfinite WP-geodesic which is forward asymptotic to 
$\gamma_1$ and backward asymptotic to $\gamma_2$.

Brock, Masur and Minsky \cite{BMM10}
found a sufficient condition for the existence
of a biinfinite WP-geodesic which is forward and backward
asymptotic to two given WP-rays. 
Namely, as in Section 2, 
call a WP-ray $\gamma:[0,\infty)\to {\cal T}(S)$ recurrent if 
there is a number $\epsilon >0$ and there 
is a sequence of numbers $t_i\to \infty$ such that
$\gamma(t_i)\in {\cal T}(S)_\epsilon$ for all $i$. In other words,
a geodesic ray is recurrent if its projection to moduli space
returns to a fixed compact set for arbitrarily large times.
Theorem 1.3 of \cite{BMM10} shows that 
for every recurrent 
WP-geodesic ray $\gamma$ and for \emph{every} WP-geodesic ray
$\xi$ which is not asymptotic to $\gamma$ there is a biinfinite WP-geodesic
which is forward asymptotic to $\gamma$ and backward asymptotic to $\xi$.

We use some ideas from \cite{BMM10} to establish
a related technical result (Corollary \ref{asympt1})
which is used in an essential way
in the proof of Theorem \ref{conjug}.

We begin with an 
observation which is a consequence of the Gau\ss{}-Bonnet
formula for ruled surfaces as in \cite{BMM10}. 
For its formulation, for $\epsilon >0$ let 
\begin{equation}\label{bepsilon}
2b(\epsilon)=\inf\{d_{\rm WP}(x,y)\mid x\in {\cal T}(S)_\epsilon,
y\in \overline{{\cal T}(S)}-{\cal T}(S)\}.\end{equation}
By invariance
of ${\cal T}(S)_\epsilon$ 
under the action of ${\rm Mod}(S)$ and cocompactness, we
have $b(\epsilon)>0$
(in fact $b(\epsilon)\asymp \epsilon^{1/2}$ by Wolpert's estimate
\cite{W03}). 
Moreover, the sectional curvature of the
Weil-Petersson metric on the $b(\epsilon)$-neighborhood
of ${\cal T}(S)_\epsilon$ is bounded from above
by a negative constant. 

A \emph{geodesic quadrangle} in $({\cal T}(S),d_{WP})$ 
consists of four WP-geodesic segments connecting four distinct
points in ${\cal T}(S)$. We always assume that a geodesic
quadrangle $Q$ is non-degenerate, i.e. that that no vertex of 
$Q$ is contained in the interior of any side of $Q$. 
Two sides $\alpha,\beta$ of such 
a quadrangle are \emph{opposite} if 
they do not share a vertex.

For a Weil-Petersson geodesic segment
$\gamma:[0,\tau]\to {\cal T}(S)$ and $\epsilon >0$ let
$\ell_{\epsilon-{\rm thick}}(\gamma)$ 
be the length of the intersection of
$\gamma$ with ${\cal T}(S)_\epsilon$
(in other words, $\ell_{\epsilon-{\rm thick}}(\gamma)$ is the Lebesgue
measure of the set $\{t\in [0,\tau]\mid \gamma(t)\in 
{\cal T}(S)_\epsilon\})$. 

In the remainder of this section, distances are always
distances with respect to the Weil-Petersson metric.
Moreover, angles are always unoriented angles
with respect to the Weil-Petersson inner product.

\begin{lemma}\label{gaussb}
\begin{enumerate}
\item
For every $\epsilon >0$ and every
$\alpha >0$ there is a number 
$k_1=k_1(\epsilon,\alpha)>0$ with the following
property. Let $\tau\geq k_1$ and let 
$\gamma:[0,\tau]\to {\cal T}(S)$
be a Weil-Petersson geodesic segment with 
$\ell_{\epsilon-{\rm thick}}(\gamma)
\geq k_1$. 
Assume that $\gamma$ is a side of a geodesic quadrangle $Q$ 
with angles at least $\alpha$ at $\gamma(0),\gamma(\tau)$.
Then the side of $Q$ which is 
opposite to $\gamma$ passes through the 
$b(\epsilon)$-neighborhood of $\gamma[0,\tau]\cap {\cal T}(S)_{\epsilon}$.
\item For every $\epsilon >0,\alpha>0$ and every $\theta >0$
there is a number $k_2=k_2(\epsilon,\alpha,\theta)>0$ with the following
property. Let $\tau\geq k_2$ and let $\gamma:[0,\tau]\to 
{\cal T}(S)$ be a Weil-Petersson geodesic segment 
with $\ell_{\epsilon-{\rm thick}}(\gamma)\geq k_2$. Assume
that $\gamma$ is a side of a geodesic triangle $T$ with angle
at least $\alpha$ at $\gamma(\tau)$. Then the angle
of $T$ at $\gamma(0)$ is at most $\theta$.
\end{enumerate}
\end{lemma}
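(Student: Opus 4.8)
The plan is to prove both parts of Lemma~\ref{gaussb} via the Gauss--Bonnet formula applied to a ruled surface spanning the relevant geodesic polygon, exactly in the spirit of \cite{BMM10}, combined with convexity of the distance function on the $\mathrm{CAT}(0)$ space $\overline{{\cal T}(S)}$. First I would set up the ruled surface: given the quadrangle $Q$ with side $\gamma$, pick the vertex $v$ opposite to one endpoint of $\gamma$ (for part~(1), to handle a quadrangle one subdivides along a diagonal into two triangles, or one directly rules $Q$ by geodesics from $\gamma$ to the opposite side), and form the surface $\Sigma$ swept out by the WP-geodesic segments joining points of $\gamma$ to the opposite side (resp. to the opposite vertex in the triangle case). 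Such a ruled surface is Lipschitz, its intrinsic metric is $\mathrm{CAT}(0)$, and its Gaussian curvature is bounded above by the ambient sectional curvature along the ruling lines; in particular, over the portion of $\Sigma$ lying within the $b(\epsilon)$-neighbourhood of ${\cal T}(S)_\epsilon$ the curvature is bounded above by a negative constant $-a=-a(\epsilon)<0$.

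Next I would run the Gauss--Bonnet argument. Suppose, for contradiction in part~(1), that the opposite side stays outside the $b(\epsilon)$-neighbourhood of $\gamma[0,\tau]\cap {\cal T}(S)_\epsilon$. Then every ruling segment from a point $\gamma(t)\in{\cal T}(S)_\epsilon$ has length at least $b(\epsilon)$, and moreover, using convexity of $s\mapsto d_{WP}(\gamma(t),\xi(s))$ (or a collar/tubular estimate), a definite ``slab'' of $\Sigma$ of width $\asymp b(\epsilon)$ sits over $\gamma[0,\tau]\cap{\cal T}(S)_\epsilon$ inside the negatively curved region. The area of this slab grows linearly in $\ell_{\epsilon\text{-thick}}(\gamma)$, so $\int_\Sigma (-K)\,dA \geq a\cdot c(\epsilon)\,\ell_{\epsilon\text{-thick}}(\gamma)$ for a constant $c(\epsilon)>0$. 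On the other hand, Gauss--Bonnet for the (possibly subdivided) polygon gives $\int_\Sigma K\,dA = (\text{sum of exterior angles}) - 2\pi\chi$, and since the angles of $Q$ at $\gamma(0),\gamma(\tau)$ are at least $\alpha$ while the other angles are bounded by $\pi$, the total is bounded above by a constant depending only on $\alpha$ (and the combinatorial type), so $\int_\Sigma(-K)\,dA \leq \mathrm{const}(\alpha)$. Choosing $k_1(\epsilon,\alpha)$ large enough that $a\cdot c(\epsilon)\cdot k_1 > \mathrm{const}(\alpha)$ yields a contradiction, proving part~(1).

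For part~(2) the same machinery applies, now with the additional conclusion quantifying the angle at $\gamma(0)$. I would rule the triangle $T$ by geodesics from $\gamma(\tau)$ to the points of $\gamma$; the resulting ruled surface again has curvature $\leq -a$ over the $\epsilon$-thick slab. If the angle $\beta$ of $T$ at $\gamma(0)$ exceeded $\theta$, then Gauss--Bonnet would bound $\int_\Sigma(-K)\,dA$ above by $\pi - \alpha - \beta + \pi < 2\pi - \alpha - \theta$ (for a triangle, $\int K\,dA = \pi - \text{(sum of interior angles)}$ on a disc-type ruled surface, so $\int(-K)\,dA = (\text{angle sum}) - \pi$; the point is that a \emph{lower} bound $\beta\geq\theta$ combined with $\alpha$ forces the negative-curvature integral to be small, hence the $\epsilon$-thick length to be small), contradicting the linear lower bound $a\cdot c(\epsilon)\,\ell_{\epsilon\text{-thick}}(\gamma) \leq \int_\Sigma(-K)\,dA$ once $\ell_{\epsilon\text{-thick}}(\gamma)\geq k_2(\epsilon,\alpha,\theta)$ is chosen large enough. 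Thus the angle at $\gamma(0)$ must be at most $\theta$.

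The main obstacle, I expect, is the geometric estimate that the ruled surface genuinely contains a slab of definite width over the $\epsilon$-thick part of $\gamma$ that stays inside the region of curvature $\leq -a$ --- equivalently, that the ruling segments emanating from $\gamma[0,\tau]\cap{\cal T}(S)_\epsilon$ do not immediately exit the $b(\epsilon)$-neighbourhood of ${\cal T}(S)_\epsilon$ where the curvature bound holds. In part~(1) this is essentially forced by the contradiction hypothesis (the opposite side is far), but one still needs convexity of $t\mapsto d_{WP}(\gamma(t),\cdot)$ along $\gamma$ together with the definition \eqref{bepsilon} of $b(\epsilon)$ to guarantee the intermediate points of each ruling segment stay within distance $b(\epsilon)$ of ${\cal T}(S)_\epsilon$; in part~(2) one needs the analogous control near the endpoint $\gamma(\tau)$ and along the rulings, using that the whole configuration can be taken with $\gamma$ in the thick part. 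Carefully isolating this width estimate --- and checking that the ruled surface, after subdividing along diagonals if necessary, is a topological disc so that Gauss--Bonnet applies with the clean formula --- is where the real work lies; the curvature and angle bookkeeping afterwards is routine.
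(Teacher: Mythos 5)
Your overall strategy -- Gauss--Bonnet on a ruled surface plus an area lower bound for a strip over the $\epsilon$-thick part of $\gamma$ -- is indeed the paper's strategy, but the step you defer to the end (``the geometric estimate that the ruled surface genuinely contains a slab of definite width \dots is where the real work lies'') is not a technical check: it is the entire content of the lemma, and the sketch you give for it does not work. Knowing that every ruling segment emanating from a thick point of $\gamma$ has \emph{length} at least $b(\epsilon)$ gives no lower bound on the \emph{area} it sweeps: the variation field along a ruling is a Jacobi field whose norm is merely convex, so the ruling can collapse immediately after leaving $\gamma$ (for instance when the opposite end of the ruling moves slowly), and the initial length-$b(\epsilon)$ pieces can sweep arbitrarily small area. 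The area estimate one actually needs is for a strip swept by \emph{intrinsic perpendiculars} to $\gamma$ inside the ruled surface (where $\Vert J(0)\Vert=1$, $\Vert J\Vert'(0)=0$ and convexity gives $\Vert J\Vert\geq 1$), and for that one must show these perpendiculars have length at least $b(\epsilon)/2$ before hitting the rest of the boundary. This is exactly where the angle hypothesis at $\gamma(0),\gamma(\tau)$ must enter -- to prevent the two sides of $Q$ \emph{adjacent} to $\gamma$ from hugging $\gamma$ and truncating the strip -- yet in your argument for part~(1) the angles are only used to tweak the (inessential) Gauss--Bonnet constant, which is bounded by $2\pi$ regardless. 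A second structural problem: the side of $Q$ opposite to $\gamma$ is not a side of any ruled triangle having $\gamma$ as a side, so a one-step ruling of $Q$ (or a subdivision along a diagonal) does not directly relate the strip over $\gamma$ to the position of the opposite side.

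For comparison, the paper closes exactly these gaps as follows: first, in the special case of angles $\geq 3\pi/4$, the angle $\geq\pi/2$ at $\gamma(\tau)$ makes the lengths of the intrinsic perpendiculars $\beta_t$ monotone (convexity of intrinsic distance in the ${\rm CAT}(0)$ ruled triangle), so either all perpendiculars over a half of the thick part are short -- forcing the diagonal side $\xi$ to track $\gamma$ within $b(\epsilon)/2$ -- or one gets a strip of width $b(\epsilon)/2$ contradicting Gauss--Bonnet; then a \emph{second} ruled triangle $T'$ over $\xi$, with angle $\geq\pi/2$ at $\gamma(0)$, transfers the conclusion to the opposite side $\zeta$ of $Q$. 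The case of arbitrary $\alpha$ is then reduced to $3\pi/4$ by cutting off initial and terminal pieces of $\gamma$ and invoking part~(2) with $\theta=\pi/4$. In part~(2), the lower bound $b(\epsilon)/2$ for the perpendiculars away from a window of width $t(\alpha,\theta)$ at the ends comes from ${\rm CAT}(0)$ comparison with a Euclidean triangle having angles exactly $\theta$ and $\alpha$; your sketch invokes ``the same machinery'' but never produces this comparison, and your Gauss--Bonnet bookkeeping there has a sign slip ($\int K\,dA=\text{angle sum}-\pi$, so it is $\int(-K)\,dA\leq\pi-\alpha-\theta$ that one uses). Without the monotonicity/comparison arguments and the two-stage quadrangle reduction, the proof is incomplete at its crucial point.
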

\begin{proof} The idea of the proof is taken
from \cite{BMM10} (and has been used before
by other authors, notably by Bonahon \cite{B86} and 
Canary \cite{C93}). Namely, 
let for the moment $\tau>0$ be arbitrary and let 
$\gamma:[0,\tau]\to {\cal T}(S)$
be a WP-geodesic segment. Let $Q$ be a WP-geodesic 
quadrangle with vertices $\gamma(0),\gamma(\tau),
x_1,x_2$ and such that $\gamma(\tau)$ is connected
to $x_1$ by a side. 

The vertices $\gamma(0),\gamma(\tau),x_1$ determine
a WP-geodesic triangle which can be filled by
WP-geodesic segments issuing from the vertex
$x_1$ and connecting $x_1$ to the opposite side $\gamma$.
The thus obtained \emph{ruled
triangle} $T$ is an embedded subsurface of ${\cal T}(S)$ 
which is smooth in its interior,
with piecewise geodesic boundary. 
The intrinsic distance in $T$ between any two points
$x,y\in T$ is not smaller than $d_{\rm WP}(x,y)$.
Moreover, the (intrinsic) Gau\ss{} curvature
of $T$ with respect to the restriction of the Weil-Petersson
metric at a point $x\in T$
does not exceed the maximum of the sectional curvatures of
the Weil-Petersson metric at $x$.
In particular, the Gau\ss{} curvature of $T$ 
is negative, and for every $\epsilon >0$ 
there is a constant $\kappa(\epsilon)>0$ only depending on $\epsilon$ 
such that the Gau\ss{} curvature at every point in $T$ whose distance
to ${\cal T}(S)_{\epsilon}$ 
is at most $b(\epsilon)$ is bounded from above by
$-\kappa(\epsilon)$
(see \cite{BMM10} for this
construction of Bonahon \cite{B86}
and Canary \cite{C93} and for references).

\begin{figure}[ht] \begin{center} \psfrag{x1}{\tiny$x_1$}
    \psfrag{x2}{\tiny$x_2$} \psfrag{d1}{\tiny$\gamma{(\tau)}$}
    \psfrag{d2}{\tiny$\gamma{(\sigma)}$}
    \psfrag{d3}{\tiny$\gamma{(0)}$} \psfrag{a}{\tiny$\alpha$}
    \psfrag{bG}{\tiny$\beta_{\sigma}$}
    \psfrag{xi}{\tiny$\xi{(\sigma')}$} \psfrag{zeta}{\tiny$\zeta$}
    \psfrag{T}{\tiny$T$} \psfrag{T'}{\tiny$T'$} \includegraphics
    [width=0.7\textwidth] {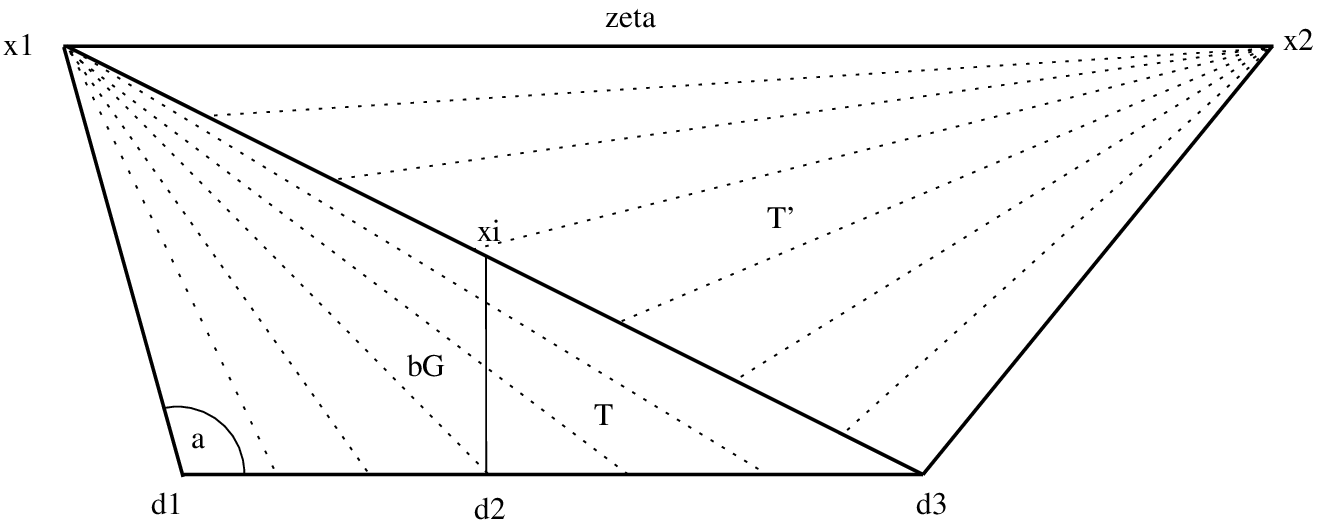}
\end{center}
\end{figure}

Even though the ruled triangle $T$ is not naturally
an embedded subsurface with piecewise geodesic
boundary of a smooth simply connected surface $U$
with a Riemannian metric,
the intrinsic angle of $T$ is
defined at every vertex of $T$. At the vertex $x_1$, this 
angle is just the length of the arc in the unit tangent
sphere for the Weil-Petersson metric which
consists of the directions of all geodesics joining $x_1$ to
$\gamma$. This length exists since the initial direction
of a geodesic depends smoothly on its endpoints. 
We claim
that the intrinsic angles of $T$ at the vertices $\gamma(0),\gamma(\tau)$
coincide with the angles for the Weil-Peterssen metric.
Namely, by slightly extending the
WP-geodesic $\gamma$ and the geodesics defining the ruling
of $T$ we obtain a smooth subsurface of ${\cal T}(S)$ 
containing a neighborhood of $\gamma(0),\gamma(\tau)$ in its interior.
The intrinsic angle at $\gamma(0),\gamma(\tau)$ of the triangle
$T$ is just the angle of $T$ at $\gamma(0),\gamma(\tau)$ 
in this subsurface equipped with the
restriction of the Weil-Petersson metric.

Let $\epsilon >0$ and let $\kappa=\kappa(\epsilon)>0$ be
as in the second paragraph of this proof.
Consider first the case that
the angle of the quadrangle $Q$ at the vertex
$\gamma(\tau)$ is not smaller than $\pi/2$.  
By the consideration
in the previous paragraph, this angle coincides with
the intrinsic
angle at $\gamma(\tau)$ of the 
triangle $T$.
By the Gau\ss{}-Bonnet formula,
the integral of the Gau\ss{} 
curvature over the triangle $T$ 
equals the sum of the intrinsic angles of $T$ minus $\pi$. 
Thus  if $\theta_1,\theta_2$ are the angles
of $T$ at $\gamma(0),x_1$ then 
this curvature integral
is not smaller than $-\pi/2 +\theta_1+\theta_2> -\pi/2$. 

Denote by $\xi$ be the side of $T$ connecting 
$\gamma(0)=\xi(0)$ to 
$x_1$.
The triangle $T$ is negatively curved
and therefore
intrinsic distance functions in $T$ are convex. 
Since the angle of $T$ at $\gamma(\tau)$
is not smaller than $\pi/2$, for each
$t\in [0,\tau]$ 
the endpoint of the intrinsic 
geodesic arc $\beta_t$ in $T$ which 
issues from $\gamma(t)$ and is perpendicular
to $\gamma$ at $\gamma(t)$ is contained in the side $\xi$. 
The length of $\beta_t$ equals the intrinsic distance between
its endpoint on $\xi$ and the side $\gamma$ of $T$ and hence
by convexity of the distance function, this length
is increasing with $t$. Thus 
if there is a number $\sigma\in [0,\tau]$ 
so that the length of $\beta_\sigma$ is 
not smaller than
$b(\epsilon)/2$, then
for every $s\in [\sigma,\tau]$ the length
of $\beta_s$ is not smaller than $b(\epsilon)/2$. Then 
$T$ contains an embedded strip $A_0$ of width
$b(\epsilon)/2$ and length $\tau-\sigma$ which consists of
all points in $T$ on the initial subsegments of length
$b(\epsilon)/2$ of the geodesics $\beta_s$
for all $s\in [\sigma,\tau]$. 

Assume that there is such a point 
$\sigma\in [0,\tau]$ such that moreover 
$\ell_{\epsilon-{\rm thick}}(\gamma[\sigma,\tau])
\geq \pi/\kappa(\epsilon) b(\epsilon)$
where $\kappa(\epsilon)>0$ is as in the second paragraph of 
this proof. 
Let $A\subset A_0$ be the closed subset of 
the embedded strip $A_0\subset T$ 
which consists
of the union of all initial subarcs of length $b(\epsilon)/2$
of those of the geodesic arcs $\beta_s$ 
which issue from a point in 
$\gamma[\sigma,\tau]\cap {\cal T}(S)_{\epsilon}$.
Since $\ell_{\epsilon-{\rm thick}}(\gamma[\sigma,\tau])
\geq \pi/\kappa(\epsilon) b(\epsilon)$ and the curvature of $T$ is negative,
comparison with the euclidean plane shows that 
the area of $A$ is at least
$\pi/2\kappa(\epsilon)$. Now the Gau\ss{} curvature
of $T$ at every point of $A$ does not exceed 
$-\kappa(\epsilon)$
and therefore the integral
of the Gau\ss{} curvature over $T$ is smaller than
$-\pi/2$. This contradicts the above observation that
by the Gau\ss{} Bonnet formula, this
curvature integral is bigger than $-\pi/2$. 
As a consequence, 
if $\sigma\in [0,\tau]$ is such that
$\ell_{\epsilon-{\rm thick}}(\gamma[\sigma,\tau])\geq
\pi/\kappa(\epsilon) b(\epsilon)$ then for $s\in [0,\sigma]$ the length
of the geodesic arc $\beta_s$ does not exceed $b(\epsilon)/2$.

Now assume more restrictively that the angles of the 
quadrangle $Q$ at the vertices $\gamma(0),\gamma(\tau)$
are not smaller than $3\pi/4$. Then the angle
of the triangle $T$ at the vertex $\gamma(\tau)$ is not smaller
than $3\pi/4$.
Since the triangle $T$ is negatively curved, the sum of its angles
is smaller than $\pi$. In particular, the Weil-Petersson angle
at $\gamma(0)$ between $\gamma$ and the WP-geodesic $\xi$ 
connecting $\gamma(0)$ to $x_1$ does not exceed $\pi/4$. 

Assume that $\ell_{\epsilon-{\rm thick}}(\gamma)\geq 2\pi/\kappa(\epsilon) b(\epsilon)$
and let $\sigma\in [0,\tau]$ be such that
\[\ell_{\epsilon-{\rm thick}}(\gamma[0,\sigma])
\geq \pi/\kappa(\epsilon) b(\epsilon)\text{ and  }
\ell_{\epsilon-{\rm thick}}(\gamma[\sigma,\tau])\geq
\pi/\kappa(\epsilon) b(\epsilon).\] Let 
$\sigma^\prime>\sigma$ be such that 
$\xi(\sigma^\prime)$ is the endpoint of 
the geodesic segment $\beta_\sigma$ 
in the ruled triangle $T$ issuing perpendicularly from $\gamma(\sigma)$.
Since 
$\ell_{\epsilon-{\rm thick}}(\gamma[\sigma,\tau])\geq \pi/\kappa(\epsilon) b(\epsilon)$,
the above
consideration shows that  
the distance between $\gamma(\sigma)$ and 
$\xi(\sigma^\prime)$ does not exceed $b(\epsilon)/2$.
Since the distance function for the Weil-Petersson metric
is convex, we conclude that $d_{WP}(\xi(t),\gamma(t\sigma/\sigma^\prime))\leq
b(\epsilon)/2$ for all $t\in [0,\sigma^\prime]$.

Let $T^\prime$ be the ruled triangle obtained by connecting
$x_2$ to each point of $\xi$ by a geodesic arc. 
The angle at $\gamma(0)$ of $T^\prime$ 
is at least $\pi/2$. 
Apply the above discussion to the ruled triangle 
$T^\prime$, the
side $\xi$ of $T^\prime$ and the side 
$\zeta$ connecting
$x_1$ to $x_2$. 
Note that $\zeta$ is the side of 
the quadrangle $Q$ opposite to $\gamma$. 

By the assumption that 
$\ell_{\epsilon-{\rm thick}}(\gamma[0,\sigma])\geq
\pi/\kappa(\epsilon) b(\epsilon)$, if
the distance between $\zeta$ and  
$\gamma[0,\sigma]\cap {\cal T}(S)_\epsilon$ is at least
$b(\epsilon)$ then there is an embedded strip
in $T^\prime$ with curvature integral 
smaller than $-\pi/2$. This strip consists of 
all points on geodesic arcs in $T^\prime$ of length 
$b(\epsilon)/2$ issuing perpendicularly
from a point $\xi(t)$ for some $t\in [0,\sigma^\prime]$ 
such that $\gamma(t\sigma/\sigma^\prime)\in {\cal T}(S)_\epsilon$.
Since the angle of $T^\prime$ at $\gamma(0)$ 
is bounded from below by $\pi/2$, this is impossible.
As a consequence, the side $\zeta$
intersects the $b(\epsilon)$-neighborhood of 
$\gamma[0,\sigma]\cap {\cal T}(S)_\epsilon$.
This shows the first part of the 
lemma for $\alpha=3\pi/4$ with $k_1(\epsilon,\alpha)=
2\pi/\kappa(\epsilon) b(\epsilon)$.

The first part of the lemma for an arbitrary angle $\alpha>0$
is now a consequence of the second part of the lemma for 
$\epsilon,\alpha$ and $\theta=\pi/4$.

Namely, 
assume that the second part of 
the lemma holds true and let $Q$ be a geodesic 
quadrangle with a side $\gamma:[0,\tau]\to {\cal T}(S)$ and 
angles at least $\alpha$ at the vertices $\gamma(0),\gamma(\tau)$.
Let $x_1,x_2$ be the vertices of $Q$ which are distinct from
$\gamma(0),\gamma(\tau)$ and assume that 
$\ell_{\epsilon-{\rm thick}}(\gamma)\geq 2\pi/\kappa(\epsilon) b(\epsilon)+
2k_2$ where $k_2=k_2(\epsilon,\alpha,\pi/4)>0$ is 
as in the second part of the lemma.
Let $0<t_1<t_2<\tau$ be such that
\[\ell_{\epsilon-{\rm thick}}(\gamma[0,t_1])\geq k_2,
\ell_{\epsilon-{\rm thick}}(\gamma[t_1,t_2])\geq 2\pi/\kappa(\epsilon) b(\epsilon),
\ell_{\epsilon-{\rm thick}}(\gamma[t_2,\tau])\geq k_2\]
and let $Q^\prime$ be the quadrangle with vertices 
$\gamma(t_1),\gamma(t_2),x_1,x_2$. The side $\zeta$ of $Q^\prime$ opposite
to $\gamma[t_1,t_2]$ coincides with the side of $Q$ opposite to $\gamma$.
By the choice of $t_1,t_2$ and by the second part of the lemma,
applied to the triangle with vertices $\gamma(0),\gamma(t_1),x_2$
and an angle at least $\alpha$ at $\gamma(0)$ and the triangle
with vertices $\gamma(t_2),\gamma(\tau),x_1$ and an angle
at least $\alpha$ at $\gamma(\tau)$, 
the angle of $Q^\prime$ at the 
vertices $\gamma(t_1),\gamma(t_2)$ is at least $3\pi/4$. Therefore
we conclude from 
the first part of the lemma for $\alpha=3\pi/4$ 
that $\zeta$ passes through
the $b(\epsilon)$-neighborhood of 
$\gamma[t_1,t_2]\cap {\cal T}(S)_\epsilon$. This is what
we wanted to show.

To establish the angle estimate in the second part of the lemma, 
let $\alpha >0,\theta >0$ and let $T$ be a triangle in 
$({\cal T}(S),d_{WP})$ with a side $\zeta:[0,\sigma]\to {\cal T}(S)$
of length $\sigma>0$,  
an angle $\theta_0\geq \theta$ at $\zeta(0)$ and an angle
$\alpha_0\geq \alpha$ at $\zeta(\sigma)$. 
Let $x_1$ be the vertex of $T$ opposite to the side $\zeta$ and
assume that $T$ is ruled by WP-geodesics connecting $x_1$ to 
the points on $\zeta$. The Gauss curvature of $T$ is negative, in 
particular we have $\alpha_0+\theta_0<\pi$.

Let $\hat T$ be a comparison triangle 
in the euclidean plane $\mathbb{R}^2$ with
a side $\hat \zeta:[0,\sigma]\to \mathbb{R}^2$ of length
$\sigma$, an angle $\theta$ at $\hat \zeta(0)$ and an angle  
$\alpha$ at $\hat \zeta(\sigma)$. 
Note that
the angles of $\hat T$ at $\hat \zeta(0),\hat \zeta(\sigma)$ may
both be smaller than the corresponding angles of $T$.
Since $\theta_0\geq \theta$, 
by ${\rm CAT}(0)$-comparison (see \cite{BH99}), 
for every $s\in [0,\sigma]$ the length of the intrinsic geodesic 
$\beta_s$ in the ruled triangle $T$ which is orthogonal to 
$\zeta$ at $\zeta(s)$ and which 
ends on one of the two sides different
from $\zeta$ is not smaller than the length of the geodesic
$\hat \beta_s$ in $\hat T$ which is orthogonal to 
$\hat \zeta$ at $\hat \zeta(s)$ and ends on one of the
sides of $\hat T$ distinct from $\hat \zeta$.

As the length $\sigma$ of $\hat \zeta$ tends to infinity, the 
distance between $\hat \zeta$ and the vertex of $\hat T$ not contained
on $\hat \sigma$ tends to infinity as well. Thus
there is 
a number $t(\alpha,\theta)>0$ only depending on $\alpha$ and $\theta$ 
such that for $s\in [t(\alpha,\theta),\sigma-t(\alpha,\theta)]$ the length of 
the geodesic arc $\beta_s$ in $T$ is 
not smaller than $b(\epsilon)/2$.
In particular, by comparison, the ruled triangle
$T$ contains an embedded strip of area at least
$b(\epsilon)(\sigma-2t(\alpha,\theta))/2$ which is the union
of the initial subsegments of length $b(\epsilon)/2$
of the geodesic arcs $\beta_s$ issuing from points
$\zeta(s)$ where $s\in [t(\alpha,\theta),\sigma-t(\alpha,\theta)]$.

Now by the argument in the beginning of this
proof, if $\rho=\ell_{\epsilon-{\rm thick}}(\zeta)$ then
the integral of the Gauss curvature over this strip is
at most $-\kappa(\epsilon) b(\epsilon)(\rho-2t(\alpha,\theta))/2$, and hence the
sum of the angles of the triangle $T$ is at most
$\pi-\kappa(\epsilon) b(\epsilon)(\rho-2t(\alpha,\theta))/2$. On the other hand,
the angle sum of $T$ is at least $\alpha+\theta$ by assumption which
implies that 
\[\rho\leq 2(\pi-\alpha-\theta)/\kappa(\epsilon) b(\epsilon) +2t(\alpha,\theta).\]

As a consequence, if $T$ is a WP-geodesic triangle in ${\cal T}(S)$ 
with a side $\zeta:[0,\sigma]\to {\cal T}(S)$ such that 
$\ell_{\epsilon-{\rm thick}}(\zeta)\geq
2(\pi-\alpha-\theta)/\kappa(\epsilon) b(\epsilon)+2t(\alpha,\theta)$ and if the
angle of $T$ at $\zeta(\sigma)$ is not smaller than
$\alpha$, then the angle of $T$ at $\zeta(0)$ does not exceed $\theta$.
This is just the statement in the second part of the lemma.
\end{proof}

As a consequence of Lemma \ref{gaussb}, we obtain a sufficient condition
for the existence of a biinfinite WP-geodesic which is forward and backward
asymptotic to WP-rays with specific geometric properties which does not
use require these rays to be recurrent.

\begin{corollary}\label{asympt1}
For $\epsilon >0$ 
and $\alpha>0$ let
$k_1=k_1(\epsilon,\alpha)>0$ be as in Lemma \ref{gaussb}.
Let $\gamma_0:[0,\tau]\to {\cal T}(S)$ be a WP-geodesic
segment with $\ell_{\epsilon-{\rm thick}}
(\gamma_0)\geq k_1$ and 
let $\gamma_1,\gamma_2:[0,\infty)\to 
{\cal T}(S)$ be infinite WP-rays issuing from $\gamma_1(0)=\gamma_0(0),
\gamma_2(0)=\gamma_0(\tau)$. Assume that the angle at
$\gamma_0(0),\gamma_0(\tau)$ between the unit tangent
vectors $\gamma_0^\prime(0),\gamma_1^\prime(0)$ and between 
the unit tangent vectors $-\gamma_0^\prime(\tau),\gamma_2^\prime(0)$ is 
at least $\alpha$.
Assume furthermore that there are measured geodesic laminations 
$\mu_1,\mu_2$ which fill $S$ and such that 
the length of $\mu_i$ is bounded along $\gamma_i$ (i=1,2). 
Then there is a unique biinfinite WP-geodesic $\xi$ which is forward 
asymptotic to $\gamma_1$ and backward asymptotic to $\gamma_2$.
\end{corollary}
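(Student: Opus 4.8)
The plan is to construct $\xi$ as a limit of geodesic segments $\xi_n$ joining $\gamma_1(n)$ to $\gamma_2(n)$, and to use Lemma~\ref{gaussb} together with the filling hypothesis to control both ends of these segments, thereby guaranteeing convergence to a biinfinite geodesic with the correct asymptotics.

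First I would set up the approximating family. For each $n$ let $\xi_n:[0,\ell_n]\to{\cal T}(S)$ be the unique WP-geodesic from $\gamma_1(n)$ to $\gamma_2(n)$; such a geodesic exists and stays in ${\cal T}(S)$ because $\overline{{\cal T}(S)}$ is ${\rm CAT}(0)$ and finite-length geodesic arcs with endpoints in ${\cal T}(S)$ remain in ${\cal T}(S)$. The first key step is to show $\xi_n$ passes uniformly close to a fixed compact set. Consider the geodesic quadrangle $Q_n$ with vertices $\gamma_1(n),\gamma_1(0)=\gamma_0(0),\gamma_0(\tau)=\gamma_2(0),\gamma_2(n)$ whose four sides are $\gamma_1|[0,n]$, $\gamma_0$, $\gamma_2|[0,n]$, and $\xi_n$. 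By the angle hypothesis the angles of $Q_n$ at $\gamma_0(0)$ and $\gamma_0(\tau)$ are at least $\alpha$, and $\ell_{\epsilon\text{-thick}}(\gamma_0)\ge k_1(\epsilon,\alpha)$; hence part~(1) of Lemma~\ref{gaussb} applies and $\xi_n$ meets the $b(\epsilon)$-neighborhood of $\gamma_0[0,\tau]\cap{\cal T}(S)_\epsilon$, which lies in a fixed compact set $K_0$. So $\xi_n$ passes within bounded distance of $K_0$ for every $n$.

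Next I would extract a limit. Reparametrize so $\xi_n(0)$ lies in $K_0$; by convexity of the WP distance function the arcs $\xi_n$ have locally uniformly bounded speed-one parametrizations, so by Arzel\`a--Ascoli (using that $\overline{{\cal T}(S)}$ is a complete geodesic space) a subsequence converges locally uniformly to a geodesic $\xi:\mathbb{R}\to\overline{{\cal T}(S)}$; I must check $\xi$ is biinfinite, i.e. stays in ${\cal T}(S)$ and does not terminate — this follows because, as in Proposition~\ref{wpcomp}, a geodesic leaving ${\cal T}(S)$ would have to leave every compact set, contradicting recurrence near $K_0$. Now the filling hypothesis enters: since $\gamma_1$ has a filling lamination $\mu_1$ whose length stays bounded along $\gamma_1$, convexity of $t\mapsto\ell_{\mu_1}(\cdot)$ forces $\ell_{\mu_1}$ to be bounded along each $\xi_n|[\,\cdot,\ell_n]$ near its $\gamma_1(n)$-endpoint, hence bounded along the forward ray of $\xi$; by continuity of the length function $\ell_{\mu_1}$ is bounded along $\xi_+$. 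The same argument with $\mu_2$ bounds $\ell_{\mu_2}$ along $\xi_-$.

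Finally I would upgrade "bounded length of a filling lamination'' to "asymptotic''. The standard argument (as in \cite{BMM10}) is: if the forward ray $\xi_+$ were not asymptotic to $\gamma_1$, then for a sequence $t_i\to\infty$ the geodesics $\xi$ and $\gamma_1$ would diverge, and building ruled triangles on the thick segments of $\gamma_1$ and applying part~(2) of Lemma~\ref{gaussb} would force the angle at the far vertex to shrink, which combined with ${\rm CAT}(0)$-convexity of the distance between $\xi_+$ and $\gamma_1$ yields a contradiction with the boundedness of $\ell_{\mu_1}$; since $\mu_1$ fills, bounded $\ell_{\mu_1}$ along two WP-rays that are both recurrent (or meet a compact set infinitely often) pins down a common endpoint, forcing asymptoticity. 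Uniqueness of $\xi$ follows from ${\rm CAT}(0)$ geometry: two biinfinite geodesics forward asymptotic to $\gamma_1$ and backward asymptotic to $\gamma_2$ bound a flat strip by the flat strip theorem, but the presence of the strictly negatively curved thick part $\gamma_0$ rules out a nondegenerate strip, so the two geodesics coincide. The main obstacle is this last step — converting boundedness of the filling-lamination length into genuine asymptoticity — since the WP curvature is not bounded above by a negative constant, so one cannot invoke hyperbolic-geometry arguments directly and must run the ruled-surface/Gauss--Bonnet estimate of Lemma~\ref{gaussb} carefully along the thick portions of the rays.
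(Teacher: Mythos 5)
Your overall skeleton (the quadrangle together with part (1) of Lemma \ref{gaussb} to force the connecting segments $\xi_n$ through a fixed compact set $K_0$, a limiting argument, filling laminations, and the flat strip theorem for uniqueness) is the paper's, but two of your middle steps have genuine gaps. First, extracting the limit by Arzel\`a--Ascoli in $\overline{{\cal T}(S)}$ is not justified: the completion is not locally compact, so equicontinuity plus completeness does not give subconvergence. The paper instead uses that the points $\xi_t(0)$ lie in a fixed compact set $K\subset {\cal T}(S)$, over which the unit tangent sphere bundle of the WP metric is compact, extracts a convergent subsequence of initial directions, and obtains a limit geodesic $\xi$ only on a maximal interval $(-r,T)$. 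Relatedly, your reason for biinfiniteness (``a geodesic leaving ${\cal T}(S)$ must leave every compact set, contradicting recurrence near $K_0$'') is wrong: the limit passes near $K_0$ only once, at time $0$, and nothing prevents it a priori from reaching the completion locus in finite forward time. Biinfiniteness is exactly what the filling laminations are for: bounded length of $\mu_1$ along $\xi[0,T)$, which you correctly derive from convexity and continuity, is incompatible with $T<\infty$, since at the boundary stratum some curve $c$ has length tending to zero while $i(c,\mu_1)>0$.

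Second, and more seriously, your asymptoticity step leans on recurrence of the rays, which is not a hypothesis --- the corollary is formulated precisely to avoid it (the paper stresses just before the statement that the rays are not required to be recurrent). The claim that bounded $\ell_{\mu_1}$ along two recurrent rays pins down a common endpoint is the \cite{BMM10}-type statement the paper is deliberately not invoking here, and for a non-recurrent $\gamma_1$ there may be no thick segments along $\gamma_1$ on which to run the Gauss--Bonnet estimate of part (2) of Lemma \ref{gaussb}. The paper obtains asymptoticity almost for free, before the filling laminations even enter: since $\xi_t(0)$ stays at uniformly bounded WP-distance from $\gamma_1(0)$ and the subsegment of $\xi_t$ from $\xi_t(0)$ to $\gamma_1(t)$ shares its far endpoint with $\gamma_1[0,t]$, convexity of the distance function in the ${\rm CAT}(0)$ space $\overline{{\cal T}(S)}$ bounds the Hausdorff distance between these two arcs independently of $t$; this bound persists in the limit, so once $\xi$ is known to be biinfinite it is automatically forward asymptotic to $\gamma_1$ and backward asymptotic to $\gamma_2$. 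If you reorganize along these lines --- ${\rm CAT}(0)$ comparison for asymptoticity, filling laminations for biinfiniteness, compactness of directions over $K$ for the limit --- your proof closes; the uniqueness step via the flat strip theorem is fine as you have it.
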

\begin{proof}
Let $\gamma_0,\gamma_1,\gamma_2$ be as in the lemma.
For $t>0$ consider the geodesic quadrangle $Q_t$ with 
vertices $\gamma_0(0),\gamma_0(\tau),\gamma_2(t),\gamma_1(t)$.
By Lemma \ref{gaussb}, the side $\xi_t$ of $Q_t$ 
which connects $\gamma_1(t)$ to $\gamma_2(t)$ 
passes through a fixed compact
neighborhood $K$ of $\gamma_0[0,\tau]\cap {\cal T}(S)_\epsilon$.
We parametrize $\xi_t$ by arc length in such a way that 
$\xi_t(0)\in K$. Then the WP-distance between $\gamma_0(0)$ and 
$\xi_t(0)$ is uniformly bounded, independent of $t$.

By the ${\rm CAT}(0)$-triangle comparison property, the
Hausdorff distance with respect to 
the Weil-Petersson metric between the geodesic arc $\gamma_1[0,t]$ and
the subsegment of $\xi_t$ connecting $\xi_t(0)$ to 
$\gamma_1(t)$ is uniformly bounded, independent of $t$. Similarly, 
the Hausdorff distance with respect to
the Weil-Petersson metric between the geodesic arc $\gamma_2[0,t]$ and 
the subsegment of $\xi_t$ connecting $\xi_t(0)$ to $\gamma_2(t)$ is 
uniformly bounded.

The restriction to $K$ of the 
unit tangent bundle for the Weil-Petersson metric is compact.
Therefore  there is a sequence $t_i\to\infty$ such that
the directions $\xi_{t_i}^\prime(0)$ 
of $\xi_{t_i}$ at $\xi_{t_i}(0)$ converge as $i\to \infty$ 
to a direction $v$. Let $\xi:(-r,T)\to {\cal T}(S)$ be the
(maximal) WP-geodesic with initial velocity $v$. 
The WP-geodesics $\xi_{t_i}$ converge
uniformly on compact subsets of $(-r,T)$ to $\xi$.  
In particular, 
if $\xi$ is biinfinite (i.e. if $r=T=\infty$) 
then $\xi$ is indeed a geodesic which is 
forward
asymptotic to $\gamma_1$ and backward asymptotic to $\gamma_2$.

To see that $\xi$ is indeed biinfinite, let $\mu_1\in {\cal M\cal L}$ be
a measured geodesic lamination which fills up $S$ and whose
length is bounded along the geodesic ray $\gamma_1$. Such a measured
geodesic lamination exists by assumption.
Since $\xi_t(0)\in K$ for all $t$, the $\xi_t(0)$-length of
$\mu_1$ is bounded independent of $t>0$. Now for every $t$ 
the forward endpoint $\xi_t(\tau_t)$ of
$\xi_t$ equals $\gamma_1(t)$ and hence by convexity of length
functions along WP-geodesics, the length of $\mu_1$ is bounded 
along $\xi_t[0,\tau_t]$ by a universal constant,
independent of $t$. Note also that $\tau_t\to \infty$ 
$(t\to \infty)$. By continuity of the
length pairing, we conclude that the length of $\mu_1$ on $\xi[0,T)$ is 
uniformly bounded (compare the proof of Lemma \ref{thickgeo} and
Proposition \ref{wpcomp} for a more detailed argument). 
However, if $T<\infty$ then there is a simple
closed curve $c$ on $S$ so that the $\xi(t)$-length of $c$ tends
to zero as $t\to T$ (see \cite{W03} for more and for references).
Since $\mu_1$ fills $S$ we have
$i(c,\mu_1)>0$ and therefore 
the length of $\mu_1$ along $\xi[0,T)$ tends to infinity as $t\to T$
which is a contradiction. This argument also applies to the
ray $\xi(-r,0]$ and yields
that $\xi$ is indeed a biinfinite 
WP-geodesic which is forward asymptotic to $\gamma_1$ and 
backward asymptotic to $\gamma_2$.

To show that such a geodesic is unique, recall from 
comparison for ${\rm CAT}(0)$-spaces that if there is 
a second such geodesic $\xi^\prime$ then $\xi$ and $\xi^\prime$ bound
a flat strip. Since the sectional curvature of the Weil-Petersson metric
is negative, this is impossible. 
The corollary follows.
\end{proof}

\section{Short curves and twisting}

In Corollary \ref{asympt1} we established a sufficient condition for
the existence of a biinfinite Weil-Petersson geodesic
which is forward and backward asymptotic to two 
given Weil-Petersson geodesic rays. To apply this result, 
we have to find a sufficient condition for
a Weil-Petersson geodesic to spend a definitive 
amount of time in the thick part of Teichm\"uller space.

We approach this problem by analyzing WP-geodesic segments
of uniformly bounded length 
which enter deeply into the thin part of Teichm\"uller 
space. Our goal is 
a quantitative version of the
following result of Wolpert \cite{W03}: 
If $\gamma:[0,r]\to {\cal T}(S)$ is any Weil-Petersson
geodesic of uniformly bounded length 
and if $\gamma$ enters the thin part of Teichm\"uller space then 
$\gamma$ twists a definitive amount about 
one of the curves
which becomes very short along $\gamma$.  
Or, put differently, if $\gamma$ does not twist much about
its short curves then $\gamma$ can
not be entirely contained in the thin part of
Teichm\"uller space. 

We continue to use
the assumptions and notations from Sections 2-3.
Following Masur and Minksy, we measure the
amount of twisting about a curve $\alpha\in {\cal C}(S)$
as follows (see in particular p. 919 of 
\cite{MM00}).
Fix a complete hyperbolic metric of finite volume 
$h\in {\cal T}(S)$ and identify $\alpha$ with the $h$-geodesic
it defines. 
There is a locally isometric 
annular cover $\tilde A\to S$ of $(S,h)$ whose
geodesic core curve $\tilde \alpha$ 
projects isometrically onto $\alpha$.  
The hyperbolic annulus $\tilde A$ admits
a natural compactification
to a closed annulus $\hat A$ which is obtained as follows.
The fundamental group $<\alpha>$ of $\tilde A$
acts on the hyperbolic plane ${\bf H}^2$ as a group
of hyperbolic isometries fixing two points $a\not=b$ in the
ideal boundary $\partial {\bf H}^2$ of 
${\bf H}^2$. The quotient of ${\bf H}^2\cup
(\partial {\bf H}^2-\{a,b\})$ under the action of $<\alpha>$
is a compact annulus $\hat A$ containing $\tilde A$ as an open
dense subset. 
Any geodesic in $\tilde A$ for the hyperbolic metric 
which intersects the geodesic core curve $\tilde \alpha$  
transversely extends to a continuous path in 
$\hat A$ connecting the two distinct boundary components of $\hat A$.

Let 
${\cal C}(\alpha)$ be the set of all simple paths in $\hat A$ connecting 
the two distinct boundary components 
of $\hat A$ 
modulo homotopies that fix the endpoints. Then ${\cal C}(\alpha)$ 
is the set of vertices of a metric graph ${\cal C\cal G}(\alpha)$ 
whose edges are determined by requiring that
two such homotopy classes of arcs $\gamma,\gamma^\prime$ 
are connected by an edge of length one if and only
if $\gamma,\gamma^\prime$ 
have representatives with disjoint interior.

Following p. 920 of \cite{MM00}, define a projection
$\pi_\alpha$ of ${\cal C}(S)$ into the 
family of all subsets of ${\cal C\cal G}(\alpha)$ 
as follows. 
Let $\gamma\in {\cal C}(S)$ be represented by a simple closed $h$-geodesic.
If $\gamma$ does not intersect $\alpha$ transversely then we define
$\pi_\alpha(\gamma)=\emptyset$. Otherwise  
the preimage $\tilde \gamma$ of $\gamma$ in $\tilde A$ has at least
one component which extends continuously 
to an arc connecting the two distinct 
boundary components of $\hat A$. The set $\pi_{\alpha}(\gamma)$ of all
these components is a finite set of diameter at most 1 in 
${\cal C\cal G}(\alpha)$. This definition of $\pi_\alpha$ is 
essentially independent of the choice of the hyperbolic metric $h$
(see \cite{MM00} for more details). If $c$ is a 
\emph{simple multi-curve}, i.e. a disjoint union of mutually
not freely homotopic simple closed curves, then 
let $\pi_{\alpha}(c)$ be the union of the projections of its
components. As before, the diameter of $\pi_{\alpha}(c)$ is 
at most one (Lemma 2.3 of \cite{MM00}).
The projection $\pi_\alpha$ can be used to measure
the relative twisting about $\alpha$ of two simple closed curves
which intersect $\alpha$ transversely \cite{MM00}.

As in Section 2, let $\chi_0>0$ be a Bers constant for 
$S$ and let $\Upsilon_{\cal T}:{\cal T}(S)\to {\cal C}(S)$
be a map which associates to a hyperbolic metric 
$h$ a simple closed curve of $h$-length at most $\chi_0$.
We use the projections $\pi_\alpha$ $(\alpha\in {\cal C}(S))$ and 
Wolpert's description
of the Weil-Petersson metric near its completion locus 
(see \cite{W03,W08}) to obtain 
information on the image of a 
Weil-Petersson geodesic $\gamma$ under the map $\Upsilon_{\cal T}$.
We are in particular interested in 
the twisting 
behavior of points in $\Upsilon_{\cal T}(\gamma)$ 
about a simple closed curve which becomes
very short along $\gamma$.

For this we first need a better
control of the projections of WP-geodesics
into the graph ${\cal C\cal G}(\alpha)$ for a simple closed
curve $\alpha\in {\cal C}(S)$. We obtain such a control
using the \emph{pants graph} ${\cal P\cal G}(S)$ for $S$.

A pants decomposition $P$ for $S$ is 
changed to a pants decomposition $P^\prime$ by 
an \emph{elementary move} if $P^\prime$ is
obtained from $P$ by replacing one of the 
pants curves $\alpha$ of 
$P$ by a curve which does not intersect $P-\alpha$ and
intersects $\alpha$ in the minimal number of points 
(i.e. in precisely two points if the component of $S-(P-\alpha)$
containing $\alpha$ is a four-holed sphere, and in precisely
one point if this component is a one-holed torus).
The pants graph ${\cal P\cal G}(S)$ 
of $S$ is the geodesic metric graph whose set of 
vertices is the set ${\cal P}(S)$ of pants decompositions
for $S$ and where two such
pants decompositions $P,P^\prime$ are connected 
by an edge of length one if and only if $P^\prime$ can
be obtained from $P$ by an elementary move.

As in Section 2, 
call a pants decomposition $P$ for $S$ a
Bers decomposition for $h\in {\cal T}(S)$ if the
$h$-length of each of the components of $P$ is bounded from
above by $\chi_0$. Note that if $\alpha\in {\cal C}(S)$ is any simple
closed curve whose $h$-length is bigger than $\chi_0$ then every
Bers decomposition $P$ for $h$ contains a component which
intersects $\alpha$ transversely. In particular, the
projection $\pi_{\alpha}(P)$ is not empty, and
its diameter ${\rm diam}(\pi_\alpha(P))$ is at most one.

Define a map 
\[\Upsilon_{\cal P}:{\cal T}(S)\to {\cal P\cal G}(S)\]
by associating to a hyperbolic metric $x\in {\cal T}(S)$ a
Bers decomposition for $x$. The following Lemma is a quantitative
version of Lemma \ref{bounded} for pants decompositions 
which enables us to control for a simple closed curve $\alpha$ 
the projections into the graph ${\cal C\cal G}(\alpha)$ 
of Bers decompositions along Weil-Petersson geodesics. Compare also
Section 3 of \cite{B03}.

\begin{lemma}\label{pants}
There is a constant $\chi_1>\chi_0$ with the following property.
Let $\xi:[0,\sigma]\to {\cal T}(S)$ be a
WP-geodesic segment of length $\sigma\leq 1$ 
and let $P_0,P_\sigma$ be Bers decompositions
for $\xi(0),\xi(\sigma)$. 
Then $P_0$ can be connected to $P_\sigma$ by an edge path
$\rho$ in ${\cal P\cal G}(S)$ of length at most $\chi_1$ 
with the following property. For every vertex $P\in {\cal P}(S)$
passed through by $\rho$ there is some $t\in [0,\sigma]$ such that
the $\xi(t)$-length of every component curve of $P$ is 
smaller than $\chi_1$.
\end{lemma}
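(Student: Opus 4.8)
The plan is to exploit the fact that a Weil-Petersson geodesic segment of length at most $1$ moves a uniformly bounded amount in the Teichm\"uller-metric sense on the thick part, but near the completion locus the relevant estimates are governed by Wolpert's asymptotic expansion of the Weil-Petersson metric. Concretely, I would fix Fenchel-Nielsen type coordinates associated to pants decompositions and use the coarse relationship, due to Brock \cite{B03}, between the pants graph ${\cal P\cal G}(S)$ and ${\cal T}(S)$ equipped with $d_{WP}$: a theorem of Brock states that $\Upsilon_{\cal P}$ is a quasi-isometry, so the ${\cal P\cal G}(S)$-distance between $P_0$ and $P_\sigma$ is bounded by a universal constant $\chi_1^\prime$ depending only on $\sigma\leq 1$. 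This already produces an edge path $\rho$ from $P_0$ to $P_\sigma$ of uniformly bounded length; the real content of the lemma is the second assertion, namely that $\rho$ can be chosen so that every pants decomposition it passes through has all its curves short (below $\chi_1$) at \emph{some} time $t\in[0,\sigma]$.

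\textbf{Key steps.} First I would recall that along $\xi$ the length function $t\mapsto \ell_\alpha(\xi(t))$ is convex for every simple closed curve $\alpha$ (Theorem \ref{wolpert}); consequently if a curve $\alpha$ is a Bers curve at both endpoints $\xi(0)$ and $\xi(\sigma)$, it stays of length at most $\chi_0$ throughout $[0,\sigma]$. Second, I would decompose $[0,\sigma]$ according to which curves become short. Using Wolpert's estimates near the completion locus (\cite{W03,W08}), there is a constant such that whenever a curve $\alpha$ has $\xi(t)$-length below some threshold for \emph{all} $t$ in a subinterval, the geodesic essentially follows a product region, and the remaining coordinates vary in a controlled way; in particular one can write an explicit bounded-length path in ${\cal P\cal G}(S)$ that stays inside the region where all curves have length at most $\chi_1$. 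Third, the general case is handled by induction on the topological complexity of the short subsurfaces: partition $[0,\sigma]$ into finitely many intervals on which a fixed (possibly empty) multicurve is short, apply the product-region analysis on each piece to connect a Bers decomposition at the left endpoint to one at the right endpoint by a controlled path, and concatenate. Finally, at the finitely many division times $t_j$, interpolate between the two chosen Bers decompositions using Lemma \ref{bounded}: at each such time all involved curves have bounded length, so they lie in a bounded region of the relevant subsurface curve graphs, and hence can be joined by an elementary-move path of bounded length consisting of short curves.

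\textbf{Main obstacle.} The hard part will be step three: controlling the path when several curves become short simultaneously but over nested or overlapping time intervals, so that the ``short subsurface'' changes along $\xi$. One must verify that the total number of such division times, and the length of the interpolating paths at each, are bounded by a constant depending only on the topological type of $S$ — this requires the quantitative version of Wolpert's product-region estimate (the geometry of the Weil-Petersson metric on a neighborhood of a stratum of the completion, as a near-product of lower-dimensional Teichm\"uller spaces with $\sinh$-type factors in the pinching coordinates), together with the observation that a geodesic of length $\le 1$ can enter and leave the thin part associated to a given curve only a bounded number of times by convexity of $\ell_\alpha$. I would isolate this as the key estimate and derive it directly from the asymptotic form of the metric in \cite{W03,W08}, then feed it into the inductive scheme above to obtain the uniform constant $\chi_1$.
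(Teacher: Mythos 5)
Your argument hinges on exactly the step you flag as the main obstacle, and that step is where the proposal breaks down. Wolpert's near-product description of the Weil--Petersson metric applies only in a neighborhood of a stratum, i.e.\ where the pinched curves are already very short, while your thick-part/convexity observations apply only where curves stay uniformly long; a WP-geodesic of length $\leq 1$ will in general spend time in the intermediate regime where neither description is available, so the proposed partition of $[0,\sigma]$ into ``thick'' pieces and ``product-region'' pieces does not exist in the form your induction requires. Worse, inside a product region the assertion that one can write down a bounded-length path in ${\cal P\cal G}(S)$ staying among curves of length $\leq\chi_1$ is essentially the lemma itself for the complementary subsurface, so your induction on complexity still needs a base mechanism that is never supplied: at some point you must prove that two pants decompositions which are simultaneously of bounded length at a \emph{single} hyperbolic metric can be joined by a uniformly bounded path of uniformly short pants decompositions. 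That static statement is the real content of the lemma, and it does not appear anywhere in your outline.

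The paper's proof avoids all stratum analysis and is much shorter. It uses Wolpert's gradient bound (Lemma 3.12 of \cite{W08}): $\Vert {\rm grad}\,\ell_\alpha\Vert\leq a$ wherever $\ell_\alpha\leq 2\chi_0$, so any Bers curve at $\xi(t)$ keeps length $\leq 2\chi_0$ for all times within $\chi_0/a$ of $t$ -- no convexity and no thin-part geometry needed. Subdividing $[0,\sigma]$ into at most $\ell>a/\chi_0$ subsegments then reduces everything to the static claim above at a single point $x$, which is proved combinatorially: discard the curves common to the two decompositions, cut $S$ along them, and observe that in each complementary piece the union of the two decompositions is an embedded filling graph with a uniformly bounded number of edges, each of $x$-length $\leq 2\chi_0$ (the collar lemma bounds the intersection number); up to the mapping class group there are only finitely many such configurations, so the two decompositions are joined by a uniformly bounded sequence of elementary moves whose curves are homotopic to bounded edge paths in the graph, hence of uniformly bounded $x$-length. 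Your Brock quasi-isometry step and the convexity remark for curves short at both endpoints are correct but do not address the difficulty; replacing the product-region machinery by the gradient bound plus this finiteness argument is what closes the gap.
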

\begin{proof} 
By Lemma 3.12 of \cite{W08}, there is a number $a>0$ 
with the following property. Let $h\in {\cal T}(S)$ and let
$\alpha\in {\cal C}(S)$ be a simple closed curve
of $h$-length $\ell_h(\alpha)\leq 2\chi_0$. Then the 
norm at $h$ of the Weil-Petersson gradient of the length function
$x\to \ell_x(\alpha)$ is bounded from above by $a$.
This implies the following. 
Let $\tau\leq \chi_0/a$, let $\xi:[0,\tau]\to {\cal T}(S)$ be any
WP-geodesic and let $P$ be a Bers decomposition for $\xi(0)$. 
Then for every $t\in [0,\tau]$ the 
$\xi(t)$-length of every component of $P$ does not exceed $2\chi_0$.

Choose an integer $\ell>a/\chi_0$ and note that $\ell$ is a universal
constant. 
Let $\xi:[0,\sigma]\to {\cal T}(S)$ be a WP-geodesic
of length $\sigma\leq 1$ and let
$j\leq \ell$ be the smallest integer such that
$j/\ell\geq \sigma$. Let $P_0,P_\sigma$ be Bers
decompositions for $\xi(0),\xi(\sigma)$ and for 
each $i\in \{1,\dots, j-1\}$ 
let $P_i$ be a Bers decomposition for $\xi(\frac{i}{\ell})$. 
By the choice of the constant $\ell$, for each
$i$ the $\xi(\frac{i+1}{\ell})$-length of each component of 
$P_i$ does not exceed $2\chi_0$. Since $j\leq \ell$,
for the proof of the lemma
it is enough to show the existence of  
a number $\beta>0$ with the following property. 
For every $x\in {\cal T}(S)$,
any pants decomposition $Q_0$ of $S$ with components
of $x$-length at most $2\chi_0$ can be connected to a given Bers
decomposition $Q_1$ for $x$ by a path in ${\cal P\cal G}(S)$
of length at most $\beta$
passing through vertices of the pants graph 
which are pants decompositions
for $S$ with components of $x$-length at most $\beta$.

Thus let $x\in {\cal T}(S)$ and let 
$Q_0,Q_1$ be such pants decompositions whose components
are $x$-geodesics of length at most $2\chi_0$ and $\chi_0$, respectively.
Let $c_1,\dots,c_k$ $(0\leq k\leq 3g-3+m)$ be those components
of $Q_0$ which are also components of $Q_1$. Note that  
by the collar lemma, the
set $c_1,\dots,c_k$ contains every simple closed
$x$-geodesic of sufficiently small length. 
Let $\hat S$ be the metric completion of the 
(perhaps disconnected) surface which we obtain by cutting $S$ open
along the geodesics $c_1,\dots,c_k$.
Choose a component $\hat S_0$ of $\hat S$ 
which is different from a three-holed sphere.

The intersection $Q_0\cap \hat S_0$ of $Q_0$ with the
interior of $\hat S_0$ is a pants decomposition for $\hat S_0$, and the
same is true for the intersection $Q_1\cap \hat S_0$ of $Q_1$ with
the interior of $\hat S_0$. Thus 
$(Q_0\cup Q_1)\cap \hat S_0$ 
is an embedded connected piecewise geodesic graph 
$G$ in $\hat S_0$ which
decomposes $\hat S_0$ into non-essential
annuli, i.e. annuli whose core curves are 
homotopic to a boundary component
of $\hat S_0$ or to a puncture, and into topological discs
with piecewise geodesic boundary. In particular, the injection
of the graph $G$ into $\hat S_0$ induces a surjection of
fundamental groups. 
By the collar lemma and the
length bound for the components of $Q_0,Q_1$, 
the number of intersections between $Q_0$ and $Q_1$ is bounded
from above
by a number only depending on $\chi_0$ and
the topological type of $S$. This number of intersections is the 
number of vertices of the graph $G$. Now the valency of 
a vertex of $G$ equals four and therefore  
the number of 
edges of $G$ is uniformly bounded as well. 
The length of each edge
does not exceed $2\chi_0$.

Up to the action of the mapping class group ${\rm Mod}(\hat S_0)$ 
of $\hat S_0$, there
are only finitely many pairs of pants decompositions
of $\hat S_0$ whose union is an embedded connected
graph in $\hat S_0$ 
with a uniformly bounded number of edges and whose complementary
components are topological discs and
non-essential annuli. By invariance under the action of 
${\rm Mod}(\hat S_0)$, this means that there is a number
$p=p(\hat S_0)>0$ only depending on the topological type of $\hat S_0$,
there is a number $n\leq p$ and  
there is a sequence of pants decompositions 
$R_0=Q_0\cap \hat S_0,R_1,\dots,R_n=Q_1\cap \hat S_0$
for $\hat S_0$ with the following
properties. 
For each $i<n$, $R_{i+1}$ can
be obtained from $R_i$ by an elementary move. Moreover,  
each simple closed curve on $\hat S_0$ 
appearing  as a pants curve of one of the pants decompositions $R_i$  
is freely homotopic to an edge path in the graph  
$(Q_0\cup Q_1)\cap \hat S_0$ of uniformly 
bounded combinatorial length. Since the $x$-length of 
an edge of $(Q_0\cup Q_1)\cap \hat S_0$ is uniformly bounded, the  
$x$-length of each component curve of $R_i$
is bounded from above by a constant which only depends on $\hat S_0$. 

On the other hand, there are
only finitely many topological types of subsurfaces of $S$
which can arise as complementary components of a 
simple multi-curve on $S$. Thus a successive application of this
construction to all components of $\hat S$ shows
that $Q_0$ can be modified to $Q_1$ in a uniformly bounded
number of steps consisting of pants decompositions whose
components have uniformly bounded $x$-length. The lemma is proven.
\end{proof}

Lemma \ref{pants} together with the results of \cite{MM00} imply
the following projection-diameter control.

\begin{corollary}\label{pantslength}
Let $\xi:[0,R]\to {\cal T}(S)$ be any Weil-Petersson
geodesic. Let $\alpha\in {\cal C}(S)$ be a simple 
closed curve with $\ell_{\xi(t)}(\alpha)\geq \chi_1$
for every $t\in [0,R]$, where $\chi_1>0$ is as in Lemma \ref{pants}. 
Let $P,Q$ be Bers decompositions
for $\xi(0),\xi(R)$. 
Then 
\[{\rm diam}(\pi_\alpha(P)\cup \pi_\alpha(Q))\leq 4\chi_1R+8\chi_1.\]
\end{corollary}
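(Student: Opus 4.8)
The strategy is to connect $P$ to $Q$ by an explicit edge path in the pants graph ${\cal P\cal G}(S)$, built by iterating Lemma \ref{pants}, and then to control how far the projection $\pi_\alpha$ can move in ${\cal C\cal G}(\alpha)$ along such a path. The reason this works is that the interpolating path stays among pants decompositions all of whose components are $\xi(t)$-short for some $t\in[0,R]$; as $\alpha$ is $\xi(t)$-long for every such $t$, it is never one of these pants curves, so it always crosses them and the relevant projections never degenerate.

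First I would set $n=\lceil R\rceil$ and subdivide $[0,R]$ into $n$ closed subintervals $[t_{i-1},t_i]$ of length $R/n\le 1$, with $t_0=0$ and $t_n=R$. Put $P_0=P$, $P_n=Q$, and for $0<i<n$ choose any Bers decomposition $P_i$ for $\xi(t_i)$. Applying Lemma \ref{pants} to $\xi\vert[t_{i-1},t_i]$ gives an edge path $\rho_i$ in ${\cal P\cal G}(S)$ of length at most $\chi_1$ from $P_{i-1}$ to $P_i$ every vertex $P'$ of which satisfies: $\ell_{\xi(t)}(c)<\chi_1$ for all components $c$ of $P'$ and some $t\in[t_{i-1},t_i]$. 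The concatenation $\rho=\rho_1\cdots\rho_n$ runs from $P$ to $Q$ and has at most $n\chi_1\le(R+1)\chi_1$ edges; since the $P_i$ are themselves Bers decompositions, \emph{every} vertex $P'$ of $\rho$ satisfies $\ell_{\xi(t)}(c)<\chi_1$ for all components $c$ of $P'$ at a suitable $t=t(P')\in[0,R]$. In particular $\alpha$ is not a component of $P'$, because $\ell_{\xi(t(P'))}(\alpha)\ge\chi_1$; and since the complement of a pants decomposition is a union of pairs of pants, any essential simple closed curve not isotopic to one of its components meets it transversely. Hence $\alpha$ crosses some component of $P'$, so $\pi_\alpha(P')\ne\emptyset$, and ${\rm diam}(\pi_\alpha(P'))\le 1$ by Lemma 2.3 of \cite{MM00}, for every vertex $P'$ of $\rho$ (including $P$ and $Q$).

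Now I would run $\pi_\alpha$ along $\rho$. Two consecutive vertices $P',P''$ of $\rho$ differ by an elementary move: $P''$ arises from $P'$ by replacing a single pants curve $\beta$ with a curve $\beta'$ satisfying $i(\beta,\beta')\le 2$ while keeping the other curves. If some unchanged curve crosses $\alpha$, then $\pi_\alpha(P')$ and $\pi_\alpha(P'')$ have a common point and ${\rm diam}(\pi_\alpha(P')\cup\pi_\alpha(P''))\le 2$. Otherwise $\alpha$, $\beta$ and $\beta'$ all lie in the same four-holed sphere or one-holed torus complementary to the unchanged curves, and because the relevant lifts of $\beta$ and $\beta'$ cross at most $i(\beta,\beta')\le 2$ times in the annular cover associated to $\alpha$, the projection estimates of \cite{MM00} bound ${\rm diam}(\pi_\alpha(P')\cup\pi_\alpha(P''))$ by a universal constant; a routine check shows one may take this constant to be $4$. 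Chaining this inequality along the at most $(R+1)\chi_1$ edges of $\rho$ gives that the distance in ${\cal C\cal G}(\alpha)$ between $\pi_\alpha(P)$ and $\pi_\alpha(Q)$ is at most $4(R+1)\chi_1$; adding the diameters (each at most $1$) of $\pi_\alpha(P)$ and $\pi_\alpha(Q)$ yields ${\rm diam}(\pi_\alpha(P)\cup\pi_\alpha(Q))\le 4(R+1)\chi_1+2\le 4\chi_1R+8\chi_1$.

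The main obstacle is the combination of two points in the last two paragraphs: guaranteeing that the projection to ${\cal C\cal G}(\alpha)$ stays non-empty along the whole interpolating path (this is precisely where the hypothesis $\ell_{\xi(t)}(\alpha)\ge\chi_1$ enters, through the ``filling'' property of pants decompositions), and invoking from \cite{MM00} the correct bound on the change of an annular projection under one elementary move --- a bound which is independent of the amount of twisting about $\alpha$, so that a path which spirals many times about $\alpha$ is penalised by its length rather than exempted from the estimate.
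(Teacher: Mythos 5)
Your proof is correct and follows essentially the same route as the paper: subdivide $[0,R]$ into intervals of length at most one, chain Lemma \ref{pants} to get a path of at most $(R+1)\chi_1$ elementary moves through pants decompositions all of whose components are $\xi(t)$-short (hence all crossing $\alpha$), bound the change of $\pi_\alpha$ by $4$ per elementary move via the same two-case analysis (an unchanged curve crossing $\alpha$, or $\alpha$ trapped in the four-holed sphere or one-holed torus where the move happens) using Lemma 2.3 of \cite{MM00}, and chain. The only differences are bookkeeping of small additive constants, which your use of $\chi_1>1$ absorbs exactly as the paper does.
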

\begin{proof}
Let $\xi:[0,R]\to {\cal T}(S)$ be a Weil-Petersson geodesic
and let $P_0,P_1$ be Bers decompositions for $\xi(0),\xi(R)$. 
Let moreover $\alpha\in {\cal C}(S)$ be a simple closed
curve with $\ell_{\xi(t)}(\alpha)\geq \chi_1$ for every
$t\in [0,R]$. By Lemma \ref{pants},
$P_0$ can be connected to $P_1$ by a path in ${\cal P\cal G}(S)$ of 
length $n\leq R\chi_1+\chi_1$ which passes through 
vertices $Q_0=P_0,\dots, Q_n=P_1$ 
of the pants graph defined by pants decompositions
with component curves of $\xi(t)$-length smaller than $\chi_1$
for some $t\in [0,R]$. Since $\ell_{\xi(t)}(\alpha)\geq \chi_1$ for all $t$, 
each of the pants decompositions $Q_i$ 
intersects $\alpha$ transversely.

By Lemma 2.3 of \cite{MM00}, 
for every pants decomposition
$P$ of $S$ with an essential intersection
with $\alpha$, the projection $\pi_\alpha(P)$ of
$P$ to ${\cal C\cal G}(\alpha)$ is non-empty and of diameter
at most $2$. If for some $i<n$ the pants decomposition  
$Q_{i+1}$ is obtained
from $Q_i$ by an elementary move preserving at least one
curve which intersects $\alpha$ transversely, then 
the projection of this curve to ${\cal C\cal G}(\alpha)$ 
is contained in $\pi_{\alpha}(Q_i)\cap 
\pi_{\alpha}(Q_{i+1})$ and therefore the diameter
of $\pi_{\alpha}(Q_i)\cup \pi_{\alpha}(Q_{i+1})$ is at most $4$.
Otherwise the elementary
move which transforms $Q_i$ to $Q_{i+1}$ 
exchanges two simple closed curves $\beta_i,\beta_{i+1}$, and the 
connected component $Y$ of $S-(Q_i-\beta_i)$
distinct from a pair of pants
contains $\alpha$. Now $Y$ is a four-holed sphere 
or a one-holed torus which is bounded
by simple closed curves in $Q_i\cap Q_{i+1}$, and $\alpha$
intersects both $\beta_i$ and $\beta_{i+1}$ transversely. 
However, in this case the diameter
of $\pi_{\alpha}(Q_i)\cup \pi_{\alpha}(Q_{i+1})
=\pi_\alpha(\beta_i)\cup \pi_\alpha(\beta_{i+1})$ 
is also bounded
from above by 4 by another application of 
Lemma 2.3 of \cite{MM00}. 

As a consequence and by induction,
the diameter in ${\cal C\cal G}(\alpha)$ of the projection
$\pi_{\alpha}(P_0)\cup \pi_{\alpha}(P_1)$ 
is at most $4(n+1)\leq 4\chi_1(R+1)+4\leq 4\chi_1 (R+1)+8\chi_1$
(note that $\chi_1>1$ by construction).
The corollary follows.
\end{proof}

Let again $\chi_0>0$ be a Bers constant for $S$ and let
$\chi_1>\chi_0$ be as in Lemma \ref{pants}. By the collar
lemma and the fact that the distance in ${\cal C\cal G}(S)$
between any two simple closed  curves
$\alpha,\beta\in {\cal C}(S)$ does not exceed $i(\alpha,\beta)+1$ 
(Lemma 2.1 of \cite{MM99}),
there is a number $p=p(\chi_0,\chi_1)>0$ with the following property.
Let $x\in {\cal T}(S)$ and let $\alpha$ be a simple closed
curve on $S$ whose $x$-length is at most $\chi_0$. If
$d_{\cal C}(\alpha,\beta)\geq p-1$ then the 
$x$-length of $\beta$ is bigger than $\chi_1$.

We use Corollary \ref{pantslength} and the results of Wolpert \cite{W03} to 
control Weil-Petersson geodesics in the thin part of 
Teichm\"uller space.

\begin{proposition}\label{finitecontrol}
For every $R>1,c >0$ there is a 
number $\epsilon=\epsilon(R,c)>0$ with 
the following property.
Let $\zeta:[0,\sigma]\to {\cal T}(S)$
be a WP-geodesic segment of length $\sigma\leq R$ and let 
$P_0,P_\sigma$ be Bers decompositions
for $\zeta(0),\zeta(\sigma)$. 
Let $\alpha\in {\cal C}(S)$ be a curve whose 
distance in ${\cal C\cal G}(S)$ to 
any component of $P_0,P_\sigma$
is at least $p$. Assume that 
${\rm diam}(\pi_{\beta}(P_0)\cup\pi_{\beta}(P_\sigma))
\leq c$ for every simple closed curve
$\beta\in {\cal C}(S)$ with $d_{\cal C}(\alpha,\beta)\leq 1$.
Then $\ell_{\zeta(t)}(\alpha)\geq \epsilon$ for all $t\in [0,\sigma]$.
\end{proposition}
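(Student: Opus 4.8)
The plan is to argue by contradiction: a deep excursion of $\alpha$ into the thin part of $\mathcal T(S)$ forces $P_0$ and $P_\sigma$ to twist a large amount about $\alpha$, or about a simple closed curve at distance $\le 1$ from $\alpha$ in $\mathcal{CG}(S)$, contrary to hypothesis. This is the quantitative form of Wolpert's statement quoted at the beginning of this section.

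First I would record the elementary consequence of the choice of $p$ that $\ell_{\zeta(0)}(\beta)>\chi_1$ and $\ell_{\zeta(\sigma)}(\beta)>\chi_1$ for every simple closed curve $\beta$ with $d_{\cal C}(\alpha,\beta)\le 1$ (in particular for $\beta=\alpha$): a component $\gamma$ of $P_0$ is a simple closed curve of $\zeta(0)$-length at most $\chi_0$ with $d_{\cal C}(\gamma,\beta)\ge d_{\cal C}(\gamma,\alpha)-d_{\cal C}(\alpha,\beta)\ge p-1$, so the defining property of $p$ applies, and likewise at $\zeta(\sigma)$. Next, assume for contradiction that $\ell_{\zeta(t_0)}(\alpha)<\epsilon$ for some $t_0\in[0,\sigma]$, where $\epsilon=\epsilon(R,c)\in(0,\chi_1)$ is to be fixed. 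By convexity of $t\to\ell_{\zeta(t)}(\alpha)$ along Weil--Petersson geodesics (Theorem \ref{wolpert}), the set $\{t\in[0,\sigma]:\ell_{\zeta(t)}(\alpha)\le\chi_1\}$ is a closed subinterval $[a,b]\subset(0,\sigma)$ with $\ell_{\zeta(a)}(\alpha)=\ell_{\zeta(b)}(\alpha)=\chi_1$, with $\ell_{\zeta(t)}(\alpha)\ge\chi_1$ on $[0,a]\cup[b,\sigma]$, and with $\min_{t\in[a,b]}\ell_{\zeta(t)}(\alpha)<\epsilon$.

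The heart of the matter is a quantitative version of Wolpert's result which I would prove from his description of the Weil--Petersson metric in a neighbourhood of its completion locus (\cite{W03,W08}) together with Corollary \ref{pantslength}: there is a function $g:(0,\chi_1)\to(0,\infty)$ with $g(\epsilon)\to\infty$ as $\epsilon\to 0^+$, depending only on $R$ and the topological type of $S$, such that for every Weil--Petersson geodesic segment $\eta:[0,L]\to{\cal T}(S)$ of length $L\le R$ with $\ell_{\eta(0)}(\alpha)=\ell_{\eta(L)}(\alpha)=\chi_1\ge\ell_{\eta(t)}(\alpha)$ for all $t$ and $\min_t\ell_{\eta(t)}(\alpha)<\epsilon$, and for Bers decompositions $Q_0,Q_L$ for $\eta(0),\eta(L)$, one has ${\rm diam}\bigl(\pi_\beta(Q_0)\cup\pi_\beta(Q_L)\bigr)\ge g(\epsilon)$ for $\beta=\alpha$ or for some simple closed curve $\beta$ disjoint from $\alpha$. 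The mechanism is that, $\eta$ never meeting the completion locus (Corollary 5.4 of \cite{W87}) while $\ell_\alpha$ descends to within WP-distance $\asymp\ell_{\min}^{1/2}$ of the stratum in which $\alpha$ is pinched (using $d_{WP}(\,\cdot\,,\text{that stratum})\asymp\ell_\alpha^{1/2}$), the part of $\eta$ transverse to the stratum is a \emph{non-radial} near-geodesic of a rotationally symmetric model metric whose cone angle at the apex (the stratum) degenerates to $0$ as $\ell_\alpha\to 0$; by the Clairaut relation, equivalently by Gau\ss{}--Bonnet comparison as in \cite{W03}, such a near-geodesic must wind around the stratum a number of times that tends to $\infty$ as $\ell_{\min}\to 0$. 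This winding is recorded by the twist coordinate about $\alpha$, hence via the Masur--Minsky annular projections \cite{MM00} by $\pi_\alpha$, except in the degenerate cases where $\alpha$ fills a one-holed torus or a four-holed sphere, where it is recorded by $\pi_\beta$ for a curve $\beta$ with $d_{\cal C}(\alpha,\beta)=1$. Corollary \ref{pantslength} is what transfers the bound from Bers decompositions near the excursion to those at the ends of $\zeta$: on each maximal subsegment of $\zeta$ on which the relevant curve $\beta$ stays of length at least $\chi_1$, the projection $\pi_\beta$ moves by at most $4\chi_1 R+8\chi_1$.

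Applying this with $\eta=\zeta|_{[a,b]}$ (noting that by the first paragraph every relevant curve $\beta$ is long at $\zeta(0)$ and $\zeta(\sigma)$) and transferring with Corollary \ref{pantslength}, we obtain ${\rm diam}\bigl(\pi_\beta(P_0)\cup\pi_\beta(P_\sigma)\bigr)\ge g(\epsilon)-8\chi_1 R-16\chi_1$ for $\beta=\alpha$ or some $\beta$ with $d_{\cal C}(\alpha,\beta)=1$. Since by hypothesis this diameter is at most $c$, it suffices to fix $\epsilon=\epsilon(R,c)>0$ so small that $g(\epsilon)>c+8\chi_1 R+16\chi_1$, which is possible because $g(\epsilon)\to\infty$; the resulting contradiction shows $\ell_{\zeta(t)}(\alpha)\ge\epsilon$ for all $t\in[0,\sigma]$. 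The routine parts are the reductions via convexity and the transfers via Corollary \ref{pantslength}. The main obstacle is the winding estimate of the previous paragraph: extracting from Wolpert's local model the explicit lower bound $g(\epsilon)\to\infty$ on the twisting, identifying the winding of the transverse near-geodesic with a genuine change of the annular projection $\pi_\alpha$, and correctly handling the cases where the twisting is carried by a curve $\beta$ disjoint from, rather than equal to, $\alpha$.
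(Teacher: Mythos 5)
Your reduction via convexity and the choice of $p$ is fine, but the proof stands or falls with the asserted quantitative lemma (the function $g(\epsilon)\to\infty$), and that is precisely what is not proved; the Clairaut/model-metric sketch you give does not cover the situation in which the alternative ``or a curve $\beta$ disjoint from $\alpha$'' actually arises. That alternative has nothing to do with $\alpha$ filling a one-holed torus or four-holed sphere: it comes from the possibility that a whole multicurve containing $\alpha$ degenerates simultaneously along $\zeta$, and that the large twisting which allows the geodesic to dive toward the corresponding stratum and return is carried by another short component $\beta$ of that multicurve rather than by $\alpha$ itself. In that regime the rotationally symmetric model transverse to the $\alpha$-stratum is not the relevant geometry (the metric is only approximately a product of several degenerating factors plus stratum directions), the angular momentum about $\alpha$ may stay small, and controlling how the error terms couple the factors is exactly the hard analysis that the paper avoids by a soft compactness argument: it takes a sequence of putative counterexamples, applies Wolpert's structure theorem for limits of bounded-length WP-geodesics (Proposition 23 of \cite{W03}), which produces unbounded Dehn multi-twists about the components of the pinched multicurve $c_{j_2}\ni\alpha$, extracts a single component $\beta$ with $d_{\cal C}(\alpha,\beta)\leq 1$ about which the twisting $r(i)\to\infty$, and converts this into ${\rm diam}(\pi_\beta(P_0)\cup\pi_\beta(P_{\sigma_i}))\to\infty$, contradicting the hypothesis. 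So your route is genuinely different (an explicit modulus $\epsilon(R,c)$ versus a non-effective contradiction argument), but as written it replaces the one hard step by an assertion.

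There is also a flaw in the transfer step even granting your lemma: you apply Corollary \ref{pantslength} to $\zeta|_{[0,a]}$ and $\zeta|_{[b,\sigma]}$ with the cut points $a,b$ defined by $\ell_\alpha=\chi_1$, but when the twisting curve is some $\beta\neq\alpha$ you have no lower bound $\ell_{\zeta(t)}(\beta)\geq\chi_1$ on those intervals (convexity of $\ell_\beta$ only bounds it from above by its values at the endpoints of $[0,\sigma]$), and indeed $\beta$ may already be short at $\zeta(a)$ or $\zeta(b)$, in which case $\pi_\beta(Q_a)$ or $\pi_\beta(Q_b)$ can even be empty. The paper's proof cuts instead at the parameters $s_i<u_i$ where $\ell_\beta=\chi_1$, and then uses convexity (a convex function equal to $\chi_1$ at $s_i$ and dipping below $\chi_1$ to the right of $s_i$ is $\geq\chi_1$ on $[0,s_i]$) to justify applying Corollary \ref{pantslength} outside $[s_i,u_i]$; you would need to reorganize your argument the same way, with the excursion interval adapted to $\beta$ rather than to $\alpha$, before the hypothesis ${\rm diam}(\pi_\beta(P_0)\cup\pi_\beta(P_\sigma))\leq c$ can be brought to bear.
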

\begin{proof}
The proof relies on Wolpert's description of Weil-Petersson geodesics 
near the completion locus of Teichm\"uller space as 
explained in \cite{W03}.

We argue by contradiction and we assume that 
the statement of the proposition does not hold. Then there
are numbers $R>1,c>0$ and
there is a sequence $\epsilon_i\to 0$,
a sequence of WP-geodesics $\zeta_i:[0,\sigma_i]\to
{\cal T}(S)$, a sequence of numbers $t_i\in (0,\sigma_i)$ and 
a sequence of simple closed curves $\alpha_{i}\in {\cal C}(S)$ 
such that for every $i$ the following holds true.
\begin{enumerate}
\item[(a)] $\sigma_i\leq R$.
\item[(b)] The distance in ${\cal C\cal G}(S)$ between $\alpha_i$ and  
any component of some Bers decomposition $P_i,P_{\sigma_i}$ 
of $\zeta_i(0),\zeta_i(\sigma_i)$ is at least $p$.
\item[(c)] 
${\rm diam}(\pi_{\beta}(P_i)\cup \pi_{\beta}(P_{\sigma_i}))
\leq c$ for every simple closed curve $\beta\in {\cal C}(S)$ 
with $d_{\cal C}(\alpha_i,\beta)\leq 1$.
\item[(d)] There is some $t_i\in (0,\sigma_i)$
such that $\ell_{\zeta_i(t_i)}(\alpha_i)\leq \epsilon_i$.
\end{enumerate}

Our strategy is to analyze a sequence of geodesics
$\zeta_i:[0,\sigma_i]\to {\cal T}(S)$ which has the 
properties (a), (b) and (d) above.
By the choice of the constants $p>0$ and $\chi_1>0$, for 
each $i$ we have
\begin{equation}\label{length}
\ell_{\zeta_i(0)}(\alpha_i)\geq  \chi_1,
\ell_{\zeta_i(\sigma_i)}(\alpha_i)\geq  \chi_1.
\end{equation} 
A result of Wolpert \cite{W03} gives some geometric
information on the sequence. This information allows us to 
formulate an additional condition on the sequence.
We then consider sequences which satisfy this additional
condition and  use Corollary \ref{pantslength} to show  
that for such a sequence, property (c) above is violated.
The general case is then reduced to the special case, 
applied to subarcs of the sequence $\zeta_i$

Let now $\zeta_i:[0,\sigma_i]\to {\cal T}(S)$ be a sequence
with properties (a),(b),(d). 
By Wolpert's gradient estimates for length functions 
(see Lemma 3.12 of \cite{W08}), for every $\beta\in {\cal C}(S)$
the norm of the Weil-Petersson gradient of the length function
$x\to \ell_\beta(x)$ is uniformly 
bounded on $\{x\mid \ell_\beta(x)\leq \chi_1\}$.
This implies that  
the Weil-Petersson distance between
a point $x\in {\cal T}(S)$ with $\ell_{\alpha_i}(x)\geq \chi_1$ and
a point $y\in {\cal T}(S)$ with $\ell_{\alpha_i}(x)\leq \chi_0/2$
is bounded from below by a universal constant $a>0$.
In particular, we have $\sigma_i\geq 2a$ for all 
$i$ which are sufficiently large that $\epsilon_i\leq \chi_0/2$ and
hence by passing to a subsequence
we may assume that $\sigma_i\to \sigma\in [2a,R]$.

The mapping class group acts on 
the Weil-Petersson completion 
$\overline{{\cal T}(S)}$ of Teichm\"uller space.
The quotient $\overline{{\cal T}(S)}/{\rm Mod}(S)$  
is just 
the Deligne-Mumford compactification of moduli
space, in particular it is compact.  
Thus up to passing to another subsequence and up to 
the action of the mapping class group,
we may assume that the initial points
$\zeta_i(0)$ of the geodesics $\zeta_i$ converge to a point
$x_0\in \overline{{\cal T}(S)}$ (compare the discussion in \cite{W03}).

A point in $\overline{{\cal T}(S)}-{\cal T}(S)$ 
is a surface with nodes, where a node
is obtained by pinching a simple closed curve on $S$ to a point.
For a simple multi-curve $c$ on $S$
let ${\cal T}(c)\subset
(\overline{{\cal T}(S)}-{\cal T}(S))$ be the stratum of 
the completion locus for the Weil-Petersson metric 
which consists of all Riemann surfaces with nodes at the 
components of $c$ (i.e. Riemann surfaces obtained from $S$
by pinching the components of $c$ to punctures). 
By Proposition 23 of \cite{W03}, up to passing to another 
subsequence and up to possibly a composition with 
Dehn multi-twists about the nodes of $x_0$, 
there exists a finite partition
$0=t_0<t_1<\dots <t_k=\sigma$ of the interval $[0,\sigma]$ 
and there are simple multi-curves $c_0,c_1,\dots, c_k$ 
and points $x_j\in {\cal T}(c_j)$ such that the
following holds true. 

For $0\leq  j\leq  k-1$ consider the (possibly trivial)
multi-curve $\tau_j=
c_j\cap c_{j+1}$. If $1\leq j<k-1$ then $\tau_j$ is a 
proper subset of both $c_j$ and  
$c_{j+1}$, and $\tau_0=c_0\cap c_1$ is a
proper subset of $c_1$, $\tau_{k-1}$ is 
a proper subset of $c_{k-1}$. For each $i$ 
and each $1\leq j\leq k-1$ there is a Dehn multi-twist 
$T_{(j,i)}$ about the components of $c_j-\tau_j$ 
such that on the parameter interval $[t_j,t_{j+1}]$ 
the arcs 
\begin{equation}\label{arcon}
T_{(j,i)}\circ \dots \circ T_{(1,i)}
\zeta_i\end{equation}
converge as $i\to \infty$ 
to the geodesic arc $\xi_j$ in $\overline{{\cal T}(S)}$  
connecting $x_j$ to $x_{j+1}$ in the
sense of parametrized unit-speed curves. 
The concatenation $\xi:[0,\sigma]\to \overline{{\cal T}(S)}$ 
of the arcs $\xi_{j}$ $(0\leq j\leq k-1)$ is
the piecewise Weil-Petersson geodesic 
connecting $x_0$ to $x_k$ and passing through $x_1,\dots,x_{k-1}$
in this order.

For each $i$ 
the $\zeta_i(0)$-length and the $\zeta_i(\sigma_i)$-length of 
the curve $\alpha_i$ is bounded from below by $\chi_1$,
and 
the minimum of the length of $\alpha_i$ along the WP-geodesics
$\zeta_i$ tends to zero as $i\to \infty$. 
This implies that $k\geq 2$ and that up to passing to a subsequence,
there is a number $j_2\in \{1,\dots,k-1\}$
such that for every sufficiently large $i$ 
we have \[T_{(j_2-1,i)}\circ \cdots\circ T_{(1,i)}\alpha_i=
\alpha\in c_{j_2}-\tau_{j_2}.\]

By Proposition 23 of \cite{W03}, the
sequence of Dehn multi-twists $T_{(j_2,i)}$ 
about the components of $c_{j_2}-\tau_{j_2}$ 
is unbounded
as $i\to \infty$. In particular, up to passing to 
a subsequence there is a simple closed
curve $\beta\in c_{j_2}-\tau_{j_2}$  
with the following property. Let $T_\beta$ be the Dehn twist about
$\beta$. Then 
there is a sequence $r(i)\to \infty$ such that 
\[T_{(j_2,i)}=T_\beta^{r(i)}\circ \hat T_{(j_2,i)}\] where
$\hat T_{(j_2,i)}$ is a (possibly trivial) Dehn multi-twist
about the components of $c_{j_2}-\tau_{j_2}-\beta$.
We note for later reference that property (b) was used here
to ensured that the length of $\beta_i$ at the endpoints o f
$\zeta_i$ is at least $\chi_1$.

As $\alpha,\beta\in c_{j_2}-\tau_{j_2}$ we have
$d_{\cal C}(\alpha,\beta)\leq 1$ and hence
by invariance under the action of the mapping class group, for
sufficiently large $i$ the distance in ${\cal C\cal G}(S)$ 
between $\alpha_i$ and 
\[\beta_i=(T_{(j_2-1,i)}\circ \cdots\circ T_{(1,i)})^{-1}\beta\] 
is at most one. By property (b) above and the choice of $p$, 
this implies that
the $\zeta_i(0)$-length of $\beta_i$ and the $\zeta_i(\sigma_i)$-length
of $\beta_i$ is at least $\chi_1$. Moreover, the curve $\beta_i$ becomes
short along $\zeta_i$.

Using the notations in the previous paragraph, 
we consider now the  case 
that $\beta$ does not intersect any
of the multi-curves $c_j$ $(0\leq j\leq k)$. 
Then $\beta$ is
invariant under each of the Dehn multi-twists $T_{(j,i)}$.
Thus we have $\beta_i=\beta$ for all 
$i$ and hence 
$\ell_{\beta}(\zeta_i(0))\geq \chi_1,
\ell_{\beta)(\zeta_i(\sigma_i)})\geq \chi_1$.
Since the arcs
$T_{(j,i)}\circ \dots \circ T_{(1,i)}
\zeta_i\vert [t_j,t_{j+1}]$ converge as $i\to \infty$ to the arc
$\xi_j$, the $\zeta_i(0)$-length and the 
$\zeta_i(\sigma_i)$-length of $\beta$ is bounded from
above independent
of $i$. Moreover,
the curve $\beta$ becomes short along the geodesics $\zeta_i$.

Call a pants decomposition $P$ of $S$ a Bers
decomposition for some $x\in \overline{{\cal T}(S)}$  
if the $x$-length of any component of $P$ is at most $\chi_0$ 
(this means in particular that $P$ contains all nodes of $x$).
Length functions on ${\cal T}(S)$ extend
continuously to functions on $\overline{{\cal T}(S)}$ 
with values in $[0,\infty]$. Thus by 
the collar lemma,  
there is a neighborhood $U$ of $x_0$ in $\overline{{\cal T}(S)}$
such that the number of pants decompositions 
which are Bers decompositions for points in $U$
is finite. Namely, if the simple closed curve $\omega$ is a node of $x_0$ then 
there is a neighborhood $V$ of $x_0$ in $\overline{{\cal T}(S)}$ 
such that every Bers decomposition for any $x\in V$ contains 
$\omega$. Consequently, by
passing to another subsequence we may assume that
the Bers decompositions $P_i$ for $\zeta_i(0)$ coincide for all $i$.
We denote this common pants decomposition by $P$.

For each $i$ let as before $P_{\sigma_i}$ be a Bers decomposition
for $\zeta_i(\sigma_i)$. We claim that
\[{\rm diam}(\pi_\beta(P)\cup\pi_{\beta}(P_{\sigma_i}))\to \infty\,
(i\to \infty).\]
Namely, the Dehn twist $T_\beta$ about $\beta$ commutes with 
each of the Dehn multi-twists $T_{(j,i)}$. Write   
$\hat T_{(j,i)}=T_{(j,i)}$ for 
$j\not=j_2$ and let as before $\hat T_{(j_2,i)}$ be the (possibly trivial)
Dehn multi-twist about the components of 
$c_{j_2}-\tau_{j_2}-\beta$
such that $T_{(j_2,i)}=T_\beta^{r(i)}\circ \hat T_{(j_2,i)}$
for some $r(i)\in \mathbb{Z}$. Since 
by the above assumption $\beta$ is disjoint from each of the
multicurves $c_j$ $(j\leq k-1)$ we 
have 
\[T_{(k-1,i)}\circ \dots\circ T_{(1,i)}= T_\beta^{r(i)}\circ
 (\hat T_{(k-1,i)}\circ \dots \circ \hat T_{(1,i)})\]
for all $i$.

Now if $\omega\in {\cal C}(S)$ is any simple closed curve and if
$T$ is any Dehn multi-twist about a simple closed curve 
$\kappa\in {\cal C}(S)-\{\beta\}$ which is disjoint from $\beta$ then 
$\pi_{\beta}(\omega)=\pi_{\beta}(T\omega)$. 
As a consequence, for each $i$ 
we have
\begin{equation}\label{twistinv}
\pi_{\beta}(P_{\sigma_i})=
\pi_{\beta}(\hat T_{(k-1,i)}\circ \dots \circ
\hat T_{(1,i)}(P_{\sigma_i}))\subset  
{\cal C\cal G}(\beta).\end{equation}

Proposition 23 of \cite{W03} together with 
${\rm Mod}(S)$-invariance of the Weil-Petersson metric shows that 
the Weil-Petersson distance between the 
points 
\[(T_{(k-1,i)}\circ\cdots \circ T_{(1,i)})^{-1}(x_k)=
(\hat T_{(k-1,i)}\circ \cdots \circ \hat T_{(1,i)})^{-1}
(T_\beta^{-r(i)}(x_k))\]
and the points $\zeta_i(\sigma_i)$
converges to zero as $i\to \infty$.
Using once more invariance of 
the Weil-Petersson metric under the mapping
class group, we conclude that 
\begin{equation}
d_{WP}(\hat T_{(k-1,i)}\circ \cdots \circ \hat T_{(1,i)}(\zeta_i(\sigma_i)),
T_\beta^{-r(i)}(x_k))\to 0\, (i\to \infty).\notag\end{equation}
Now there are only finitely many Bers decompositions for all points
in a small neighborhood of $x_k$ and therefore
by invariance under the action of the mapping class group
and by (\ref{twistinv}) above, 
up to passing to a subsequence there is a 
Bers decomposition $\hat P$ for $x_k$ such that 
\[\pi_\beta(P_{\sigma_i})=\pi_{\beta}(T_\beta^{-r(i)}\hat P)\] for 
all sufficiently large $i$. 
By property (2) for $\beta$,  the pants decomposition 
$P_{\sigma_i}$ has an essential intersection with $\beta$ and 
hence the same holds true for $\hat P$.

Since the
diameter in ${\cal C\cal G}(\beta)$ of the projection
$\pi_{\beta}(P)\cup \pi_{\beta}(\hat P)$ 
is finite and since
$\vert r(i)\vert \to \infty (i\to \infty)$, 
this implies that
the diameters of the projections
$\pi_{\beta}(P)\cup \pi_{\beta}(P_{\sigma_i})$ tend to infinity
with $i$. But $P_i=P$ for all $i$ and $d_{\cal C}(\beta,\alpha_i)\leq 1$
and hence this contradicts the 
assumption (c) above.

For the proof of the
proposition we are now left with the case
that there is some $j\in \{0,\dots,k\}$ such that
$\beta$ intersects the multi-curve $c_j$. 
Let $j_1\in \{0,\dots,j_2-1\}$ be the maximal
number $j\leq j_2-1$ such that $i(c_j,\beta)\not=0$.
If there is no such $j$ then write $j_1=-1$. Similarly,
let $j_3\in \{j_2+1,\dots,k\}$ be the mimimal number 
$j\geq j_2+1$ such that $i(c_j,\beta)\not=0$. If there is
no such number then write $j_2=k+1$. 
By our assumption, we either have $j_1\geq 0$ or $j_3\leq k$.
For $j_1<j<j_3$, the curve 
$\beta$ is invariant under the Dehn multi-twist $T_{(j,i)}$. 
Define 
\[\rho_i=T_{(j_1,i)}\circ \dots \circ T_{(1,i)}\zeta_i.\]
If $j_1>-1$ then we have 
$\ell_{\beta}(\rho_i(t_{j_1}))\to \infty$ $(i\to \infty)$,
$\ell_{\beta}(\rho_i(t_{j_1+1})) \to 0$ $(i\to \infty)$
and therefore by convexity of length functions along
Weil-Petersson geodesics, 
for sufficiently large $i$ there is a unique number
$s_i\in (t_{j_1},t_{j_2})$ such that
$\ell_{\beta}(\rho_i(s_i))=\chi_1$.
If $j_1=-1$ then using once more convexity and the
assumption that the $\zeta_i(0)$-length
of $\beta_i$ is not smaller than $\chi_1$ we also
find a unique number $s_i\in (t_{j_1},t_{j_2})$ such that
$\ell_{\beta}(\rho_i(s_i))=\chi_1$.
Similarly we find for all sufficiently large $i$ a unique number
$u_i\in (t_{j_2},t_{j_3})$ such that $\ell_{\beta}(\rho_i(u_i))=\chi_1$.
By passing to a subsequence, we may assume that
$s_i\to s,u_i\to u$. Using again Wolpert's gradient bound
for length functions, we have $s\in (t_{j_1},t_{j_2})$,
$u\in (t_{j_2},t_{j_3})$ and $s<u$.

The Weil-Petersson geodesics 
$\rho_i[s_i,u_i]$, the 
Bers decompositions $Q_i^0$ for $\rho_i(s_i)$,
$Q_i^1$ for $\rho_i(u_i)$ and the curve $\beta$
have all properties required to apply the special case
of the proposition established in the first part of this proof.  
The first part of this proof implies that 
the diameters of the projections
\[\pi_\beta(Q_i^0)\cup \pi_\beta(Q_i^1)\]
tend to infinity with $i$. By equivariance
under the mapping class group, if 
\[\beta_i=(T_{(j_1,i)}\circ \dots\circ T_{(1,i)})^{-1}\beta\]
and if 
$\hat Q_i^0,\hat Q_i^1$ are
Bers decompositions for $\zeta_i(s_i),\zeta_i(u_i)$, then 
\begin{equation}\label{tendtoinfty}
{\rm diam}(\pi_{\beta_i}(\hat Q_i^0)\cup \pi_{\beta_i}(\hat Q_i^1))\to \infty
\end{equation}
as well.

By equivariance of length functions under the action of 
${\rm Mod}(S)$, 
we have 
\[\ell_{\beta}(\zeta_i(s_i))=\chi_1,
\ell_{\beta_i}(\zeta_i(u_i))=\chi_i,\] moreover the length of 
$\beta_i$ become arbitrarily small along 
$\zeta_i[s_i,u_i]$ as $i\to \infty$. Thus 
by convexity, the length
of $\beta_i$ is at least $\chi_1$ 
on $[0,s_i]\cup [u_i,\sigma_i]$.
Corollary \ref{pantslength},
applied to the Weil-Petersson geodesics
$\zeta_i[0,s_i]$ and $\zeta_i[u_i,\sigma_i]$ of length 
at most $R$ and the curves $\beta_i\in {\cal C}(S)$, yields 
that 
\[\max\{{\rm diam}(\pi_{\beta_i}(P_i)\cup \pi_{\beta_i}(\hat Q_i^0)),
{\rm diam}(\pi_{\beta_i}(\hat Q_i^1)\cup \pi_{\beta_i}(P_{\sigma_i}))\}
\leq 
4\chi_1 R+8\chi_1.\]
Together with (\ref{tendtoinfty}) this implies that 
${\rm diam}(\pi_{\beta_i}(P_i)\cup \pi_{\beta_i}(P_{\sigma_i}))\to 
\infty$ $(i\to \infty)$ which contradicts assumption (c) above.
The proposition is proven.
\end{proof}

\section{Length bounds in the thick part of Teichm\"uller space}

In this section we use the results from Section 4
to estimate the length of the intersection of 
a family of Weil-Petersson geodesics with the thick part 
of Teichm\"uller space. In particular, we show that
for every $\epsilon >0$, a 
Weil-Petersson geodesic $\zeta:[0,\sigma]\to {\cal T}(S)$
which connects two points on a Teichm\"uller geodesic
$\gamma\subset {\cal T}(S)_\epsilon$ spends a fixed 
percentage of time in the thick part of Teichm\"uller space.
The main two ingredients for the proof of this fact
are Proposition \ref{finitecontrol} and the following
consequence of hyperbolicity of the curve graph
(which holds true for
every hyperbolic geodesic metric space).

\begin{lemma}\label{curvegraph}
For every $\kappa >1$ there is a number $b(\kappa)>0$ and for
every $n>0$ there are numbers $\tau=\tau(\kappa,n)>0$, 
$T=T(\kappa,n)>0$ with the
following properties. Let $k>0$, let 
$\rho:[0,k]\to {\cal C\cal G}(S)$ be any
geodesic 
and let $\omega:[0,r]\to {\cal C\cal G}(S)$ be a 
one-Lipschitz
curve of length $r\leq \kappa k$ 
connecting $\omega(0)=\rho(0)$ to 
$\omega(r)=\rho(k)$. 
Then there is a set $A\subset \{0,\dots,k-1\}$ of
cardinality at least $\tau k$ and for every $i\in A$ 
there are numbers $r_i\in [0,r],s_i\in [r_i ,r_i+T]$ 
with $r_{i+1}\geq s_i$ and there is a number $j_i\geq i+n$ so that
\[d_{\cal C}(\omega(r_i),\rho(i))\leq b(\kappa),\,
d_{\cal C}(\omega(s_i),\rho(j_i))\leq b(\kappa)\,(i=1,2).\]
\end{lemma}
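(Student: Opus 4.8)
The strategy is to exploit hyperbolicity of ${\cal C\cal G}(S)$ together with the efficiency hypothesis. Since $\rho$ is a geodesic, $d_{\cal C}(\rho(0),\rho(k))=k$, so $\omega$ joins the endpoints of a geodesic of length $k$ by a path of length $r\le\kappa k$. Fix a hyperbolicity constant $\delta$ for ${\cal C\cal G}(S)$, parametrize $\omega$ by arclength on $[0,r]$, and fix a nearest point projection $\pi$ onto $\rho$. Put $h(t)=d_{\cal C}(\omega(t),\rho)$ and $q(t)=d_{\cal C}(\omega(t),\rho(0))-h(t)$. Then $q$ is continuous, $q(0)=0$, $q(r)=k$, and by thin triangles $q(t)$ differs by at most a universal $C_0\delta$ from the $\rho$-coordinate of $\pi(\omega(t))$; in particular $d_{\cal C}(\omega(t),\rho(\lfloor q(t)\rfloor))\le h(t)+C_0\delta+1$ for all $t$ (truncating $q$ to $[0,k]$ when needed). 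Choose $\beta=\beta(\kappa)>0$ large, let $G=\{t\in[0,r]:h(t)\le\beta\}$ (closed, containing $0$ and $r$), and set $b(\kappa)=\beta+C_0\delta+c_2+3$, where $c_2=c_2(\delta)$ is the additive constant in the contraction estimate below.

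For $i\in\{0,\dots,k\}$ put $r_i=\inf\{t\in G:q(t)\ge i\}$; since these sets are nested and closed, $r_i$ is attained, $r_i\in G$, $i\mapsto r_i$ is nondecreasing, $r_0=0$, $r_k\le r$. Call $i$ \emph{admissible} if $q(r_i)<i+c_2+2$; then $\lfloor q(r_i)\rfloor$ lies within $c_2+2$ of $i$, so $d_{\cal C}(\omega(r_i),\rho(i))\le h(r_i)+C_0\delta+1+c_2+2\le b(\kappa)$. The key claim --- and the main obstacle --- is that for $\beta$ large enough (in terms of $\kappa$ and $\delta$) at most $k/2$ levels are inadmissible. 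This uses contraction in the hyperbolic graph: on an \emph{excursion} $(a_m,b_m)$, i.e. a connected component of $\{h>\beta\}$, the coordinate $q$ changes by at most $\theta(\beta)(b_m-a_m)+c_2$, where $\theta(\beta)\to0$ as $\beta\to\infty$, because a path remaining at distance $>\beta$ from a geodesic makes progress along it at a rate tending to $0$ with $\beta$. By continuity of $q$, an inadmissible $i$ forces $r_i$ to be a right endpoint $b_m$ of an excursion with $M_m<i\le q(b_m)-c_2-2$, where $q(a_m)\le M_m:=\sup\{q(t):t\in G,\,t<b_m\}$; since distinct inadmissible levels give distinct endpoints $b_m$, an elementary count bounds the number of inadmissible levels by $\sum_m\big(q(b_m)-M_m-c_2\big)\le\sum_m\theta(\beta)(b_m-a_m)\le\theta(\beta)\,r\le\theta(\beta)\kappa k\le k/2$, once $\theta(\beta)\kappa\le1/2$. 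Executing the contraction estimate for $q$ along an excursion is where hyperbolicity of ${\cal C\cal G}(S)$ really enters, and I expect it to be the most delicate step.

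Thus at least $k/2$ levels are admissible, and $i\mapsto r_i$ is nondecreasing on them with total increase $\le r\le\kappa k$. Assume $k\ge8n$ (smaller $k$ being immediate from $\omega(0)=\rho(0)$, $\omega(r)=\rho(k)$, $r\le\kappa k$). For an admissible $i$ let $\nu(i)$ be the least admissible level $\ge i+n$; this is defined except for the $\le n$ admissible levels within $n$ of the largest one, hence for $\ge k/2-n$ admissible $i$. As any interval of length $n$ meets $\le n$ admissible levels, telescoping along consecutive admissible levels gives $\sum_i(r_{\nu(i)}-r_i)\le n\cdot(\text{total increase of }r_\bullet)\le n\kappa k$, the sum over admissible $i$ with $\nu(i)$ defined; by Markov's inequality at most $k/4$ of them have $r_{\nu(i)}-r_i>4n\kappa$. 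So with $T=T(\kappa,n)=4n\kappa$ there remain $\ge k/2-n-k/4\ge k/8$ admissible levels $i$ with $r_{\nu(i)}-r_i\le T$.

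From this set of $\ge k/8$ levels extract $A$ greedily: $i_1$ its least element, $i_2$ the least element $\ge\nu(i_1)$, and so on; each step discards $\le n$ elements (those in $[i_l,\nu(i_l))$), so $|A|\ge k/(8n)$ and $\tau(\kappa,n)=1/(8n)$ works. For $i\in A$ keep $r_i$ as above and set $s_i=r_{\nu(i)}$, $j_i=\nu(i)\ge i+n$: then $d_{\cal C}(\omega(r_i),\rho(i))\le b(\kappa)$, $d_{\cal C}(\omega(s_i),\rho(j_i))\le b(\kappa)$, $s_i\in[r_i,r_i+T]$, and if $i<i'$ are consecutive in $A$ then $i'\ge\nu(i)$ with both admissible, so $r_{i'}\ge r_{\nu(i)}=s_i$ by monotonicity, which is the required ordering. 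This proves the lemma with these choices of $b(\kappa)$, $T(\kappa,n)$ and $\tau(\kappa,n)$.
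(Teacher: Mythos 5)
Your argument is correct in substance, but it organizes the combinatorics rather differently from the paper, so a comparison is worthwhile. Both proofs rest on the same two ingredients: the coarse contraction of the nearest-point projection onto a geodesic in the hyperbolic graph ${\cal C\cal G}(S)$, and a pigeonhole on the total length $r\leq\kappa k$ of $\omega$. The paper implements this by choosing marker times $q_0<q_1<\dots$ along $\omega$ whose projections to $\rho$ advance by a fixed amount $5a$, using the length bound to show that for a definite proportion of indices the gap $q_{i+1}-q_i$ is bounded, deducing from the contraction estimate (\ref{diamcon}) that at such indices $\omega(q_i)$ lies near $\rho$, and finally producing the pairs $(r_i,s_i)$ by intersecting the good index set with its translate by $n$. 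You instead introduce the coarse coordinate $q(t)=d_{\cal C}(\omega(t),\rho(0))-d_{\cal C}(\omega(t),\rho)$, the good set $G=\{h\leq\beta\}$, and level hitting times $r_i$, control the ``inadmissible'' levels by an excursion decomposition, and then extract the pairs by telescoping, Markov's inequality and a greedy selection; this buys a cleaner separation between the geometric input and the counting, at the cost of a slightly more elaborate bookkeeping (admissibility, excursions, $\nu(i)$). The one step you leave unexecuted --- that along an excursion where $h>\beta$ the coordinate $q$ advances at rate $\theta(\beta)\to 0$ --- is exactly the paper's estimate (\ref{diamcon}), obtained in a few lines from the Masur--Minsky contraction properties of the projection by subdividing the excursion into pieces of length $b\beta$, together with the standard thin-triangle comparison between $q(t)$ and the projection coordinate; so it is a routine, not a delicate, step.

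Two small repairs. First, your parenthetical ``distinct inadmissible levels give distinct endpoints $b_m$'' is false as stated (several levels can have $r_i$ equal to the same excursion endpoint), but this is harmless: the displayed bound $\sum_m\bigl(q(b_m)-M_m-c_2\bigr)$ already counts the levels attached to each $b_m$ with the correct multiplicity, since their number is at most the number of integers in $(M_m,\,q(b_m)-c_2-2]$. Second, in the regime $k<n$ the conclusion as literally stated cannot hold (one would need $j_i\geq i+n$ with $j_i\leq k$), so your ``small $k$'' remark only covers $n\leq k<8n$ after enlarging $T$ to $8n\kappa$; this degenerate regime is equally untreated in the paper's proof, where $n$ is implicitly restricted to be small compared with $k$, and it is irrelevant for the application, but it deserves a sentence.
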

\begin{proof}
Let $\rho:[0,k]\to {\cal C\cal G}(S)$ 
be any geodesic in the curve graph.
Let $\Pi:{\cal C\cal G}(S)\to \rho[0,k]$
be a shortest distance projection (i.e. $\Pi$ associates to
a point $x\in {\cal C\cal G}(S)$ a point $\Pi(x)\in\rho[0,k]$ 
of minimal distance;
note that such a point need not be unique, and the map
$\Pi$ need not be continuous).
By hyperbolicity of ${\cal C\cal G}(S)$, 
there are numbers $a\geq 1,b>0$ only depending on the hyperbolicity
constant 
such that the projection $\Pi$ satisfies
the following contraction properties (see Section 2 of \cite{MM99}
for these properties in the case of the curve graph).
\begin{enumerate}
\item If $d_{\cal C}(x,y)\leq 1$ then $d_{\cal C}(\Pi(x),\Pi(y))\leq a$.
\item If $d_{\cal C}(x,\Pi(x))\geq a$ 
and $d_{\cal C}(x,y)\leq bd_{\cal C}(x,\Pi(x))$
then $d_{\cal C}(\Pi(x),\Pi(y))\leq a$.
\end{enumerate}

As a consequence, 
the following holds true. For every $p>a$ and for every 
one-Lipschitz edge path 
$\theta:[0,s]\to {\cal C\cal G}(S)$ of length $s>0$ which
does not intersect the $p$-neighborhood of $\rho[0,k]$, 
we have 
\begin{equation}\label{diamcon}
{\rm diam}(\Pi\theta[0,s])\leq 
as/bp+a.
\end{equation}
Namely, by property (2) above, the claim holds true
if $d_{\cal C}(\theta(0),\theta(\sigma))<bp$ for all $\sigma\in [0,s]$. 
Otherwise let $s_1\geq bp$ be the smallest 
number such that $d_{\cal C}(\theta(0),\theta(s_1))=
bp$. Then $d_{\cal C}(\Pi\theta(0),\Pi\theta(\sigma))\leq a$
for every $\sigma\in [0,s_1]$. Moreover,
$d_{\cal C}(\theta(s_1),\Pi\theta(s_1))\geq p$ and 
hence we can repeat this argument for the interval $[s_1,s_2]$
where $s_2\geq s_1+bp$ is the smallest number such that
$d_{\cal C}(\theta(s_1),\theta(s_2))=bp$.
The estimate
(\ref{diamcon}) now follows by induction.

Let $\kappa >1$ and let  
$\omega:[0,r]\to {\cal C\cal G}(S)$ be any 
one-Lipschitz edge path of length
$r\leq \kappa k$ connecting $\omega(0)=\rho(0)$ to 
$\omega(r)=\rho(k)$. Then property (1) above implies that the
image of $\omega[0,r]$ under the projection $\Pi$ is 
$a$-dense in $\rho[0,k]$. This means that for
every $t\in [0,k]$ there is some $s\in [0,r]$
such that $d_{\cal C}(\Pi(\omega(s)),\rho(t))\leq a$.
Moreover, we have $\Pi(\omega(0))=\rho(0)$ and
$\Pi(\omega(r))=\rho(k)$.

Identify $\rho[0,k]$ with the line
segment $[0,k]$. Let $\ell>0$ be the largest
integer so that $5a\ell\leq k$; then 
$k/5a-1\leq \ell \leq k/5a$. 
Choose inductively a sequence
$0=q_0<\dots <q_{\ell}<r\leq \kappa k$ 
by requiring that $q_i$ is the smallest
number such that $d_{\cal C}(\Pi(\omega(q_i)),\rho(5ai))\leq a$.
Note that 
\[\Pi(\omega(q_{i}))+3a\leq \Pi(\omega(q_{i+1}))\leq \Pi(\omega(q_i))+7a
\text{ for all }i.\]

Define
\[A=\{i\leq \ell\mid q_{i+1}\leq q_i+15\kappa a\}.\]
Since the length of $\omega$ does not exceed $\kappa k$
and $\ell+1\geq k/5a$, 
the cardinality of the 
subset $A$ of $\{0,\dots,\ell\}$ 
exceeds $2(\ell+1)/3> k/8a$.

Now if $i\in A$ then the
arc $\omega[q_i,q_{i+1}]$ of length at most
$15\kappa a$ is mapped by $\Pi$ to a subset of
$[0,k]$ of diameter 
at least $3a$. Thus by the estimate
(\ref{diamcon}), if $\chi>0$ is such that
$15\kappa a^2/b \chi +a=3a$ then  
there is some $j(i)\in [q_i,q_{i+1}]$ such that
the distance between 
$\omega(j(i))$ and $\rho[0,k]$
is at most $\chi$. Since $q_{i+1}\leq q_i+15\kappa a$, since
$\omega$ is a one-Lipschitz-curve and since the projection
$\Pi$ is distance minimizing, 
the distance
between $\omega(q_i)$ and $\Pi\omega(q_i)$ is
bounded from above by $\chi +15\kappa a$, i.e. by
a universal constant only depending on $\kappa$.

Let $n\in [1,\ell-1]$ and let 
$T_n:\mathbb{Z}\to \mathbb{Z}$ be
the translation $T_n(z)=z-n$. Since the cardinality
of $A\subset \{0,\dots,\ell\}$ is not smaller than $2(\ell+1)/3$, 
the cardinality
of the intersection $A\cap T_n(A)$ is at least
$(\ell+1-n)/3$. The cardinality of a maximal subset 
$C\subset A\cap T_n(A)$ with the additional
property that if $c\in C$ then $p\not\in C$ for every 
$p\in \{c+1,\dots,c+n-1\}$ is at least $(\ell+1-n)/3n
\geq (k/5a-n)/3n=k/15an-1/3$.

If $c\in C$ then $c\in A$, 
$c+n\in A$ and $c+j\not\in C$ for $1\leq j\leq n-1$.
Recall that $c\in A$ implies that
the distance between $\omega(q_c)$ and $\rho[0,k]$
is uniformly bounded.

Define $\tau=\tau(\kappa,n)=1/40 \kappa an$ 
and $T=T(\kappa,n)=30\kappa an$.
Since the 
cardinality
of $C$ is not smaller than $k/15an-1$ and the 
length of $\omega$ does not exceed $\kappa k$, 
there is a subset $\hat C\subset C$
of cardinality
at least $\tau k$ such that 
$q_{c+n}-q_c\leq T$ for every $c\in \hat C$.

By construction, if $c\in \hat C$ then 
\[\max\{d_{\cal C}(\omega(q_c),\rho(5ac)),
d_{\cal C}(\omega(q_{c+n}),\rho(5a(c+n)))\}
\leq \chi +15\kappa a\]
and $d_{\cal C}(\rho(5ac),\rho(5a(c+n)))= 5an>n$
(recall that $a\geq 1$).
This shows the lemma with 
$b(\kappa)=\chi +15\kappa a$ and with 
$r_c=q_c$ and $s_c=q_{c+n}$ for $c\in \hat C$. 
\end{proof}

To use Lemma \ref{curvegraph} 
to obtain a control on the intersection of a Weil-Petersson
geodesic with the thick part of Teichm\"uller space 
we have to compare the distances $d_{\rm WP}$ and
$d_{\cal T}$ on ${\cal T}(S)$. 

The following statement is well known. We include it
as a lemma for easy reference.

\begin{lemma}\label{wpcomparison}
For all $x,y\in {\cal T}(S)$ the following holds true.
\begin{enumerate}
\item $d_{WP}(x,y)\leq \sqrt{2\pi(2g-2+m)} d_{\cal T}(x,y)$.
\item There is a number $L>1$ such that 
$d_{\cal C}(\Upsilon_{\cal T}(x),\Upsilon_{\cal T}(y))\leq
Ld_{WP}(x,y)+L$.
\end{enumerate}
\end{lemma}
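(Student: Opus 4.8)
The plan is to prove the two inequalities separately; both are soft consequences of the definitions together with results already recorded in the excerpt.

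For the first inequality I would work on the cotangent bundle. For $x\in {\cal T}(S)$ the fibre over $x$ of the cotangent bundle is the space $Q(x)$ of holomorphic quadratic differentials on the Riemann surface $x$; the Teichm\"uller co-norm of $q\in Q(x)$ is the area $\|q\|_{\cal T}=\int_x|q|$, and the Weil-Petersson co-norm is the $L^2$-norm $\|q\|_{WP}=\big(\int_x |q|^2/\rho\big)^{1/2}$ taken with respect to the hyperbolic metric $\rho$ of $x$. By the Cauchy-Schwarz inequality,
\[
\int_x |q|=\int_x \frac{|q|}{\sqrt{\rho}}\,\sqrt{\rho}\leq \Big(\int_x \frac{|q|^2}{\rho}\Big)^{1/2}\Big(\int_x \rho\Big)^{1/2}=\|q\|_{WP}\,\sqrt{{\rm Area}_\rho(x)},
\]
and by the Gauss-Bonnet theorem the hyperbolic area of $x$ equals $2\pi(2g-2+m)$. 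Hence $\|q\|_{\cal T}\leq\sqrt{2\pi(2g-2+m)}\,\|q\|_{WP}$ at every point of the cotangent bundle. Dualizing this pointwise inequality of norms gives $\|v\|_{WP}\leq \sqrt{2\pi(2g-2+m)}\,\|v\|_{\cal T}$ for every tangent vector $v$, where now $\|\cdot\|_{\cal T}$ and $\|\cdot\|_{WP}$ denote the Teichm\"uller Finsler norm and the Weil-Petersson Riemannian norm on the tangent bundle. Since $d_{\cal T}$ and $d_{WP}$ are the length metrics of these norms and $x,y\in{\cal T}(S)$, integrating this inequality along the Teichm\"uller geodesic from $x$ to $y$ yields $d_{WP}(x,y)\leq \sqrt{2\pi(2g-2+m)}\,d_{\cal T}(x,y)$.

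For the second inequality I would pass through the pants graph; write $d_{\cal P}$ for the metric on ${\cal P\cal G}(S)$. Let $x,y\in {\cal T}(S)$, put $d=d_{WP}(x,y)$, and recall that since $\overline{{\cal T}(S)}$ is a ${\rm CAT}(0)$ geodesic space and a finite length Weil-Petersson geodesic arc with endpoints in ${\cal T}(S)$ remains in ${\cal T}(S)$, there is a unit speed Weil-Petersson geodesic $\gamma:[0,d]\to {\cal T}(S)$ from $x$ to $y$. Set $N=\lceil d\rceil$ (the case $d=0$ being trivial), subdivide $[0,d]$ into $N$ geodesic subsegments of length $d/N\leq 1$, and choose Bers decompositions $P_i$ for the points $\gamma(id/N)$ with $P_0=\Upsilon_{\cal P}(x)$ and $P_N=\Upsilon_{\cal P}(y)$. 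By Lemma \ref{pants}, for each $i$ the decompositions $P_i$ and $P_{i+1}$ are joined by an edge path in ${\cal P\cal G}(S)$ of length at most $\chi_1$; concatenating these paths gives $d_{\cal P}(\Upsilon_{\cal P}(x),\Upsilon_{\cal P}(y))\leq \chi_1 N\leq \chi_1(d+1)$.

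It remains to compare the pants graph with the curve graph. Fix a map $\Pi:{\cal P}(S)\to {\cal C}(S)$ sending a pants decomposition to one of its component curves. If $P,P'$ differ by an elementary move then, since $3g-3+m\geq 2$, they share at least one component curve $\gamma_0$; as $\Pi(P),\gamma_0$ are disjoint component curves of $P$ and $\gamma_0,\Pi(P')$ are disjoint component curves of $P'$, we get $d_{\cal C}(\Pi(P),\Pi(P'))\leq 2$, hence $d_{\cal C}(\Pi(P),\Pi(Q))\leq 2\,d_{\cal P}(P,Q)$ for all $P,Q\in{\cal P}(S)$. Moreover both $\Pi(\Upsilon_{\cal P}(x))$ and $\Upsilon_{\cal T}(x)$ are simple closed curves of $x$-length at most $\chi_0$, so by Lemma \ref{bounded} their distance in ${\cal C\cal G}(S)$ is at most $a(\chi_0)$, and likewise at $y$. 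Combining these estimates,
\[
d_{\cal C}(\Upsilon_{\cal T}(x),\Upsilon_{\cal T}(y))\leq 2a(\chi_0)+2\,d_{\cal P}(\Upsilon_{\cal P}(x),\Upsilon_{\cal P}(y))\leq 2a(\chi_0)+2\chi_1\,d_{WP}(x,y)+2\chi_1,
\]
so the claim holds with $L=2\chi_1+2a(\chi_0)+2$. The only point requiring a little care is the dualization in the first part: one must keep straight that the Weil-Petersson and Teichm\"uller pairings of tangent vectors with quadratic differentials are the same natural pairing, so that the co-norm inequality dualizes to the stated tangent-norm inequality with the same constant; everything else is routine.
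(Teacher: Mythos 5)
Your proof is correct, and it differs from the paper's mainly in being self-contained where the paper cites the literature. For part (1) the paper simply invokes Linch \cite{Li74}; your Cauchy--Schwarz comparison of the $L^1$ (Teichm\"uller) and $L^2$ (Weil--Petersson) co-norms on $Q(x)$, combined with Gauss--Bonnet and dualization through the common pairing of Beltrami and quadratic differentials, is essentially Linch's original argument, and you correctly isolate the dualization as the one point needing care (the constant does pass through duality, and integrability of the $L^2$ norm at the cusps is not an issue for differentials with at most simple poles). For part (2) the paper takes the same overall route through the pants graph and the same component-picking map $\Psi$ with the same disjointness argument, but it obtains the coarse Lipschitz bound $d_{\cal P}(\Upsilon_{\cal P}x,\Upsilon_{\cal P}y)\leq L_1 d_{WP}(x,y)+L_1$ by citing Brock's quasi-isometry theorem \cite{B03}, whereas you derive exactly that one-sided bound by subdividing the Weil--Petersson geodesic (which stays in ${\cal T}(S)$ by Wolpert's convexity result quoted in the introduction) into segments of length at most $1$ and iterating Lemma \ref{pants}. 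This is legitimate and arguably cleaner in context: only the easy direction of Brock's theorem is needed, Lemma \ref{pants} is already proved in the paper from Wolpert's gradient estimate, and you get an explicit constant; what the paper's citation buys is brevity. Your handling of the ambiguity of $\Upsilon_{\cal T}$ via Lemma \ref{bounded} at the two endpoints, instead of the paper's choice $\Upsilon_{\cal T}=\Psi\circ\Upsilon_{\cal P}$, is an equally valid fix.
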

\begin{proof} The first part of the lemma is due to 
Linch \cite{Li74}.

To show the second part of the lemma, 
let as before $\chi_0>0$ be a Bers constant for $S$.
Let $\Upsilon_{\cal P}:{\cal T}(S)\to {\cal P}(S)$ be any
map which associates to a hyperbolic metric
$x\in {\cal T}(S)$ a Bers decomposition for $x$.
By a result of Brock \cite{B03}, the
map $\Upsilon_{\cal P}$ is a quasi-isometry with respect to
the Weil-Petersson metric on ${\cal T}(S)$ and the
combinatorial metric $d_{\cal P}$ on the pants graph ${\cal P\cal G}(S)$:  
There is a number
$L_1>0$ such that
\begin{equation}\label{qisom}
d_{WP}(x,y)/L_1-L_1\leq 
d_{\cal P}(\Upsilon_{\cal P} x,\Upsilon_{\cal P} y)\leq L_1d_{WP}(x,y)+L_1
\text{ for all } x,y\in {\cal T}(S).
\end{equation}

Let $\Psi:{\cal P}(S)\to 
{\cal C}(S)$ be a map which associates to
a pants decomposition $P\in {\cal P}(S)$
one of its component curves.
If $P,P^\prime$ are pants decompositions
with $d_{\cal P}(P,P^\prime)=1$
then $P^\prime$ can be obtained from $P$ by an elementary
move and hence $P\cap P^\prime\not=\emptyset$ (recall that
by assumption, the surface $S$ is not a once punctured torus
or a four-punctured sphere).
This implies that the distance in ${\cal C\cal G}(S)$
between any component of $P$ and any component of $P^\prime$ is
at most $2$. Since the pants graph ${\cal P\cal G}(S)$ is a geodesic metric
graph, we have \[d_{\cal C}(\Psi P_1,\Psi P_2)\leq 
2d_{\cal P}(P_1,P_2)\text{ for all }P_1,P_2\in {\cal P}(S).\]

Now the map $\Upsilon_{\cal T}:{\cal T}(S)\to {\cal C}(S)$ which
associates to a point $x\in {\cal T}(S)$ a simple
closed curve of $x$-length at most $\chi_0$ can be
chosen to coincide with $\Psi\circ \Upsilon_{\cal P}$. 
Together with inequality (\ref{qisom}), 
the second part of the lemma follows.
\end{proof}

As before, let $\chi_0>0$ be a Bers constant for $S$.
By Lemma \ref{bounded}, if $\alpha,\beta\in {\cal C}(S)$ are such that
there is some $x\in {\cal T}(S)$ with 
$\ell_\alpha(x)\leq \chi_0,\ell_\beta(x)\leq \chi_0$ the 
the distance in ${\cal C\cal G}(S)$
between $\alpha,\beta$ 
is at most $a(\chi_0)>0$. In particular, if
$\alpha\in {\cal C}(S)$ is of $x$-length at most $\chi_0$ and 
and if $\beta\in {\cal C}(S)$ is such that $d_{\cal C}(\alpha,\beta)\geq
a(\chi_0)+3$ then $\beta\cup \xi$ binds $S$ for  
any simple closed curve $\xi\in {\cal C}(S)$ with 
$\ell_\xi(x)\leq \chi_0$.

For $\alpha\in {\cal C}(S)$ 
recall from Section 4 the definition of the graph
${\cal C\cal G}(\alpha)$ and
the definition of the projection 
$\pi_\alpha:{\cal C}(S)\to {\cal C\cal G}(\alpha)$. 
The following lemma is a version of a result of Masur
and Minsky  \cite{MM00}: 
Teichm\"uller geodesics in the thick part of 
Teichm\"uller space have bounded combinatorics
(see also  \cite{R05} and
\cite{R14}). 

\begin{lemma}\label{localcontrol}
For every $R>0$ there are numbers
$\delta=\delta(R)>0$, 
$c=c(R)>0$ with the property 
that for every Teichm\"uller geodesic arc
$\gamma:[0,r]\to{\cal T}(S)$
of length $r\leq R$ such that
$d_{\cal C}(\Upsilon_{\cal T}\gamma(0),\Upsilon_{\cal T}\gamma(r))\geq 2a(\chi_0)+3$
the following is satisfied.
\begin{enumerate}
\item $\gamma[0,r]\subset {\cal T}(S)_\delta$.
\item 
${\rm diam}(\pi_\beta(\Upsilon_{\cal T}\gamma(0))\cup
\pi_{\beta}(\Upsilon_{\cal T}\gamma(r)))\leq c$ for all $\beta\in {\cal C}(S).$ 
\end{enumerate}
\end{lemma}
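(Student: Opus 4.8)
The statement asserts two things about a bounded-length Teichm\"uller geodesic arc $\gamma:[0,r]\to{\cal T}(S)$ whose endpoints project far apart in the curve graph: first, that $\gamma$ stays in a fixed thick part ${\cal T}(S)_\delta$, and second, that the $\beta$-projection diameters of the Bers curves at the endpoints are uniformly bounded for every annular curve $\beta$. The natural strategy is a compactness argument combined with the Masur--Minsky machinery already quoted. I would argue by contradiction: if either conclusion fails, there is a sequence of arcs $\gamma_i:[0,r_i]\to{\cal T}(S)$ with $r_i\le R$, with $d_{\cal C}(\Upsilon_{\cal T}\gamma_i(0),\Upsilon_{\cal T}\gamma_i(r_i))\ge 2a(\chi_0)+3$, and with either (i) some $t_i$ with $\operatorname{sys}(\gamma_i(t_i))\to 0$, or (ii) some $\beta_i\in{\cal C}(S)$ with ${\rm diam}(\pi_{\beta_i}(\Upsilon_{\cal T}\gamma_i(0))\cup\pi_{\beta_i}(\Upsilon_{\cal T}\gamma_i(r_i)))\to\infty$.

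For part (1): by cocompactness of ${\rm Mod}(S)$ on the locus where some fixed Bers curve is short (Deligne--Mumford), and by equivariance of $\Upsilon_{\cal T}$, after applying mapping classes and passing to a subsequence I may assume $\gamma_i(0)$ converges in $\overline{{\cal T}(S)}$ and $r_i\to r_\infty\le R$. If $\gamma_i(t_i)$ leaves every thick part, then since the arcs have uniformly bounded length, a limiting argument (Arzel\`a--Ascoli for Teichm\"uller geodesics, using completeness of the Teichm\"uller metric and continuity of the extremal length / hyperbolic length functions) shows the $\gamma_i$ converge to a Teichm\"uller geodesic arc $\gamma_\infty$, possibly with an endpoint on the boundary, along which some simple closed curve $\alpha$ stays short of $\chi_0$ at the limit of the $t_i$. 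But a curve that is short somewhere on a Teichm\"uller geodesic of length $\le R$ has extremal length bounded on the whole arc, hence is a Bers curve at \emph{both} endpoints by the standard Teichm\"uller-geodesic estimates (Minsky's product regions / Rafi \cite{R07b}); this forces $d_{\cal C}(\Upsilon_{\cal T}\gamma_i(0),\Upsilon_{\cal T}\gamma_i(r_i))$ to be bounded, contradicting the hypothesis $\ge 2a(\chi_0)+3$ once one notes that $\alpha$ is then within bounded curve-graph distance of both endpoint Bers curves. Choosing $\delta=\delta(R)$ to be the infimum over this (nonexistent) bad sequence gives the bound.

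For part (2): once part (1) is known, $\gamma_i\subset{\cal T}(S)_\delta$ with $\delta=\delta(R)$ fixed. Then by Theorem \ref{unparam} and its thick-part refinement, $\Upsilon_{\cal T}\gamma_i$ is a parametrized $L'$-quasi-geodesic in ${\cal C\cal G}(S)$ with $L'=L'(\delta)$, and the combinatorics of $\gamma_i$ are controlled: by the Masur--Minsky bounded-geometry theorem (Theorem 6.1 of \cite{MM00}), a Teichm\"uller geodesic in ${\cal T}(S)_\delta$ has all subsurface projection coefficients between its endpoints bounded by a constant $c=c(\delta)$. This is exactly statement (2) for annular subsurfaces. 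So I would simply cite \cite{MM00} (and the thick-part version in \cite{H10a}, \cite{R05}, \cite{R14}) for the annular projection bound, or reprove the annular case directly: a short curve $\beta$ whose relative twisting between the two endpoint Bers curves is large would, by the Minsky product-region theorem, force $\beta$ to become short along $\gamma$, contradicting part (1).

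The main obstacle is the interface between the convexity/compactness soft arguments and the hard Masur--Minsky subsurface-projection estimates: one must make sure that "a curve short somewhere on a bounded Teichm\"uller geodesic" genuinely forces the curve-graph distance of the endpoint projections to be bounded (so that the contradiction with $2a(\chi_0)+3$ bites), and that the annular-projection bound in part (2) is uniform in the same constant $\delta$ produced in part (1). Both follow from \cite{R07b} and \cite{MM00}, so the proof is essentially an assembly of quoted results plus the contradiction/compactness scaffolding; no new estimate is needed.
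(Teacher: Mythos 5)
Your proposal is in substance the paper's own argument: part (1) rests on the quantitative fact that a curve which is very short at some point of a Teichm\"uller arc of length at most $R$ has length at most $\chi_0$ along the whole arc, so that by Lemma \ref{bounded} the endpoint Bers curves are at curve-graph distance at most $2a(\chi_0)$, contradicting the hypothesis $\geq 2a(\chi_0)+3$ --- the paper does this directly, quoting Wolpert's Lemma 3.1 of \cite{W79} and taking $\delta=\chi_0e^{-R}$ explicitly --- and part (2) is, in both your write-up and the paper, a citation of \cite{MM00} (most conveniently via the distance formula of \cite{R07b}). The only difference is your compactness/limiting scaffold, which is dispensable and a little shaky as stated (${\rm Mod}(S)$ does not act cocompactly on the locus where a curve is short, and the Teichm\"uller metric is complete, so there is no boundary arc for the $\gamma_i$ to converge to); since your inner distortion estimate applies directly to each $\gamma_i$ with no limit needed, the argument still goes through, but the direct quantitative version is cleaner.
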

\begin{proof}
For $R>0$ a Teichm\"uller geodesic
$\gamma:[0,r]\to {\cal T}(S)$ of length $r\leq R$ such that
$d_{\cal C}(\Upsilon_{\cal T}\gamma(0),\Upsilon_{\cal T}\gamma(r))\geq 2a(\chi_0)+3$ 
is entirely contained in ${\cal T}(S)_\delta$ where
$\delta=\chi_0e^{-R}$. Namely, 
by Lemma 3.1 of \cite{W79}, 
if $\zeta:J\subset \mathbb{R}\to {\cal T}(S)$ is any Teichm\"uller geodesic
and if $s\in J$, 
$\alpha\in {\cal C}(S)$ are such that 
$\ell_{\alpha}(\zeta(s))\leq \delta$
then $\ell_{\alpha}(\zeta(t))\leq \chi_0$ for every 
$t$ with $\vert s-t\vert \leq \log \chi_0-\log \delta$.
In particular, by Lemma \ref{bounded}, if 
$\vert s-t_i\vert \leq \log \chi_0-\log \delta$ 
for $i=1,2$ then 
we have
$d_{\cal C}(\Upsilon_{\cal T}(\zeta(t_1)),\Upsilon_{\cal T}(\zeta(t_2)))
\leq 2a(\chi_0)$. The first part of the lemma follows.

The second part of the lemma can be extracted from \cite{MM00}. 
Perhaps the most convenient reference is the distance formula
of \cite{R07b}.
\end{proof}

We use 
Proposition \ref{finitecontrol} and  
Lemma \ref{curvegraph}-\ref{localcontrol} 
to show that a WP-geodesic connecting two sufficiently far apart 
points on a Teichm\"uller 
geodesic $\gamma:\mathbb{R}\to {\cal T}(S)$
whose image under $\Upsilon_{\cal T}$ makes controlled
progress in the curve graph spends a definitive
proportion of time in the thick part of Teichm\"uller space.

\begin{proposition}\label{thickmeet}
For every $\theta >0$ there are numbers 
$\delta=\delta(\theta) >0$ and $\eta=\eta(\theta)>0$ 
with the following property. Let $\xi\geq 1/\eta$ and let 
$\gamma:[0,\xi]\to 
{\cal T}(S)$ be a Teichm\"uller geodesic 
such that 
\[d_{\cal C}(\Upsilon_{\cal T}(\gamma(0)),\Upsilon_{\cal T}(\gamma(\xi)))\geq
\theta \xi.\] If 
$\zeta:[0,\sigma]\to {\cal T}(S)$
is the Weil-Petersson geodesic connecting $\zeta(0)=\gamma(0)$
to $\zeta(\sigma)=\gamma(\xi)$ then 
$\ell_{\delta-{\rm thick}}(\zeta)\geq \eta \xi$.
\end{proposition}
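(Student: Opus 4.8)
The plan is to estimate $\ell_{\delta\text{-thick}}(\zeta)$ from below by counting how often $\zeta$ returns to a fixed thick part, using the curve graph as a bridge between $\zeta$ and the Teichm\"uller geodesic $\gamma$. Set $C=\sqrt{2\pi(2g-2+m)}$, so by Lemma \ref{wpcomparison}(1) the WP-length $\sigma$ of $\zeta$ satisfies $\sigma\le C\xi$. By Wolpert's gradient estimate for length functions (Lemma 3.12 of \cite{W08}) together with convexity along WP-geodesics, any point of $\zeta$ lying in ${\cal T}(S)_{2\delta}$ is the centre of a WP-subinterval of a definite length $\rho_0=\rho_0(\delta)$ on which $\zeta$ stays in ${\cal T}(S)_\delta$. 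Hence it suffices to produce $N\ge c_0(\theta)\,\xi$ points of $\zeta$ lying in ${\cal T}(S)_{2\delta}$ and pairwise at WP-distance $\ge 1$: the corresponding $\rho_0$-intervals are pairwise disjoint, giving $\ell_{\delta-{\rm thick}}(\zeta)\ge 2\rho_0 N$, and one then sets $\eta=2\rho_0 c_0$.

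Next I set up the comparison in ${\cal C\cal G}(S)$. Let $\rho:[0,k]\to{\cal C\cal G}(S)$ be a geodesic from $\Upsilon_{\cal T}(\gamma(0))$ to $\Upsilon_{\cal T}(\gamma(\xi))$, so $k\ge\theta\xi$ by hypothesis. Sampling $\zeta$ at the integers, the points $\Upsilon_{\cal T}(\zeta(j))$ are at mutual distance at most $2L$ by Lemma \ref{wpcomparison}(2); joining consecutive ones by geodesic segments of ${\cal C\cal G}(S)$ and then reparametrising the resulting path with constant speed $1$ (which rescales the parameter by the fixed factor $2L$) gives a one-Lipschitz curve $\omega:[0,2L\sigma]\to{\cal C\cal G}(S)$ with $\omega(0)=\rho(0)$, $\omega(2L\sigma)=\rho(k)$, of length $\le 2LC\xi\le\kappa k$ where $\kappa=2LC/\theta$. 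The essential point is that the $\omega$-parameter and the $\zeta$-parameter differ only by the factor $2L$.

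Now apply Lemma \ref{curvegraph} to $\rho$ and $\omega$ with this $\kappa$ and with an integer $n$ to be fixed below. This yields a set $A$ of cardinality $\ge\tau(\kappa,n)\,\theta\xi$ together with essentially pairwise disjoint $\omega$-intervals $[r_i,s_i]$, $s_i\le r_i+T$, and indices $j_i\ge i+n$, with $\omega(r_i)$ within $b(\kappa)$ of $\rho(i)$ and $\omega(s_i)$ within $b(\kappa)$ of $\rho(j_i)$. Since $\omega$ is one-Lipschitz, $j_i-i\le d_{\cal C}(\omega(r_i),\omega(s_i))+2b(\kappa)\le T+2b(\kappa)$, so $j_i-i$ is bounded above as well as below by $n$. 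Translating back, the corresponding subarcs $\zeta|_{[u_i,w_i]}$ with $u_i=r_i/2L$, $w_i=s_i/2L$ have WP-length at most $T/2L$, while $d_{\cal C}(\Upsilon_{\cal T}\zeta(u_i),\Upsilon_{\cal T}\zeta(w_i))\ge n-2b(\kappa)-4L$, and their Bers curves at $\zeta(u_i),\zeta(w_i)$ are $\mathcal C\mathcal G(S)$-close to $\rho(i),\rho(j_i)$. Because these subarcs have bounded length, are pairwise disjoint and number $\ge\tau\theta\xi$, one can extract a subfamily of size $\ge c_0(\theta)\xi$ whose subarcs lie at pairwise $\zeta$-distance $\ge 1$. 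Thus the proof reduces to one claim: for $n$ large enough, every such subarc $\zeta|_{[u_i,w_i]}$ meets ${\cal T}(S)_{2\delta}$.

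The crossing claim is where the real work lies, and is handled by contradiction through Proposition \ref{finitecontrol}. If $\zeta|_{[u_i,w_i]}$ missed ${\cal T}(S)_{2\delta}$, some simple closed curve $\alpha$ would be $2\delta$-short along it; choosing $2\delta=\epsilon(T/2L,c)$ from Proposition \ref{finitecontrol} and taking $n$ larger than all accumulated constants (so that $\alpha$, being $\mathcal C\mathcal G(S)$-close to $\Upsilon_{\cal T}\zeta(t_*)$ for an interior $t_*$, cannot be within $p$ of both of the far-apart endpoint Bers curves; the case where $\alpha$ is near just one endpoint is removed by the same spreading argument applied to sub-subarcs), Proposition \ref{finitecontrol} forces $\operatorname{diam}(\pi_\beta(P_{u_i})\cup\pi_\beta(P_{w_i}))>c$ for some $\beta$ with $d_{\cal C}(\alpha,\beta)\le1$. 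This I contradict by bounding the endpoint combinatorics of $\zeta$, transported from $\gamma$: by Theorem \ref{unparam} and the Morse lemma in the hyperbolic space ${\cal C\cal G}(S)$, the segment $\rho|_{[i,j_i]}$ fellow-travels a segment of $\Upsilon_{\cal T}(\gamma)$; since on a thin excursion $\Upsilon_{\cal T}(\gamma)$ stays $\mathcal C\mathcal G(S)$-close to the pinching curve, $\rho$ lies within bounded Hausdorff distance of the image under $\Upsilon_{\cal T}$ of the thick part of $\gamma$, so the matching $\gamma$-parameters may be chosen in some ${\cal T}(S)_{\delta_1}$, where on bounded subintervals Lemma \ref{localcontrol} gives $c$-bounded projections $\pi_\beta$, which Lemma \ref{pants} and Corollary \ref{pantslength} then transfer to $P_{u_i},P_{w_i}$. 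The main obstacle is precisely this transfer: between $\rho(i)$ and $\rho(j_i)$ the Teichm\"uller geodesic $\gamma$ may undergo long thin excursions that contribute no curve-graph progress, and one must show that any such excursion is around a curve that is $\mathcal C\mathcal G(S)$-far from $\alpha$ (hence irrelevant to the projections $\pi_\beta$ with $d_{\cal C}(\alpha,\beta)\le1$) --- equivalently, that the only short curves $\zeta$ can develop on $[u_i,w_i]$ shadow thin excursions of $\gamma$ whose bounded combinatorics is itself controlled by a direct application of Proposition \ref{finitecontrol} to the corresponding bounded Teichm\"uller subarc.
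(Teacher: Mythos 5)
Your skeleton is the same as the paper's: compare $\zeta$ with a curve-graph geodesic $\rho$ via the Lipschitz path $\omega$ built from $\Upsilon_{\cal T}(\zeta(j))$, use Lemma \ref{curvegraph} to produce linearly many disjoint WP-subarcs of bounded length with definite progress in ${\cal C\cal G}(S)$, show each such subarc meets ${\cal T}(S)_{2\delta}$ via Proposition \ref{finitecontrol}, and convert thick points into definite thick length by Wolpert's gradient bound. The gap is exactly where you say it is, and it is not closed by your final sentence. To apply Proposition \ref{finitecontrol} (or its contrapositive) you must bound ${\rm diam}(\pi_\beta(P_{u_i})\cup\pi_\beta(P_{w_i}))$ for \emph{every} $\beta$ within distance $1$ of the putative short curve $\alpha$, and your route to this bound transports combinatorics from the full Teichm\"uller segment of $\gamma$ matched to $\rho[i,j_i]$, which may have arbitrarily large Teichm\"uller length and long thin excursions. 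During such an excursion about a curve $\beta_0$ the relative twisting, i.e. ${\rm diam}(\pi_{\beta_0}(\Upsilon_{\cal T}\gamma(t))\cup\pi_{\beta_0}(\Upsilon_{\cal T}\gamma(t')))$, can be arbitrarily large, and nothing in your argument shows $d_{\cal C}(\alpha,\beta_0)>1$; proving that the only curves $\zeta$ can pinch are far from all excursion curves of $\gamma$ is essentially as hard as the proposition itself. Moreover the fix you gesture at ("a direct application of Proposition \ref{finitecontrol} to the corresponding bounded Teichm\"uller subarc") is not available: that proposition is a statement about Weil--Petersson geodesics of bounded length, and the Teichm\"uller segment in the problematic case is precisely not bounded.

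The paper's proof avoids this obstacle by an extra pigeonhole step that your proposal is missing. After Lemma \ref{curvegraph} one matches each pair $\rho(i),\rho(j_i)$ to points $\gamma(t_i),\gamma(t_i+v_i)$ using the fellow-traveling of $\rho$ and the unparametrized quasi-geodesic $\Upsilon_{\cal T}\circ\gamma$ (Theorem \ref{unparam}); since the blocks are essentially disjoint in the $\gamma$-parameter and $\xi\le k/\theta$ while there are at least $\tau k$ blocks, at least half of them satisfy $v_i\le 2/\theta\tau$. On these blocks the Teichm\"uller segment $\gamma[t_i,t_i+v_i]$ has \emph{uniformly bounded length} and makes curve-graph progress $\ge 2a(\chi_0)+3$, so Lemma \ref{localcontrol} applies and yields a uniform bound $c$ on ${\rm diam}(\pi_\beta(\Upsilon_{\cal T}\gamma(t_i))\cup\pi_\beta(\Upsilon_{\cal T}\gamma(t_i+v_i)))$ for \emph{all} $\beta\in{\cal C}(S)$ simultaneously; this is then transferred to the Bers decompositions at the endpoints of the WP-subarc by the elementary observation (via Lemma 2.3 of \cite{MM00}) that a curve-graph geodesic avoiding the $2$-ball about $\beta$ moves annular projections by a bounded amount. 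Finally, instead of arguing by contradiction, the paper picks an interior time $t$ of the WP-subarc whose Bers curves are at distance at least $p$ from both endpoint decompositions (possible since the subarc makes progress at least $2p+2B+5a(\chi_0)+8$ and $\Upsilon_{\cal T}\zeta$ is $a(\chi_0)$-dense along a connecting path) and applies Proposition \ref{finitecontrol} directly to every Bers curve of $\zeta(t)$, concluding $\zeta(t)\in{\cal T}(S)_{2\delta}$. If you insert the bounded-$v_i$ selection and this interior-point argument, your outline becomes the paper's proof; without them the crossing claim remains unproved.
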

\begin{proof}
Let  $\theta >0$ and let 
$\gamma:[0,\xi]\to {\cal T}(S)$ be any 
Teichm\"uller geodesic of length $\xi\geq 2a(\chi_0)+3/\theta$ 
such that 
\begin{equation}\label{synchron}
d_{\cal C}(\Upsilon_{\cal T}(\gamma(0)),\Upsilon_{\cal T}(\gamma(\xi)))\geq
\theta \xi.\end{equation}
Here $a(\chi_0)>0$ is as in Lemma \ref{bounded} for a Bers constant 
$\chi_0$ for $S$.

Write $C=\sqrt{2\pi(2g-2+m)}$. 
Lemma \ref{wpcomparison} yields that 
\begin{align}\label{squeeze}
d_{\cal C}(\Upsilon_{\cal T}\gamma(0),
\Upsilon_{\cal T}\gamma(\xi))/L-1 & \leq 
d_{WP}(\gamma(0),\gamma(\xi))\\ 
\leq C\xi= Cd_{\cal T}(\gamma(0),\gamma(\xi)) & \leq  
Cd_{\cal C}(\Upsilon_{\cal T}\gamma(0),\Upsilon_{\cal T}\gamma(\xi))/\theta.
\notag
\end{align}

Let $\zeta:[0,\sigma]\to 
{\cal T}(S)$ be the WP-geodesic connecting $\zeta(0)=\gamma(0)$
to $\zeta(\sigma)=\gamma(\xi)$. 
By the second part of Lemma \ref{wpcomparison},
we have $d_{\cal C}(\Upsilon_{\cal T}(\zeta(i)),
\Upsilon_{\cal T}(\zeta(i+1)))\leq 2L$ for all $i$.
Thus via connecting 
$\Upsilon_{\cal T}(\zeta(i))$ to $\Upsilon_{\cal T}(\zeta(i+1))$ 
by a geodesic segment
in ${\cal C\cal G}(S)$ for each $i$ 
and subsequent reparametrization
we obtain a 1-Lipschitz curve $\omega:[0,u]\to {\cal C\cal G}(S)$ 
connecting $\omega(0)=\Upsilon_{\cal T}(\gamma(0))$ to 
$\omega(u)=\Upsilon_{\cal T}(\gamma(\xi))$ whose
length $u$ does not exceed $2L d_{WP}(\gamma(0),\gamma(\xi))$.
(To obtain this estimate we slightly adjust the constant $L$ to 
accommodate for the problem that the length $\sigma$ of $\zeta$
is not integral in general. This adjustment simplifies the notation).
 
Inequality (\ref{squeeze}) then implies that 
the length $u$ of $\omega$ is also bounded from above by 
\[u\leq 2LC d_{\cal C}(\Upsilon_{\cal T}\gamma(0),
\Upsilon_{\cal T}\gamma(\xi))/\theta= \kappa 
d_{\cal C}(\Upsilon_{\cal T}\gamma(0),\Upsilon_{\cal T}\gamma(\xi))\]
where $\kappa =2LC/\theta>0$ only depends on $\theta$.
Moreover, every point on the curve $\omega$ is of distance at most
$L$ from a point in $\Upsilon_{\cal T}(\zeta[0,\sigma])$. 

Let $k=
d_{\cal C}(\Upsilon_{\cal T}\gamma(0),\Upsilon_{\cal T}\gamma(\xi))\geq
\max\{\theta \xi,u/\kappa\}$. 
By Theorem \ref{unparam}, there is a number $L_1>1$ such that
the curve 
$t\to \Upsilon_{\cal T}(\gamma(t))$ is an unparametrized
$L_1$-quasi-geodesic 
in the curve graph. 
Hence
by hyperbolicity, the Hausdorff distance between
its image and the image of 
a geodesic $\rho:[0,k]\to {\cal C\cal G}(S)$ with
the same endpoints is bounded from above 
by a universal constant $\beta>0$.

Let $p=p(\chi_0,\chi_1)\geq 1$ be as 
defined before Proposition \ref{finitecontrol}.
For $\kappa=2LC/\theta>1$ let $b(\kappa)>0$ be as in 
Lemma \ref{curvegraph} and write
\[B=b(\kappa)+\beta+L.\] 
Note that $B>0$ only depends on $\theta$. 

Apply Lemma \ref{curvegraph} to the geodesic
$\rho$ in ${\cal C\cal G}(S)$ of length $k$, the one-Lipschitz
edge path  $\omega:[0,u]\to {\cal C\cal G}(S)$ of length
$u\leq \kappa k$ connecting $\omega(0)=\rho(0)$ to 
$\omega(u)=\rho(k)$ and the constant 
\[n=2p+4B+2\beta+5a(\chi_0)+8.\] 
Note that $n>0$ only depend on $\theta$.
Using the fact that every point on 
$\omega[0,u]$ is of distance at most $L$ from
a point in $\Upsilon_{\cal T}(\zeta[0,\sigma])$ and 
that the Hausdorff distance between
$\Upsilon_{\cal T}\gamma[0,\xi]$ and $\rho[0,k]$ is 
at most $\beta$, we conclude that 
there are numbers 
$\tau=\tau(\theta) >0$ and $T=T(\theta)>0$ 
and there is a subset $A_0$ of $\{1,\dots,k\}$ of 
cardinality at least $\tau k$ and for each $i\in A_0$ there is some 
$r_i\in [0,\sigma]$ and some $t_i\in [0,\xi]$ 
such that the following holds true.
\begin{enumerate}
\item $d_{\cal C}(\Upsilon_{\cal T}(\zeta(r_i)),
\Upsilon_{\cal T}(\gamma(t_i)))\leq B$.
\item There are numbers $e_i\leq T,v_i>0$ such that
$d_{\cal C}(\Upsilon_{\cal T}\zeta(r_i+e_i),
\Upsilon_{\cal T}\gamma(t_i+v_i))\leq B$.
\item $d_{\cal C}(\Upsilon_{\cal T}\gamma(t_i),
\Upsilon_{\cal T}\gamma(t_i+v_i))\geq 2p+4B+5a(\chi_0)+8$.
\item $r_{i+1}\geq r_i+e_i$, $t_{i+1}\geq t_i+v_i$.
\end{enumerate}

Since  the length $\xi$ of the 
Teichm\"uller geodesic arc $\gamma$ does not exceed 
$k/\theta$ and since the cardinality of $A_0$ is at least
$\tau k$, there is a subset $A_1$  of $A_0$ of cardinality
at least $\tau k/2$ such that
$v_i\leq 2/\theta\tau$ for every $i\in A_1$.
Lemma \ref{localcontrol} then yields the existence
of a number $c>0$ only depending on $\theta$ such that
for every $i\in A_1$ and 
for every $\alpha\in {\cal C}(S)$ we have
\begin{equation}\label{rafi}
{\rm diam}(\pi_\alpha(\Upsilon_{\cal T}\gamma(t_i)),
\pi_\alpha(\Upsilon_{\cal T}\gamma(t_i+v_i)))\leq c.
\end{equation}

Let $i\in A_1$ and let 
$\phi,\psi\in {\cal C}(S)$ be such that
\[d_{\cal C}(\phi,\Upsilon_{\cal T}\gamma(t_i))\leq B,
d_{\cal C}(\psi,\Upsilon_{\cal T}\gamma(t_i+v_i))\leq B.\]
If 
$\alpha\in {\cal C}(S)$ is such that
$d_{\cal C}(\phi,\alpha)\geq
B+3,d_{\cal C}(\psi,\alpha)\geq B+3$ then a geodesic in 
${\cal C\cal G}(S)$ connecting 
$\phi,\psi$ to 
$\Upsilon_{\cal T}(\gamma(t_i)),
\Upsilon_{\cal T}(\gamma(t_i+v_i))$ 
does not intersect the ball of radius $2$ about $\alpha$.
In particular, each of the vertices of ${\cal C\cal G}(S)$ 
passed through by these geodesics intersects $\alpha$ transversely.
Now by Lemma 2.3 of \cite{MM00}, if $\beta_1,\beta_2\in {\cal C}(S)$ 
are two curves of distance one in ${\cal C\cal G}(S)$ which
intersect $\alpha$ transversely then the diameter in
${\cal C\cal G}(\alpha)$ of the projection
$\pi_{\alpha}(\beta_1)\cup \pi_{\alpha}(\beta_2)$ is at most 2. 
An inductive application of this fact to a geodesic
in ${\cal C\cal G}(S)$ connecting $\phi,\psi$ to
$\Upsilon_{\cal T}(\gamma(t_i)),\Upsilon_{\cal T}(\gamma(t_i+v_i))$ 
shows that 
\[{\rm diam}(\pi_{\alpha}(\phi)\cup
\pi_{\alpha}(\Upsilon_{\cal T}\gamma(t_i)))\leq B+1,\,
{\rm diam}(\pi_{\alpha}(\psi)\cup\pi_{\alpha}(\Upsilon_{\cal T}\gamma(t_i+v_i)))
\leq B+1.\]
Together with the estimate (\ref{rafi}) we obtain that
\begin{equation}\label{diam2}
{\rm diam}(\pi_{\alpha}(\phi)\cup \pi_{\alpha}(\psi))\leq c+2B+2.
\end{equation}

For $i\in A_1$ (where $A_1\subset \{1,\dots,k\}$ is
as above) consider the WP-geodesic arc
$\zeta[s_i,s_{i}+e_i]$.
By the properties (1)-(3) above, 
we have 
\[d_{\cal C}(\Upsilon_{\cal T}\zeta(s_i),
\Upsilon_{\cal T}\zeta(s_i+e_i))\geq 2p +2B+5a(\chi_0)+8.\]
Even though the map $t\to \Upsilon_{\cal T}(\zeta(t))$ is 
not continuous, by 
the choice of the constant $a(\chi_0)$ the set 
$\Upsilon_{\cal T}\zeta[0,\sigma]$ is $a(\chi_0)$-dense 
in a simplicial arc connecting $\Upsilon_{\cal T}(\zeta(0))$
to $\Upsilon_{\cal T}(\zeta(\sigma))$. Therefore
there is a number $t\in (s_i,s_i+e_i)$ so that
\begin{align}\label{midpoint}
d_{\cal C}(\Upsilon_{\cal T}\zeta(t),
\Upsilon_{\cal T}\zeta(s_i))\geq p+B+2a(\chi_0)+4,      \\
d_{\cal C}(\Upsilon_{\cal T}\zeta(t),
\Upsilon_{\cal T}\zeta(s_i+e_i))\geq p+B+2a(\chi_0)+4.\notag
\end{align}
Note that by the second part 
of Lemma \ref{wpcomparison}, there is a number 
$v_0>0$ such that $\min\{\vert s_i-t\vert,
\vert s_i+e_i-t\vert \}\geq v_0$.

If $P_i,Q_i$ are Bers decompositions for $\zeta(s_i),
\zeta(s_i+e_i)$, then using  
once more the definition of $a(\chi_0)$,  
inequality (\ref{midpoint}) implies that the distance in ${\cal C}(S)$ 
between any component of $P_i,Q_i$ and
every component $\psi$ of a Bers decomposition for $\zeta(t)$ is at least 
$p+B+4$. Thus the 
estimate (\ref{diam2}), applied to components 
$\phi,\psi$ of $P_i,Q_i$ and to  
every $\beta\in {\cal C}(S)$ with 
$d_{\cal C}(\xi,\beta)\leq 1$  
(which is possible
by the property (1) above) yields that 
\[{\rm diam}(\pi_{\beta}(P_i)\cup
\pi_{\beta}(Q_i))\leq c+2B+2.\]

Now $\psi$ is an arbitrary component of a Bers decomposition
for $\zeta(t)$, and the length $e_i$ of the
arc $\zeta_i[s_i,s_i+e_i]$ does not exceed $T$. Therefore 
Proposition \ref{finitecontrol} shows the existence
of a number $\delta=\delta(\theta) >0$ only depending on $\theta$ 
such that
$\zeta(t)\in {\cal T}(S)_{2\delta}$. Since by Wolpert's results
the Weil-Petersson distance between ${\cal T}(S)_{2\delta}$ and
${\cal T}(S)-{\cal T}(S)_\delta$ is bounded from below
by a universal constant (compare the beginning of the proof of
Lemma \ref{pants} for a more precise statement),
there is a number $v \leq v_0$ such that
$\ell_{\delta-{\rm thick}}(\zeta[s_i,s_i+e_i])\geq v$.

Recall that
$i\in A_1$ was arbitrary and that the cardinality of $A_1$ is not 
smaller than $\tau k/2$. Then the above consideration
shows that $\ell_{\delta-{\rm thick}}(\zeta)\geq 
\tau kv/2\geq \tau \xi v/2\theta$. Since $\tau,v$ only depend on 
$\theta$, the proposition follows.
\end{proof}

\section{Conjugating the flows}\label{conjugating}

For a $\Phi_{\cal T}^t$-invariant Borel subset
$A$ of ${\cal Q}(S)$ define a \emph{measurable
conjugacy} of the restriction of $\Phi_{\cal T}^t$ to $A$
into the geodesic flow of the Weil-Petersson metric 
to be an injective measurable map $\Lambda:A\to {\cal Q}_{WP}(S)$
such that there is a measurable function
$\psi:A\times \mathbb{R}\to \mathbb{R}$ with the
following properties.
\begin{enumerate}
\item $\psi(x,0)=0$ for all $x\in A$.
\item For each fixed $x\in A$ the function 
$\psi(x,\cdot):s\to \psi(x,s)$ is an increasing homeomorphism.
\item $\Lambda(\Phi_{\cal T}^tx)=\Phi_{WP}^{\psi(x,t)}\Lambda(x)$
for all $x\in A,t\in \mathbb{R}$.
\end{enumerate}

The goal of this section is to construct 
such a measurable conjugacy $\Lambda$
on a $\Phi^t_{\cal T}$-invariant 
Borel subset ${\cal E}$ of ${\cal Q}(S)$ which 
has full measure for every $\Phi^t_{\cal T}$-invariant
Borel probability measure. 
The restriction of the map to any compact invariant
subset of ${\cal Q}(S)$ is continuous. We use this to establish the 
first part of Theorem \ref{fellowtravel}.

We begin with isolating properties of typical orbits
for $\Phi^t_{\cal T}$-invariant Borel probability measures. 
We continue to use the assumptions and notations from 
Sections 2-6.
Let again $\Upsilon_{\cal T}:{\cal T}(S)\to {\cal C}(S)$
be a map which associates to a point $x\in {\cal T}(S)$
a Bers curve on $x$, i.e. a 
simple closed curve of $x$-length at most $\chi_0$ where
$\chi_0>0$ is a Bers constant for $S$. Our first goal is
to study the distances $d_{\cal C}(\Upsilon_{\cal T}\gamma(0),
\Upsilon_{\cal T}\gamma(t))$ as $t\to \infty$ for 
a Teichm\"uller geodesic $\gamma$ whose cotangent line
projects to an orbit of the flow $\Phi^t_{\cal T}$ 
on ${\cal Q}(S)$ which is typical for an
invariant ergodic Borel probability
measure on ${\cal Q}(S)$. 

For this we face the problem that
the map $\Upsilon_{\cal T}$ depends on choices and 
may not be measurable,
and the same problem may arise for the 
function $(x,y)\to d_{\cal C}(\Upsilon_{\cal T}(x),
\Upsilon_{\cal T}(y))$ on ${\cal T}(S)\times {\cal T}(S)$.
We resolve this problem by
using a construction from \cite{H10a}.

Namely, 
choose a smooth function
$\sigma:[0,\infty)\to [0,1]$ with $\sigma[0,\chi_0]\equiv 1$ and
$\sigma[2\chi_0,\infty)\equiv 0$. 
For every $h\in {\cal T}(S)$ we obtain a finite Borel
measure $\mu_h$ on ${\cal C}(S)$ by defining
\begin{equation}
\mu_h=\sum_\beta \sigma(\ell_h(\beta))\delta_\beta\notag
\end{equation}
where $\delta_\beta$ denotes the Dirac mass at $\beta$.
By the collar lemma, the 
number of simple closed geodesics on $(S,h)$ 
of length at most $2\chi_0$ is bounded
from above independent of $h$.
Thus the total mass of $\mu_h$ is bounded from
above and below by a universal positive constant, and
by Lemma \ref{bounded}, 
the diameter in ${\cal C\cal G}(S)$
of the support of $\mu_h$ 
is uniformly bounded as well. 

Define a symmetric non-negative function $\rho$ on
${\cal T}(S)\times {\cal T}(S)$ by
\begin{equation}
\rho(h,h^\prime)=\int_{{\cal C}(S)\times {\cal C}(S)}
d_{\cal C}(\cdot,\cdot)d\mu_h\times d\mu_h^{\prime}/\mu_h({\cal C}(S))
\mu_{h^\prime}({\cal C}(S)).\notag
\end{equation}
Lemma 3.3 of \cite{H10a} shows that 
the function $\rho$ is
continuous and invariant
under the diagonal 
action of ${\rm Mod}(S)$. Moreover, there is a universal
constant $a_0>0$ such that 
\begin{equation}\label{rho}
\rho(h,h^\prime)-a_0\leq
d_{\cal C}(\Upsilon_{\cal T}(h),\Upsilon_{\cal T}(h^\prime))
\leq \rho(h,h^\prime)+a_0 \text{ for all }h,h^\prime\in {\cal T}(S).
\end{equation}

For $q\in {\cal Q}(S)$ and for $t\geq 0$ we can now define a number
$r(q,t)\geq 0$ as follows. 
A lift $\tilde q$ of $q$ to $\tilde {\cal Q}(S)$ determines a  
Teichm\"uller geodesic $\gamma:\mathbb{R}\to {\cal T}(S)$ with
initial unit cotangent 
$\tilde q$. By invariance of the function $\rho$ under the
diagonal action of the mapping class group, the number 
\[r(q,t)=\rho(\gamma(0),\gamma(t))\] 
does not depend 
on the choice of $\tilde q$ and hence it defines
a continuous function
$r:{\cal Q}(S)\times \mathbb{R}\to [0,\infty)$.
We have.

\begin{lemma}\label{deviation}
Let $\mu$ be a $\Phi_{\cal T}^t$-invariant ergodic
Borel probability measure on ${\cal Q}(S)$. Then 
there is a number $b=b(\mu)>0$ such that 
\[\lim_{t\to \infty}\frac{1}{t}r(q,t)=b\]
for $\mu$-almost every $q\in {\cal Q}(S)$. 
\end{lemma}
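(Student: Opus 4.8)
The plan is to recognise $t\mapsto r(q,t)$ as a cocycle over the Teichm\"uller flow which is both sub- and super-additive up to a fixed additive error, to apply Kingman's subadditive ergodic theorem so as to obtain the a.e.\ limit, and then to use recurrence of typical geodesics together with a Fatou argument to see that this limit is strictly positive.

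First I would record the basic estimate. Let $\gamma:\mathbb{R}\to{\cal T}(S)$ be a Teichm\"uller geodesic whose initial unit cotangent vector projects to $q$; then for $s\geq 0$ the geodesic $t\mapsto\gamma(s+t)$ represents $\Phi^s_{\cal T}q$. Applying the triangle inequality in ${\cal C\cal G}(S)$ to the triple $\Upsilon_{\cal T}\gamma(0),\Upsilon_{\cal T}\gamma(s),\Upsilon_{\cal T}\gamma(s+t)$ together with the two-sided estimate (\ref{rho}) gives the subadditive inequality $r(q,s+t)\leq r(q,s)+r(\Phi^s_{\cal T}q,t)+3a_0$; on the other hand, by Theorem \ref{unparam} the path $\Upsilon_{\cal T}\gamma$ is an unparametrized quasi-geodesic in the hyperbolic graph ${\cal C\cal G}(S)$, so a no-backtracking estimate for quasi-geodesics in hyperbolic spaces yields a constant $C_1>0$ with $r(q,s+t)\geq r(q,s)+r(\Phi^s_{\cal T}q,t)-C_1$ as well. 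Hence $f_t:=r(\,\cdot\,,t)+3a_0$ is a subadditive cocycle, it is continuous, and $f_1$ is bounded by Lemma \ref{upsilonlipschitz}(1), so $f_1\in L^1(\mu)$. Kingman's subadditive ergodic theorem together with ergodicity of $\mu$ then gives a constant $b\geq 0$ with $\tfrac1t r(q,t)\to b$ for $\mu$-a.e.\ $q$ and $b=\lim_{t\to\infty}\tfrac1t\int r(q,t)\,d\mu(q)$.

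For the lower bound I would iterate the super-additivity inequality: $r(q,nT)\geq\sum_{i=0}^{n-1}r(\Phi^{iT}_{\cal T}q,T)-C_1n$ for every $T>0$ and $n\geq 1$. Integrating against $\mu$ and using invariance, $\int r(q,nT)\,d\mu\geq n\int r(q,T)\,d\mu-C_1n$, so dividing by $nT$ and letting $n\to\infty$ gives
\[b\ \geq\ \frac1T\int r(q,T)\,d\mu-\frac{C_1}{T}\qquad\text{for every }T>0.\]
Thus it suffices to produce a single $T$ with $\int r(q,T)\,d\mu>C_1$; in fact I claim $\int r(q,T)\,d\mu\to\infty$ as $T\to\infty$, and this is the main obstacle. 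By Poincar\'e recurrence, for $\mu$-a.e.\ $q$ the orbit $\{\Phi^t_{\cal T}q:t\geq 0\}$ returns to some compact subset of ${\cal Q}(S)$ for arbitrarily large times, i.e.\ $\gamma\vert[0,\infty)$ is recurrent; by Masur's criterion \cite{M82} its vertical measured lamination is then uniquely ergodic, in particular minimal and filling, and it is well known (compare \cite{MM99,H10a}) that for such a geodesic the path $\Upsilon_{\cal T}\gamma$ leaves every bounded subset of ${\cal C\cal G}(S)$ as $t\to\infty$. Hence $r(q,t)\to\infty$ for $\mu$-a.e.\ $q$, and since $r\geq 0$ Fatou's lemma gives $\int r(q,T)\,d\mu\to\infty$. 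Choosing $T$ with $\int r(q,T)\,d\mu\geq 2C_1$ yields $b\geq C_1/T>0$, which completes the argument.
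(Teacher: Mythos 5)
Your proposal is correct and follows essentially the same route as the paper: coarse subadditivity of $r$ coming from the triangle inequality and the estimate (\ref{rho}) plus the subadditive ergodic theorem for the a.e.\ limit, and positivity via Poincar\'e recurrence, Masur's unique ergodicity criterion and unboundedness of $t\mapsto \Upsilon_{\cal T}\gamma(t)$, combined with the two-sided coarse additivity of $r$ (the paper's estimate (\ref{curvegraphdist}), quoted there from Lemma 2.4 of \cite{H10a}, which you rederive from Theorem \ref{unparam} and hyperbolicity). The only cosmetic difference is the finishing step: you integrate the super-additivity and use Fatou together with the mean-convergence part of Kingman's theorem, whereas the paper applies the Birkhoff ergodic theorem to $r(\cdot,t_0)$ on a compact set of large measure; both are valid and of comparable length.
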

\begin{proof} By the triangle inequality for $d_{\cal C}$ and
the estimate (\ref{rho}) above, 
for $q\in {\cal Q}(S)$ and $s,t>0$ we have
\[r(q,s+t)\leq r(q,s)+r(\Phi_{\cal T}^sq,t)+2a_0\] where
$a_0>0$ is as in (\ref{rho}). Thus by the
subadditive ergodic theorem \cite{Kr85}, 
for $\mu$-almost 
every $q\in {\cal Q}(S)$ the limit
\[\lim_{t\to \infty}\frac{1}{t}r(q,t)\]
exists and does not depend on $q$. (Here we
apply the subadditive ergodic theorem to the
function $(q,t)\to r(q,t)+2a_0$).

We have to show that this limit is positive 
almost everywhere. For this 
recall from Theorem \ref{unparam}
that there is a number $L_1>0$
such that the image under 
$\Upsilon_{\cal T}$ of any Teichm\"uller geodesic
is an unparametrized $L_1$-quasi-geodesic in 
${\cal C\cal G}(S)$.
By the Poincar\'e recurrence theorem, 
the $\Phi_{\cal T}^t$-orbit of $\mu$-almost
every $q\in {\cal Q}(S)$ is recurrent under
the Teichm\"uller flow. Therefore the support of the 
vertical measured geodesic lamination of
$\mu$-almost every $q\in {\cal Q}(S)$ fills $S$ and 
is uniquely ergodic \cite{M82}.
As a consequence, 
if $\gamma:\mathbb{R}\to {\cal T}(S)$
is a geodesic whose unit cotangent line is
a lift of an orbit of $\Phi^t_{\cal T}$ which is
typical for $\mu$ then
the map $t\to \Upsilon_{\cal T}(\gamma(t))$ is
an unparametrized $L_1$-quasi-geodesic
of infinite length \cite{Kl99,H06}.
This means that
$r(q,t)\to \infty$ $(t\to \infty)$ for 
$\mu$-almost every $q$.
Moreover, by Lemma 2.4 of \cite{H10a} 
and the estimate (\ref{rho}) above, 
there is a number $c>0$ such that 
\begin{equation}\label{curvegraphdist}
r(q,t+s)-c\leq 
r(q,s)+r(\Phi^s q,t)\leq r(q,t+s)+c \text{ for all }q\in {\cal Q}(S),s,t>0. 
\end{equation}

Now the function $r:{\cal Q}(S)\times \mathbb{R}\to [0,\infty)$ 
is continuous and the measure $\mu$ is Borel regular. 
Hence there is a compact 
subset $A$ of ${\cal Q}(S)$ with
$\mu(A)>3/4$ and a number $t_0>0$
such that
$r(q,t_0)\geq 4c$ for all $q\in A$ where $c>0$ is as in 
(\ref{curvegraphdist}) above. 
For the continuous 
non-negative function
$\phi(q)=r(q,t_0)$ on ${\cal Q}(S)$ we then have
$\int \phi d\mu\geq 3c$.
The Birkhoff ergodic theorem
implies that
\[\frac{1}{t_0}\int_0^{t_0}
\bigl(\lim_{n\to \infty}\frac{1}{n}
\sum_{i=0}^{n-1}\phi(\Phi_{\cal T}^{it_0}\Phi_{\cal T}^sq)\bigr) ds
\geq 3c\] for $\mu$-almost every $q$.
The estimate (\ref{curvegraphdist}) shows that
$r(q,nt_0)\geq \sum_{i=0}^{n-1}\phi(\Phi^{it_0}_{\cal T}q)-nc$
and therefore 
\[\lim\inf_{n\to \infty}\frac{1}{nt_0}r(q,nt_0)\geq 2c\]
for $\mu$-almost every $q$.
This completes the proof of the lemma.
\end{proof}


For $j>0,\ell>0$ define 
\[B^+(j,\ell)=\{q\in {\cal Q}(S)\mid 
r(q,t)\geq t/j \text{  for all }t\geq \ell\}.\]

\begin{lemma}\label{compact}
The set $B^+(j,\ell)$ is compact and consists
of quadratic differentials with uniquely ergodic
vertical measured lamination. 
\end{lemma}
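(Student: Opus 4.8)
The plan is to prove the three assertions --- closedness, compactness, and unique ergodicity of the vertical lamination --- separately; compactness reduces to a uniform lower bound on the systole at the basepoint, and unique ergodicity needs, in addition, a quantitative consequence of Wolpert's length estimate. Closedness is immediate: since $r:{\cal Q}(S)\times \mathbb{R}\to[0,\infty)$ is continuous, each set $\{q\mid r(q,t)\geq t/j\}$ with $t\geq\ell$ is closed, and $B^+(j,\ell)$ is their intersection.

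For compactness I would produce $\epsilon=\epsilon(j,\ell)>0$ such that the basepoint of every $q\in B^+(j,\ell)$ has systole at least $\epsilon$; since ${\rm Mod}(S)$ acts cocompactly on ${\cal T}(S)_\epsilon$, the set of area one quadratic differentials with basepoint in ${\cal T}(S)_\epsilon$ is compact, and a closed subset of it is compact. To get the systole bound, let $\gamma$ be a Teichm\"uller geodesic whose cotangent line lifts the orbit of $q$, and suppose the systole $\delta$ of $\gamma(0)$ is smaller than $\chi_0$, realized by a simple closed curve $\alpha$. By Lemma 3.1 of \cite{W79}, $\ell_\alpha(\gamma(t))\leq\chi_0$ for all $t\in[0,\log\chi_0-\log\delta]$, so by Lemma \ref{bounded} we have $d_{\cal C}(\Upsilon_{\cal T}\gamma(0),\Upsilon_{\cal T}\gamma(t))\leq 2a(\chi_0)$, hence by \eqref{rho} $r(q,t)\leq 2a(\chi_0)+a_0$ on that interval. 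Evaluating $r(q,t)\geq t/j$ at $t=\log\chi_0-\log\delta$ (if this number is at least $\ell$) forces $\log\chi_0-\log\delta\leq j(2a(\chi_0)+a_0)$, while otherwise $\log\chi_0-\log\delta<\ell$. In either case $\delta\geq\epsilon:=\chi_0\,e^{-\max\{\ell,\,j(2a(\chi_0)+a_0)\}}$.

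For unique ergodicity I would argue by contradiction: suppose $q\in B^+(j,\ell)$ but the forward ray $\gamma\vert_{[0,\infty)}$ is not recurrent, so $\mathrm{sys}(\gamma(t))\to0$. Then for any prescribed $K>0$ there is $N_0$ with $\log\chi_0-\log\mathrm{sys}(\gamma(t))\geq K$ for all $t\geq N_0$; writing $\alpha(t)$ for a shortest geodesic on $\gamma(t)$, Wolpert's estimate and Lemma \ref{bounded} give $d_{\cal C}(\alpha(t),\Upsilon_{\cal T}\gamma(s))\leq a(\chi_0)$ whenever $\lvert s-t\rvert\leq\log\chi_0-\log\mathrm{sys}(\gamma(t))$. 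Setting $t_1=N_0$ and $t_{k+1}=t_k+(\log\chi_0-\log\mathrm{sys}(\gamma(t_k)))$, consecutive pinning intervals overlap at $t_{k+1}$, so $d_{\cal C}(\alpha(t_k),\alpha(t_{k+1}))\leq 2a(\chi_0)$; for a given $T$ one reaches an index $m$ with $t_m\geq T$ and $m\leq 1+t_m/K$, and concatenating yields $d_{\cal C}(\Upsilon_{\cal T}\gamma(0),\Upsilon_{\cal T}\gamma(t_m))\leq\const(N_0)+2a(\chi_0)t_m/K$. On the other hand $d_{\cal C}(\Upsilon_{\cal T}\gamma(0),\Upsilon_{\cal T}\gamma(t_m))\geq r(q,t_m)-a_0\geq t_m/j-a_0$ by \eqref{rho} and membership in $B^+(j,\ell)$. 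Choosing $K=4ja(\chi_0)$ these two estimates give $t_m/(2j)\leq\const$, contradicting $t_m\geq T\to\infty$. Hence $\gamma$ is recurrent, and then the support of its vertical measured lamination $q^v$ fills $S$ and is uniquely ergodic by \cite{M82} (recurrence of a Teichm\"uller geodesic forces unique ergodicity of its vertical foliation, exactly as used in the proof of Lemma \ref{deviation}).

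The routine parts are closedness and the systole bound, which are immediate once Wolpert's estimate and Lemma \ref{bounded} are in hand. The main obstacle is the quantitative step in the unique-ergodicity argument: one must recognize that an excursion of $\gamma$ reaching systole $\delta$ pins $\Upsilon_{\cal T}\gamma$ within bounded distance of a single curve for a parameter interval of length $\asymp\log(1/\delta)$, and that as $\mathrm{sys}(\gamma(t))\to0$ these pinning intervals grow without bound and asymptotically cover $[0,\infty)$, so that the curve-graph progress of $\Upsilon_{\cal T}\gamma$ becomes sublinear --- which is incompatible with the linear lower bound defining $B^+(j,\ell)$. Keeping track of the overlapping pinning intervals and calibrating $K$ against the rate $1/j$ is where the care is required.
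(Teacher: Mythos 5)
Your proof is correct and follows essentially the same route as the paper: continuity of $r$ gives closedness, Wolpert's Lemma 3.1 of \cite{W79} together with Lemma \ref{bounded} and the estimate (\ref{rho}) shows that excursions into the thin part pin $\Upsilon_{\cal T}\gamma$ near a short curve for a time comparable to $\log(1/{\rm sys})$, so that points of $B^+(j,\ell)$ have uniformly thick basepoints (hence compactness) and non-recurrent rays would have sublinear curve-graph progress contradicting $r(q,t)\geq t/j$, after which Masur's criterion \cite{M82} yields unique ergodicity. Your write-up merely makes explicit the quantitative pinning argument and the systole bound at the basepoint that the paper treats more tersely (the paper routes recurrence through the quasi-cocycle estimate (\ref{curvegraphdist}) instead of your chained intervals), but the key inputs and the contradiction structure are the same.
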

\begin{proof} 
Since the function $r$ is continuous, we have
\[B^+(j,\ell)=\cap_{s\geq \ell,s\in \mathbb{Q}}
\{q\mid r(q,s)\geq s/j\}\]
and hence $B^+(j,\ell)$ is a countable intersection of 
closed sets. In particular, $B(j,\ell)\subset {\cal Q}(S)$ is closed.

For $\epsilon>0$ let 
${\cal M}(S)_\epsilon$ be the projection of ${\cal T}(S)_\epsilon$
into the moduli space of curves. 
Let $q$ be a quadratic differential whose
vertical measured geodesic lamination is not uniquely
ergodic. Then for every $\epsilon >0$ there is some
$\tau(\epsilon)>0$ such that for every 
$s\geq \tau(\epsilon)$, the surface underlying 
$\Phi^s_{\cal T}(q)$ is not contained in  
${\cal M}(S)_\epsilon$  \cite{M82}.
By Lemma 3.1 of \cite{W79}, if $\alpha$ is a curve of 
length at most $\epsilon$ on
the surface underlying $\Phi^s_{\cal T}(q)$, then the 
length of $\alpha$ on the surface underlying $\Phi^u_{\cal T}(q)$
is at most $\chi_0$ for every $u\in [s-\log(\chi_0-\epsilon),
s+\log(\chi_0-\epsilon)]$. 
This implies that if $\epsilon$ is sufficiently small then  
we have $r(z,s)\leq s/2j$ for all sufficiently large $s$ and every 
quadratic differential $z$ with the property that the projection
of the  
forward orbit $\{\Phi^t_{\cal T}z\mid t\geq 0\}$ of $z$ does
not intersect ${\cal M}(S)_\epsilon$.

By the definition of the set $B^+(j,\ell)$, this shows first that
$B(j,\ell)$ projects into ${\cal M}(S)_\epsilon$ for a number
$\epsilon >0$ depending on $j,\ell$. In particular, $B(j,\ell)$ is 
compact. Moreover, by 
the estimate (\ref{curvegraphdist}), the orbit of 
$q\in B^+(j,\ell)$ under $\Phi^t_{\cal T}$ 
recurs to a fixed compact
set for arbitrarily large times hence
its vertical measured lamination is uniquely ergodic
\cite{M82}: 
\end{proof}

Let ${\cal F}:{\cal Q}(S)\to {\cal Q}(S)$ be the 
\emph{flip} ${\cal F}(q)=-q$. Define 
\begin{align}
B(j,\ell)&=
B^+(j,\ell)\cap {\cal F}(B^+(j,\ell)),\notag\\  
E^+(j,\ell)&=B^+(j,\ell)\cap \lim\sup_{i\to \infty} \Phi^{-i}B^+(j,\ell)=
B(j,\ell)\cap (\cap_{u=1}^\infty
(\cup_{i=u}^\infty \Phi^{-i}B(j,\ell))),\notag\\
E(j,\ell)&=E^+(j,\ell)\cap {\cal F}(E^+(j,\ell))\text{ and }
{\cal E}=\cup_j(\cup_\ell E(j,\ell)).\notag\end{align}


\begin{lemma}\label{borel}
\begin{enumerate}
\item ${\cal E}$ is a $\Phi^t_{\cal T}$-invariant Borel subset of 
${\cal Q}(S)$.
\item
$\mu({\cal E})=1$ for every
$\Phi^t_{\cal T}$-invariant Borel probability measure. 
\item The horizontal and vertical meausured geodesic laminations of 
every $q\in {\cal E}$ are uniquely ergodic.
\item Every compact $\Phi^t_{\cal T}$-invariant subset of ${\cal Q}(S)$
is contained in ${\cal E}$.
\end{enumerate}
\end{lemma}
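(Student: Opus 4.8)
The plan is to verify the four assertions essentially by unwinding the definitions and invoking the results already established about the function $r$ and the sets $B^+(j,\ell)$, $B(j,\ell)$. For \emph{part (1)}, I would first check that each $B^+(j,\ell)$ is Borel (indeed closed, by Lemma \ref{compact}), hence so is $B(j,\ell)=B^+(j,\ell)\cap {\cal F}(B^+(j,\ell))$ since the flip ${\cal F}$ is a homeomorphism. Then $E^+(j,\ell)=B(j,\ell)\cap\bigcap_{u\ge 1}\bigcup_{i\ge u}\Phi^{-i}_{\cal T}B(j,\ell)$ is Borel as a countable combination of Borel sets (using that $\Phi^{-i}_{\cal T}$ is a homeomorphism), $E(j,\ell)=E^+(j,\ell)\cap{\cal F}(E^+(j,\ell))$ is Borel, and ${\cal E}=\bigcup_j\bigcup_\ell E(j,\ell)$ is a countable union of Borel sets. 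For flow-invariance, the key point is the coarse additivity estimate (\ref{curvegraphdist}): if $q\in B^+(j,\ell)$ then for any fixed $s$ one has $r(\Phi^s_{\cal T}q,t)\ge r(q,s+t)-r(q,s)-c\ge (s+t)/j - r(q,s)-c$, which for $t$ large enough is $\ge t/j'$ for a slightly larger $j'$; thus $\Phi^s_{\cal T}q\in B^+(j',\ell')$ for suitable $j',\ell'$. The same argument applied to ${\cal F}(\Phi^s_{\cal T}q)=\Phi^{-s}_{\cal T}({\cal F}q)$ handles the symmetrized set, and the $\limsup$ condition defining $E^+$ is manifestly shift-stable; hence $\Phi^s_{\cal T}{\cal E}\subset{\cal E}$, and applying this to $-s$ as well gives equality.

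For \emph{part (2)}, let $\mu$ be a $\Phi^t_{\cal T}$-invariant Borel probability measure; by the ergodic decomposition it suffices to treat $\mu$ ergodic. By Lemma \ref{deviation} there is $b=b(\mu)>0$ with $\frac1t r(q,t)\to b$ for $\mu$-a.e.\ $q$; choosing an integer $j>2/b$, for a.e.\ $q$ we get $r(q,t)\ge t/j$ for all $t\ge \ell$ once $\ell=\ell(q)$ is large enough, so $\mu\bigl(\bigcup_\ell B^+(j,\ell)\bigr)=1$. Applying the same to ${\cal F}_*\mu$ (which is again invariant and ergodic; note $r({\cal F}q,t)=\rho(\gamma(0),\gamma(-t))$ controls the backward direction) gives $\mu\bigl(\bigcup_\ell B(j,\ell)\bigr)=1$. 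Finally, since each $B(j,\ell)$ is compact (Lemma \ref{compact}) hence of finite measure, and $B^+(j,\ell)\subset B^+(j,\ell+1)$ so the union is increasing, there is $\ell$ with $\mu(B(j,\ell))>0$; by the Poincar\'e recurrence theorem $\mu$-a.e.\ point of $B(j,\ell)$ returns to $B(j,\ell)$ under $\Phi^i_{\cal T}$ for infinitely many positive $i$, i.e.\ lies in $E^+(j,\ell)$, and applying recurrence also in backward time (or to ${\cal F}_*\mu$) puts a.e.\ such point in $E(j,\ell)\subset{\cal E}$. Since ${\cal E}$ is invariant and $\mu$ is ergodic, $\mu({\cal E})=1$.

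For \emph{part (3)}, every $q\in{\cal E}$ lies in some $B(j,\ell)\subset B^+(j,\ell)\cap{\cal F}(B^+(j,\ell))$, and Lemma \ref{compact} asserts that points of $B^+(j,\ell)$ have uniquely ergodic vertical measured lamination; applying this to ${\cal F}(q)$ gives unique ergodicity of the horizontal lamination as well. For \emph{part (4)}, let $C\subset{\cal Q}(S)$ be compact and $\Phi^t_{\cal T}$-invariant. Every $q\in C$ is recurrent in both time directions (its orbit stays in the compact set $C$), so by Masur's criterion \cite{M82} its vertical and horizontal laminations are uniquely ergodic, and the map $t\mapsto \Upsilon_{\cal T}(\gamma(t))$ is an unparametrized quasi-geodesic of infinite length (as in the proof of Lemma \ref{deviation}); moreover the function $(q,s)\mapsto r(q,s)$ is continuous and the set $\{(q,s):q\in C,\ s\ge 1\}/\!\sim$ is, after dividing by the flow, governed by the compactness of $C$, so there is $b_0>0$ with $r(q,s)\ge b_0 s$ for all $q\in C$ and all $s$ large (uniformly over $C$): indeed, if not, a sequence $q_n\in C$, $s_n\to\infty$ with $r(q_n,s_n)\le s_n/n$ would, after passing to a subsequence converging to some $q\in C$ and using the cocycle estimate (\ref{curvegraphdist}) together with the Birkhoff argument of Lemma \ref{deviation} along the orbit of $q$, contradict positivity of the limit $\lim_{s\to\infty}\frac1s r(q,s)$. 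Fixing an integer $j$ with $1/j<b_0$ and $\ell$ large, we get $C\subset B^+(j,\ell)$ and, by invariance, $C\subset B(j,\ell)$; since $C$ is invariant, every point of $C$ returns to $C\subset B(j,\ell)$ at all times, so $C\subset E(j,\ell)\subset{\cal E}$.

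The main obstacle is part (4): extracting a \emph{uniform} linear lower bound $r(q,s)\ge b_0 s$ valid for all $q$ in the compact invariant set $C$ simultaneously, rather than the pointwise-a.e.\ bound furnished by Lemma \ref{deviation}. This requires combining the coarse subadditivity (\ref{curvegraphdist}), the continuity of $r$, and a compactness/limiting argument to rule out a sequence of orbit segments making sublinear progress in the curve graph — in effect a quantitative, uniform version of the argument in Lemma \ref{deviation}, using that on a compact invariant set all orbits are recurrent so Masur's criterion applies uniformly.
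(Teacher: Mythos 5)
Parts (1)--(3) of your proposal are correct and run along the same lines as the paper: Borel measurability and unique ergodicity come from Lemma \ref{compact}, flow invariance from the coarse cocycle estimate (\ref{curvegraphdist}), and full measure from Lemma \ref{deviation} together with the flip and Poincar\'e recurrence (the paper is terser on part (2), but your recurrence argument is the intended one).

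The genuine gap is in part (4), exactly where you flag the difficulty. You need one fixed pair $(j,\ell)$ with $C\subset B(j,\ell)$, and you try to obtain the uniform linear lower bound $r(q,s)\ge b_0 s$ on the compact invariant set $C$ by a contradiction argument that applies ``the Birkhoff argument of Lemma \ref{deviation}'' at a limit point $q$ of a bad sequence $q_n$. This does not work. First, Lemma \ref{deviation} is an almost-everywhere statement with respect to an ergodic invariant measure; an arbitrary accumulation point $q\in C$ need not be typical for any invariant measure, so neither existence nor positivity of $\lim_{s\to\infty}\frac1s r(q,s)$ is available at $q$. Second, even granting positivity at $q$, the convergence $q_n\to q$ only controls $r(q_n,s)$ for $s$ in bounded ranges (continuity of $r(\cdot,s)$ for each fixed $s$), whereas your hypothesis concerns $r(q_n,s_n)$ with $s_n\to\infty$; the estimate (\ref{curvegraphdist}) compares values of $r$ along a single orbit and gives no comparison between the orbits of $q_n$ and of $q$ at large times. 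Note also that the uniformity is genuinely needed: a pointwise linear bound with point-dependent constants would not yield the infinitely many integer-time returns to a \emph{fixed} $B^+(j,\ell)$ demanded in the definition of $E^+(j,\ell)$.

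The paper's proof of (4) sidesteps all of this: a compact $\Phi^t_{\cal T}$-invariant set lies over ${\cal T}(S)_\epsilon$ for some $\epsilon>0$, and by the fact recorded after Theorem \ref{unparam} (from \cite{H10a}) the image under $\Upsilon_{\cal T}$ of a Teichm\"uller geodesic entirely contained in ${\cal T}(S)_\epsilon$ is a \emph{parametrized} quasi-geodesic in ${\cal C\cal G}(S)$ with constants depending only on $\epsilon$. This gives $r(q,t)\ge t/j$ for all $q\in C$ and $t\ge \ell$ with $(j,\ell)$ depending only on $\epsilon$, and invariance of $C$ then places $C$ inside $E(j,\ell)\subset{\cal E}$. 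If you prefer a compactness route, a correct version is: for every $z\in C$ one has $r(z,t)\to\infty$ (Masur recurrence plus Klarreich, as in Lemma \ref{deviation}), so the open sets $\{z\mid r(z,t)>2c+1\}$ cover $C$ and finitely many times suffice; using the lower bound in (\ref{curvegraphdist}) this yields a single $T$ with $r(z,T)\ge c+1$ for all $z\in C$, and iterating along the orbit, which stays in $C$, gives $r(z,NT)\ge N$, hence the uniform linear bound.
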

\begin{proof} That ${\cal E}$ is a Borel set is immediate
from Lemma \ref{compact} and the definitions.
Moreover, Lemma \ref{compact} shows that the horizontal and
vertical measured geodesic laminations of every $q\in {\cal E}$ are
uniquely ergodic.
 
Part (2) of the lemma follows from
Lemma \ref{deviation} and the fact that the image under
the flip of any $\Phi^t_{\cal T}$-invariant 
Borel probability measure on ${\cal Q}(S)$ is an
invariant Borel probability measure. 

To show invariance of ${\cal E}$ under 
$\Phi^t_{\cal T}$, it suffices to show that
$\cup_j(\cup_\ell E^+(j,\ell))$ is invariant. For this 
recall that 
$E^+(j,\ell)$ is the set of all points $q\in B^+(j,\ell)$ so that
$\Phi_{\cal T}^iq\in B^+(j,\ell)$ for infinitely many $i>0$.
Thus if $q\in E^+(j,\ell)$ and if $t>0$ then there is some
$u\geq 0$ with $\Phi_{\cal T}^u(\Phi_{\cal T}^tq)\in 
E^+(j,\ell)$. Inequality (\ref{curvegraphdist}) shows that 
\[r(\Phi^t_{\cal T}(q),s)\geq r(\Phi^{u+t}_{\cal T}(q),s-u)-c\geq 
(s-u)/j-c\]
for all $s\geq \ell+u$ and hence putting
$\ell^\prime=2\max\{\ell+u,2c\}$ we conclude that  
$\Phi^t_{\cal T}(q)\in B(2j,\ell^\prime)$.
The same argument also shows that
$\Phi^t_{\cal T}q\in E^+(2j,\ell^\prime)$.

Part (4) of the lemma is an immediate consequence of the
fact that the image under $\Upsilon$ of a Teichm\"uller geodesic
which is entriely contained in ${\cal T}(S)_\epsilon$ for some 
$\epsilon >0$ is a \emph{parametrized} quasi-geodesic in 
${\cal C\cal G}(S)$.
 \end{proof}

The idea for constructing the conjugacy is to 
find for a Teichm\"uller geodesic with uniquely ergodic
horizontal and vertical measured laminations 
a Weil-Petersson geodesic which has these laminations
as ending measures. For this we need a technical preparation
which we formulate as two lemmas.

\begin{lemma}\label{WPconv}
Let $\gamma_i:\mathbb{R}\to {\cal T}(S)$ 
be a sequence of Teichm\"uller geodesics converging 
locally uniformly to a Teichm\"uller geodesic $\gamma$.
Assume that the 
vertical measured geodesic lamination $\nu$ 
of the quadratic differential which defines $\gamma$ is uniquely ergodic.
Let $n_i\to \infty$ and for each $i$ let $\zeta_i:[0,\sigma_i]\to
{\cal T}(S)$ be the WP-geodesic connecting $\zeta_i(0)=\gamma_i(0)$
to $\zeta_i(\sigma_i)=\gamma_i(n_i)$. Then up to passing 
to a subsequence, the geodesics $\zeta_i$ converge locally
uniformly to an infinite WP-ray $\zeta:[0,\infty)\to {\cal T}(S)$.
The length of $\nu$ is bounded along $\zeta$. 
\end{lemma}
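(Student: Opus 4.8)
The plan is to realize $\zeta$ as a subsequential limit of the WP-segments $\zeta_i$, to bound $\ell_\nu$ along $\zeta$ by playing convexity of WP-length functions (Theorem~\ref{wolpert}) against the exponential decay of the extremal length of the vertical foliation along a forward Teichm\"uller geodesic, and then to use this bound to rule out that $\zeta$ reaches the completion locus in finite time.

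First I would extract the limit geodesic. Since $\gamma_i\to\gamma$ locally uniformly, $\gamma_i(0)\to\gamma(0)$ and the area one quadratic differentials $q_i=\gamma_i^\prime(0)$ converge to the differential $q=\gamma^\prime(0)$; hence $q_i^v\to q^v=\nu$ and $q_i^h\to q^h$ in ${\cal M\cal L}$. The WP-unit velocities $\zeta_i^\prime(0)$ lie in the restriction of the Weil--Petersson unit tangent bundle to a compact neighbourhood of $\gamma(0)$, so after passing to a subsequence $\zeta_i^\prime(0)\to v$ for some WP-unit vector $v$ at $\gamma(0)$; let $\zeta:[0,T)\to{\cal T}(S)$ be the maximal WP-geodesic with $\zeta^\prime(0)=v$. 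By continuous dependence of geodesics on initial conditions for the smooth Weil--Petersson metric, $\zeta_i\to\zeta$ uniformly on every compact subinterval of $[0,T)$ on which the $\zeta_i$ are defined for large $i$.

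The heart of the matter is the length bound. Normalize the vertical laminations by $\nu_i=q_i^v/\ell_{q_i^v}(\gamma_i(0))$, so $\ell_{\nu_i}(\gamma_i(0))=1$; since $\ell_{q_i^v}(\gamma_i(0))\to\ell_\nu(\gamma(0))>0$, the $\nu_i$ converge, after rescaling $\nu$ once so that $\ell_\nu(\gamma(0))=1$, to $\nu$. Along the Teichm\"uller geodesic $\gamma_i$ the extremal length of the vertical foliation decays in the forward direction, $\operatorname{Ext}_{\gamma_i(n_i)}(\nu_i)=e^{-2n_i}\operatorname{Ext}_{\gamma_i(0)}(\nu_i)\le \operatorname{Ext}_{\gamma_i(0)}(\nu_i)$, and the right hand side stays bounded because $\gamma_i(0)$ and $\nu_i$ range over compact sets and the extremal length is continuous; together with the elementary inequality $\ell_x(\mu)^2\le 2\pi(2g-2+m)\operatorname{Ext}_x(\mu)$ (the hyperbolic metric competes in the definition of extremal length) this yields a uniform bound $\ell_{\nu_i}(\gamma_i(n_i))\le M^\prime$. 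Now $t\mapsto\ell_{\nu_i}(\zeta_i(t))$ is convex on $[0,\sigma_i]$ by Theorem~\ref{wolpert}, hence bounded above by the larger of its two endpoint values, so $\ell_{\nu_i}(\zeta_i(t))\le\max\{1,M^\prime\}=:M$ for all $t\in[0,\sigma_i]$. Letting $i\to\infty$ and using continuity of the length function with $\zeta_i(t)\to\zeta(t)$ and $\nu_i\to\nu$ gives $\ell_\nu(\zeta(t))\le M$ for all $t\in[0,T)$; thus the length of $\nu$ along $\zeta$ is bounded.

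It remains to see that $T=\infty$ and $\sigma_i\to\infty$. Here I would use that $\nu=q^v$ fills $S$: being uniquely ergodic its support is a single minimal component, this component is not a simple closed curve (the vertical foliation of a quadratic differential on $S$ with $\chi(S)<0$ is not carried by a single cylinder) and is not contained in a proper essential subsurface $Y$ (otherwise the complement of $Y$ would carry a nontrivial recurrent part of the vertical foliation disjoint from $\nu$, contradicting unique ergodicity), so $\nu$ is minimal arational and fills $S$. If $\zeta$ reached $\overline{{\cal T}(S)}\setminus{\cal T}(S)$ at some finite $T$, then a nonempty multicurve would be pinched along $\zeta$ as $t\to T$, and since $\nu$ fills the collar lemma would force $\ell_\nu(\zeta(t))\to\infty$, contradicting the bound above; hence $T=\infty$. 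The same pinching obstruction, applied to $\lim_i\gamma_i(n_i)$, shows that $\gamma_i(n_i)=\zeta_i(\sigma_i)$ cannot subconverge to a point of the completion locus; but $d_{\cal T}(\gamma(0),\gamma_i(n_i))\ge n_i-d_{\cal T}(\gamma(0),\gamma_i(0))\to\infty$, so the $\gamma_i(n_i)$ leave every bounded subset of $({\cal T}(S),d_{\cal T})$ and $\sigma_i=d_{WP}(\gamma_i(0),\gamma_i(n_i))$ cannot stay bounded. Thus $\sigma_i\to\infty$ and $\zeta_i\to\zeta$ locally uniformly on $[0,\infty)$. I expect the main obstacle to be making the length estimate uniform in $i$ — transporting the extremal-length decay at the far endpoint $\gamma_i(n_i)$ onto the WP-geodesics $\zeta_i$ via convexity — with the finite-time escape exclusion ($T=\infty$) and the filling of $q^v$ as the supporting technical points.
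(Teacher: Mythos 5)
Your argument reaches the same three milestones as the paper's proof (a subsequential limit ray obtained from the initial directions, a length bound via convexity of length functions along WP-geodesics, and infiniteness of the ray via the filling/pinching obstruction), but your route to the crucial length bound is genuinely different. The paper rescales the Bers curves $\Upsilon_{\cal T}(\gamma_i(n_i))$ to laminations $\mu_i$ of $\gamma_i(0)$-length one, notes that their lengths are bounded at \emph{both} endpoints of $\zeta_i$ and applies Theorem \ref{wolpert}; the real work there is showing $\mu_i\to\nu$, which uses Theorem \ref{unparam}, hyperbolicity of $\mathcal{C}\mathcal{G}(S)$ and the Klarreich--Hamenst\"adt description of its boundary together with unique ergodicity. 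You instead use the vertical laminations $\nu_i$ of the approximating geodesics themselves, whose convergence to $\nu$ is immediate, and you control $\ell_{\nu_i}(\gamma_i(n_i))$ by the decay of extremal length along the Teichm\"uller flow combined with $\ell_x(\mu)^2\le 2\pi(2g-2+m)\operatorname{Ext}_x(\mu)$. This is more elementary (no curve graph, no boundary theory, and unique ergodicity plays no role in the estimate itself), though the paper's Bers-curve version is the one that generalizes to the situations where only a short curve at the far endpoint is available, as in Lemma \ref{thickgeo} and Proposition \ref{wpcomp}.

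There is, however, a genuine gap in your final paragraph. The inference ``the $\gamma_i(n_i)$ leave every $d_{\cal T}$-bounded set, hence $\sigma_i=d_{WP}(\gamma_i(0),\gamma_i(n_i))$ cannot stay bounded'' is false: as recalled in the introduction, there are sequences with $d_{\cal T}(x_0,x_i)\to\infty$ but $d_{WP}(x_0,x_i)$ bounded, so $d_{\cal T}$-escape gives no lower bound on WP-distance. The companion step (``cannot subconverge to a point of the completion locus'') is also unavailable, since $\overline{{\cal T}(S)}$ is not locally compact and a $d_{WP}$-bounded sequence need not subconverge at all. This matters earlier than you acknowledge: passing to the limit in $\ell_{\nu_i}(\zeta_i(t))\le M$ for all $t\in[0,T)$ already presupposes $\sigma_i>t$ for large $i$, i.e. $\liminf_i\sigma_i\ge T$, so the question cannot be deferred until after the pinching argument. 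The repair is short: pass to a subsequence with $\sigma_i\to\sigma_\infty\in(0,\infty]$. If $\sigma_\infty<T$, then by continuous dependence of WP-geodesics on their data inside ${\cal T}(S)$ one gets $\gamma_i(n_i)=\zeta_i(\sigma_i)\to\zeta(\sigma_\infty)\in{\cal T}(S)$, contradicting $d_{\cal T}(\gamma_i(0),\gamma_i(n_i))=n_i\to\infty$. Hence $\sigma_\infty\ge T$, your bound is valid on all of $[0,T)$, the pinching argument then forces $T=\infty$, and consequently $\sigma_i\to\infty$.

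A second, smaller flaw: your derivation of ``$\nu$ fills $S$'' from unique ergodicity does not work. The vertical foliation of a quadratic differential can consist of closed leaves (Strebel differentials), and a uniquely ergodic lamination in the naive sense may be a simple closed curve or be supported in a proper essential subsurface. In this paper, as in Masur's criterion, ``uniquely ergodic'' is used in the strong sense of a minimal filling lamination carrying a unique transverse measure (the paper's proof simply invokes that $\nu$ fills, and its identification of the support of $\nu$ with a boundary point of the curve graph requires this); you should take filling as part of the hypothesis rather than attempt to deduce it.
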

\begin{proof}
By Theorem \ref{unparam}, 
there is a number
$L_1>0$ such that the image under
$\Upsilon_{\cal T}$ of every Teichm\"uller geodesic 
$\gamma:\mathbb{R}\to {\cal T}(S)$ 
in an unparametrized $L_1$-quasi-geodesic
in ${\cal C\cal G}(S)$. 
By hyperbolicity
of the curve graph, this means in particular that there is a
number $b>0$  and there
is a geodesic $\rho:[0,b)\to {\cal C\cal G}(S)$ such that
the Hausdorff distance between $\rho(J)$ and 
$\Upsilon_{\cal T}(\gamma[0,\infty)$ is bounded from
above by a universal constant $p_1>0$.

Let $\gamma_i,\gamma:\mathbb{R}\to {\cal T}(S)$ be as in the lemma. 
Let $\nu$ be the vertical measured geodesic lamination of 
$\gamma$, normalized in such a way that the $\gamma(0)$-length 
of $\nu$ equals one. By assumption, 
$\nu$ is uniquely ergodic. This implies that the diameter
of $\Upsilon_{\cal T}(\gamma[0,\infty))$ is infinite and the support of 
$\nu$ defines a point in the boundary $\partial{\cal C\cal G}(S)$
of the curve graph ${\cal C\cal G}(S)$.
If $(\beta_i)\subset {\cal C}(S)$ is any sequence converging
in ${\cal C\cal G}(S)\cup \partial {\cal C\cal G}(S)$ to the support of $\nu$ 
then by unique ergodicity of $\nu$, 
the projective measured geodesic laminations $[\beta_i]$ supported in 
$\beta_i$ converge in ${\cal P\cal M\cal L}$ to the 
projective measured geodesic lamination $[\nu]$ which is
the class of $\nu$
(see Theorem 1.4 of \cite{Kl99} and Theorem 1.1 of \cite{H06}).

Since $\gamma_i\to \gamma$ uniformly on compact sets,
as $i\to \infty$ longer and longer subarcs of the uniform
unparametrized quasi-geodesic 
$\Upsilon_{\cal T}\circ \gamma$ are 
uniformly fellow-traveled by subarcs of the uniform
unparametrized quasi-geodesics
$\Upsilon_{\cal T}\circ \gamma_i$.
Thus for any sequence $n_i\to \infty$, 
the curves  $c_i=\Upsilon_{\cal T}(\gamma_i(n_i))$ 
converge in ${\cal C\cal G}(S)\cup \partial{\cal C\cal G}(S)$ to 
the support of $\nu$ (compare the more detailed
argument in the proof of Proposition 3.4 of \cite{H10a}). 
As a consequence, we
have $[c_i]\to [\nu]$ in ${\cal P\cal M\cal L}$.

For $i>0$ let $\zeta_i$ be the WP-geodesic connecting
$\zeta_i(0)=\gamma_i(0)$ to $\zeta_i(\sigma_i)=
\gamma(n_i)$. After
passing to a subsequence we may assume that the directions
$v_i=\zeta_i^\prime(0)$ of the geodesics $\zeta_i$ 
converge as $i\to \infty$ to a direction $v$ with
footpoint $\gamma(0)$. Let $\zeta:[0,T)\to {\cal T}(S)$
be the WP-ray with direction $v$. Then $\zeta_i\to \zeta$ uniformly
on compact subsets of $[0,T)$.

We claim that the length of $\nu$ is bounded along $\zeta$.
Namely,
for each $i$ let $\mu_i$ be the measured geodesic lamination
of $\gamma_i(0)$-length one which we obtain from 
$\Upsilon_{\cal T}(\gamma_i(n_i))$ by multiplication with
a positive constant. Since $\gamma_i(0)\to \gamma(0)$
$(i\to \infty)$ 
there is a number $\epsilon >0$ such that $\gamma_i(0)\in 
{\cal T}(S)_\epsilon$  for all $i$. This means that  
the shortest length of a simple closed curve
for the metric $\gamma_i(0)$ is not smaller than $\epsilon$.
Therefore $\mu_i$ is obtained from the counting measure
for $\Upsilon_{\cal T}(\gamma_i(n_i))$ by multiplication with
a constant which does not exceed $1/\epsilon$. By definition 
of the map $\Upsilon_{\cal T}$,  
the $\gamma_i(n_i)$-length of $\mu_i$ is not
bigger than $\chi_0/\epsilon$. Thus by convexity, the
length of $\mu_i$ along $\zeta_i$ is uniformly bounded, independent
of $i$.

By the above consideration and continuity of the length 
pairing, 
as $i\to \infty$ the measured geodesic laminations $\mu_i$ converge
weakly to the measured geodesic lamination $\nu $. Since
$\zeta_i\to \zeta$ 
locally uniformly, continuity of the length pairing 
implies that the
length of $\nu$ is uniformly bounded along $\zeta$
(compare the more detailed argument in the proof of
Proposition \ref{wpcomp}).

Since $\nu$ fills $S$, the WP-ray $\zeta$ is infinite.
Namely, otherwise there is a simple closed
curve $c$ on $S$ so that the $\zeta(t)$-length of $c$ tends
to zero as $t\to T$. But $i(c,\nu)>0$ and consequently
in this case  
the length of $\nu$ is unbounded along $\zeta$ which
is a contradiction. The lemma is proven.
\end{proof}

The following angle control is the second and last preparatory
step toward the construction of a WP-ray associated
to a Teichm\"uller geodesic $\gamma:\mathbb{R}\to {\cal T}(S)$
whose initial direction is contained in the preimage of 
one of the sets $B(j,\ell)$.

\begin{lemma}\label{anglecontrol}
Let $B\subset {\cal Q}(S)$ be a compact set
consisting of quadratic differentials with uniquely ergodic vertical 
and horizontal measured geodesic laminations. Then 
there are numbers $\alpha=\alpha(B)>0$
and $R_0=R_0(B)>0$ with the following property.
Let $\tilde B\subset \tilde {\cal Q}(S)$ 
be the preimage of $B$ and let 
$\gamma:\mathbb{R}\to {\cal T}(S)$ be 
a Teichm\"uller geodesic with initial velocity $\gamma^\prime(0)
\in \tilde B$. Let $R_1,R_2\geq R_0$ and 
let $\xi_1,\xi_2$ be the Weil-Petersson geodesics
which connect $\gamma(0)=\xi_1(0)=\xi_2(0)$ to 
$\gamma(-R_1),\gamma(R_2)$. Then the angle
$\angle_{\gamma(0)}(\xi_1^\prime(0),\xi_2^\prime(0))$ at 
$\gamma(0)$ between the geodesics $\xi_1,\xi_2$ is at least
$\alpha$.
\end{lemma}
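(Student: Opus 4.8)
The plan is to argue by contradiction, using compactness to pass to a limiting configuration and then invoking Lemma \ref{WPconv} and Theorem \ref{fill}. Suppose no such $\alpha=\alpha(B),R_0=R_0(B)$ exist. Then for each $n$ there is a Teichm\"uller geodesic $\gamma_n:\mathbb{R}\to {\cal T}(S)$ with $\gamma_n^\prime(0)\in \tilde B$ and there are numbers $R_1^n,R_2^n\geq n$ such that the Weil-Petersson geodesics $\xi_1^n,\xi_2^n$ connecting $\gamma_n(0)$ to $\gamma_n(-R_1^n),\gamma_n(R_2^n)$ satisfy $\angle_{\gamma_n(0)}((\xi_1^n)^\prime(0),(\xi_2^n)^\prime(0))<1/n$. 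Since $B$ is compact and $\tilde B$ is the full preimage of $B$, hence ${\rm Mod}(S)$-invariant and closed, I may modify each $\gamma_n$ by a mapping class --- which changes neither $\tilde B$ nor the (unoriented Weil-Petersson) angle --- and pass to a subsequence so that $\gamma_n\to \gamma$ locally uniformly, where $\gamma$ is the Teichm\"uller geodesic with $q:=\gamma^\prime(0)\in \tilde B$. In particular the vertical lamination $\nu=q^v$ and the horizontal lamination $\mu=q^h$ are uniquely ergodic, and they bind $S$ with $i(\mu,\nu)=1$. Also $R_1^n,R_2^n\to \infty$.

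Next I would apply Lemma \ref{WPconv} twice. Applied to $\gamma_n\to\gamma$ with $n_i=R_2^n$, it yields, after passing to a subsequence, a Weil-Petersson ray $\xi_2:[0,\infty)\to {\cal T}(S)$ which is the local uniform limit of the $\xi_2^n$ and along which $\ell_\nu$ is bounded. Applied to the flipped geodesics $t\mapsto \gamma_n(-t)$, whose vertical laminations converge to $\mu$, with $n_i=R_1^n$, it yields a Weil-Petersson ray $\xi_1:[0,\infty)\to {\cal T}(S)$ which is the local uniform limit of the $\xi_1^n$ and along which $\ell_\mu$ is bounded. As in the proof of Lemma \ref{WPconv} the local uniform convergence of these geodesics also gives $(\xi_i^n)^\prime(0)\to \xi_i^\prime(0)$ (after passing to a further subsequence, using compactness of the unit cotangent sphere over a neighborhood of $\gamma(0)$). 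Since the Weil-Petersson inner product varies continuously with the basepoint, $\gamma_n(0)\to \gamma(0)$, and the angle between $(\xi_1^n)^\prime(0)$ and $(\xi_2^n)^\prime(0)$ at $\gamma_n(0)$ tends to $0$, it follows that $\angle_{\gamma(0)}(\xi_1^\prime(0),\xi_2^\prime(0))=0$; as both are unit cotangent vectors, $\xi_1^\prime(0)=\xi_2^\prime(0)$, so $\xi_1$ and $\xi_2$ coincide with a single Weil-Petersson ray $\xi$. Thus both $\ell_\mu$ and $\ell_\nu$ are bounded along $\xi$.

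It remains to derive a contradiction from such a ray $\xi$. First I would check that $\xi$ is recurrent: if not, then ${\rm sys}(\xi(t))\to 0$ as $t\to\infty$, so there are $t_k\to\infty$ and simple closed curves $c_k$ with $\ell_{c_k}(\xi(t_k))\to 0$; by the collar lemma the width of the collar of $c_k$ in $\xi(t_k)$ tends to infinity, while compactness of the section $\{\lambda\in {\cal M\cal L}\mid \ell_\lambda(x_0)=1\}$ together with the fact that $\mu$ fills $S$ gives a uniform bound $i(\mu,c_k)\geq \delta>0$; hence $\ell_\mu(\xi(t_k))$ is at least $\delta$ times the collar width and tends to infinity, contradicting boundedness of $\ell_\mu$ along $\xi$. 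So $\xi$ is recurrent. Now Theorem \ref{fill}(3) applies to both $\mu$ and $\nu$: the support of $\mu$ equals the support of an ending measure for $\xi$, and likewise for the support of $\nu$; by Theorem \ref{fill}(1) all ending measures for $\xi$ have the same support, so $\mu$ and $\nu$ have the same support $\lambda$. But any two measured geodesic laminations supported on the single lamination $\lambda$ have intersection number zero, so $i(\mu,\nu)=0$, contradicting $i(\mu,\nu)=1$. This contradiction proves the lemma.

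The step I expect to require the most care is the recurrence of the limiting ray $\xi$ deduced from the two length bounds --- this is where the collar lemma and the compactness of ${\cal P\cal M\cal L}$ (via the length-one section of ${\cal M\cal L}$) enter and must be made quantitative in $k$; everything else is either the routine ${\rm Mod}(S)$-compactness extraction of $\gamma$ in $\tilde B$ or a direct application of Lemma \ref{WPconv} and Theorem \ref{fill}.
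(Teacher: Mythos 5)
Your setup (contradiction, moving the sequence into a compact part of $\tilde B$ by the mapping class group, passing to a limiting Teichm\"uller geodesic $\gamma$, applying Lemma \ref{WPconv} to the forward and to the flipped geodesics, and concluding from the vanishing angles that the two limiting rays coincide, so that both $\ell_\mu$ and $\ell_\nu$ are bounded along a single infinite WP-ray $\xi$) is exactly the paper's argument. The divergence, and the problem, is in your endgame. The claimed uniform bound $i(\mu,c_k)\geq\delta>0$ over all simple closed curves $c_k$ is false for a filling measured lamination: closing up a long leaf segment of $\mu$ by a short transversal produces essential simple closed curves $c$ with $i(\mu,c)$ arbitrarily small (these are precisely the curves whose projective classes converge to $[\mu]$). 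Your compactness argument does not repair this, because on the length-one section of ${\cal M\cal L}$ the continuous function $\lambda\mapsto i(\mu,\lambda)$ \emph{vanishes} at the normalized $\mu$, which lies in the closure of the normalized weighted simple closed curves; so no positive lower bound follows, and for the unnormalized $c_k$ the product $\ell_{c_k}(x_0)\cdot i(\mu,\hat c_k)$ can still tend to $0$. Consequently your exclusion of the non-recurrent case does not go through: boundedness of $\ell_\mu$ along $\xi$ is perfectly compatible with curves becoming short, provided $i(\mu,c_k)$ decays like $1/\log(1/\ell_{c_k})$, and nothing in your argument rules this out. Since the rest of your proof (Theorem \ref{fill} applied to both $\mu$ and $\nu$, then $i(\mu,\nu)=0$ versus $i(\mu,\nu)=1$) needs recurrence of $\xi$, this is a genuine gap.

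The gap is fixable, but only by using the two laminations \emph{jointly}, which is where compactness really does bite: since $\mu,\nu$ bind $S$, the function $\lambda\mapsto i(\mu,\lambda)+i(\nu,\lambda)$ is continuous and strictly positive on the compact length-one section, hence bounded below by some $\delta'>0$, and then $i(\mu,c_k)+i(\nu,c_k)\geq \ell_{c_k}(x_0)\,\delta'\geq {\rm sys}(x_0)\,\delta'>0$ uniformly; combined with the collar estimate this contradicts boundedness of $\ell_\mu+\ell_\nu$ along $\xi$. This is in effect a hands-on proof of the fact the paper invokes instead: by Kerckhoff (Theorem 1.2 of \cite{K92}) the function $x\mapsto\ell_x(q^h)+\ell_x(q^v)$ is proper because $q^h,q^v$ bind $S$, so boundedness along $\xi$ confines $\xi$ to a compact subset of ${\cal T}(S)$, which is impossible since WP-geodesics in the ${\rm CAT}(0)$ completion are globally length-minimizing and $\xi$ is an infinite ray. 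With the paper's route you do not need recurrence, Theorem \ref{fill}, the collar lemma, or even unique ergodicity at this stage -- binding alone suffices -- so I would recommend replacing your recurrence step by that argument.
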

\begin{proof}
We argue by contradiction and we assume that there is a 
set $B$ as in a lemma 
for which the statement of the
lemma does not hold. Let $\tilde B\subset \tilde
{\cal Q}(S)$ be the preimage of $B$. By assumption, 
there is a sequence of Teichm\"uller geodesics
$\gamma_i:\mathbb{R}\to {\cal T}(S)$ with initial 
velocity $\gamma^\prime(0)\in \tilde B$,
and there is a sequence of numbers $R_i,T_i\to \infty$ so that
the angle between the WP-geodesics $\zeta_i,\xi_i$ connecting
$\gamma_i(0)$ to $\gamma_i(-R_i),\gamma_i(T_i)$ 
tends to zero as $i\to \infty$.

By invariance under the action of the
mapping class group and cocompactness of the action of 
${\rm Mod}(S)$ on $\tilde B$, up to passing to a subsequence 
we may assume that the Teichm\"uller 
geodesics $\gamma_i$ converge as
$i\to \infty$ to a Teichm\"uller geodesic $\gamma$.
By Lemma \ref{WPconv}, by passing to another
subsequence we may assume that the WP-geodesics
$\zeta_i,\xi_i$ converge as $i\to \infty$ to infinite
Weil-Petersson rays $\zeta,\xi:[0,\infty)\to {\cal T}(S)$.

Let $q_h,q_v$ be the horizontal and vertical measured
geodesic lamination, respectively, of the area one quadratic differential 
$q$ which is the 
unit cotangent vector of $\gamma$ at $\gamma(0)$. By Lemma \ref{WPconv},
the length of $q_v$ is bounded along $\zeta$, and the
length of $q_h$ is bounded along $\xi$.

On the other hand, by assumption, the angle at $\gamma_i(0)$ 
between $\zeta_i,\xi_i$ converges to zero as $i\to \infty$ 
and therefore we have $\zeta=\xi$. As a consequence,
the lengths of both $q_h,q_v$ 
are bounded along $\zeta$.
But the measured geodesic
laminations $q_h,q_v$ bind $S$ 
(i.e. we have $i(\mu,q_h)+i(\mu,q_v)>0$ for
every measured geodesic lamination $\mu$ on $S$)
and hence the 
function on ${\cal T}(S)$ 
which associates to a point
$x\in {\cal T}(S)$ the value $\ell_x(q_h)+\ell_x(q_v)$ 
is proper (see Theorem 1.2 of \cite{K92}).
Since $\zeta$ is an infinite ray, this is a contradiction. The 
lemma follows.
\end{proof}

Denote by $P:T^*{\cal T}(S)\to {\cal T}(S)$ the canonical
projection of the vector bundle $T^*{\cal T}(S)$
of all holomorphic quadratic differentials 
(which is the cotangent bundle of ${\cal T}(S)$) onto the base.
Then $P$ restricts to the canonical projection of the
sphere bundles for both the Teichm\"uller metric
and the Weil-Peterssen metric.
The next theorem is the 
first part of Theorem \ref{conjug} from the introduction.

\begin{theorem}\label{muasympt}
There is a measurable
conjugacy $\Lambda:{\cal E}\to {\cal Q}_{WP}(S)$ of the restriction
of $\Phi_{\cal T}^t$ to ${\cal E}$ into the geodesic flow of the
Weil-Petersson metric. 
Its restriction to every compact subset of ${\cal E}$
is continuous.
\end{theorem}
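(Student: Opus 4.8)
The plan is to construct $\Lambda$ orbit-by-orbit using the geometric comparison results accumulated so far, and then to verify measurability and the continuity on compact sets. Given $q\in{\cal E}$, by Lemma \ref{borel} the vertical and horizontal measured geodesic laminations $q^v,q^h$ of $q$ are uniquely ergodic and bind $S$. Lift the orbit of $q$ to a Teichm\"uller geodesic $\gamma:\mathbb{R}\to{\cal T}(S)$ with $\gamma'(0)=\tilde q$. Since $q\in E(j,\ell)$ for some $j,\ell$, both forward and backward orbits return infinitely often to the compact set $B(j,\ell)$, so the function $r(q,t)$ grows at least linearly in $|t|$ along a sequence of times; by Proposition \ref{thickmeet} this forces the Weil-Petersson geodesic segments $\zeta_i$ connecting $\gamma(-R_i)$ to $\gamma(T_i)$ (for suitable sequences $R_i,T_i\to\infty$ obtained from the recurrence to $B(j,\ell)$) to spend a definite proportion of time in ${\cal T}(S)_\delta$ for a fixed $\delta=\delta(j,\ell)>0$. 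Using Lemma \ref{WPconv} applied to both time directions, extract WP-rays $\gamma_+,\gamma_-:[0,\infty)\to{\cal T}(S)$ issuing from $\gamma(0)$ along which $q^v$ (resp.\ $q^h$) has bounded length.

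Next I would invoke Corollary \ref{asympt1} to produce the biinfinite Weil-Petersson geodesic. Its hypotheses are exactly what the preceding steps supply: a WP-geodesic segment $\gamma_0$ (a subsegment of one of the $\zeta_i$) with $\ell_{\epsilon-{\rm thick}}(\gamma_0)\geq k_1$, coming from Proposition \ref{thickmeet}; the angle lower bound at the two endpoints, coming from Lemma \ref{anglecontrol} applied to the compact set $B(j,\ell)$ (which consists of quadratic differentials with uniquely ergodic vertical and horizontal laminations, as required); and the existence of the binding measured laminations $q^v,q^h$ of bounded length along $\gamma_+,\gamma_-$, just constructed. Corollary \ref{asympt1} then yields a \emph{unique} biinfinite WP-geodesic $\xi_q$ forward asymptotic to $\gamma_+$ and backward asymptotic to $\gamma_-$. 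By Theorem \ref{fill} (the ray $\xi_q$ is recurrent because it fellow-travels $\gamma_+$ which meets ${\cal T}(S)_\delta$ infinitely often), any ending measure of $\xi_q$ in either direction has the same support as $q^v$ resp.\ $q^h$; since those are uniquely ergodic, the ending measures are determined up to scale. Normalize: let $\Lambda(q)$ be the unit-Weil-Petersson-norm cotangent vector at $\xi_q(0)$ pointing along $\xi_q$, where the parametrization of $\xi_q$ is pinned by declaring $\xi_q(0)$ to be the foot-point of the quadratic differential whose vertical and horizontal laminations are $q^v/\sqrt{i(q^v,q^h)}$ and $q^h/\sqrt{i(q^v,q^h)}$ — exactly the normalization used for $\sigma(\gamma,\mu_+,\mu_-)$ in the proof of Proposition \ref{wpcomp}. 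This makes $\Lambda$ equivariant under ${\rm Mod}(S)$ and, by the strict convexity of $\ell_{q^v}$ and $\ell_{q^h}$ along $\xi_q$, maps the Teichm\"uller orbit of $q$ homeomorphically onto the WP orbit of $\Lambda(q)$; this is the conjugacy relation $\Lambda(\Phi^t_{\cal T}q)=\Phi^{\psi(q,t)}_{WP}\Lambda(q)$ with $\psi(q,\cdot)$ an increasing homeomorphism. Injectivity follows because $q$ is recovered from $\xi_q$ as the Teichm\"uller geodesic determined by its (unique) ending measures.

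For measurability, I would argue that $\Lambda$ is a pointwise limit of the explicit continuous maps furnished by the constructions: on each $E(j,\ell)$, the segments $\zeta_i$ depend continuously on $q$ (Arzel\`a--Ascoli plus continuity of WP-geodesics in endpoints), the ending laminations depend continuously on $\gamma$ by the uniqueness argument in the proof of Proposition \ref{wpcomp} and Lemma \ref{WPconv}, and the footpoint normalization is continuous by continuity of the length function and the intersection form. Since ${\cal E}=\bigcup_{j,\ell}E(j,\ell)$ is a countable union of Borel sets, $\Lambda$ is Borel measurable. For the continuity on compact invariant sets: if $C\subset{\cal E}$ is compact and $\Phi^t_{\cal T}$-invariant, then by Lemma \ref{borel}(4) and compactness, $C$ is contained in some $B(j,\ell)$ and its lifted Teichm\"uller geodesics lie in a fixed ${\cal T}(S)_\epsilon$; then $\Lambda|_C$ is literally the continuous conjugacy $\Psi^{-1}$ of Proposition \ref{wpcomp}(2) composed with the footpoint reparametrization, hence continuous. (More directly: on $C$ the whole construction runs through the cocompact set $\Gamma_\epsilon$ and inherits its continuity.) The main obstacle is the middle step — verifying that the recurrence encoded in membership in $E(j,\ell)$, together with Proposition \ref{thickmeet}, really delivers WP-segments with $\ell_{\epsilon-{\rm thick}}\geq k_1$ simultaneously on both sides of $\gamma(0)$, so that Corollary \ref{asympt1} applies and the resulting $\xi_q$ is both biinfinite and unique; once that is in place, the remaining verifications (equivariance, the conjugacy identity via convexity, injectivity, measurability) are routine.
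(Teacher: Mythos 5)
Your overall strategy follows the paper, but the central step --- producing the biinfinite Weil--Petersson geodesic via Corollary \ref{asympt1} --- is not correctly set up, and it is exactly the step you flag as ``the main obstacle'' without resolving it. Corollary \ref{asympt1} requires a WP-segment $\gamma_0:[0,\tau]\to {\cal T}(S)$ with $\ell_{\epsilon-{\rm thick}}(\gamma_0)\geq k_1$ \emph{and} two rays issuing from its two endpoints $\gamma_0(0),\gamma_0(\tau)$, with angle at least $\alpha$ between the segment and each ray at its own endpoint. In your configuration both rays $\gamma_+,\gamma_-$ issue from the single point $\gamma(0)$, while your proposed $\gamma_0$ is a subsegment of some $\zeta_i$ whose endpoints carry neither ray; moreover Lemma \ref{anglecontrol} only bounds the angle at $\gamma(0)$ between the two WP-segments from $\gamma(0)$ to $\gamma(-R_1)$ and $\gamma(R_2)$ --- it gives no angle information at the endpoints of a subsegment of $\zeta_i$. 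So the hypotheses of Corollary \ref{asympt1} are not ``exactly what the preceding steps supply.'' The missing idea, which is how the paper proceeds, is to exploit the recurrence built into the definition of $E(j,\ell)$ to choose a return time $n>\max\{\ell,k/\eta\}$ with $\Phi^n_{\cal T}q\in B(j,\ell)$, to take $\gamma_0$ to be the WP-geodesic joining $\gamma_{\tilde q}(0)$ to $\gamma_{\tilde q}(n)$ (which has $\ell_{\delta-{\rm thick}}\geq k$ by Proposition \ref{thickmeet} since $n\geq k/\eta$), to anchor the backward ray at $\gamma_{\tilde q}(0)$ and the \emph{forward ray at $\gamma_{\tilde q}(n)$}, and to get the two angle bounds from Lemma \ref{anglecontrol} applied at the two cotangent vectors lying over $B(j,\ell)$ (namely over $q$ and over $\Phi^n_{\cal T}q$). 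Without this two-anchor configuration Corollary \ref{asympt1} simply does not apply, since with a common basepoint the ``middle segment'' degenerates.

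A secondary problem is the normalization: you pin the parametrization by declaring $\xi_q(0)$ to be the foot-point of the quadratic differential with laminations $q^v/\sqrt{i(q^v,q^h)},\,q^h/\sqrt{i(q^v,q^h)}$, but that point lies on the Teichm\"uller geodesic, not on $\xi_q$, so as written this does not define a parametrization of $\xi_q$, nor the monotone time change $\psi(q,\cdot)$. The paper instead takes $\tilde\Lambda(\tilde q)$ to be the unit WP-cotangent of $\xi(\tilde q)$ at the unique point where $\ell_{\tilde q^v}=1$; strict convexity of length functions along WP-geodesics then makes $t\mapsto\tilde\Lambda(\Phi^t_{\cal T}\tilde q)$ a homeomorphism onto the cotangent line of $\xi(\tilde q)$, which is what yields the conjugacy relation and injectivity. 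Finally, your continuity argument for compact invariant sets via ``$\Psi^{-1}$'' from Proposition \ref{wpcomp}(2) is not available: that $\Psi$ conjugates the WP flow into the Teichm\"uller flow, not conversely, and no inverse has been constructed; the paper's route is simply that the whole construction (rays, angle bounds, the geodesic $\xi(\tilde q)$ and the normalized cotangent) depends continuously on $\tilde q$ over each $\tilde E(j,\ell)$, giving continuity of $\Lambda$ on the sets $B(j,\ell)$ and hence on the compact sets in question.
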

\begin{proof} Recall that the preimage $\tilde {\cal E}\subset 
\tilde {\cal Q}(S)$ of ${\cal E}$ consists of 
differentials with uniquely ergodic
horizontal and vertical measured geodesic laminations.

For all $j,\ell$ let 
$\tilde E(j,\ell)\subset \tilde {\cal E}$ be the preimage of 
$E(j,\ell)$ in 
$\tilde {\cal Q}(S)$.
Let moreover $\tilde B(j,\ell)$ be the preimage
of $B(j,\ell)$.

Fix $(j,\ell)$ and recall that $E(j,\ell)\subset B(j,\ell)$.
By Lemma \ref{compact} 
we may apply Lemma \ref{WPconv} and 
Lemma \ref{anglecontrol} to the sets $B(j,\ell)$.  
Thus for every $q\in E(j,\ell)$ 
and every lift $\tilde q$ of $q$ 
there are unique WP-geodesics rays $\zeta_+(\tilde q),
\zeta_-(\tilde q)$ which are limits of segments connecting
points on the forward or backward geodesic
subray of the geodesic $\gamma_{\tilde q}$ with 
initial velocity $\tilde q$. The angle between 
$\zeta_+(\tilde q)$ and $\zeta_-(\tilde q)$ is bounded from
below by 
$\alpha=\alpha(B(j,\ell))$ as in Lemma \ref{anglecontrol}.
Moreover, these rays depend continuously
on $\tilde q\in \tilde E(j,\ell)$.

Let $\delta=\delta(1/j)$ and $\eta=\eta(1/j)$ as in 
Proposition \ref{thickmeet}. 
Let $k=k(\delta,\alpha)>0$ be as in the first part of 
Lemma \ref{gaussb}. 
Let $n>\max\{\ell,k/\eta\}$ be such that
$\Phi_{\cal T}^nq\in B(j,\ell)$; such a number exists by the
definition of $E(j,\ell)$.
By the choice of $\alpha$, if $n$ is sufficiently
large then for sufficiently large
$R\geq k/\eta$ 
the angle at $\gamma_{\tilde q}(0)$ of the geodesic triangle
with vertices $\gamma_{\tilde q}(n),\gamma_{\tilde q}(0),
\gamma_{\tilde q}(-R)$ is at least $\alpha$
and the same holds true for the angle at $\gamma_{\tilde q}(n)$
of the triangle with vertices 
$\gamma_{\tilde q}(n+R),\gamma_{\tilde q}(n), \gamma_{\tilde q}(0)$.
Proposition \ref{thickmeet} shows moreover  that
$\ell_{\delta-{\rm thick}}(\gamma_{\tilde q}[0,n])\geq k$.
Thus by Corollary \ref{asympt1},
there is a unique Weil-Petersson geodesic 
$\xi(\tilde q)$ which is forward asymptotic
to $\zeta_+(\tilde q)$ and backward asymptotic 
to $\xi_-(\tilde q)$.
Moreover, this geodesic depends continuously on $\tilde q\in
\tilde E(j,\ell)$. Its projective ending measures are
the classes $[\tilde q^v],[\tilde q^h]$ 
of the vertical and horizontal measured geodesic laminations of
$\tilde q$.

We complete the proof of the theorem using the arguments
from the proof of Proposition \ref{wpcomp}.
Namely, let $\tilde q\in \tilde E(j,\ell)$ 
and let $\tilde q^v,\tilde q^h$
be the vertical and the horizontal measured geodesic lamination of $\tilde q$,
respectively. The function $x\to \ell_x(\tilde q^v)$
is strictly convex along the Weil-Petersson geodesic $\xi(\tilde q)$
and tends to zero as $t\to \infty$. 
Let $\tilde \Lambda(\tilde q)$ be 
the unit cotangent vector of $\xi(\tilde q)$
at the unique point $\xi(\tilde q)(s)$ where this length equals one.
Since length functions are strictly convex along
Weil-Petersson geodesics, 
the assignment
$t\to \tilde \Lambda(\Phi^t_{\cal T}\tilde q)$ is a homeomorphism
of the orbit of the Teichm\"uller flow through $\tilde q$
onto the orbit of the Weil-Petersson flow through
$\tilde \Lambda(\tilde q)$. 

Since the Teichm\"uller geodesic
$t\to P\Phi_{\cal T}^t\tilde q$ is uniquely determined by the
projective classes of the horizontal and vertical measured
geodesic laminations, respectively, and these projective
measured geodesic laminations are the ending measures of the 
WP-geodesic determined by $\tilde\Lambda(\tilde q)$, 
the map $\tilde q\in \tilde E(j,\ell) \to \tilde\Lambda(\tilde q)$ is injective,
moreover it is continuous and  
equivariant under the action of the mapping class group.
Thus this map projects to a measurable map 
$\Lambda:{\cal E}\to {\cal Q}_{WP}(S)$
which defines a conjugacy of the Teichm\"uller geodesic flow
on 
${\cal E}$ into the Weil-Petersson geodesic flow. Its restriction
to each of the sets $B(j,\ell)$ is continuous. 
\end{proof}

\begin{remark}\label{remuasympt}
The proof of Theorem \ref{muasympt} moreover shows
that for every $q\in {\cal E}$ 
there is a number $c=c(q)>0$ with the following
property. Let $\tilde \Lambda:\tilde {\cal Q}(S)\to \tilde{\cal Q}_{WP}(S)$
be a ${\rm Mod}(S)$-equivariant 
lift of $\Lambda$, defined on the
preimage $\tilde {\cal E}$ of ${\cal E}$, and let 
$\tilde q\in \tilde {\cal E}$ be 
a lift of $q$.
Then there is a sequence $t_i$ $(i\in \mathbb{Z})$ with
$t_i\to \pm \infty$ $(i\to \pm \infty)$ and 
such that
$d_{\cal T}(P\Phi_{\cal T}^{t_i}\tilde q,
P\tilde \Lambda(\Phi_{\cal T}^{t_i}\tilde q))\leq c$ for all $i$.

Namely, by continuity, for any compact set 
$\tilde K\subset \tilde{\cal Q}(S)$ which projects onto $B(j,\ell)$ 
there is some
$c>0$ such that $d_{\cal T}(P\tilde q,P\Lambda(\tilde q))\leq c$ for
every $\tilde q\in \tilde K$. As for every $q\in E(j,\ell)$ the flow
line of $\Phi^t_{\cal T}$ through $q$ intersects $B(j,\ell)$ for arbitrarily
large and small times, by eqivariance 
this number $c$ satisfies the properties
stated in the proposition for all $q\in E(j,\ell)$.
\end{remark}

We conclude this section with the proof of the first part of 
Theorem \ref{fellowtravel} from the introduction 
(which is a version of  a special case of 
Theorem \ref{muasympt}).
As in the introduction,
we always denote by $J,J^\prime$ a closed 
connected subset of $\mathbb{R}$.

\begin{proposition}\label{tmcomp}
\begin{enumerate}
\item
For every $\epsilon >0$ there is a number $R=R(\epsilon)>0$
with the following property. For 
every Teichm\"uller geodesic 
$\gamma:J\to {\cal T}(S)_\epsilon$
there is a Weil-Petersson geodesic
$\xi:J^\prime\to {\cal T}(S)$ with
$d_H(\gamma(J),\xi(J^\prime))\leq R$.
\item 
Let $K\subset {\cal Q}(S)$ be any compact set which is 
invariant under the Teichm\"uller geodesic flow $\Phi^t_{\cal T}$. 
Then there is a 
conjugacy $\Lambda:K\to {\cal Q}_{WP}(S)$ of the restriction
of $\Phi^t_{\cal T}$ to $K$ into the geodesic flow 
$\Phi^t_{WP}$ for the Weil-Petersson metric.
\end{enumerate}
\end{proposition}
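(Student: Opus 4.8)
The idea is to read off both parts from Theorem \ref{muasympt} and the estimates of Sections 3--5, first for biinfinite geodesics and then reducing the general situation to that case. Part (2) is immediate: if $K\subset{\cal Q}(S)$ is compact and $\Phi^t_{\cal T}$-invariant, then $K\subset{\cal E}$ by part (4) of Lemma \ref{borel}, so $\Lambda$ is defined on $K$; since $K$ is a compact subset of ${\cal E}$, the restriction $\Lambda\vert K$ is continuous by Theorem \ref{muasympt}, and by construction it is a conjugacy of $\Phi^t_{\cal T}\vert K$ into $\Phi^t_{WP}$, with a reparametrization function that is continuous on $K\times\mathbb{R}$ because the orbit $t\mapsto\tilde\Lambda(\Phi^t_{\cal T}\tilde q)$ is an arclength-parametrized biinfinite WP geodesic depending continuously on $\tilde q$.

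For part (1) take first $J=\mathbb{R}$. Because $\gamma\subset{\cal T}(S)_\epsilon$, the path $t\mapsto\Upsilon_{\cal T}(\gamma(t))$ is a \emph{parametrized} $L^\prime(\epsilon)$-quasigeodesic in ${\cal C\cal G}(S)$, so by the estimate (\ref{rho}) the quadratic differential $q$ underlying $\gamma$ satisfies $r(\Phi^s_{\cal T}q,t)\geq t/j$ for all $s\in\mathbb{R}$ and $t\geq\ell$, with $j=j(\epsilon)$ and $\ell=\ell(\epsilon)$. Hence the whole orbit of $q$ lies in $B(j,\ell)$, so $q\in E(j,\ell)\subset{\cal E}$ and the compact invariant closure $K=\overline{\{\Phi^t_{\cal T}q\}}$ lies in $B(j,\ell)$. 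Let $\xi:\mathbb{R}\to{\cal T}(S)$ be the biinfinite WP geodesic with initial cotangent vector $\tilde\Lambda(\tilde q)$ for a lift $\tilde q$ of $q$. By Remark \ref{remuasympt} there is $c=c(j,\ell)=c(\epsilon)$ with $d_{\cal T}(P\Phi^t_{\cal T}\tilde q,\ P\tilde\Lambda(\Phi^t_{\cal T}\tilde q))\le c$ whenever $\Phi^t_{\cal T}q\in B(j,\ell)$, i.e. for every $t$; since $P\Phi^t_{\cal T}\tilde q=\gamma(t)$ and $\{P\tilde\Lambda(\Phi^t_{\cal T}\tilde q)\mid t\in\mathbb{R}\}=\xi(\mathbb{R})$, this gives $d_H(\gamma(\mathbb{R}),\xi(\mathbb{R}))\le c$, and $R(\epsilon)=c$ works.

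For a general closed connected $J$ I would argue as follows. If ${\rm diam}(J)\le T_0(\epsilon)$ for a suitable constant, take $\xi$ to be the constant map at $\gamma(\min J)$; since Teichm\"uller geodesics are parametrized by $d_{\cal T}$-arclength, $d_H(\gamma(J),\{\gamma(\min J)\})={\rm diam}(J)\le T_0$. For ${\rm diam}(J)>T_0$ the plan is to associate to $\gamma$ a WP geodesic $\zeta:J^\prime\to{\cal T}(S)$ canonically: the WP geodesic joining the two endpoints of $\gamma$ when both lie in ${\cal T}(S)$, and otherwise the limiting WP ray, respectively biinfinite WP geodesic, produced by Lemma \ref{WPconv} (and, when $J=\mathbb{R}$, Corollary \ref{asympt1}), which exists because along $\gamma\subset{\cal T}(S)_\epsilon$ the vertical and horizontal laminations are uniquely ergodic and fill $S$ by Masur's criterion \cite{M82}, and which agrees with $\xi$ above when $J=\mathbb{R}$. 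One then runs the Mosher-type compactness argument from the proof of Proposition \ref{wpcomp}(1), with the two metrics interchanged: the triples $(\gamma:J\to{\cal T}(S)_\epsilon,\zeta,s)$ form a space on which ${\rm Mod}(S)$ acts cocompactly (Arzel\`a--Ascoli, compactness of $\{\mu\in{\cal M\cal L}\mid\ell_\mu(x)=1,\ x\in A\}$ for compact $A\subset{\cal T}(S)_\epsilon$, the parametrized quasigeodesic property to forbid degeneration, continuity of WP geodesics in their endpoints, and Lemma \ref{WPconv} when ${\rm diam}(J)=\infty$), and Proposition \ref{thickmeet} guarantees that $\zeta$ spends a definite proportion of its length in the thick part, so that $\zeta$ stays uniformly close to $\gamma$ in $d_{\cal T}$; continuity and ${\rm Mod}(S)$-invariance of the relevant nearest-point distance function then produce $R(\epsilon)$. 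The main obstacle is precisely this last step: unlike in Proposition \ref{wpcomp}(1), there is no explicit lamination-theoretic parametrization matching points of $\gamma$ to points of $\zeta$, so one must use Proposition \ref{thickmeet} together with the control of twisting from Section 4 to rule out that $\zeta$ escapes into the thin part, and one must verify in the limiting cases that $\zeta$ is genuinely an infinite ray or biinfinite geodesic rather than a bounded arc — which is where unique ergodicity and the filling property enter in an essential way.
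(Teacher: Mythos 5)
Your part (2) is correct and is exactly the paper's argument: $K\subset{\cal E}$ by part (4) of Lemma \ref{borel}, and Theorem \ref{muasympt} gives the conjugacy, continuous on the compact invariant set $K$. Your treatment of the case $J=\mathbb{R}$ in part (1) is also correct, and it is a genuinely different (and legitimate) route: since $\Upsilon_{\cal T}\circ\gamma$ is a parametrized quasi-geodesic for $\gamma\subset{\cal T}(S)_\epsilon$, the whole flow line lies in $B(j(\epsilon),\ell(\epsilon))$, hence in $E(j,\ell)\subset{\cal E}$, and the refined statement of Remark \ref{remuasympt} applies at every time, giving a Hausdorff bound $c(\epsilon)$ between $\gamma(\mathbb{R})$ and the image Weil--Petersson geodesic.

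For a general closed connected $J$ --- which is the actual content of part (1) --- your argument is only a plan with a gap that you flag yourself: you never produce the uniform two-sided estimate $d_H(\gamma(J),\xi(J^\prime))\leq R(\epsilon)$, and your claim that ``there is no explicit lamination-theoretic parametrization matching points of $\gamma$ to points of $\zeta$'' is precisely where the proposal breaks down, because the paper closes the gap by constructing exactly such a matching. The paper declares a projective measured lamination \emph{realized} at an endpoint of $J$ (a Bers curve of $\gamma(t)$-length at most $\chi_0$ at a finite endpoint, the vertical resp.\ horizontal lamination of the defining quadratic differential at an infinite one), forms the space $\Gamma$ of triples $(\gamma:J\to{\cal T}(S)_\epsilon,\lambda_+,\lambda_-)$ with $\lambda_\pm$ of $\gamma(0)$-length one, shows that ${\rm Mod}(S)$ acts cocompactly on $\Gamma$ (via Proposition 3.4 of \cite{H10a}), and assigns to each triple the Weil--Petersson geodesic through the endpoints (or the limiting ray/biinfinite geodesic from Lemma \ref{WPconv} and Lemma \ref{anglecontrol}), parametrized so that the strictly convex function $x\mapsto\ell_x(\lambda_+)+\ell_x(\lambda_-)$ attains its minimum at time $0$. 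Continuity, equivariance and cocompactness bound $d_{\cal T}(\gamma(0),\rho(\gamma,\lambda_+,\lambda_-)(0))$ by a constant depending only on $\epsilon$, and then --- this is the step missing from your sketch --- re-basing the triple at each $s\in J$ (shifting the parametrization and renormalizing $\lambda_\pm$ to $\gamma(s)$-length one) stays in the same cocompact set while the associated Weil--Petersson geodesic is unchanged as a set; hence every $\gamma(s)$ lies within uniformly bounded $d_{\cal T}$-distance of that geodesic, and continuity in $s$ gives the reverse inclusion, exactly as in Proposition \ref{wpcomp}. Your proposed substitute, Proposition \ref{thickmeet} together with the twisting control of Section 4, only yields that the connecting Weil--Petersson geodesic spends a definite proportion of its length in a thick part; by itself this gives no fellow-traveling in $d_{\cal T}$, and you do not indicate how it would.
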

\begin{proof} The second part of the proposition is 
immediate from Theorem \ref{muasympt}. 

The argument for the first part is similar to the proof of 
Proposition \ref{wpcomp}. Namely, 
let $J\subset \mathbb{R}$ be a closed connected set
containing $0$ and
let $\gamma:J\to {\cal T}(S)_\epsilon$ be a Teichm\"uller geodesic.
We say that the projective measured geodesic
lamination $[\beta]$ defined by a simple closed curve  $\beta\in {\cal C}(S)$ 
is \emph{realized} at some $t\in J$ if the $\gamma(t)$-length of 
$\beta$ does not exceed $\chi_0$.
If $J$ contains $[0,\infty)$ then we say that the
projectivization $[q_v]$ of the
vertical measured geodesic lamination defined by the 
unit cotangent vector $q$ of $\gamma$ at $\gamma(0)$ is
realized at the right endpoint of $J$, and similarly for
a left infinite endpoint.

Let $\Gamma$ be the set of all triples 
$(\gamma:J\to {\cal T}(S)_\epsilon,\lambda_+,\lambda_-)$ with the
following properties.
\begin{enumerate}
\item $J\subset\mathbb{R}$ is a closed connected set containing $0$.
\item $\gamma:J\to {\cal T}(S)_\epsilon$ is a Teichm\"uller geodesic.
\item $\lambda_+,\lambda_-$ are measured geodesic
laminations of $\gamma(0)$-length one, and
the projective measured geodesic lamination
$[\lambda_+]$ is realized at the right endpoint of
$J$, the projective measured geodesic lamination
$[\lambda_-]$ is realized at the left endpoint of $J$.
\end{enumerate}

We equip $\Gamma$ with the product topology, using the weak$^*$-topology
on ${\cal M\cal L}$ for the second and the third component of the triple
and the compact-open topology for the arc $\gamma:J\to {\cal T}(S)_\epsilon$.
Note that this topology is metrizable. Moreover, $\Gamma$
is invariant under the natural action of the
mapping class group.

We claim that the action of ${\rm Mod}(S)$ on $\Gamma$ is 
cocompact. Since ${\rm Mod}(S)$ acts cocompactly
on ${\cal T}(S)_\epsilon$, for this it is enough to show
that the following holds true. If $\gamma_i:J_i\to {\cal T}(S)_\epsilon$
$(i>0)$ is any sequence of Teichm\"uller geodesics which 
converge locally uniformly to a Teichm\"uller geodesic
$\gamma:J\to {\cal T}(S)_\epsilon$, if the 
projective measured geodesic lamination $[\lambda_i]$
is realized at the right endpoint of $J_i$ and
if $[\lambda_i]\to [\lambda]$ in ${\cal P\cal M\cal L}$ 
$(i\to \infty)$ then $[\lambda]$ is realized at the 
right endpoint of $J$. However, that this holds true was
shown in the proof of Proposition 3.4 of \cite{H10a}
(compare also the argument in the proof of Proposition \ref{wpcomp}).

To each triple $(\gamma:J\to {\cal T}(S)_\epsilon,\lambda_+,\lambda_-)\in \Gamma$ 
associate a Weil-Petersson geodesic $\rho(\gamma,\lambda_+,\lambda_-)$
as follows. Assume first that $J=[-a,b]$ is bounded,
Then there is up to parametrization a unique WP-geodesic 
$\xi$ connecting $\gamma(-a)$ to $\gamma(b)$. 
The restriction to $\xi$ of 
the function which associates to $x\in {\cal T}(S)$ the sum
$\ell_x(\lambda_+)+\ell_x(\lambda_-)$ is strictly convex and non-constant 
(unless $a=b=0$) 
and hence it assumes a unique minimum along $\xi$ \cite{W08}.
Let $\rho(\gamma,\lambda_+,\lambda_-)$ be the parametrization
of $\xi$ so that this minimum is assumed
at $\rho(\gamma,\lambda_+,\lambda_-)(0)$.

If $J$ is one-sided unbounded, say if $J=[-a,\infty)$ for some 
$a\geq 0$, then there is a unique infinite WP-geodesic ray 
$\xi$ issuing from
$\gamma(-a)$ which is asymptotic to $\eta(\gamma_+)$ where
$\eta(\gamma_+)$ is as in Lemma \ref{WPconv}.
The function $x\to \ell_x(\lambda_+)+\ell_x(\lambda_-)$
assumes a unique minimum along $\xi$. 
We define $\rho(\gamma,\lambda_+,\lambda_-)$ to be
the parametrization of $\xi$ for which this minimum
is assumed at $\rho(\gamma,\lambda_+,\lambda_-)(0)$.

If $J$ is two-sided infinite then let $\rho(\gamma,\lambda_+,\lambda_-)$
be the parametrization of the geodesic $\zeta(\gamma)$ 
as in the second part of Lemma \ref{anglecontrol} so that the
minimum of the function $x\to \ell_x(\lambda_+)+\ell_x(\lambda_-)$ 
along $\eta(\gamma)$ is assumed at $\rho(\gamma,\lambda_+,\lambda_-)(0)$.

By Lemma \ref{anglecontrol} and continuity and convexity of length functions, 
the assignment which associates
to $(\gamma:J\to {\cal T}(S)_\epsilon,\lambda_+,\lambda_-)\in \Gamma$ the
point $\rho(\gamma,\lambda_+,\lambda_-)(0)$ is continuous, moreover
it is equivariant under the action of the mapping class group.
Since ${\rm Mod}(S)$ acts cocompactly on $\Gamma$, this means
that for every 
$(\gamma:J\to {\cal T}(S)_\epsilon,\lambda_+,\lambda_-)\in \Gamma$ the
Teichm\"uller 
distance between $\gamma(0)$ and $\rho(\gamma,\lambda_+,\lambda_-)(0)$ 
is uniformly bounded. The first part of the proposition now
follows as in the proof of Proposition \ref{wpcomp}.
\end{proof}

\section{Invariant measures for the 
Teichm\"uller flow}\label{invariant}

Denote by 
$h(\mu)\geq 0$ the entropy of a $\Phi^t_{\cal T}$-invariant
Borel probability measure $\mu$ on ${\cal Q}(S)$ 
(or of a $\Phi^t_{\rm WP}$-invariant Borel probability
measure $\mu$ on ${\cal Q}_{\rm WP}(S)$).
We continue to use the assumptions and notations from
Section 2-7

\begin{theorem}\label{final}
The conjugacy $\Lambda$ induces a 
continuous injective map 
\[\Theta:{\cal M}_{\cal T}({\cal Q}(S))
\to {\cal M}_{\rm WP}({\cal Q}_{\rm WP}(S)).\] Moreover,
\[h(\Theta(\mu))\geq h(\mu)/\sqrt{2\pi(2g-2+m)}\]
for all $\mu\in {\cal M}_{\cal T}({\cal Q}(S)).$ 
\end{theorem}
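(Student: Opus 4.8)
The plan is to realise $\Theta$ as a time change of the Teichm\"uller flow transported by $\Lambda$, and to extract the entropy estimate from Abramov's formula together with the comparison in Lemma~\ref{wpcomparison}(1).

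Fix $\mu\in{\cal M}_{\cal T}({\cal Q}(S))$. By Theorem~\ref{muasympt} the conjugacy $\Lambda$ carries a function $\psi:{\cal E}\times\mathbb{R}\to\mathbb{R}$ as in Definition~\ref{conjugacy}; comparing $\Lambda(\Phi_{\cal T}^{t+s}q)=\Phi_{WP}^{\psi(q,t+s)}\Lambda(q)$ with $\Phi_{WP}^{\psi(\Phi_{\cal T}^tq,s)}\Phi_{WP}^{\psi(q,t)}\Lambda(q)$ gives the cocycle identity $\psi(q,t+s)=\psi(q,t)+\psi(\Phi_{\cal T}^tq,s)$, so with $a(q)=\partial_s\psi(q,s)|_{s=0}>0$ one has $\psi(q,t)=\int_0^t a(\Phi_{\cal T}^uq)\,du$. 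Thus $\Lambda$ conjugates $\Phi_{\cal T}^t$ to the time change of $\Phi_{\cal T}^t$ by the function $a$, and the classical time-change construction produces the $\Phi_{WP}^t$-invariant Borel probability
\[
\Theta(\mu):=\Lambda_*\!\left(\frac{a\,\mu}{\int a\,d\mu}\right),
\]
which, by Abramov's formula and the fact that $\Lambda$ is a measure-theoretic isomorphism onto its image, satisfies $h(\Theta(\mu))=h(\mu)/\!\int a\,d\mu$. (That $a\in L^1(\mu)$ and $0<\int a\,d\mu<\infty$: positivity is clear as $a>0$, and integrability follows from the bound in the next paragraph by a routine truncation, applying the Birkhoff theorem to $\min(a,N)$ and letting $N\to\infty$.)

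The estimate $\int a\,d\mu\le\sqrt{2\pi(2g-2+m)}$ is the crux of the entropy bound. For $T>0$ one has $\psi(q,T)=d_{WP}(P\Lambda(q),P\Lambda(\Phi_{\cal T}^Tq))$, since $\Lambda(\Phi_{\cal T}^Tq)=\Phi_{WP}^{\psi(q,T)}\Lambda(q)$ and $\psi(q,\cdot)$ is increasing. Let $\gamma(t)=P\Phi_{\cal T}^tq$ and take the sequence $t_i\to\pm\infty$ supplied by Remark~\ref{remuasympt}, along which $d_{\cal T}(\gamma(t_i),P\Lambda(\Phi_{\cal T}^{t_i}q))\le c(q)$. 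Writing $C=2\pi(2g-2+m)$, for $t_i<0<t_j$ the triangle inequality and Lemma~\ref{wpcomparison}(1) give
\[
\psi(q,t_j)-\psi(q,t_i)=d_{WP}\big(P\Lambda(\Phi_{\cal T}^{t_i}q),P\Lambda(\Phi_{\cal T}^{t_j}q)\big)\le\sqrt{C}\,(t_j-t_i)+2\sqrt{C}\,c(q),
\]
because $\gamma$ is a unit-speed $d_{\cal T}$-geodesic. Dividing by $t_j$ and letting $t_j\to\infty$ yields $\limsup_{T\to\infty}\psi(q,T)/T\le\sqrt{C}$, and since by the Birkhoff theorem $\int a\,d\mu=\lim_{T\to\infty}\psi(q,T)/T$ for $\mu$-a.e.\ $q$, we get $\int a\,d\mu\le\sqrt{C}$ and hence $h(\Theta(\mu))\ge h(\mu)/\sqrt{C}$. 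Finiteness of $h(\Theta(\mu))$ follows since $h(\mu)<\infty$ (the entropy of $\Phi_{\cal T}^t$ is bounded above, \cite{H11}) and $\int a\,d\mu>0$.

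Injectivity is soft: $\Lambda$ is injective on ${\cal E}$, so $\Theta(\mu_1)=\Theta(\mu_2)$ forces $a\mu_1/\!\int a\,d\mu_1=a\mu_2/\!\int a\,d\mu_2$ on ${\cal E}$; dividing by the a.e.-finite positive function $a$ and renormalising gives $\mu_1=\mu_2$. The genuinely delicate point --- which I expect to be the main obstacle --- is continuity of $\Theta$ for the weak$^*$-topologies, because $\Lambda$ (hence $a$ and $g\circ\Lambda$ for $g\in C_c({\cal Q}_{WP}(S))$) is only measurable. Since all measures are probabilities, a weak$^*$-convergent sequence $\mu_n\to\mu$ loses no mass, hence converges against bounded continuous functions, and $\int g\,d\Theta(\mu)=\big(\int(g\circ\Lambda)\,a\,d\mu\big)/\int a\,d\mu$; the plan is to pass to the limit through these bounded measurable integrands using the compact exhaustion of ${\cal E}$ by the sets $B(j,\ell)$ of Lemma~\ref{compact}, on each of which $\Lambda$ is continuous and $a$ is bounded. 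One shows, using Lemma~\ref{deviation} and the continuity and coarse additivity of $r$, that $\mu(B(j,\ell))\to1$ and that the exhaustion is effective uniformly along the sequence $\mu_n$, so that the family $\{a\}$ is uniformly $\mu_n$-integrable and the discontinuity sets of $\Lambda$ and $a$ are $\mu$-null; the continuous mapping theorem then gives $\int(g\circ\Lambda)\,a\,d\mu_n\to\int(g\circ\Lambda)\,a\,d\mu$ and $\int a\,d\mu_n\to\int a\,d\mu$, whence $\Theta(\mu_n)\to\Theta(\mu)$.
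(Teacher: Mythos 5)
Your construction of $\Theta$ and the entropy bound are essentially correct and run parallel to the paper's argument: your derivative $a$ is exactly the paper's Radon--Nikodym density $f$ of $\hat\Theta(\mu)$ with respect to $\Lambda_*\mu$, and the mass estimate $\int a\,d\mu\leq\sqrt{2\pi(2g-2+m)}$ via Remark \ref{remuasympt}, global minimality of WP-geodesics and Lemma \ref{wpcomparison} is the paper's computation almost verbatim (for non-ergodic $\mu$ phrase the Birkhoff step with conditional expectations or the ergodic decomposition, and note that the limsup should be taken along the sequence $t_i$ of Remark \ref{remuasympt}; both points are routine). You do deviate at two places, legitimately: you invoke Abramov's formula for time changes directly and get the equality $h(\Theta(\mu))=h(\mu)/\int a\,d\mu$, where the paper goes through Rudolph's special representation with roof function in $[1-\epsilon,1]$ and an $\epsilon$-limit; and your injectivity argument (invert the injective Borel map $\Lambda$ on its image and divide by the positive function $a$) is softer than the paper's, which distinguishes ergodic measures by the pairs of projective ending measures of typical geodesics using Theorem 1.3 of \cite{BMM10}. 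These shortcuts are fine, granted the smoothness of $t\mapsto\psi(q,t)$ coming from the construction of $\Lambda$, which the paper states explicitly.

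The genuine gap is the continuity of $\Theta$, which you flag but only sketch, and the sketch as written does not go through. The key claim that the exhaustion by the sets $B(j,\ell)$ of Lemma \ref{compact} is ``effective uniformly along the sequence $\mu_n$'', i.e. $\sup_n\mu_n({\cal Q}(S)\setminus B(j,\ell))$ small for suitable $(j,\ell)$, does not follow from weak$^*$ convergence: the sets $B(j,\ell)$ are closed, so the portmanteau theorem gives only $\limsup_n\mu_n(B(j,\ell))\leq\mu(B(j,\ell))$, which is the wrong direction, and membership in $B(j,\ell)$ is an infinite-time condition on the forward orbit which is not stable under small perturbations of the differential, so one cannot repair this by passing to open neighborhoods. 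Similarly, uniform integrability of $a$ along $\{\mu_n\}$ does not follow from the uniform bound $\int a\,d\mu_n\leq\sqrt{2\pi(2g-2+m)}$, since $a$ blows up near the thin part where the $\mu_n$ may concentrate mass; without it the interchange $\int(g\circ\Lambda)a\,d\mu_n\to\int(g\circ\Lambda)a\,d\mu$ is unjustified. This is exactly where the paper argues differently: it reduces to ergodic $\mu_n\to\mu$, passes to the disintegrated measures $\hat\mu_n$ on the space of Teichm\"uller geodesics, picks a compact set $K$ of $\hat\mu$-typical points, extracts compacta of $\hat\mu_{n}$-typical points in shrinking neighborhoods of $K$ which converge to $K$ in the Hausdorff topology, and then uses continuity of $\tilde\Lambda$ on compact subsets of $\tilde{\cal E}$ (Theorem \ref{muasympt}) together with continuous dependence of Teichm\"uller geodesics on their ending data. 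To complete your proof you would need either to import an argument of this type or to find a genuine substitute for the uniform-tightness and uniform-integrability claims.
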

\begin{proof}
Since the Teichm\"uller space ${\cal T}(S)$ is contractible and
the Teichm\"uller metric on ${\cal T}(S)$ is complete,
the action of $\Phi_{\cal T}^t$ on $\tilde {\cal Q}(S)$ is proper.
This implies that the \emph{space of oriented geodesics} 
${\cal G}(S)$ for 
the Teichm\"uller metric is a locally compact ${\rm Mod}(S)$-space
(which is naturally homeomorphic to a ${\rm Mod}(S)$-invariant 
open subset of ${\cal P\cal M\cal L}\times {\cal P\cal M\cal L}-\Delta$
(where $\Delta$ denotes the diagonal). This set consists of all 
pairs $([\mu],[\nu])$ which bind $S$.

The measure $\mu$ induces a locally finite $\Phi_{\cal T}^t$-invariant
${\rm Mod}(S)$-invariant measure $\tilde \mu$ on
$\tilde {\cal Q}(S)$.
Via disintegration, the measure
$\tilde \mu$ projects to a locally finite
${\rm Mod}(S)$-invariant measure $\hat \mu$ on ${\cal G}(S)$.
Since $\mu$ is ergodic under the Teichm\"uller flow, the
measure $\hat \mu$ is ergodic under the action of 
${\rm Mod}(S)$.

Let $\tilde {\cal E}\subset \tilde {\cal Q}(S)$ be the preimage 
of the set ${\cal E}$ defined in Section \ref{conjugating}. 
Then $\tilde {\cal E}$ is invarariant under $\Phi^t_{\cal T}$ and hence
it projects to a ${\rm Mod}(S)$-invariant Borel subset 
$\hat {\cal E}$ of ${\cal G}(S)$ of full measure for $\hat \mu$.

Let ${\cal G}_{WP}(S)$ be the space of biinfinite geodesics
for the Weil-Peterson metric. 
The conjugacy $\Lambda:{\cal E}\to {\cal Q}_{WP}(S)$ lifts to 
a ${\rm Mod}(S)$-equivariant conjugacy $\hat \Lambda:
\hat {\cal E}\to {\cal G}_{WP}(S)$.
The push-forward
$\hat \Lambda_*(\hat \mu)$ of the measure $\hat \mu$  
is a ${\rm Mod}(S)$-invariant ergodic measure on ${\cal G}_{WP}(S)$.
Its product with the standard Lebesgue measure
on the flow lines of the Weil-Petersson geodesic flow defines
a $\Phi_{WP}^t$-invariant ${\rm Mod}(S)$-invariant 
locally finite Borel measure on $\tilde {\cal Q}_{WP}(S)$
which determines a locally finite Borel measure
$\mu_0$ on ${\cal Q}_{WP}(S)$ (we explain in more detail in the sequel
that $\mu_0$ is in fact finite).
The measure $\mu_0$ is
ergodic under the action of the Weil-Petersson geodesic flow
since the measure 
$\hat \Lambda_*(\hat \mu)$ is ergodic under the 
action of ${\rm Mod}(S)$.

Let $b=\sqrt{2\pi(2g-2+m)}$. We claim  
that the the total mass of the measure $\mu_0$ 
on ${\cal Q}_{WP}(S)$ 
is bounded from above by $b$.
To see that this is the case, let 
$\psi:{\cal E}\times \mathbb{R}\to \mathbb{R}$ be the
measurable function defined by the conjugacy. The function $\psi$ satisfies the
\emph{cocycle identity}
\begin{equation}\label{cocycle}
\psi(x,s+t)=\psi(x,s)+\psi(\Phi_{\cal T}^sx,t)\,(x\in {\cal E},s,t\in \mathbb{R}).
\end{equation}
Moreover, its restriction to ${\cal E}\times [0,\infty)$ is non-negative,
and we have 
$\psi(x,0)<\psi(x,s)< \psi(x,t)$ for $0<s< t$.

Let  
$q\in {\cal E}$ be a typical point for $\mu$ and let
$\tilde q$ be a lift of $q$ to $\tilde {\cal Q}(S)$.
By Remark \ref{remuasympt} there is a number $c>0$ and there is a 
sequence $t_i\to \infty$
such that $d_{\cal T}(P\Phi^{t_i}_{\cal T}\tilde q,
P\tilde \Lambda(\Phi^{t_i}_{\cal T}\tilde q))\leq c$.
Now $\tilde \Lambda(\Phi_{\cal T}^{t_i}\tilde q)=
\Phi_{WP}^{\psi(q,t_i)}\tilde \Lambda(\tilde q)$ 
and hence since geodesics for the Weil-Petersson metric are
globally length minimizing, 
Lemma \ref{wpcomparison} shows that 
$\psi( q,t_i)\leq b(t_i+2c)$ for all $i$.
As a consequence, we have 
\[\lim\inf_{t\to \infty}\frac{1}{t}\psi(q,t)\leq b.\]

Since $q\in {\cal E}$ was an arbitrary typical point for $\mu$, 
the Birkhoff ergodic theorem together with the cocycle 
identity (\ref{cocycle}) 
implies that the (non-negative) 
function $x\to\psi(x,1)$ is integrable with respect to $\mu$, and 
$\int\psi(x,1)d\mu\leq b$.

Recall that length functions are smooth along
Weil-Petersson geodesics. Therefore by construction
of the conjugacy $\Lambda$,   
for every $q\in {\cal E}\subset {\cal Q}(S)$ the
function $t\to \psi(q,t)$ is continuously differentiable,
with derivative $f(q)$ at $t=0$ 
depending measurably on $q$. By invariance of 
the measure $\mu$ under
the Teichm\"uller flow we have
\[\int fd\mu=\int(\int_0^1f(\Phi^t_{\cal T}q)dt)d\mu(q)=\int\psi(q,1)d\mu(q)\leq b.\]

On the other hand, for
$\mu$-almost every $q$ the Radon-Nikodym derivative 
of $\mu_0$ with respect to
$\Lambda_*(\mu)$ exists at $\Lambda(q)$ 
and equals $f(q)$. Therefore we have
\[\mu_0({\cal Q}_{WP}(S))=\int fd\mu\leq b\] as claimed.

As a consequence, the conjugacy $\Lambda$ induces a map
\[\Theta:{\cal M}_{\cal T}({\cal Q}(S)\to 
{\cal M)_{WP}(\cal Q}_{WP}(S).\] Namely, we showed so far
that $\Lambda$ defines a map $\hat\Theta$ from the set of ergodic 
$\Phi^t_{\cal T}$-invariant Borel probability measure on
${\cal Q}(S)$ to a $\Phi^t_{\cal Q}$-invariant
Borel measure on ${\cal Q}_{WP}(S)$ whose total mass is
at most $b$. The map $\hat \Theta$ does not depend on any
choices made and hence it is compatible with convex
combinations. Thus it naturally extends to 
a map from ${\cal M}_{\cal T}({\cal Q}(S))$
into the space of $\Phi^t_{WP}$-invariant Borel measures 
on ${\cal Q}_{WP}(S)$
of total mass at most $b$.
We then define 
\[\Theta(\mu)=\hat\Theta(\mu)/\hat\Theta(\mu)
({\cal Q}_{WP}(S).\]

We claim that the map $\Theta$ is injective. Namely,
by construction, if $\mu$ is an ergodic
$\Phi^t_{\cal T}$-invariant Borel probability measure on 
${\cal Q}(S)$ and if $\tilde q\in {\cal Q}^1(S)$ is the lift
of a typical point for $\mu$ with vertical and 
horizontal measured geodesic laminations $\tilde q^v,\tilde q^h$, 
respectively, then there is a lift of a typical 
point for $\Theta(\mu)$ which determines a biinfinite
Weil-Petersson geodesic with forward and backward 
ending measures $[\tilde q^v],[\tilde q^h]$.
This Weil-Petersson geodesic is recurrent and hence 
by Theorem 1.3 of \cite{BMM10} and the fact that 
$\overline{\cal T}(S)$ is a ${\rm CAT}(0)$-space 
without flat strips contained in its open dense
subset ${\cal T}(S)$, such a recurrent geodesic
is determined up to parametrization by its 
projective ending measures.
As the $\Phi^t_{\cal T}$-orbit of 
$\tilde q$ is determined by the pair
$([\tilde q^v],[\tilde q^h])$ as well, 
this means that the pairs of all ending measures of
all geodesics whose initial cotangents are typical
for $\Theta(\mu)$ determine both $\Theta(\mu)$ and $\mu$.
In other words, the restriction of the 
map $\Theta$ to the extreme points of 
${\cal M}_{\cal T}({\cal Q}(S))$ is injective, and its image
consists of extreme points of ${\cal M}_{WP}({\cal Q}_{WP}(S))$.
By naturality of the map $\hat\Theta$ with respect to 
convex combination, injectivity of $\Theta$ follows.

Next we show
that $h(\Theta(\mu))\geq h(\mu)/b$ for
every measure $\mu\in {\cal M}_{\cal T}({\cal Q}(S))$. For this
we use Rudolph's theorem (see Section 11.4 in \cite{CFS82})
and Abramov's formula. 
Namely, let $\epsilon >0$. Then there is a \emph{special 
representation} of the flow $\Phi^t_{\cal T}$ on $({\cal E},\mu)$ given by
a Lebesgue space $(M,\nu)$, a measure preserving automorphism
$H:(M,\nu)\to (M,\nu)$ and a roof function 
$\rho:M\to [1-\epsilon,1]$.
The flow $\Phi^t_{\cal T}$ on ${\cal E}$ is just the vertical flow on the
space $\{(x,t)\in M\times \mathbb{R}\mid 0\leq t<\rho(x)\}/\sim$ where
$(x,\rho(x))\sim (Hx,0)$ for all $x$. The measure $\mu$ is the
product of $\nu$ with the Lebesgue measure on $\mathbb{R}$. 
In particular, since $\mu$ is a probability measure, the total mass of 
$\nu$ is contained in the interval $[1,\frac{1}{1-\epsilon}]$.

Let $h_1\geq 0$ be the entropy of the $H$-invariant measure $\nu$.
By Abramov's formula, the entropy $h(\mu)$ of $\mu$ equals
$h_1/\int \rho d\nu$. Since $\rho$ assumes values in 
$[1-\epsilon,1]$ we have
$\int\rho d\nu\in [1-\epsilon,1]$. In particular, the entropy
of $\nu$ 
is within $h_1\epsilon$ of the entropy $h(\mu)$ of $\mu$. 

The space $(M,\nu)$ can be thought of 
as a measurable section for the flow $\Phi^t_{\cal T}$.
Via the conjugacy $\Lambda$, it determines
a measurable section for the flow $\Phi^t_{WP}$. Let  
$\hat \rho$ be the corresponding first return time.
Using again Abramov's formula,  
the entropy of $\Theta(\mu)$ equals $h_1/\int \hat\rho d\nu$.

Now for each $t\in [0,1-\epsilon]$ the set $\Phi^t_{\cal T}M$ is a 
measurable section
for $\Phi^t_{\cal T}$ as well to which the above reasoning can be applied.
Since $t\to \psi(x,t)$ is increasing and non-negative we can estimate
\[\int\hat \rho d\nu\leq
\frac{1}{1-\epsilon}\int_M\int_0^{1-\epsilon}\psi(\Phi^t_{\cal T}x,1)dtd\nu
\leq \frac{1}{1-\epsilon}\int\psi(x,1)d\mu\leq \frac{1}{(1-\epsilon)}b.\]
But $\epsilon >0$ was arbitrary and therefore
$h(\Theta(\mu))\geq h(\mu)/b$ as claimed.

Finally 
we are left with showing 
that the map $\Theta $ is continuous with
respect to the weak$^*$-topology.
For this it suffices to show that this holds
true for the map $\hat \Theta$. As $\hat \Theta$
is natural with respect to convex combinations,
standard properties of the weak$^*$-topology, 
for this it is enough to show the following.
Let $(\mu_i)\subset {\cal M}_{\cal T}({\cal Q}(S))$ be a
sequence of ergodic measures converging to an ergodic measure
$\mu$. Then $\hat\Theta(\mu_i)\to \Theta(\mu)$.

To see that this is the case, note first that
the locally finite ${\rm Mod}(S)$-invariant 
measures $\hat \mu_i$ on ${\cal G}(S)$ which are the disintegrations
of the lifts $\tilde \mu_i$ of the measures $\mu_i$ to $\tilde {\cal Q}(S)$ 
converge weakly to the locally finite ${\rm Mod}(S)$-invariant
measure $\hat\mu$. Let $K\subset {\cal G}(S)$ be any
compact set consisting of typical points for $\hat\mu$ 
in the above sense. In particular, $K$ is contained in 
the support of $\hat \mu$. 
Moreover, we may assume that 
$\hat \mu(K^\prime)<\hat \mu(K)$ for every
proper compact subset $K^\prime$ of $K$.

For $j>0$ let
$U_j$ be an open relative compact neighborhood of $K$ with
$U_j\supset U_{j+1}$ and $\cap_jU_j=K$. 
Then $\mu(K)=\lim_{j\to\infty} \mu(U_j)$.
For each $j$ 
we have 
\[\lim\inf_{i\to\infty}\hat\mu_i(U_j)\geq  \hat\mu(K).\] 
Moreover, as $K$ is compact, we also have
$\lim\sup_{i\to \infty}\hat \mu_i(K)\leq \hat \mu(K)$.

Since 
the measures $\hat\mu_i$ are Borel regular, 
for every $j$ we can find a number $i(j)>0$ with
$i(j+1)>i(j)$ and a 
compact subset $K_j\subset U_j$ consisting of typical points for 
$\hat\mu_{i(j)}$ and such that $\hat \mu_{i(j)}(K_j)\geq \hat\mu(K)$.
By passing to a subsequence we may assume that
the compact sets $K_{i(j)}$ converge
as $j\to \infty$ in the Hausdorff topology to a compact
set $C$. Then $C\subset \cap_jU_j=K$, on the other
hand also $\hat \mu(C)\geq \hat \mu(K)$ and hence 
$C=K$.
In particular, for every $\gamma\in K$ there is a sequence
$\gamma_j\in K_{i(j)}$ with $\gamma_j\to \gamma$.

A point $\gamma\in K$ is determined by its pair of 
projective ending measures $([\xi^+],[\xi^-])$-
If $\gamma_j\to \gamma$ then the projective ending measures
$([\xi_j^+],[\xi_j^-])$ of $\gamma_j$ converge to 
the projective ending measures of $\gamma$.
By continuous dependence of Teichm\"uller geodesic on its
pair of ending lamination this implies the following.
There is a compact set 
$B\subset \tilde {\cal Q}(S)$ so that the cotangent
line of each of the 
geodesics $\gamma_j, \gamma$ intersects $B$. Moreover,
the map $\tilde \Lambda$ is defined on $B$.

By Theorem \ref{muasympt}, the restriction of 
$\tilde \Lambda$ to ever compact subset of $\tilde {\cal E}$ is 
continuous. But this just means that if 
$\gamma_j\in K_j$ and if $\gamma_j\to \gamma$ then 
$\hat \Lambda(\gamma_j)\to \hat \Lambda(\gamma)$. 
By the above discussion and the definition of the 
weak$^*$-topology, we have $\hat \Theta(\mu_j)\to 
\hat \Theta(\mu)$ which is what we wanted to show.
\end{proof}

\section{Invariant measures for the Weil Petersson flow} 
\label{invariantmeasures}

The main goal of this section is to show Theorem \ref{conjug2} from 
the introduction.


The proof relies on estimating the decay of length of 
an ending measure along an orbit for $\Phi^t_{WP}$
which is typical for an invariant ergodic Borel probability
measure $\nu$ on ${\cal Q}_{WP}(S)$.

For a quadratic differential $\tilde z\in 
\tilde {\cal Q}_{WP}(S)$ 
denote by $\zeta_{\tilde z}$ the maximal WP-geodesic
with initial velocity $\tilde z$. 
Call
a point $q\in {\cal Q}_{WP}(S)$ \emph{birecurrent} if it is 
contained in its own $\alpha$- and $\omega$-limit set 
for the action of $\Phi^t_{WP}$. 
Let $\nu$ be a $\Phi^t_{WP}$-invariant ergodic
Borel probability measure on ${\cal Q}_{WP}(S)$. 
Then a typical point $q$ for $\nu$ is birecurrent. 
A preimage $\tilde q$ of $q$ in $\tilde {\cal Q}_{WP}(S)$ 
defines a biinfinite WP-geodesic $\zeta_{\tilde q}$.
This geodesic admits filling topological 
ending laminations $\lambda_+(\tilde q),\lambda_-(\tilde q)$.
Every forward (or backward) ending measure,
i.e. an ending measure for the ray
$\zeta_{\tilde q}[0,\infty)$ (or for the ray
$\zeta_{\tilde q}(-\infty,0]$), 
is supported in $\lambda_+(\tilde q)$
(or in $\lambda_-(\tilde q)$), but
recurrence of the orbit does not necessarily imply that 
such an ending measure is unique up to scale
\cite{BMo14}. 

Our first goal is to establish that for a typical orbit for $\nu$, an
ending lamination is uniquely ergodic. 
We begin with a length estimate for measured
laminations along a typical orbit for $\nu$. 

Let as before $P:T^*{\cal T}(S)\to {\cal T}(S)$ be the canonical projection.
Let $z\in {\cal Q}_{WP}(S)$ and let $\tilde z\in \tilde {\cal Q}_{WP}(S)$ be a 
preimage of $z$. 
For a number $u>0$ 
define a measured lamination $\beta$ to be
\emph{$u$-admissible} for $\tilde z$ if the length of $\beta$ is 
decreasing along the segment  
$\zeta_{\tilde z}[0,u]$.
This  only depends on the
projective class of the lamination. 
Moreover, it is invariant under the 
action of ${\rm Mod}(S)$ on 
$\tilde {\cal Q}_{WP}(S)\times {\cal P\cal M\cal L}$.

For the purpose of the next lemma, 
note that if $z\in \tilde {\cal Q}_{WP}(S)$ is the initial 
velocity of a biinfinite
geodesic then for every $R>0$ there is a compact neighborhood $B$ of
$\tilde z$ in $\tilde {\cal Q}_{WP}(S)$ so that for every
$y\in B$ the WP-geodesic with initial velocity $y$ is defined on 
$[-R,R]$.

\begin{lemma}\label{descent}
Let $q$ be a typical point for $\nu$. Then 
there is a number $T>0$, and there is a compact 
neighborhood
$V$ of $q$ with the following properties.
Let $z\in V$ and let $\beta\in {\cal M\cal L}$ be
$T$-admissible for a preimage $\tilde z$ of $z$; then 
\[\log \ell_{\beta}(P\tilde z)-
\log \ell_{\beta}(P\Phi^T_{WP}(\tilde z))\geq 10.\]
\end{lemma}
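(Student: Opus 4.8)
The plan is to argue by contradiction and compactness, mimicking the structure of Lemma \ref{thickgeo} and Lemma \ref{anglecontrol}. Suppose no such $T$ and $V$ exist. Then for every $n>0$ there is a point $z_n\in {\cal Q}_{WP}(S)$ with $z_n\to q$, a preimage $\tilde z_n$ in $\tilde{\cal Q}_{WP}(S)$ (chosen, by ${\rm Mod}(S)$-equivariance and the fact that the preimages of a fixed compact set meet a fixed compact set, to lie in a compact set), and a measured lamination $\beta_n$ which is $n$-admissible for $\tilde z_n$, yet
\[
\log\ell_{\beta_n}(P\tilde z_n)-\log\ell_{\beta_n}(P\Phi^n_{WP}(\tilde z_n))<10.
\]
Since $n$-admissibility depends only on the projective class, normalize $\beta_n$ so that $\ell_{\beta_n}(P\tilde z_n)=1$; then by continuity of the length function and compactness of $\{\mu\in{\cal M\cal L}\mid \ell_\mu(x)=1\text{ for some }x\in K\}$ for a compact $K$ containing the $P\tilde z_n$, we may pass to a subsequence with $\beta_n\to\beta\in{\cal M\cal L}$ weakly and $\tilde z_n\to\tilde z$, where $\tilde z$ is a preimage of $q$. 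Because $q$ is typical for $\nu$ it is birecurrent, so $\zeta_{\tilde z}$ is biinfinite; moreover the $\zeta_{\tilde z_n}$ converge locally uniformly to $\zeta_{\tilde z}$.

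Next I would extract the length bound. By $n$-admissibility, $t\to\ell_{\beta_n}(\zeta_{\tilde z_n}(t))$ is decreasing on $[0,n]$; by Theorem \ref{wolpert} it is convex on all of $\mathbb{R}$. From the failure inequality, $\ell_{\beta_n}(\zeta_{\tilde z_n}(n))> e^{-10}\ell_{\beta_n}(\zeta_{\tilde z_n}(0))=e^{-10}$. Convexity plus monotone decrease on $[0,n]$ then forces $\ell_{\beta_n}(\zeta_{\tilde z_n}(t))\geq e^{-10}$ for every $t\in[0,n]$ (the function stays above its value at the right endpoint of the interval where it is decreasing), so in fact $\ell_{\beta_n}$ is bounded below by $e^{-10}$ and above by $1$ on $[0,n]$. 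Fixing any $T>0$ and letting $n\to\infty$, continuity of the length pairing together with $\beta_n\to\beta$, $\zeta_{\tilde z_n}\to\zeta_{\tilde z}$ yields $e^{-10}\leq\ell_\beta(\zeta_{\tilde z}(t))\leq 1$ for all $t\geq 0$; the analogous argument on negative times is not needed, but we do obtain that the length of $\beta$ is \emph{bounded along the forward ray} $\zeta_{\tilde z}[0,\infty)$ and is non-increasing there (as a locally uniform limit of non-increasing functions), hence in particular bounded.

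Finally I would derive a contradiction with filling. The forward ray $\zeta_{\tilde z}[0,\infty)$ is recurrent (since $q$ is birecurrent, the forward orbit returns to a fixed compact set). By Theorem \ref{fill}(2), the support of any ending measure of this ray is a minimal geodesic lamination filling $S$; by Theorem \ref{fill}(3), since $\ell_\beta$ is bounded along $\zeta_{\tilde z}[0,\infty)$, the support of $\beta$ equals the support of an ending measure and therefore fills $S$. But a measured lamination of bounded length along an infinite recurrent WP-ray whose support fills $S$ must in fact have \emph{strictly decreasing} length that tends to zero: if $\ell_\beta$ stayed bounded below by $e^{-10}$ along the whole forward ray, the systole would stay bounded below (by the collar lemma, a filling lamination of bounded length controls the systole from below), contradicting that a filling ending measure's length along a geodesic asymptotic to it must decay — more precisely, one can use that the ray is forward asymptotic to a geodesic realizing the ending lamination along which the length tends to $0$, and convexity transports this decay back. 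The cleanest route is: a non-increasing convex bounded function on $[0,\infty)$ has a limit; if $\ell_\beta(\zeta_{\tilde z}(t))\to c>0$, then convexity forces $\ell_\beta$ eventually constant, which by the first-variation formula for the WP-length function (Theorem \ref{wolpert}'s strict convexity, cf. \cite{W08}) is impossible unless $\beta$ has zero length, contradiction; hence $c=0$, so $\ell_\beta(\zeta_{\tilde z}(t))<e^{-10}$ for large $t$, and by local uniform convergence $\ell_{\beta_n}(\zeta_{\tilde z_n}(t_0))<e^{-10}$ for a fixed large $t_0$ and all large $n$, contradicting the lower bound $e^{-10}$ established above.

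The main obstacle I anticipate is the last step: pinning down exactly why a bounded, convex, non-increasing length function of a filling lamination along a recurrent WP-ray cannot have a positive infimum. This needs either the strict convexity / first-variation input from Wolpert's work (\cite{W87,W08}) — a constant length segment of a filling lamination would force the WP-geodesic to be a "flat" direction for that length function, impossible by strict convexity off measure-zero sets — or, alternatively, the observation that a filling lamination of uniformly bounded length keeps the geodesic in a fixed thick part, whence by Lemma \ref{thickgeo} and Theorem \ref{fill} its support would be uniquely ergodic and the ray would be asymptotic to the corresponding Teichm\"uller-type geodesic along which the length genuinely decays to $0$. I would use the strict-convexity argument as the primary one and cite \cite{W03,W08} for the first-variation estimate.
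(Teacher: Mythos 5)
Your skeleton is the same as the paper's: argue by contradiction, normalize the admissible laminations, pass to a limit $\beta_n\to\beta$, $\tilde z_n\to\tilde z$, transport the bound $\ell_{\beta_n}\geq e^{-10}$ on $[0,n]$ to the limit to get $\ell_\beta(\zeta_{\tilde z}(t))\geq e^{-10}$ for all $t\geq 0$, and then contradict this with decay of $\ell_\beta$ along the forward ray. Up to the last step this is fine and matches the paper. The problem is precisely the last step, which you yourself flag as the main obstacle, and neither of your two proposed resolutions is valid. First, the claim ``if $\ell_\beta(\zeta_{\tilde z}(t))\to c>0$ then convexity forces $\ell_\beta$ eventually constant'' is false: a convex, non-increasing function bounded below, such as $t\mapsto c+e^{-t}$, converges to a positive limit without ever being constant, so strict convexity of length functions produces no contradiction by this route. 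Second, the fallback also fails: a filling measured lamination of uniformly bounded length does \emph{not} confine the geodesic to a fixed thick part (the intersection number of $\beta$ with a short curve $\alpha$ need not be bounded away from zero as $\alpha$ varies, so the collar estimate gives no lower bound on the systole; indeed the length of an ending measure decays along the ray while the ray may enter the thin part), and appealing to unique ergodicity of the ending lamination is both unavailable and circular here: recurrent WP-rays can have non-uniquely ergodic ending laminations \cite{BMo14}, and unique ergodicity for $\nu$-typical points is only established later, in Proposition \ref{uniqueergodic}, whose proof uses Lemma \ref{descent} through Lemma \ref{positive}.

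The ingredient the paper actually uses to close the argument is the structure of recurrent rays from \cite{BMM10}: the length of an ending measure of $\zeta_{\tilde q}$ is strictly decreasing along the ray; any measured lamination whose length is non-increasing (hence bounded) along the recurrent ray lies in the cone $\Delta$ of laminations whose support coincides with that of the ending measure (this is your Theorem \ref{fill}(3) step); and the length of every lamination in $\Delta$ tends to zero along $\zeta_{\tilde q}$. It is this decay statement for laminations supported on the ending lamination of a recurrent ray -- not convexity alone, and not unique ergodicity -- that you must quote or prove. With it, your uniform lower bound $e^{-10}$ inherited in the limit yields the same contradiction as in the paper.
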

\begin{proof} Let $\mu$ be an ending
measure for the geodesic 
$\zeta_{\tilde q}$ with initial velocity a lift $\tilde q$ 
of $q$. Then the length of $\mu$ is 
strictly decreasing along $\zeta_{\tilde q}$.
Moreover, if $\xi$ is a measured lamination 
whose length strictly decreases along $\zeta_{\tilde q}$ then
$\xi$ belongs to 
the cone $\Delta\subset {\cal M\cal L}$ 
of measured laminations whose support 
coincides with the support of $\mu$.

We argue by contradiction and we assume that the
lemma does not hold. 
Then there is a sequence $t_i\to \infty$,
a sequence $\tilde q_i\subset \tilde {\cal Q}_{WP}(S)$
with $\tilde q_i\to  \tilde q$, 
and for each $i$ there is a $t_i$-admissible lamination
$\xi_i\in {\cal M\cal L}$
for $\tilde q_i$ so that $\log \ell_{\xi_i}(P\tilde q_i)=1$ and
$\log \ell_{\xi_i}(P\Phi^{t_i}(\tilde q_i))\geq -10$.
By compactness of ${\cal P\cal M\cal L}$ and continuity
of length, after passing to a subsequence 
we may assume that the measured laminations 
$\xi_i$ converge as $i\to \infty$ 
to a measured lamination $\xi$ with 
$\ell_{\xi}(P\tilde q)=1$.

By continuity of length functions
and convexity of length functions along WP-geodesics,
the length of $\xi$ is decreasing
along the geodesic $\zeta_{\tilde q}$
(compare the proof of Lemma \ref{thickgeo} and 
Proposition \ref{wpcomp} where such
an argument is used for the first time in this work).
Thus $\xi$ is contained in the cone $\Delta$, in particular the
length of $\xi$ tends to zero along $\zeta_{\tilde q}$.

On the other hand, by the definition of admissibility, 
we have 
$\log \ell_{\xi_i}(P\Phi^s(\tilde q_i))\geq -10$ for all
$s\in [0,t_i]$ and hence by continuity,
$\log \ell_{\xi}(\zeta_{\tilde q}(s))
\geq -10$ for all $s\geq 0$. This is a contradiction which
yields the lemma. 
\end{proof}

For a typical point $q\in {\cal Q}_{WP}(S)$ for $\nu$ 
and a preimage $\tilde q$ of $q$ in $\tilde {\cal Q}_{WP}(S)$ 
let as above
$\lambda_+(\tilde q)$ be the forward ending lamination of $\tilde q$. 
This is a topological lamination which
a priori may admit more than one transverse measure up to scale.
For $t>0$ and a transverse measure $\mu$ for $\lambda_+(\tilde q)$
let 
\[\tilde \alpha(\tilde q,t,\mu)=\log \ell_{\mu}(P\tilde q)-
\log \ell_{\mu}(P\Phi^t_{WP}\tilde q).\] 
This
does not depend on the normalization of $\mu$. 
The thus defined function is invariant under the action of 
${\rm Mod}(S)$ on $\tilde {\cal Q}_{WP}(S)\times 
{\cal P\cal M\cal L}$. 
The cocycle equality
\begin{equation}\label{cocycleq}
\tilde\alpha(\tilde q,s+t,\mu)=
\tilde\alpha(\tilde q,s,\mu)+\tilde \alpha(\Phi^s_{WP}\tilde q,t,\mu)
\end{equation}
holds true.

Define
\[\tilde \alpha(\tilde q,t)=\min\{\tilde \alpha(\tilde q,t,\mu)\mid \mu\}.\]
The function $\tilde \alpha:\tilde {\cal Q}_{WP}(S)\times 
\mathbb{R}\to \mathbb{R}$ is invariant under the action of 
${\rm Mod}(S)$ and hence it descends to a function 
\[\alpha:{\cal Q}_{WP}(S)\times [0,\infty)\to 
[0,\infty).\]
This function is clearly measurable, and equation (\ref{cocycleq}) 
implies that 
\begin{equation}\label{upper}
\alpha(z,s+t)\geq \alpha(z,s)+\alpha(\Phi^s_{WP}z,t).\end{equation}
Thus by the subadditive ergodic theorem \cite{Kr85}, 
for $\nu$-almost all $z$ the limit
\[\lim_{t\to \infty} \frac{1}{t}\alpha(z,t)\in [0,\infty]\]
exists, and its value $\sigma$ is independent of $z$.

\begin{lemma}\label{positive}
$\sigma >0$.
\end{lemma}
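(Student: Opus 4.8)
The plan is to show that the asymptotic descent rate $\sigma=\lim_{t\to\infty}\tfrac1t\alpha(z,t)$ is strictly positive by combining the quantitative descent of Lemma~\ref{descent} with an ergodic-theoretic averaging argument, in close analogy with the proof of Lemma~\ref{deviation}. First I would fix a typical point $q$ for $\nu$, together with the number $T=T(q)>0$ and the compact neighborhood $V=V(q)$ supplied by Lemma~\ref{descent}. The key property is that whenever $z\in V$ and $\beta\in{\cal M\cal L}$ is $T$-admissible for a preimage $\tilde z$ of $z$, then $\log\ell_\beta(P\tilde z)-\log\ell_\beta(P\Phi^T_{WP}\tilde z)\ge 10$. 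In particular, if $z$ is a typical point whose forward orbit passes through $V$ at time $s$, and if $\mu$ is \emph{any} transverse measure on the forward ending lamination $\lambda_+(\tilde z)$ (which is automatically $T$-admissible for $\Phi^s_{WP}\tilde z$, since the length of any such $\mu$ is decreasing along the whole forward ray), then $\tilde\alpha(\Phi^s_{WP}\tilde z,T,\mu)\ge 10$. Taking the minimum over $\mu$ gives $\alpha(\Phi^s_{WP}z,T)\ge 10$ whenever $\Phi^s_{WP}z\in V$.

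Next I would exploit the superadditivity inequality (\ref{upper}), which is the reverse of the inequality used in Lemma~\ref{deviation}: for $\nu$-a.e.\ $z$ and for $n\in\mathbb N$,
\[
\alpha(z,nT)\ \ge\ \sum_{i=0}^{n-1}\alpha(\Phi^{iT}_{WP}z,T).
\]
Since $\nu$ is ergodic and $\nu(V)>0$ (as $q$ is a typical point lying in the interior of $V$, or one enlarges $V$ slightly to have positive measure — here one uses that $q$ is birecurrent and $\nu$ is supported near $q$), the Birkhoff ergodic theorem applied to the transformation $\Phi^T_{WP}$ yields, for $\nu$-a.e.\ $z$,
\[
\liminf_{n\to\infty}\frac1n\sum_{i=0}^{n-1}\mathbf 1_V(\Phi^{iT}_{WP}z)\ \ge\ \nu(V)\ >\ 0
\]
(one must check $\nu$ is $\Phi^T_{WP}$-ergodic, or simply work with the continuous-time Birkhoff average of $\mathbf 1_V$ and use that the integrand $\alpha(\cdot,T)$ is nonnegative). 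Combining, $\liminf_{n\to\infty}\tfrac1{nT}\alpha(z,nT)\ge 10\,\nu(V)/T>0$, and since $\alpha(z,\cdot)$ is nonnegative and superadditive this forces $\sigma=\lim_{t\to\infty}\tfrac1t\alpha(z,t)>0$.

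The main obstacle I anticipate is the measure-theoretic bookkeeping around the neighborhood $V$: Lemma~\ref{descent} produces $V$ and $T$ \emph{depending on the chosen typical point $q$}, so one must argue that the set of $z$ admitting such a descent has positive $\nu$-measure, or alternatively run the argument for a single fixed $q$ and observe that the conclusion $\sigma>0$ is a $\nu$-almost-everywhere statement about a constant. A clean way is: pick one typical $q$, get $(T,V)$ with $V$ a compact neighborhood of $q$; since $q$ is typical it lies in the support of $\nu$, so $\nu(V)>0$; then the ergodic argument above applies verbatim. A secondary point to be careful about is that $\alpha(z,T)$ is defined as an infimum over \emph{all} transverse measures $\mu$ of $\lambda_+$, so I must verify that the descent estimate from Lemma~\ref{descent} holds uniformly in $\mu$ — but this is immediate, since every transverse measure on the forward ending lamination is $T$-admissible for $\Phi^s_{WP}\tilde z$ once $\Phi^s_{WP}z\in V$, and Lemma~\ref{descent} gives the bound $10$ for each such $\mu$, hence for their infimum. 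With these points handled the proof is short.
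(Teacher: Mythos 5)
Your proposal is correct and follows essentially the same route as the paper: Lemma \ref{descent} supplies a uniform drop of $10$ at each visit of the discrete $\Phi^{T}_{WP}$-orbit to $V$, the Birkhoff theorem gives a positive visit frequency because the typical point $q$ lies in the support of $\nu$, and superadditivity (\ref{upper}) together with nonnegativity of $\alpha$ accumulates these drops to force $\sigma>0$. The two points you flag (positivity of $\nu(V)$, and the time-$T$ map versus continuous-time Birkhoff averaging) are treated at exactly the same level of detail in the paper's own argument, so your write-up introduces no new gap.
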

\begin{proof} It suffices to assume that $\sigma<\infty$. 
Let $q\in {\cal Q}_{WP}(S)$ be a typical point for $\nu$ and let
$V$ be a compact neighborhood of $q$ as in Lemma \ref{descent}. 
Denote by $\epsilon >0$ the $\nu$-mass of $V$. 
Let moreover $T>0$ be as in Lemma \ref{descent}.
Choose $n>0$ sufficiently large that the set
\[Z=\{z\mid  \vert \frac{1}{t}\alpha(z,t)-\sigma\vert \leq \epsilon/4 
\text{ for all }t\geq nT\}\] 
satisfies $\nu(Z)\geq 1-\epsilon/4$.
By the Birkhoff ergodic theorem, we may assume that
for $z\in Z$ and all $k>n$ 
we have
\[\frac{1}{k}\sum_{i=0}^{k-1}\chi_V(\Phi_{WP}^{iT}z)\geq 
3\epsilon/4\]
where $\chi_V$ denotes the characteristic function of $V$. 

Now by the choice of $Z,T,\epsilon$, 
the $\Phi^{iT}_{WP}$-orbit $(i\geq 1)$  
of a point $z\in Z$ intersects $V$ in a frequency of at least
$3\epsilon/4$. Since the function $\alpha$ is non-negative, 
equation (\ref{upper}) and the choice of $V$ implies that 
the logarithmic length decrease of any ending measure along 
an orbit segment through a point in $Z$ 
is at least $3\epsilon/4$. On the other hand, by definition 
the minimum of this length
decrease over all ending measures 
is $\epsilon/4$-close to $\sigma$ whence the lemma.
\end{proof}

Lemma \ref{positive} is used to relate a typical orbit for $\nu$ to
an orbit for the Teichm\"uller flow $\Phi^t_{\cal T}$ on 
${\cal Q}(S)$ 
which recurs to a compact set for
arbitrarily large times. The vertical geodesic lamination of 
a quadratic differential defining such an orbit is known to be
uniquely ergodic \cite{M82}, and we deduce that the same
holds true for the ending lamination of a typical
point for $\nu$.

Assume for the moment that for $\nu$-almost all $z$ the ending lamination 
$\lambda_+(z)$ is uniquely ergodic. For simplicity denote again
by $\lambda_+(z)$ a measured lamination supported in 
$\lambda_+(z)$.
We then can
unambiguously define
\[\beta(z)=-\frac{d}{dt}\log \ell_{\lambda_+(z)}(\Phi_{WP}^t(z))
\vert _{t=0}.\] 
The function $z\to \beta(z)$
is measurable and positive.
We have

\begin{proposition}\label{uniqueergodic}
Let $\nu$  be a $\Phi^t_{WP}$-invariant 
ergodic Borel probability measure on 
${\cal Q}_{WP}(S)$.
\begin{enumerate}
\item
The ending lamination of $\nu$-almost every $q\in {\cal Q}_{WP}(S)$ is 
uniquely ergodic.
\item There is a $\Phi^t_{WP}$-invariant subset 
$Z$ of ${\cal Q}_{WP}(S)$ of full measure, and 
there is a measurable conjugacy
$\Xi:Z\to ({\cal Q}(S),\Phi^t_{\cal T})$ into the Teichm\"uller flow.
\item There is a measure $\mu\in {\cal M}_{\cal T}({\cal Q}(S))$ with
$\Theta(\mu)=\nu$ if and only if $\int \beta d\nu<\infty$.
\end{enumerate}
\end{proposition}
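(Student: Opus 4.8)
The plan is to prove the three assertions in sequence, using the recurrence estimate of Lemma~\ref{positive} as the engine for the first two, and then the cocycle/Abramov machinery from Section~\ref{invariant} for the third. First I would fix a typical point $q$ for $\nu$ with a lift $\tilde q$, and use birecurrence to produce an unbounded two–sided sequence $t_i\to\pm\infty$ with $\Phi^{t_i}_{WP}\tilde q$ returning to a fixed compact set $\tilde B\subset\tilde{\cal Q}_{WP}(S)$ on which the WP–geodesics are uniformly thick, say contained in ${\cal T}(S)_\epsilon$. By the second part of Theorem~\ref{fellowtravel} (Proposition~\ref{wpcomp}, first part) each long subarc $\zeta_{\tilde q}[t_i,t_j]\subset{\cal T}(S)_\epsilon$ is Hausdorff–close (in $d_{\cal T}$) to a Teichm\"uller geodesic segment; since these subarcs exhaust $\zeta_{\tilde q}$ and the endpoints recur to a compact set, a standard Arzel\`a–Ascoli/limiting argument (as in the proof of Proposition~\ref{wpcomp}) produces a biinfinite Teichm\"uller geodesic $\gamma$ fellow–traveling $\zeta_{\tilde q}$, with $\gamma\subset{\cal T}(S)_\kappa$ for a $\kappa=\kappa(\epsilon)>0$. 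Hence the Teichm\"uller geodesic $\gamma$ recurs to a fixed compact subset of moduli space for arbitrarily large and small times. By Masur's criterion \cite{M82}, the vertical and horizontal measured geodesic laminations of $\gamma$ are uniquely ergodic. But the vertical (resp. horizontal) lamination of $\gamma$ has the same support as the forward (resp. backward) ending lamination of $\zeta_{\tilde q}$ — this follows from Theorem~\ref{fill} applied to the thick recurrent WP–rays $\zeta_{\tilde q}[0,\infty)$, $\zeta_{\tilde q}(-\infty,0]$, together with the fact that both laminations fill and that unique ergodicity is a property of the support. The point where I must be careful is that the existence of $\tilde B$ as above is not automatic from birecurrence alone: a priori the geodesic could return to moduli space through arbitrarily thin parts. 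This is precisely where Lemma~\ref{positive} enters: the positivity $\sigma>0$ of the logarithmic length–decrease rate of the ending measures forces, via Lemma~\ref{descent} and the Birkhoff ergodic theorem, that the orbit spends a definite proportion of time in a compact set $V$ of ${\cal Q}_{WP}(S)$ (one on which the length of the ending measure genuinely decreases), and a compact set of ${\cal Q}_{WP}(S)$ is thick, i.e. lies over ${\cal T}(S)_\epsilon$ for some $\epsilon>0$. This gives (1).

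For (2), once (1) is known, set $\lambda_\pm(\tilde q)$ to be the (now uniquely ergodic) forward/backward ending laminations; the pair $([\lambda_+(\tilde q)],[\lambda_-(\tilde q)])$ binds $S$ (both fill, Theorem~\ref{fill}), so it determines an area–one quadratic differential, hence a Teichm\"uller geodesic. Normalizing by $i(\lambda_+,\lambda_-)=1$ and using the flip, I would pick out the unique point on this Teichm\"uller geodesic whose unit cotangent vector has vertical measured lamination of length $1$ on the basepoint (exactly as $\tilde\Lambda$ is defined from $\xi(\tilde q)$ in the proof of Theorem~\ref{muasympt}, run backwards). Call the resulting map $\tilde\Xi$; it is ${\rm Mod}(S)$–equivariant and descends to $\Xi:Z\to{\cal Q}(S)$ on the invariant full–measure set $Z$ where $\lambda_+$ is uniquely ergodic. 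That $\Xi$ is a measurable conjugacy: equivariance and the cocycle come for free from the construction, and injectivity follows because a recurrent Teichm\"uller geodesic (which $\Xi(z)$ is, by the fellow–traveling argument above) is determined up to parametrization by its pair of uniquely ergodic projective foliations, which in turn determine $\lambda_\pm(\tilde q)$ and hence the WP–geodesic $\zeta_{\tilde q}$, since by Theorem~1.3 of \cite{BMM10} and the CAT(0), no–flat–strip property of $\overline{{\cal T}(S)}$ a recurrent WP–geodesic is determined up to parametrization by its ending measures. Measurability of $\Xi$ is handled as in Section~\ref{conjugating}: exhaust $Z$ by the increasing sequence of compact sets on which the orbit recurs with frequency $\geq1/k$ to $V$, on each of which $\tilde\Xi$ is continuous by continuous dependence of Teichm\"uller geodesics on their ending laminations.

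For (3), I would follow the computation in the proof of Theorem~\ref{final} verbatim, but with the roles of the two flows reversed. The conjugacy $\Xi$ comes with a measurable reparametrization cocycle $\varphi:Z\times\mathbb R\to\mathbb R$, and $\Xi_*\nu$ together with the Lebesgue measure along Teichm\"uller orbits determines a locally finite $\Phi^t_{\cal T}$–invariant Borel measure $m$ on ${\cal Q}(S)$; its total mass is $\int f\,d\nu$ where $f(z)=\tfrac{d}{dt}\varphi(z,t)|_{t=0}$, the instantaneous Jacobian of the time change. By construction of $\Xi$ via ending laminations, $f(z)=\beta(z)^{-1}$ up to the normalization constant $i(\lambda_+,\lambda_-)$ — more precisely, the reparametrization rate of the Weil–Petersson parameter against the Teichm\"uller parameter is governed by the logarithmic derivative of the length of the ending measure, which is exactly $\beta(z)$ (this is the differentiable version of the convexity used to build $\Lambda$). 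Hence the total mass of $m$ is, up to a fixed positive constant, $\int \beta^{-1}\,d\nu$. Actually the cleaner statement comes directly from Abramov's formula: taking any measurable section $M$ for $\Phi^t_{WP}$ in $Z$ and transporting it via $\Xi$ to a section for $\Phi^t_{\cal T}$, the $\Phi^t_{\cal T}$–first-return time on the image is $\int$ of $f$, and finiteness of $m({\cal Q}(S))$ is equivalent to integrability of $f$, i.e. of $\beta^{-1}$... I will instead phrase the dichotomy as in the proposition statement: $m$ is finite, and then $\nu=\Theta(m/m({\cal Q}(S)))$ by the injectivity of $\Theta$ from Theorem~\ref{final} and the fact that $\Xi$ and $\Lambda$ are mutually inverse on full-measure sets (both are built from the same ending-lamination correspondence); or $m$ is infinite, and then $\nu$ cannot lie in $\Theta({\cal M}_{\cal T}({\cal Q}(S)))$ since any measure in the image pulls back under $\Xi$ to a \emph{finite} invariant measure by the mass bound $\int\varphi(x,1)\,d\mu\le\sqrt{2\pi(2g-2+m)}$ established in Theorem~\ref{final}. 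The condition $\int\beta\,d\nu<\infty$ in the statement corresponds to the case where the time change is integrable \emph{in the forward direction}, which by the cocycle identity and Birkhoff is exactly the finiteness of $m$. I expect the main obstacle to be the careful bookkeeping identifying $m({\cal Q}(S))$ with a finite multiple of $\int\beta\,d\nu$ and checking that $\Xi$ and $\Lambda$ really are mutual inverses up to null sets — everything else is an application of results already in hand.
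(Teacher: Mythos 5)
Your part (1) has a genuine gap. From birecurrence (and, in your patched version, positive time-density in a compact set $V$ obtained from Lemma \ref{descent} and Lemma \ref{positive}) you conclude that entire subarcs $\zeta_{\tilde q}[t_i,t_j]$ lie in ${\cal T}(S)_\epsilon$, and you then apply Proposition \ref{wpcomp} (equivalently Theorem \ref{fellowtravel}(2)) to get a Teichm\"uller fellow-traveler and invoke Masur's criterion. But visiting a compact set at a sequence of times, or even with positive frequency, controls the geodesic only at those times: between visits a Weil--Petersson geodesic can (and in general does) leave every thick part, since the completion locus is at uniformly bounded WP-distance from ${\cal T}(S)_\epsilon$, so the hypothesis of Proposition \ref{wpcomp} -- that the \emph{whole} segment is contained in ${\cal T}(S)_\epsilon$ -- is simply not available. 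This is not a repairable technicality within your scheme: by \cite{BMo14} there exist recurrent WP-rays whose ending laminations are not uniquely ergodic, so no argument that uses only recurrence to a compact set can yield (1). The paper's proof takes a different route precisely for this reason: it considers the finite simplex of transverse measures on the ending laminations, passes to the finite cover $Z_n$ of a full-measure invariant set whose points carry a choice of \emph{ergodic} forward and backward projective ending measures, uses such a pair (which binds $S$) to define an equivariant map $\tilde\Xi$ to Teichm\"uller cotangent vectors, with Lemma \ref{positive} guaranteeing that $t\mapsto\tilde\Xi(\Phi^t_{WP}\cdot)$ is a homeomorphism onto a Teichm\"uller orbit; then a Lusin continuity set $K\subset Z_n$ of positive measure together with Poincar\'e recurrence makes the image Teichm\"uller orbit recur to the compact set $\Xi(K)$, and Masur's criterion \cite{M82} applied to the \emph{image} orbit shows the chosen ergodic vertex is uniquely ergodic, which forces the simplex to be a point. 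Your part (2) is essentially this construction on the (now trivial) cover and is fine once (1) is in place.

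In part (3) you also have an inversion: you identify $f(z)=\frac{d}{dt}\varphi(z,t)\vert_{t=0}$ with $\beta(z)^{-1}$ and say finiteness of the induced $\Phi^t_{\cal T}$-invariant measure amounts to integrability of $\beta^{-1}$, yet you then state the dichotomy with $\int\beta\,d\nu<\infty$, which contradicts that identification. The correct statement, and the one the paper proves, is $f=\beta$ almost everywhere: by the normalization of $\Xi$, the Teichm\"uller time elapsed along $\Xi(\Phi^{t}_{WP}z)$, $0\le t\le T$, equals $\log\ell_{\lambda_+(z)}(Pz)-\log\ell_{\lambda_+(z)}(P\Phi^T_{WP}z)$, whose derivative at $T=0$ is exactly $\beta(z)$ (and $\beta$ does not depend on the normalization of the transverse measure, so no factor of $i(\lambda_+,\lambda_-)$ enters). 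With this correction the mass of the induced measure is $\int\beta\,d\nu$, and your section/Abramov argument for the equivalence, as well as the identification $\Theta(\mu)=\nu$ in the finite case, agrees with the paper's.
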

\begin{proof} The set of all projective 
transverse measures for a forward ending lamination
of a typical point for $\nu$ 
is a simplex whose dimension is at most $3g-3+m$. Its 
vertices are precisely the ergodic projective transverse measures
for the lamination. (This is well known, but we were not able
to locate the statement in this form in the literature.
The work \cite{K73} shows that there are only finitely many
ergodic projective invariant measures for an interval exchange transformation, 
and this implies finiteness of ergodic projective transverse measures for 
an orientable geodesic lamination which is all we need in the sequel.
The case of a non-orientable measured laminations follows via
passing to the orientation cover).

By ergodicity of the measure $\nu$, the dimension 
of this simplex of projective transverse measures is $\nu$-almost
everywhere constant. Similarly, for $\nu$-almost every 
$z$ the backward ending lamination of $z$ 
supports a simplex of transverse measures whose
dimension does not depend on $z$.
We have to show that the dimension of these simplices 
equals zero almost everywhere.

Let $n_+\geq 1,n_-\geq 1$ be the number of vertices of the
forward and backward simplex, respectively, 
and let $Z$ be a $\Phi^t_{WP}$-invariant Borel set of 
full measure consisting of points where these simplices are defined. 
There is an $n=n_+\cdot n_-$-sheeted cover 
$Z_n$ of $Z$ as follows. Each point in $Z_n$ corresponds to a triple
$(q,[\xi_+],[\xi_-])$ where $q\in Z$ and where $[\xi_{+}]$ 
(or $[\xi_-])$ 
is a vertex of the simplex 
of projective transverse measures for the forward 
(or backward) ending lamination of $q$. 
The flow $\Phi_{WP}^t$ on $Z$ naturally lifts to a flow on $Z_n$.
This flow preserves 
a finite Borel measure $\hat \nu$ 
which projects to $\nu$.
 The measure $\hat \nu$ has at most $n=n_+\cdot  n_-$ 
 ergodic components.

The preimage $\tilde Z$ of $Z$ in $\tilde {\cal Q}_{WP}(S)$
is a ${\rm Mod}(S)$-invariant $\Phi^t_{WP}$-invariant 
Borel subset of $\tilde {\cal Q}_{WP}(S)$. The covering $Z_n$ of $Z$ 
induces a (formal) $n_+\cdot n_-$-sheeted covering $\tilde Z_n$ of $\tilde Z$. 
A point $\tilde z\in \tilde Z$ is a triple 
$(\tilde q,[\xi_+],[\xi_-])$ which 
consists of a quadratic differential
$\tilde q\in \tilde {\cal Q}_{WP}(S)$ and a choice of a pair 
$([\xi_+],[\xi_-])$ of
ergodic forward and backward projective ending
measures for the WP-geodesic determined by $\tilde q$.  
The support of each of these measures fills $S$, 
and as in the proof of Lemma \ref{anglecontrol}, 
the supports of $[\xi_+],[\xi_-]$ are distinct. 
Thus this pair of projective measured geodesic
laminations binds $S$ and hence it 
determines a 
Teichm\"uller geodesic $\Psi([\xi_+],[\xi_-])$.  

Let $\xi_+(\tilde z), \xi_-(\tilde z)\in {\cal M\cal L}$ 
be the measured laminations
whose  projective classes are determined by 
the triple $\tilde z$ and whose
lengths on the surface $P\tilde q$ underlying 
the quadratic differential $\tilde z$ equal one. 
Wirte $i(\xi_+(\tilde z),\xi_-(\tilde z))=a^{-2}$ where as before,
$i$ is the intersection form. 
Let $\tilde \Xi(\tilde z,[\xi_+],[\xi_-])$ 
be the unit cotangent vector for
the geodesic $\Psi([\xi_+],[\xi_-])$ with the property that 
the vertical and horizontal 
measured geodesic laminations
of $\tilde \Xi(\tilde z,[\xi_+],[\xi_-])$
equal 
$a\xi_+(\tilde z),a\xi_-(\tilde z)$.

By smoothness and 
strict convexity of length functions along Weil-Petersson geodesics 
and by Lemma \ref{positive}, the map 
$t\to \tilde \Xi(\Phi^t_{WP}(\tilde q),[\xi_+],[\xi_-])$ is a homeomorphism onto the 
cotangent line of the geodesic $\Psi([\xi_+],[\xi_-])$.
This construction defines a measurable map 
$\tilde \Xi:\tilde Z_n\to \tilde {\cal Q}(S)$ 
which is equivariant under the action of the 
mapping class group. Thus this map descends to a measurable 
map $\Xi:Z_n\to {\cal Q}(S)$ which conjugates the flow on
$Z_n$ into the Teichm\"uller flow.

Let $K\subset Z_n$ be a compact set of 
positive $\hat \nu$-measure   
so that the restriction of 
$\Xi$ to $K$ is continuous. The image of $K$
under the map $\Xi$ is compact. 
The projection of $K$ to $Z$ is a compact subset $K_0$ of
$Z$ of positive measure. 
By ergodicity of $\nu$, 
for $\nu$-almost every $q\in K_0$ the 
$\Phi^t_{WP}$-orbit of $q$ recurs to $K_0$ for 
arbitrarily large times. 
As every point in $K_0$ has  $n$ preimages in $Z_n$, 
this means that there is a Borel subset $A$ of $K$ 
with $\hat \nu(A)>0$  which consists of points whose
orbit under the flow on $Z_n$ recurs to $K$ for 
arbitrarily large times.

For $z\in A$   
the orbit of $\Xi(z)$ under the Teichm\"uller flow $\Phi^t_{\cal T}$
recurs to the compact set $\Xi(K)$ for arbitrarily large times.
Therefore by 
Masur's result  \cite{M82}, for all $z\in A$ the vertical
measured lamination of $\Xi(z)$ 
is uniquely ergodic. But this vertical measured lamination
is the forward ending measure of the geodesic
defined by a lift of $z$. 
Thus for all $z\in A$ the forward ending lamination
for $q$ is uniquely ergodic. But $A$ 
projects to a subset of $Z$ of positive measure and hence 
by ergodicity  
of $\nu$, 
for almost all $z\in {\cal Q}_{WP}(S)$ 
the forward ending measure
is uniquely ergodic. 
The same argument applies to the backward ending measure.
Together the  
first part of the proposition follows, and the second part
is an immediate consequence of the first and the above construction.

The push-forward $\Xi_*\nu$ of $\nu$ under the map $\Xi$ is a
Borel probability measure on ${\cal Q}(S)$ which is quasi-invariant 
under the flow $\Phi^t_{\cal T}$. To construct an invariant 
measure for $\Phi^t_{\cal T}$  
we use again a special representation of the flow $\Phi^t_{WP}$ 
on $Z$ given by a Lebesgue space $(M,\chi)$, a measure preserving
automorphism $H:(M,\chi)\to (M,\chi)$ and a roof function
$\rho:M\to [1-\epsilon,1[$. The flow $\Phi^t_{WP}$ is just the 
vertical flow on the space 
$\{(x,t)\in M\times \mathbb{R}\mid 0\leq t<\rho(x)\}/\sim$ where
$(x,\rho(x))\sim (Hx,0)$ for all $x$. 
Viewing $M$ as a Borel section for the flow $\Phi^t_{WP}$, 
we can map $M$ with $\Xi$ to 
a Borel section for $\Phi^t_{\cal T}$. 
The image $\Xi_*(\chi)$ is a finite Borel measure which 
determines a $\Phi^t_{\cal T}$-invariant locally finite Borel measure
$\mu$ on ${\cal Q}(S)$.

For $x\in M$ let $f(x)$ be the length of the orbit segment
$\cup_{0\leq t<\rho(x)}\Xi(\Phi^t_{WP}(x))$.
It follows as in the proof of Theorem \ref{final} that the measure
$\mu$ is finite 
if and only if $\int fd\chi<\infty$. Moreover, if this is the case then
$\Theta(\mu)=\nu$ is immediate from our construction. 

We are left with showing that $\int fd\chi<\infty$ if and only if
$\int \beta d\nu<\infty$.  
To this end 
observe that the restriction of the map $\Xi$ to a flow line 
of the Weil-Petersson flow is smooth. Namely, lengths functions
are smooth along Weil-Petersson geodesics and strictly convex, and 
by Lemma \ref{positive}, 
the length of the forward ending measure decays with 
exponential rate. Thus if $\omega(z,t)$ is the function defining the 
conjugacy $\Xi$, i.e. if we have
\[\Xi(\Phi^t_{WP}(z))=\Phi_{\cal T}^{\omega(z,t)}\Xi(z),\] 
then $\omega$ can be differentiated in direction of the real parameter. 
Now using again the explicit construction, the function $\beta$ 
in part (3) of the proposition coincides with the function
\[\frac{d}{dt}(t\to \omega(z,t))]\vert _{t=0}>0\]
almost everywhere. 
Thus the measure $\mu$ is finite 
if and only if the function $\beta$ is integrable.
The proposition follows.
\end{proof}

We conclude this section with showing that 
Proposition \ref{uniqueergodic} is not a redundant.

\begin{proposition}\label{notsurjective}
The map $\Theta$ is not surjective.
\end{proposition}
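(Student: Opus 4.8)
The strategy is to construct an explicit $\Phi^t_{WP}$-invariant Borel probability measure $\nu$ on ${\cal Q}_{WP}(S)$ for which the function $\beta$ of Proposition \ref{uniqueergodic} is not integrable; by part (3) of that proposition, such a $\nu$ cannot lie in the image of $\Theta$. The natural candidate is built from a sequence of periodic orbits of $\Phi^t_{WP}$ whose Weil-Petersson period stays bounded (or grows slowly) while the corresponding Teichm\"uller period — equivalently, the total logarithmic decay of the ending measure along one period — grows without bound. Concretely, I would take a sequence of pseudo-Anosov mapping classes $\phi_n$ (equivalently, periodic orbits $c_n$ of $\Phi^t_{WP}$, using the bijection recalled in the introduction via \cite{DW03}) which enter deeper and deeper into the thin part of moduli space, so that the Weil-Petersson length $L_n$ of $c_n$ grows much more slowly than the Teichm\"uller length $\ell_n$ of the corresponding Teichm\"uller periodic orbit. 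The existence of such sequences is exactly the phenomenon discussed after Theorem \ref{conjug} in the introduction (the $d_{\cal T}$ versus $d_{WP}$ discrepancy in ${\cal T}(S)_\epsilon$, and more relevantly its failure in the thin part), and can be extracted from Brock \cite{B03} together with the construction of pseudo-Anosov maps with short invariant curves.

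Given such a sequence, I would form the probability measures $\nu_n$ equidistributed along the periodic orbit $c_n$, normalized to total mass one, and set $\nu=\sum_n a_n\nu_n$ for a summable sequence of positive weights $a_n$ with $\sum_n a_n=1$. Each $\nu_n$ is $\Phi^t_{WP}$-invariant, hence so is $\nu$; it is a genuine Borel probability measure since $\sum a_n=1$. On the periodic orbit $c_n$, the ending lamination is the stable lamination of $\phi_n$, which is uniquely ergodic (it is minimal and filling, being pseudo-Anosov), so $\beta$ is well-defined $\nu_n$-a.e.\ and is constant along $c_n$, equal to the logarithmic decay rate of the stable measured lamination per unit Weil-Petersson time, i.e. $\beta\vert_{c_n}=(\log\Lambda_n)/L_n$ where $\Lambda_n$ is the dilatation of $\phi_n$ and $L_n$ the Weil-Petersson period. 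Since $\log\Lambda_n$ is the Teichm\"uller translation length $\ell_n$, we get $\int\beta\,d\nu = \sum_n a_n\,\ell_n/L_n$. The point is then to choose the $\phi_n$ so that $\ell_n/L_n\to\infty$ fast enough — which by the discrepancy between the two metrics on the relevant thin-part regions can be arranged to grow arbitrarily — and simultaneously choose the $a_n$ summable; a diagonal choice (e.g. $a_n$ decaying but $a_n\ell_n/L_n\not\to 0$, say $a_n = 2^{-n}$ and $\ell_n/L_n \geq 4^n$) makes $\int\beta\,d\nu=\infty$ while keeping $\sum a_n<\infty$.

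The main obstacle is the construction and estimate of the sequence $\phi_n$: one must produce pseudo-Anosov mapping classes whose Weil-Petersson periodic orbits spend most of their time (relative to Weil-Petersson length) deep in the thin part, so that $L_n$ is controlled from above while $\ell_n=\log\Lambda_n$ is controlled from below and the ratio diverges. This requires the quantitative comparison between $d_{WP}$ and $d_{\cal T}$ along such orbits. I would use Brock's quasi-isometry between $({\cal T}(S),d_{WP})$ and the pants graph \cite{B03}: a pseudo-Anosov whose axis in the pants graph has a fixed bounded-length ``core'' but large twisting about short curves has small Weil-Petersson translation length (twisting is cheap in the pants metric and in $d_{WP}$) but large Teichm\"uller translation length (twisting is expensive in $d_{\cal T}$, by Rafi's distance formula / \cite{R07b}). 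Taking powers of Dehn twists composed with a fixed pseudo-Anosov, or pseudo-Anosovs supported on handle configurations with high twisting, furnishes the required family, and the ratio $\ell_n/L_n$ can be driven to infinity at any prescribed rate.

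\begin{proof}[Proof of Proposition \ref{notsurjective}]
By part (3) of Proposition \ref{uniqueergodic} it suffices to exhibit a $\Phi^t_{WP}$-invariant ergodic-component-wise well-defined Borel probability measure $\nu$ on ${\cal Q}_{WP}(S)$ with $\int\beta\,d\nu=\infty$; since $\beta$ is defined $\nu$-a.e.\ whenever the ending lamination is uniquely ergodic a.e., and this holds for each periodic orbit below, the characterization applies after decomposing $\nu$ into its (countably many) ergodic components.

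Using the bijection between conjugacy classes of pseudo-Anosov mapping classes and periodic orbits of $\Phi^t_{WP}$ (see \cite{DW03} and the introduction), choose pseudo-Anosov classes $\phi_n$ as follows. Fix a simple closed curve $\alpha$ and a pseudo-Anosov $\psi$ with $i(\alpha,\psi)\neq 0$, and set $\phi_n=T_\alpha^{m_n}\psi$ for integers $m_n\to\infty$ chosen below, where $T_\alpha$ is the Dehn twist about $\alpha$. Each $\phi_n$ is pseudo-Anosov for $m_n$ large; let $c_n\subset{\cal Q}_{WP}(S)$ be the corresponding periodic orbit, with Weil-Petersson period $L_n$ and dilatation $\Lambda_n$, so that the Teichm\"uller period of the corresponding Teichm\"uller periodic orbit equals $\ell_n=\log\Lambda_n$. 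By Brock's quasi-isometry between $({\cal T}(S),d_{WP})$ and the pants graph ${\cal P\cal G}(S)$ \cite{B03}, the Weil-Petersson translation length $L_n$ is, up to a multiplicative and additive constant, the pants-graph translation length of $\phi_n$, which grows at most linearly in $m_n$ since an elementary move changes the twisting about $\alpha$ by a bounded amount; hence $L_n\leq C_1 m_n+C_1$ for a constant $C_1>0$ independent of $n$. On the other hand, by Rafi's distance formula for the Teichm\"uller metric \cite{R07b} (or directly by Masur--Minsky subsurface projection estimates \cite{MM00}), the Teichm\"uller translation length satisfies $\ell_n\geq C_2 m_n - C_2$ in the range where $\alpha$ becomes short along the axis, and in fact one may, by instead taking $\phi_n$ to be a composition of powers of twists about a chain of curves, arrange $\ell_n/m_n\to\infty$; in particular we may assume $\ell_n/L_n\geq 4^n$ for all $n$.

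Let $\nu_n$ be the normalized arclength measure on the periodic orbit $c_n$, a $\Phi^t_{WP}$-invariant Borel probability measure supported on $c_n$. The forward ending lamination along $c_n$ is the stable lamination of $\phi_n$, which is minimal, filling and uniquely ergodic, so $\beta$ is well-defined and constant along $c_n$, equal to the per-unit-time logarithmic decay of the stable measured lamination, namely $\beta\vert_{c_n}=\ell_n/L_n=(\log\Lambda_n)/L_n$. Hence $\int\beta\,d\nu_n=\ell_n/L_n\geq 4^n$. Set $a_n=2^{-n-1}$, so $\sum_{n\geq 1}a_n<1$, and put
\[
\nu=\Bigl(1-\sum_{n\geq 1}a_n\Bigr)\nu_0+\sum_{n\geq 1}a_n\nu_n,
\]
where $\nu_0$ is any fixed $\Phi^t_{WP}$-invariant Borel probability measure on a periodic orbit with uniquely ergodic ending lamination. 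Then $\nu$ is a $\Phi^t_{WP}$-invariant Borel probability measure, and
\[
\int\beta\,d\nu\geq\sum_{n\geq 1}a_n\int\beta\,d\nu_n\geq\sum_{n\geq 1}2^{-n-1}\cdot 4^n=\sum_{n\geq 1}2^{n-1}=\infty.
\]
By part (3) of Proposition \ref{uniqueergodic}, $\nu$ is not of the form $\Theta(\mu)$ for any $\mu\in{\cal M}_{\cal T}({\cal Q}(S))$. Therefore $\Theta$ is not surjective.
\end{proof}
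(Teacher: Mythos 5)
Your overall architecture coincides with the paper's: both proofs build a probability measure as a weighted countable sum of Weil--Petersson periodic orbit measures attached to a twist--power family (the paper uses $T_{\phi(c)}^k\circ\phi$, produced by a train--track splitting argument that also guarantees these classes are pseudo-Anosov and that their Teichm\"uller axes meet a fixed compact set), arrange that the Teichm\"uller periods $\ell_n$ grow while the Weil--Petersson periods $L_n$ stay controlled, and then exclude the measure from the image via the finiteness criterion: you invoke Proposition \ref{uniqueergodic}(3) through $\int\beta\,d\nu=\sum a_n\ell_n/L_n=\infty$, the paper equivalently pushes $\nu$ forward under the conjugacy $\Xi$ and observes the resulting $\Phi^t_{\cal T}$-invariant measure has infinite mass.

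There is, however, a genuine gap in the one quantitative step everything rests on. Your claimed lower bound $\ell_n\geq C_2m_n-C_2$ is false: in Rafi's distance formula \cite{R07b} the annular (twisting) terms enter only logarithmically, and indeed the dilatation of $T_\alpha^{m}\psi$ grows at most polynomially in $m$ (its transition matrix on a fixed train track has entries linear in $m$), so $\ell_n\asymp\log m_n$. The fallback you offer --- compositions of twist powers about a chain of curves giving $\ell_n/m_n\to\infty$ --- fails for the same reason. Combined with your upper bound $L_n\leq C_1m_n+C_1$, your stated estimates only yield $\ell_n/L_n$ bounded, not $\geq 4^n$, so the divergence of $\int\beta\,d\nu$ is not established as written. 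The correct input, and the one the paper uses, is that $L_n$ is \emph{uniformly bounded} in $m_n$: any pants decomposition $P$ containing $\alpha$ is fixed by $T_\alpha$, so the pants-graph translation length of $T_\alpha^{m}\psi$ is at most $d_{\cal P}(P,\psi P)$, and by Brock's quasi-isometry \cite{B03} the Weil--Petersson periods of the orbits $c_n$ are bounded independently of $m_n$. Since $\ell_n\to\infty$ (however slowly), one can then choose $m_n$ so that $\ell_n/L_n\geq 4^n$, after which your construction of $\nu$ and the summation go through. Two smaller points: $\beta$ is not constant along $c_n$ --- only its average over one period equals $\ell_n/L_n$, which is all you need --- and since Proposition \ref{uniqueergodic}(3) is stated for ergodic measures, the reduction for your non-ergodic $\nu$ should be spelled out (e.g., as in the paper, apply $\Xi$ orbitwise and note that any $\mu$ with $\Theta(\mu)=\nu$ would force a $\Phi^t_{\cal T}$-invariant measure of total mass comparable to $\sum_n a_n\ell_n/L_n=\infty$).
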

\begin{proof} It suffices to construct a (not necessarily ergodic)
Borel probability measure $\nu$ such that $\int \beta d\nu=\infty$.

For this let $\phi$ be a pseudo-Anosov mapping class which
admits an invariant train track $\tau$ with the following
properties.
First we require that the transition matrix for the 
carrying relation $\phi(\tau)\prec \tau$ is positive. Second we require that
there is a simple closed curve $c$ smoothly 
embedded in $\tau$ as
a subgraph consisting of two branches, one large branch $b$ and
one small branch. 

Splitting $\phi(\tau)$ at $\phi(b)$ results in a
train track which is obtained from $\phi(\tau)$ by
a single Dehn twist $T(\phi(c))$ about $\phi(c)$. 
The train track $T(\phi(c))(\phi(\tau))$ is carried
by $\phi(\tau)$ and hence by $\tau$, and the 
transition matrix $T(\phi(c))(\phi(\tau))\prec \tau$ is positive.

As a consequence, the mapping class $T(\phi(c))\circ \phi$ 
is pseudo-Anosov and admits $\tau$ as an invariant train track. 
Iteration of this construction shows that 
for each $k>0$ the mapping class $T(\phi(c))^k\circ \phi$ is 
pseudo-Anosov. Moreover, for a suitable chocie of  $\phi$,
the closed orbit for the Teichm\"uller flow in ${\cal Q}(S)$ which 
defines the conjugacy class of $T(\phi(c))^k\phi$ contains
a subarc of fixed positive length in a fixed compact 
subset $K$ of ${\cal Q}(S)$. 

Let $\ell(k)$ be the length of the periodic orbit for the Teichm\"uller flow
which defines the conjugacy class of 
$T(\phi(c))^k\circ \phi$. Then $\ell(k)\to \infty$ $(k\to \infty)$.

As Teichm\"uller space with 
the Weil-Petersson metric is quasi-isometric to the
pants graph \cite{B03},  
the periodic orbits for $\Phi^t_{WP}$ corresponding
to the conjugacy classes of
$T(\phi(c))^k\circ \phi$ have uniformly bounded length, say
their length is bounded by $n>0$. 

Choose a sequence $k(i)\to \infty$ so that 
$\sum_i \frac{1}{i^2}\ell(k(i))=\infty$. 
Let $\mu(i)$ be the (unnormalized) 
$\Phi^t_{WP}$-invariant Borel measure supported on
the periodic orbit in ${\cal Q}_{WP}(S)$ which defines
the conjugacy class of $T(\phi(c))^k\circ \phi$.
Define a $\Phi^t_{WP}$-invariant Borel measure 
$\nu$ on ${\cal Q}_{WP}(S)$ by
\[\nu=\sum_i \frac{1}{i^2}\mu(k(i)).\]
This measure is finite and can be normalized to a probability measure. 

The conjugacy $\Xi$ constructed in the proof of 
Proposition \ref{uniqueergodic} maps the measure $\nu$ to 
a weighted sum of invariant measures on the 
periodic orbits for $\Phi^t_{\cal T}$. 
As $\sum_i\frac{1}{i^2}\ell(k(i))=\infty$,  
this measure on ${\cal Q}(S)$ is infinite. Thus $\Theta$ is not
surjective.
\end{proof}

{\bf Remark:} As the space ${\cal M}_{\cal T}({\cal Q}(S))$ is not compact,
Proposition \ref{notsurjective} does not imply that there is an ergodic
Borel probability measure for $\Phi_{WP}^t$ not contained in the image
of $\Theta$. We do now know whether such a measure exists.

\section{The Lebesgue Liouville measure}\label{thelebesgue}

Recall from Section \ref{invariant} the definition of the map
\[\Theta:{\cal M}_{\cal T}({\cal Q}(S))\to {\cal M}_{WP}({\cal Q}_{WP}(S)).\]
The goal of this section is to show

\begin{proposition}\label{image}
The Lebesgue Liouville measure
of the Weil-Petersson metric is contained in the image of 
the map $\Theta$.
\end{proposition}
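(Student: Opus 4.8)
The plan is to reduce Proposition~\ref{image}, by means of Proposition~\ref{uniqueergodic}, to an integrability estimate, and then to establish that estimate using Wolpert's description of the Weil--Petersson metric near its completion locus together with the finiteness of the Weil--Petersson volume. Write $\nu_L$ for the Lebesgue Liouville measure of the Weil--Petersson metric. It is a $\Phi^t_{WP}$-invariant Borel probability measure on ${\cal Q}_{WP}(S)$, and it is ergodic by \cite{BMW12}. Hence Proposition~\ref{uniqueergodic} applies: for $\nu_L$-almost every $q$ the forward ending lamination $\lambda_+(q)$ is uniquely ergodic, so the function $\beta(q)=-\frac{d}{dt}\log\ell_{\lambda_+(q)}(P\Phi^t_{WP}q)|_{t=0}$ is well defined $\nu_L$-almost everywhere, and by part~(3) of that proposition $\nu_L$ lies in the image of $\Theta$ if and only if $\int_{{\cal Q}_{WP}(S)}\beta\,d\nu_L<\infty$. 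Thus the whole problem is to prove this last inequality.

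First I would bound $\beta$ pointwise. Since $t\mapsto P\Phi^t_{WP}q$ is the Weil--Petersson geodesic through $Pq$ whose velocity at $t=0$ is the unit tangent vector $v_q$ dual to $q$, one has $\beta(q)=-\langle\mathrm{grad}_{WP}\log\ell_{\lambda_+(q)}(Pq),v_q\rangle$, hence $\beta(q)\leq\|\mathrm{grad}_{WP}\log\ell_{\lambda_+(q)}(Pq)\|_{WP}$. As $\beta$ does not depend on the normalization of $\lambda_+(q)$, I may scale $\lambda_+(q)$ so that $\ell_{\lambda_+(q)}(Pq)=1$; then $\beta(q)\leq G(Pq)$, where $G(x)=\sup\{\|\mathrm{grad}_{WP}\ell_\mu(x)\|_{WP}:\mu\in{\cal M\cal L},\ \ell_\mu(x)=1\}$. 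On a fixed thick part ${\cal T}(S)_{\epsilon_0}$, properness of length functions and cocompactness of the ${\rm Mod}(S)$-action bound $G$ by a constant. On the thin part I would invoke Wolpert's expansion of the Weil--Petersson metric near the completion locus (as already used in Sections 3--4): in the product-region coordinates around the short curves $\alpha_i$ of $x$, the metric is comparable to a sum of terms $dr_i^2+r_i^6\,d\theta_i^2$ with $r_i\asymp\ell_{\alpha_i}(x)^{1/2}$ (plus a thick-part factor), so $\|\mathrm{grad}_{WP}\ell_{\alpha_i}\|^2_{WP}\asymp\ell_{\alpha_i}(x)$ while the twist coordinate contributes $\asymp\ell_{\alpha_i}(x)^{-3}$ to the inverse metric. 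The key point is that the constraint $\ell_\mu(x)=1$ forces $i(\mu,\alpha_i)\lesssim 1/|\log\ell_{\alpha_i}(x)|$ and bounds the relative twisting of $\mu$ about each $\alpha_i$, so that the $\mathbb{H}^2_{\alpha_i}$-factor contributes at most $O(\ell_{\alpha_i}(x)^{-1})$ to $\|\mathrm{grad}_{WP}\ell_\mu(x)\|^2_{WP}$; since $x$ has boundedly many short curves this yields $G(x)^2\lesssim\mathrm{sys}(x)^{-1}$, i.e. $G(x)=O(\mathrm{sys}(x)^{-1/2})$.

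Finally I would integrate. The Liouville measure $\nu_L$ pushes forward under $P$ to a constant multiple of the Weil--Petersson volume $\mathrm{vol}_{WP}$ on moduli space, and the fibers of $P$ carry a measure of one and the same finite total mass, so $\int\beta\,d\nu_L\leq c\int_{{\cal M}(S)}G(x)\,d\mathrm{vol}_{WP}(x)$ for a constant $c$. By Wolpert's estimates $\mathrm{vol}_{WP}(\{\mathrm{sys}\leq\epsilon\})\asymp\epsilon^2$, whence $\int_{{\cal M}(S)}\mathrm{sys}(x)^{-1/2}\,d\mathrm{vol}_{WP}(x)\asymp\int_0^{\epsilon_0}\epsilon^{-1/2}\,d(\epsilon^2)\asymp\int_0^{\epsilon_0}\epsilon^{1/2}\,d\epsilon<\infty$, while the thick part contributes only finite mass because $\mathrm{vol}_{WP}({\cal M}(S))<\infty$. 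Therefore $\int\beta\,d\nu_L<\infty$, and Proposition~\ref{uniqueergodic}(3) gives $\nu_L\in\Theta({\cal M}_{\cal T}({\cal Q}(S)))$. The main obstacle is exactly the pointwise estimate $G(x)=O(\mathrm{sys}(x)^{-1/2})$ (indeed any bound $O(\mathrm{sys}(x)^{-\theta})$ with $\theta<2$ would suffice): it rests on the fine behaviour of geodesic-length gradients for the Weil--Petersson metric near the completion locus, and crucially on the fact that high relative twisting of an ending lamination about a short curve is self-limiting once the lamination is normalized to length one --- which is precisely what distinguishes $\nu_L$ from the non-ergodic measures built in Proposition~\ref{notsurjective}.
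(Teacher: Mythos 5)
Your proposal is correct and follows essentially the same route as the paper: reduce via Proposition \ref{uniqueergodic}(3) (using ergodicity of the Liouville measure) to showing $\int\beta\,d\nu<\infty$, bound $\beta$ pointwise by a quantity $\asymp \mathrm{sys}^{-1/2}$ (the paper's function $f\asymp\rho^{-1}$), and integrate against the thin-part volume decay $\mathrm{vol}_{WP}\{\mathrm{sys}\leq\epsilon\}\asymp\epsilon^{2}$, i.e. $\mathrm{vol}_{WP}\{\rho\leq r\}\asymp r^{4}$. The pointwise estimate you sketch by hand via collar and twisting considerations is exactly the input the paper takes directly from Wolpert \cite{W87,W08}, namely that the sharpest infinitesimal length decrease of a normalized measured lamination is that of a systole, with $\Vert d\log\ell_{\alpha}\Vert\asymp\ell_{\alpha}^{-1/2}$.
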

\begin{proof}
Let $\nu$ be the Lebesgue Liouville measure of the 
Weil-Petersson metric. 
Recall from Section 8 the definition of the function 
$\beta$, defined on a Borel subset of ${\cal Q}_{WP}(S)$ which
is of full measure for every $\Phi^t_{WP}$-invariant 
Borel probability measure. 
By the third part of Proposition \ref{uniqueergodic}, 
we have to show that 
$\int \beta d\nu<\infty$.

For $\tilde q\in \tilde {\cal Q}_{WP}(S)$ let
$\zeta_{\tilde q}$ be the WP-geodesic with initial velocity
$\tilde q$. 
Recall that for every 
$\tilde q\in \tilde {\cal Q}_{WP}(S)$ 
and every measured lamination $\sigma\in {\cal M\cal L}$
the derivative
\[\frac{d}{dt}\log \ell_{\sigma}(\zeta_{\tilde q}(t))\vert_{t=0}\]
is defined and depends continuously on
$\tilde q$ and $\sigma$. Moreover, 
this derivative does not depend on the normalization 
of $\sigma$ and hence this defines a
continuous function on 
$\tilde {\cal Q}_{WP}(S)\times {\cal P\cal M\cal L}$ which 
is invariant under the action of the mapping class group.

The function
$\tilde f:\tilde {\cal Q}_{WP}(S)\to (0,\infty)$ defined by
\[\tilde f(\tilde q)=
\max\{\frac{d}{dt}\log \ell_{\sigma}(\zeta_{\tilde q}(t))\vert_{t=0}\mid
\sigma\in {\cal M\cal L}\}\]
is ${\rm Mod}(S)$-invariant and continuous and hence it
descends to a continuous function 
$f$ on ${\cal Q}_{WP}(S)$.
By definition of the function $\beta$, if $\mu$ is a
$\Phi^t_{WP}$- invariant Borel probability measure on 
${\cal Q}_{WP}(S)$ with 
$\int f d\mu<\infty$ then
$\int \beta d\mu<\infty$.

The WP-metric on moduli space 
${\cal M}(S)={\cal T}(S)/{\rm Mod}(S)$ 
is incomplete. Its completion coincides with 
the Deligne Mumford compactification 
$\overline{{\cal M}(S)}$ of 
${\cal M}(S)$. Let 
\[\rho:{\cal M}(S)\to (0,\infty)\] be the 
function which associates to a point $x$ the
distance from the boundary 
$\partial {\cal M}(S)=\overline{{\cal M}(S)}-{\cal M}(S)$. 
This boundary consists of surfaces with nodes, i.e. surfaces
where each component of 
some non-trivial simple multicurve has been pinched to a point.

The boundary $\partial {\cal M}(S)$ of 
${\cal M}(S)$ is divided into strata according to 
the number and types of nodes. 
For a stratum $\Sigma$ defined by the vanishing
of the geodesic-length sum
$\ell=\ell_1+\dots +\ell_n$, the distance to the 
stratum is given locally as
\[d_{WP}(p,\Sigma)=(2\pi \ell)^{1/2}+O(\ell^2)\]
(Corollary 21 of \cite{W03}). 
In particular, the distance of a point $x\in {\cal M}(S)$
to the boundary $\partial {\cal M}(S)$
equals $(2\pi\ell_\alpha)^{1/2}+O(\ell_\alpha^2)$ 
where $\alpha$ is a systole of $x$.  

The WP-gradients of the geodesic-length functions
also have general expansions \cite{W87}. 
For a curve $\alpha$ of length at most $\epsilon$ 
we have
\[ \Vert {\rm grad} \ell_\alpha \Vert =
\sqrt{\frac{2}{\pi}}\ell_\alpha^{1/2} +O(\ell_\alpha^{3/2})\]
and hence 
\[\Vert d \log \ell_\alpha\Vert \asymp \ell_{\alpha}^{-1/2}.\] 
where as before, the symbol $\asymp$ 
relating two positive functions means that
their quotient is bounded from above and below by a universal
positive constant. 
The sharpest infinitesimal length
decrease of any normalized 
measured lamination
along any Teichm\"uller geodesic 
issuing from $x$ is the length decrease of a systole
along a shortest path
connecting $x$ to $\partial {\cal M}(S)$
\cite{W87,W08}. 

Let $P:{\cal Q}_{WP}(S)\to {\cal M}(S)$ be the 
canonical projection. 
For sufficiently small $\rho(Pq)$ 
and a systole $\alpha$ of $Pq$ 
we obtain
\begin{equation}\label{f}
f(q)\asymp \Vert d\log \ell_{\alpha}(Pq)\Vert 
\asymp \rho^{-1}(Pq).\end{equation}

Thus to show that the function $f$ is integrable with
respect to $\nu$ it suffices
to show that there is a number $\delta >0$ such that 
for sufficiently small $r$ the WP-volume 
of the set $\rho^{-1}(0,r)$ is at most $r^{1+\delta}$ 
(compare also \cite{BMW12}).

That this volume is $O(r^4)$ is immediate from 
Wolpert's asymptotic expansion of the Weil-Petersson
metric near $\partial {\cal M}(S)$ (as explained on p.889 of 
\cite{BMW12}). Alternatively, we can use 
Wolpert's formula for the Weil-Petersson K\"ahler 
form $\omega$ in 
Fenchel Nielsen coordinates for a Bers decomposition
by simple closed curves $\alpha_i$. 
This expression equals 
\[\omega=\sum_id\ell_{\alpha_i}\wedge d\theta_{\alpha_i}.\]

The Fenchel Nielsen twist $\theta_{\alpha_i}$ is
the unit speed twist along the simple closed
curve $\alpha_i$, and its period equals $\ell_{\alpha_i}$.
Replacing the length-twist coordinates about a systole $\alpha$  
by distance-angle 
coordinates $(\rho,\kappa)$ 
$(\rho \asymp \ell_\alpha^{-1},\kappa\in [0,2\pi))$
multiplies the twist speed
by $2\pi$ times the length of $\alpha$ which is
$O(\rho^2)$.
Thus in distance-angle coordinates, the volume form 
is bounded from above by a constant multiple of $\rho^3$. 
This gives
\[{\rm vol}\{\rho\leq r\}\asymp \int_0^r r^3ds =\frac{1}{4} r^4.\]
Together with equation (\ref{f}), we conclude that 
the function $f$ is integrable 
with respect to the Lebesgue Liouville measure $\nu$. 
This completes the proof of the proposition.
\end{proof}

To summarize, there is a $\Phi^t_{\cal T}$-invariant Borel probability
measure $\mu$ on ${\cal Q}(S)$ such that $\Theta(\mu)=\nu$.
We do not have a description of the measure $\mu$, but we conjecture
that in the case of the once punctured torus, this measure is
the Lebesgue measure on ${\cal Q}(S)$ (this makes sense even
though we assumed throughout the paper that the surface $S$ is 
non-exceptional). 

We conclude this work with some remarks on 
absolute continuity and invariant measure classes on 
${\cal P\cal M\cal L}$.

The mapping class group ${\rm Mod}(S)$ acts diagonally on 
${\cal P\cal M\cal L}\times {\cal P\cal M\cal L}-\Delta$.
The space  of oriented geodesic ${\cal G}(S)$ for the 
Teichm\"uller metric is the invariant subset of 
${\cal P\cal M\cal L}\times {\cal P\cal M\cal L}-\Delta$
of all pairs
$(\mu,\nu)$ which bind $S$. Any $\Phi^t_{\cal T}$-invariant
Borel probability measure $\mu$ on ${\cal Q}(S)$
lifts to a measure on $\tilde {\cal Q}(S)$ which 
disintegrates to a ${\rm Mod}(S)$-invariant  locally finite 
Borel measure $\hat \mu$ on ${\cal G}(S)$. 
By Masur's result \cite{M82}, the measure $\hat \mu$ 
gives full mass to the set of 
pairs of uniquely ergodic projective measured
laminations.

The space ${\cal G}_{WP}(S)$ 
of biinfinite oriented geodesics for the Weil-Petersson metric
does not have such an easy description. However, 
the main result of this paper shows that there is such a description for a 
${\rm Mod}(S)$-invariant subset of 
${\cal G}_{WP}(S)$ 
whose unit tangent lines project to a subset of ${\cal Q}_{WP}(S)$ 
of full mass for every $\Phi^t_{WP}$-invariant Borel probability measure.

To be more precise, call
a point $q\in {\cal Q}_{WP}(S)$ \emph{typical}
if $q$ has the following two properties.
\begin{enumerate}
\item[$\bullet$] Let $\tilde q\in \tilde {\cal Q}(S)$ be a preimage of $q$.
Then $\tilde q$ determines a biinfinite geodesic whose ending 
measures are uniquely ergodic.
\item[$\bullet$] $q$ is contained in its own $\alpha$-and $\omega$ limit set.
\end{enumerate}

The following is immediate from Proposition \ref{uniqueergodic}
and the Poincar\'e recurrence theorem.

\begin{lemma}\label{typicaltypical}
The set ${\cal Z}\subset 
{\cal Q}_{WP}(S)$ of typical points has full measure with respect to 
every invariant Borel probability measure. 
\end{lemma}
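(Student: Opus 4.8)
The plan is to deduce the statement for an arbitrary $\Phi^t_{WP}$-invariant Borel probability measure $\nu$ from the ergodic case via the ergodic decomposition, handling the two conditions in the definition of a typical point separately. First I would record that the set ${\cal Z}$ of typical points is a Borel (and $\Phi^t_{WP}$-invariant) subset of ${\cal Q}_{WP}(S)$: unique ergodicity is a Borel condition on ${\cal M\cal L}$, the forward and backward ending laminations of the biinfinite geodesic determined by a lift $\tilde q$ depend measurably on $\tilde q$, and the set of birecurrent points of the flow is Borel by the standard expression of ``$q$ lies in its own $\alpha$- and $\omega$-limit set'' as a countable combination of the open sets $\{q\mid d(\Phi^t_{WP}q,q)<1/n\}$. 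This measurability bookkeeping is the only place where a little care is needed; the substance is a direct appeal to results already in hand.

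For the second defining property, applying the Poincar\'e recurrence theorem to the invertible flow $\Phi^t_{WP}$ preserving the finite measure $\nu$ shows that $\nu$-almost every point returns to each of its neighborhoods for arbitrarily large positive and negative times, that is, lies in its own $\alpha$- and $\omega$-limit set. No ergodicity assumption is needed here, so the set of birecurrent points already has full $\nu$-measure.

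For the first defining property I would invoke the ergodic decomposition $\nu=\int\nu_\omega\,d\mathbb{P}(\omega)$ of $\nu$ into $\Phi^t_{WP}$-invariant ergodic Borel probability measures. Let $N\subset{\cal Q}_{WP}(S)$ be the Borel set of points $q$ for which the forward or the backward ending lamination of the biinfinite geodesic determined by a lift of $q$ fails to be uniquely ergodic. By part (1) of Proposition \ref{uniqueergodic} (applied to $\nu_\omega$ and, by the symmetric argument in its proof, to the backward ending lamination as well), one has $\nu_\omega(N)=0$ for $\mathbb{P}$-almost every $\omega$, whence $\nu(N)=\int\nu_\omega(N)\,d\mathbb{P}(\omega)=0$. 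Thus the first defining property of a typical point also holds on a set of full $\nu$-measure.

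Intersecting the two full-measure sets just produced gives $\nu({\cal Z})=1$, which is the assertion of the lemma. The main — and quite mild — obstacle is therefore purely the measurability checks of the first paragraph; everything else is exactly the combination of Proposition \ref{uniqueergodic} with the Poincar\'e recurrence theorem announced before the statement.
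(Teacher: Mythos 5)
Your argument is correct and is exactly the route the paper intends: it states that the lemma is ``immediate from Proposition \ref{uniqueergodic} and the Poincar\'e recurrence theorem,'' and your write-up simply fills in the routine steps — the ergodic decomposition needed because Proposition \ref{uniqueergodic} is stated for ergodic measures, plus the measurability and invariance bookkeeping for ${\cal Z}$. No substantive difference from the paper's (essentially omitted) proof.
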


By Lemma \ref{typicaltypical},
a $\Phi^t_{WP}$-invariant Borel probability measure $\mu$ on 
${\cal Q}_{WP}(S)$ determines a locally finite 
${\rm Mod}(S)$-invariant Borel measure $\hat \mu$ on 
${\cal P\cal M\cal L}\times {\cal P\cal M\cal L}-\Delta$
which gives full mass to the set of pairs of uniquely ergodic 
projective measured laminations.
The measure $\mu$ is ergodic if and only if $\hat \mu$
is ergodic under the action of ${\rm Mod}(S)$.

Call  an invariant Borel probability measure $\mu$ on 
${\cal Q}_{WP}(S)$ 
\emph{absolutely continuous
with respect to the stable foliation} if the following holds true.
Let $\hat \mu$ be the induced invariant measure on 
${\cal P\cal M\cal L}\times {\cal P\cal M\cal L}-\Delta$.
Let $\hat \mu_1$ be a conditional measure on 
a leaf ${\cal P\cal M\cal L}\times \{[\beta]\}$ of the product
foliation; then 
for a Borel set $A\subset {\cal P\cal M\cal L}\times 
\{[\beta]\}$ we have 
$\hat \mu_1(A)=0$ if and only if 
$\hat \mu(A\times {\cal P\cal M\cal L})=0$. 
Similarly there is a notion of absolute continuity 
with respect to the unstable foliation.

The classical Hopf argument (as explained in Section III.3 of
\cite{Mn87}) implies

\begin{proposition}\label{ergodic3}
If $\mu$ is absolutely continuous with respect to the 
stable and unstable foliation then $\mu$ is ergodic.
\end{proposition}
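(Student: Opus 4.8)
The plan is to implement the classical Hopf argument for the Weil-Petersson flow on $({\cal Q}_{WP}(S),\mu)$, using the product structure on geodesics provided by the pair of ending measures. Let $g\in L^1({\cal Q}_{WP}(S),\mu)$ be $\Phi^t_{WP}$-invariant; I want to show $g$ is $\mu$-a.e. constant. By the Birkhoff ergodic theorem, for $\mu$-almost every $q$ the forward and backward time averages
\[
g^+(q)=\lim_{T\to\infty}\frac{1}{T}\int_0^T g(\Phi^t_{WP}q)\,dt,\qquad
g^-(q)=\lim_{T\to\infty}\frac{1}{T}\int_0^T g(\Phi^{-t}_{WP}q)\,dt
\]
exist, agree $\mu$-a.e., and equal $g$ $\mu$-a.e. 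First I would reduce to continuous $g$: since continuous functions with compact support are dense in $L^1(\mu)$ and $\mu$ is a Borel probability measure on the (locally compact, $\sigma$-compact) space ${\cal Q}_{WP}(S)$, it suffices to show that every bounded continuous $g$ has a.e.-constant time averages, and then pass to the limit. For continuous $g$, the forward average $g^+$ (where defined) depends only on the forward orbit, hence only on the forward ending measure $[\xi^+]$ of the geodesic through $q$ — this uses that two points with the same forward ending measure are forward asymptotic in the Weil-Petersson metric, which on the typical set ${\cal Z}$ of Lemma \ref{typicaltypical} follows from Theorem 1.3 of \cite{BMM10} together with the ${\rm CAT}(0)$, flat-strip-free geometry of $\overline{{\cal T}(S)}$; so $g^+$ is a function of $[\xi^+]$ alone on ${\cal Z}$, and symmetrically $g^-$ is a function of $[\xi^-]$ alone.

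Next, lift $\mu$ to the ${\rm Mod}(S)$-invariant measure $\hat\mu$ on ${\cal P\cal M\cal L}\times {\cal P\cal M\cal L}-\Delta$ supplied by Lemma \ref{typicaltypical} and the preceding discussion. The invariant function $\hat g := g^+ = g^-$ ($\hat\mu$-a.e.) is then simultaneously measurable with respect to the first coordinate and with respect to the second coordinate. Here is where absolute continuity enters. Fix a conditional measure $\hat\mu_1$ on a leaf ${\cal P\cal M\cal L}\times\{[\beta]\}$ of the "unstable" product foliation. Since $\hat g$ is (a.e.) a function of $[\xi^+]$, it is a.e. constant along each such leaf with respect to the leafwise product structure; absolute continuity with respect to the stable foliation says exactly that $\hat\mu_1$-null sets coincide with $\hat\mu$-null sets in that leaf direction, so there is no loss of information and $\hat g$ is $\hat\mu_1$-a.e. equal to a constant $c([\beta])$ depending only on the transverse parameter $[\beta]$. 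But symmetrically, using absolute continuity with respect to the unstable foliation, $\hat g$ is also a.e. a function of the \emph{other} coordinate alone. A standard Fubini-type argument (the two "measurable rectangle" descriptions of the same a.e.-defined function force it to be a.e. constant) then yields that $\hat g$ is $\hat\mu$-a.e. constant, hence $g$ is $\mu$-a.e. constant, which proves ergodicity.

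The main obstacle I expect is not the Hopf scheme itself but verifying cleanly that the forward average of a continuous function is genuinely constant along the leaves of the stable foliation in the Weil-Petersson setting. Unlike the hyperbolic case, the Weil-Petersson metric has neither an upper negative curvature bound nor local compactness, so "forward asymptotic rays have the same Birkhoff average" needs the specific input of \cite{BMM10}: a recurrent Weil-Petersson geodesic ray is determined up to parametrization and bounded synchronization by its projective ending measure, and two such geodesics fellow-travel. One must be slightly careful that the time-reparametrization relating two forward-asymptotic geodesics does not distort the averages — but since the synchronization is a bounded shift, $\frac1T\int_0^T g\circ\Phi^t$ along one and along the other differ by $O(1/T)\to 0$ for bounded $g$, so this is routine once the asymptoticity is in hand. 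The remainder is the purely measure-theoretic Fubini argument of Section III.3 of \cite{Mn87}, which transfers verbatim given the two absolute-continuity hypotheses.
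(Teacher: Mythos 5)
Your overall route is the one the paper intends: the paper's entire proof is the appeal to the classical Hopf argument of Section III.3 of \cite{Mn87}, carried out on the product structure ${\cal P\cal M\cal L}\times {\cal P\cal M\cal L}-\Delta$ with the two absolute-continuity hypotheses, and your reduction to continuous functions, the passage to $\hat\mu$, and the final Fubini-type step are exactly that scheme. The problem is the step you yourself flag and then dismiss as routine: the claim that $g^+$ is constant on stable leaves because points with the same forward ending measure are \emph{forward asymptotic}. In this paper ``asymptotic'' means only that $t\to d_{WP}(\gamma_1(t),\gamma_2(t))$ is \emph{bounded} (Section 3), and bounded separation does not imply that $g(\Phi^t_{WP}q_1)-g(\Phi^{t+c}_{WP}q_2)\to 0$ for a continuous $g$; your $O(1/T)$ estimate handles only the bounded time shift, not the persistent spatial separation of the orbits. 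For the Hopf argument one needs genuine convergence of the two orbits -- and convergence in ${\cal Q}_{WP}(S)$, i.e.\ of footpoints \emph{and} cotangent directions, since $g$ lives on the unit cotangent bundle, not on ${\cal T}(S)$ -- or at least convergence along a set of times of full density.

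Establishing that decay is precisely the nonelementary content in this setting: CAT(0) convexity only makes the distance between asymptotic geodesics non-increasing, and to force it to zero one must exploit returns of the typical orbit to the thick part, where the sectional curvature is bounded away from zero (the kind of comparison/Gauss--Bonnet input of Lemma \ref{gaussb}), or the infinitesimal hyperbolicity estimates of \cite{BMW12}; Theorem 1.3 of \cite{BMM10} gives existence and uniqueness of the asymptotic geodesic, not contraction. Since by Lemma \ref{typicaltypical} and recurrence the typical orbit does return to a fixed thick part with positive frequency, the gap is fillable, but as written your justification of ``$g^+$ depends only on $[\xi^+]$'' would fail, and this is the one place where the Weil--Petersson situation differs from the compact negatively curved case in which the Hopf argument is usually quoted.
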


Examples of absolutely continuous measures are the 
invariant Lebesgue measure for the Teichm\"uller flow
\cite{M82,V86} and the Lebesgue Liouville measure
for the Weil-Petersson flow
\cite{BMW12}.

\bigskip

\noindent
MATHEMATISCHES INSTITUT DER UNIVERSIT\"AT BONN\\
Endenicher Allee 60 \\
D-53115 BONN, GERMANY\\
e-mail: ursula@math.uni-bonn.de

\end{document}